\renewcommand{\email}[2][]{%
  \ifx\emails\@empty\relax\else{\g@addto@macro\emails{,\space}}\fi%
  \@ifnotempty{#1}{\g@addto@macro\emails{\textrm{(#1)}\space}}%
  \g@addto@macro\emails{#2}%
}
\DeclareMathOperator*{\re}{Re}
\DeclareMathOperator*{\interior}{int}
\DeclareMathOperator*{\exterior}{ext}
\DeclareMathOperator*{\length}{length}
\DeclareMathOperator*{\sys}{sys}
\DeclareMathOperator*{\dist}{distor}
\DeclareMathOperator*{\genus}{genus}
\newcommand*{\bound}[1]{\delta(#1)}
\DeclareMathOperator*{\conlength}{convol_1}
\DeclareMathOperator*{\modulo}{mod}
\DeclareMathOperator*{\ess}{ess}
\DeclareMathOperator{\odd}{odd}
\DeclareMathOperator{\even}{even}
\renewcommand{\epsilon}{\varepsilon}
\renewcommand{\Lambda}{\Uplambda}
\newtheorem{theorem}{Theorem}[section]
\newtheorem{lemma}[theorem]{Lemma}
\newtheorem{cor}[theorem]{Corollary}
\newtheorem{prop}[theorem]{Proposition}
\theoremstyle{definition}
\newtheorem{definition}[theorem]{Definition}
\newtheorem{remark}[theorem]{Remark}
\newtheorem{notation}[theorem]{Notation}
\newtheorem{construction}[theorem]{Construction}
\begin{document}

\title[Extrinsic Systole and Knot Distortion]{Extrinsic systole of Seifert surfaces and distortion of knots}
\date{\today}
\author[Vasudevan]{Sahana Vasudevan}
\address[Sahana Vasudevan]{School of Mathematics, Institute for Advanced Study, Princeton, NJ 08540, USA; Department of Mathematics, Princeton University, Princeton, NJ 08540, USA}
\email{sahanav@ias.edu}

\fancyhead[CE]{Vasudevan}

\begin{abstract} In 1983, Gromov introduced the notion of distortion of a knot, and asked if there are knots with arbitrarily large distortion. In 2011, Pardon proved that the distortion of $T_{p,q}$ is at least $\min\{p,q\}$ up to a constant factor. We prove that the distortion of $T_{p, p+1}\# K$ is at least $p$ up to a constant, independent of $K$. We also prove that any embedding of a minimal genus Seifert surface for $T_{p,p+1}\# K$ in $\mathbb{R}^3$ has small extrinsic systole, in the sense that it contains a non-contractible loop with small $\mathbb{R}^3$-diameter relative to the length of the knot. These results are related to combinatorial properties of the monodromy map associated to torus knots.
\end{abstract} 

\maketitle

\section{Introduction}

In this paper, we prove several related results about the distortion of certain knots, and the extrinsic systole of Seifert surfaces associated to these knots. Along the way, we also develop some general tools to study the distortion of knots.

In \cite{Gro83}, Gromov introduced the notion of distortion of a knot $K$, and asked if there are knots with arbitrarily large distortion. Distortion is an invariant of $K$ that measures the infimum of the bi-Lipschitz constant of embeddings of $K$ in $\mathbb{R}^3$, over all such embeddings.

\begin{definition}\label{distortion of knot} Let $\beta\subset \mathbb{R}^3$ be an embedding of $S^1$. Then $$\dist(\beta)=\sup_{x,y\in \beta}\frac{d_\beta(x,y)}{|x-y|}.$$ Given a knot $K$, $$\dist(K)=\inf_{\beta} \dist(\beta),$$ where the infimum is over all embeddings $\beta\subset \mathbb{R}^3$ of $K$.
\end{definition}

In \cite{Par11}, Pardon proved that the torus knot $T_{p,q}$ has distortion at least $\min\{p,q\}$ up to a constant factor, thus answering Gromov's question in the affirmative. Subsequently, Gromov and Guth constructed other examples of knots with arbitrarily large distortion in \cite{GG12}. In general, it has been difficult to prove lower bounds for knot distortion; till now, the examples in \cite{Par11} and \cite{GG12} have been the only known classes of knots with arbitrarily large distortion. In this paper, we prove uniform lower bounds for a new class of knots: the connect sum $T_{p,p+1}\# K$ for arbitrary $K$, answering a question of Pardon.

\begin{theorem}\label{distortion of connect sum is large} For any tame knot $K$, $$\dist(T_{p,p+1}\#K)\gtrsim p,$$ with constant independent of $p$ and $K$.
\end{theorem}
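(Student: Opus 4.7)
The plan is to deduce Theorem~\ref{distortion of connect sum is large} from the extrinsic systole bound on minimal Seifert surfaces announced in the abstract, and to convert the existence of a short non-contractible loop on such a surface into a distortion lower bound for $\beta$.

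Fix an embedding $\beta\subset\mathbb{R}^3$ of $T_{p,p+1}\#K$, rescaled so that $\length(\beta)=1$; the goal is $\dist(\beta)\gtrsim p$. Choose a minimal-genus PL Seifert surface $F$ for $\beta$ embedded in $\mathbb{R}^3$ (possible since $K$ is tame). By the extrinsic systole theorem, there exists a loop $\gamma\subset F$, non-contractible in $F$, with $\diam_{\mathbb{R}^3}(\gamma)\le C/p$, so that $\gamma$ is contained in a Euclidean ball $B$ of radius $C/p$.

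The geometric step runs as follows. Since $\mathbb{R}^3$ is simply connected, $\gamma$ bounds a PL disk $D\subset B$. If $D\cap F$ consisted only of $\gamma$, then surgering $F$ along $D$ would produce a Seifert surface of strictly smaller genus, contradicting the minimality of $F$. Hence $D$ must meet $F$ also in the interior of $D$. After standard innermost-disk reductions (removing closed-curve intersections one by one), one may assume $D\cap F$ consists of $\gamma$ together with finitely many arcs whose endpoints lie on $\partial F=\beta$; in particular $\beta\cap B$ contains at least two points $x_1,x_2$, with $|x_1-x_2|\le 2C/p$. The heart of the argument is then to show that the subarc of $\beta$ joining $x_1$ to $x_2$ has length bounded below by an absolute constant $c_0>0$, which by the definition of distortion yields $\dist(\beta)\ge c_0 p/(2C)\gtrsim p$.

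The main obstacle is twofold. First, the extrinsic systole theorem itself is the technical core of the paper and presumably hinges on the combinatorial structure of the torus-knot monodromy, as the abstract indicates; this is the step I expect to be hardest, and I would attack it via the explicit Milnor-fiber description of the minimal Seifert surface of $T_{p,p+1}$ together with a pigeonhole argument on the many generators of $H_1$ that the monodromy permutes. Second, proving the macroscopic length of the subarc of $\beta$ between $x_1$ and $x_2$ requires more than the bare non-contractibility of $\gamma$ in $F$: one needs $\gamma$ to be ``globally essential'' in the sense that the pair $(x_1,x_2)$ produced by the compressing-disk argument cannot be joined by a short arc of $\beta$ that only bounces into $B$ locally. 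I would expect this extra control either to be built into the statement of the extrinsic systole theorem, or to follow from a refinement in which $\gamma$ is chosen to represent a homology class whose algebraic interaction with the Seifert form of $T_{p,p+1}\#K$ forces multiple essential strands of $\beta$ through $B$.
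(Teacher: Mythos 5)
Your proposed reduction runs in the wrong logical direction and its key step fails. The fatal gap is the claim that the two puncture points $x_1,x_2\in\beta\cap B$ produced by the compressing-disk argument are joined along $\beta$ by a subarc of length bounded below by an absolute constant. Non-contractibility of $\gamma$ in the Seifert surface gives no such control: a non-contractible loop of diameter $O(1/p)$ can perfectly well bound a disk punctured by two points of $\beta$ lying on a single short strand. Concretely, take $T_{p,p+1}\#T_{2,3}$ embedded so that the trefoil summand is realized at a tiny scale $\epsilon$; a minimal genus Seifert surface is a boundary connect sum, and the tiny summand contributes non-contractible loops of diameter $O(\epsilon)$ whose compressing disks meet $\beta$ only in points joined by subarcs of length $O(\epsilon)$ --- yet such local miniaturization does not increase distortion (this is the same phenomenon behind the paper's remark that $\dist(K\#\cdots\#K)$ stays bounded). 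So ``small extrinsic systole of every minimal Seifert surface'' does not, by itself, imply large distortion, and \cref{distortion of connect sum is large} cannot be deduced from \cref{Seifert surface is thin connect sum} in the way you outline. Your suggested repairs (that the needed ``global essentiality'' is built into the systole theorem, or follows from the Seifert form) are not substantiated and would have to carry essentially all the difficulty; moreover you also leave the systole theorem itself as a black box, so neither half of the proposal is actually proved.

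The paper's logic is the reverse of yours: both \cref{distortion of connect sum is large} and \cref{Seifert surface is thin connect sum} are deduced from the ball-intersection theorem (\cref{topological lemma for connect sum}), which says that any embedded sphere cutting a minimal Seifert surface into pieces of intermediate relative adjusted Euler characteristic must cross the knot at least $p/10^4$ times. The quantitative input is thus not a single short non-contractible loop but a definite amount of genus (measured by $\chi^{p,p+1}_\Sigma$) on both sides of the sphere. For distortion, this is converted into a lower bound on the conformal length: one assumes $\conlength(\beta)\le p/(5\cdot 10^5)$, takes an approximately smallest box $Q$ with $|\chi^{p,p+1}_\Sigma(Q\cap\Sigma)|\ge 9p(p-1)/10$, slices it by a well-chosen plane meeting $\beta$ few times, and applies the double bubble lemma (\cref{double bubble lemma}, itself a sweep-out application of \cref{topological lemma for connect sum}) to produce a smaller box still carrying large $\chi^{p,p+1}_\Sigma$, contradicting minimality; then $4\dist(\beta)\ge\conlength(\beta)$ finishes (\cref{conformal length of connect sum is large}). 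If you want to salvage your approach, you would need a refined systole-type statement guaranteeing a small ball whose sphere boundary separates a definite fraction of the $T_{p,p+1}$-genus on each side --- which is exactly the content of \cref{topological lemma for connect sum}, not of the extrinsic systole theorem.
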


This inequality is sharp, as a standard embedding of $T_{p,p+1}$ has distortion around $p$. Our proof of the theorem is closely related to the notion of extrinsic systole, which is an invariant associated to embedded surfaces in $\mathbb{R}^3$ that we defined for tori in \cite{Vas25}. Our definition extends to arbitrary surfaces:

\begin{definition} Let $\Sigma\subset \mathbb{R}^3$ be an embedded surface. The extrinsic systole of $\Sigma$, denoted $\sys_{\mathbb{R}^3}(\Sigma)$, is the diameter of the smallest ball in $\mathbb{R}^3$ that contains a non-contractible closed curve on $\Sigma$.
\end{definition}

In this paper, we also prove an upper bound for the extrinsic systole of any minimal genus Seifert surface associated to $T_{p,p+1}\#K$.

\begin{theorem}\label{Seifert surface is thin connect sum}  Let $\beta\subset \mathbb{R}^3$ be an embedding of $T_{p, p+1}\# K$. Let $\Sigma\subset \mathbb{R}^3$ be any embedded surface of minimal genus with $\partial \Sigma=\beta$. Then $$\textstyle\sys_{\mathbb{R}^3}(\Sigma)\lesssim \displaystyle\frac{\length(\beta)}{p},$$ with constant independent of $p$ and $K$. 
\end{theorem}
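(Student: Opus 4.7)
The plan is to argue by contradiction: assume that $\sys_{\mathbb{R}^3}(\Sigma) > C\length(\beta)/p$ for a constant $C$ to be chosen sufficiently large. Set $r := \sys_{\mathbb{R}^3}(\Sigma)/8$ and cover $\beta$ by a chain of overlapping closed balls $B_1,\ldots,B_N$ of radius $r$, so that $N \lesssim \length(\beta)/r \lesssim p/C$. By hypothesis every closed curve on $\Sigma$ of $\mathbb{R}^3$-diameter at most $2r$ is null-homotopic in $\Sigma$, and in particular every closed curve contained in some $B_i$ is null-homotopic in $\Sigma$. Since a positive-genus compact subsurface of a closed orientable surface carries two simple loops with geometric intersection number $1$ --- which cannot both be null-homologous in the ambient surface --- each connected component of $\Sigma \cap B_i$ must be a planar subsurface of $\Sigma$ (a sphere with finitely many open disks removed).

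After a small ambient isotopy I may assume $\Sigma$ meets each $\partial B_i$ transversely, so that each curve of $\Sigma \cap \partial B_i$ lies on the boundary of a planar component of $\Sigma \cap B_i$ and consequently bounds a disk in $\Sigma$. Using the minimality of $\genus(\Sigma)$, I apply innermost-disk surgeries and incompressibility of $\Sigma$ in $\mathbb{R}^3\setminus\beta$ to put $\Sigma$ in a normal form with respect to the balls, reducing each component of $\Sigma \cap B_i$ to a combinatorially simple model whose homeomorphism type is controlled by the pattern in which $\beta$ enters and exits $B_i$.

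The main obstacle --- and the step at which the torus-knot monodromy combinatorics alluded to in the introduction should enter --- is to show that in this normal form the genus of $\Sigma$ is bounded by $O(N) = O(p/C)$; together with $\genus(\Sigma) \ge \genus(T_{p,p+1}) = p(p-1)/2$ this gives a contradiction for $C$ large and $p$ sufficiently large (the small-$p$ regime is elementary, since a diameter bound of order $\length(\beta)$ is trivial). I expect the proof of this genus bound to begin by producing an essential $2$-sphere intersecting $\Sigma$ in a single arc and realizing the connect-sum decomposition $T_{p,p+1}\#K$ on $\Sigma$ --- such a sphere exists for any minimal-genus Seifert surface of a composite knot --- which isolates the $T_{p,p+1}$ summand from the $K$ summand so that the bound becomes independent of $K$. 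One then reduces to an extrinsic systole bound for $T_{p,p+1}$ alone, which should in turn follow from a careful accounting of how the Dehn twists in the monodromy of $T_{p,p+1}$ interact with the balls $B_i$ that cover the torus-knot portion of $\beta$.
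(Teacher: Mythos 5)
There is a genuine gap, and it sits exactly where you placed your ``main obstacle.'' Your plan is: cover $\beta$ by $N\lesssim p/C$ balls of radius $\sys_{\mathbb{R}^3}(\Sigma)/8$, show each component of $\Sigma\cap B_i$ is topologically trivial, and then bound $\genus(\Sigma)$ by $O(N)$ via a normal form, contradicting $\genus(\Sigma)\geq p(p-1)/2$. But the balls cover only $\beta$, while nothing forces the topology of $\Sigma$ to lie near $\beta$: a minimal genus Seifert surface has genus exactly $p(p-1)/2+\genus(K)$ no matter what, and all of that genus can sit in a region of $\mathbb{R}^3$ far from the knot, reached by thin tentacles, where your covering gives no information (and you cannot cover $\Sigma$ itself by $O(p)$ balls, since there is no area bound on $\Sigma$). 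So the asserted bound $\genus(\Sigma)=O(N)$ has no mechanism behind it; it is essentially a restatement of the theorem. The deferred step is not a technical normal-form computation but the entire content of the paper: one needs a quantitative statement saying that any sphere separating an ``intermediate'' amount of the $\Sigma_{p,p+1}$-topology of $\Sigma$ must cross $\beta$ at least $\sim p$ times (\cref{topological lemma for connect sum}), and proving that is what requires the dictionary between $3$-submanifolds of the fibered complement and paths of subsurfaces, together with the monodromy combinatorics. Your closing suggestion --- use the decomposing sphere for the connect sum to ``reduce to an extrinsic systole bound for $T_{p,p+1}$ alone'' --- also does not work as stated: the embedding of $\Sigma_{p,p+1}\subset\Sigma$ in $\mathbb{R}^3$ is not the boundary of an embedded $T_{p,p+1}$ with controlled length, so even granting \cref{Seifert surface is thin} for the torus knot you cannot quote it; in the paper the decomposing sphere enters only inside the proof of \cref{topological lemma for connect sum}, via careful minimal-position and innermost-curve arguments, not as a reduction of the systole statement itself.

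For comparison, the paper's actual route is global rather than knot-local: after scaling $\length(\beta)=1$ and assuming the systole exceeds $\sqrt{3}\cdot 2\cdot 10^4/p$, it takes the $2$-skeleton $X$ of a cubical lattice of side $2\cdot10^4/p$ filling all of space, translates it by an averaging argument so that $|X\cap\beta|\lesssim p$, and measures topology with the relative adjusted Euler characteristic $\chi^{p,p+1}_\Sigma$ rather than genus of pieces. The systole assumption makes every lattice cube contribute essentially nothing (\cref{complement of lattice does not intersect surface much}), while a single large cube containing $\beta$ carries $\approx p(p-1)$; merging cubes one face at a time is an iterated double bubble, and \cref{double bubble lemma} (which is where \cref{topological lemma for connect sum} is invoked, since the merging spheres meet $\beta$ fewer than $p/10^4$ times) shows the intermediate range can never be crossed --- a contradiction. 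Your ball-covering and planarity observations are fine as far as they go, but they do not substitute for this machinery.
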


In particular, we have:

\begin{cor} \label{Seifert surface is thin} Let $\beta\subset \mathbb{R}^3$ be an embedding of $T_{p,p+1}$. Let $\Sigma_{p,p+1}\subset \mathbb{R}^3$ be a standard Seifert surface for $T_{p,p+1}$, with $\partial \Sigma_{p,p+1}=\beta$. Then $$\textstyle\sys_{\mathbb{R}^3}(\Sigma_{p,p+1})\lesssim \displaystyle\frac{\length(\beta)}{p},$$ with constant independent of $p$. 
\end{cor}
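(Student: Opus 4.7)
The corollary is an essentially immediate specialization of \cref{Seifert surface is thin connect sum}. The plan is to set $K$ to be the unknot and check that the standard Seifert surface $\Sigma_{p,p+1}$ falls into the scope of the theorem.

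First, I would observe that the connect sum with the unknot is trivial: $T_{p,p+1}\# \text{unknot} = T_{p,p+1}$ as knot types. Therefore any embedding $\beta\subset \mathbb{R}^3$ of $T_{p,p+1}$ is also an embedding of $T_{p,p+1}\# K$ for the choice $K = \text{unknot}$.

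Next I would verify that the standard Seifert surface $\Sigma_{p,p+1}$ has minimal genus among all Seifert surfaces bounded by $\beta$. This is a classical fact: the standard Seifert surface for $T_{p,q}$ (obtained either from Seifert's algorithm applied to the standard diagram, or as the fiber of the fibration of $S^3\setminus T_{p,q}$) has genus $(p-1)(q-1)/2$, and this is known to equal the Seifert genus of $T_{p,q}$, for example because torus knots are fibered so the fiber realizes the minimal genus, or via the Alexander polynomial together with the Seifert inequality. Specializing to $q=p+1$ gives genus $p(p-1)/2$, which is minimal.

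Once these two points are in place, I would simply invoke \cref{Seifert surface is thin connect sum} directly: since $\Sigma_{p,p+1}$ is an embedded minimal genus Seifert surface with $\partial \Sigma_{p,p+1}=\beta$, the theorem yields
\[
\sys_{\mathbb{R}^3}(\Sigma_{p,p+1}) \lesssim \frac{\length(\beta)}{p},
\]
with constant independent of $p$. There is no real obstacle here beyond recording the minimal genus fact; all of the substantive work lives in the proof of \cref{Seifert surface is thin connect sum}.
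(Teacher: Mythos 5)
Your proposal is correct and matches the paper's route exactly: the paper derives \cref{Seifert surface is thin} from \cref{Seifert surface is thin connect sum} as an immediate specialization (``In particular''), taking $K$ to be the unknot and using that the standard Seifert surface, being the fiber of the fibration, realizes the minimal genus $p(p-1)/2$. Nothing further is needed.
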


The connection between distortion of knots and extrinsic systole of surfaces stems from our reinterpretation of Pardon's argument in \cite{Par11}, that we describe in \cite[Theorem A.1]{Vas25}. The latter theorem is an extrinsic systolic inequality for embedded tori in $\mathbb{R}^3$, that is proved by modifying Pardon's argument. In this paper, the proofs of \cref{distortion of connect sum is large} and \cref{Seifert surface is thin connect sum} are similar to each other. Both theorems are consequences of an underlying purely topological result that describes how embedded balls in $\mathbb{R}^3$ intersect a minimal genus Seifert surface for $T_{p,p+1}\# K$. Roughly speaking, this result states that if both the interior and exterior of the ball intersect the Seifert surface in a suitably large way, then the boundary of the ball intersects the knot many times. 

In general, questions about distortion of knots or extrinsic systole of surfaces are related to topological questions about how embedded balls (or more complicated embedded structures) in $\mathbb{R}^3$ intersect the knot or surface. In order to understand these intersections, for fibered knots, we develop a dictionary between $3$-submanifolds of the knot complement, and certain types of sequences of subsurfaces of the fiber. The dictionary transforms certain questions about $3$-submanifolds of the knot complement into questions about the combinatorics of subsurfaces of a surface. The dictionary can then be used to prove statements about knots that are not necessarily fibered, like $T_{p,p+1}\# K$. In particular, it relates questions about the distortion of $T_{p,p+1}\# K$ to questions about how the monodromy map associated to $T_{p,p+1}$ acts on subsurfaces of the standard Seifert surface of $T_{p,p+1}$. 

\subsection{Bounds for knot distortion: previous work and obstacles} Besides the results in \cite{Par11} and \cite{GG12}, little is known about lower bounds for knot distortion. Gromov proved that the distortion of a closed curve is at least $\pi/2$, with equality if and only if the curve is a circle. In \cite{DS09}, Denne and Sullivan proved that $\dist(K)\geq 5\pi/3$ for all nontrivial tame knots $K$. 

The sharpness of Pardon's inequality $\dist(T_{p,q})\gtrsim \min\{p,q\}$ is also not known in general. Although the inequality is sharp when $p$ and $q$ are both large, it is for example open whether $\dist(T_{2,q})\to \infty$ as $q\to \infty$. In \cite{Stu16}, Studer proved that the distortion of $T_{2,q}$ grows sublinearly: $\dist(T_{2,q})\lesssim q/\log q$.

The central difficulty to prove lower bounds is: it is not true that as knots become more topologically complicated, the distortion grows larger. For example, there are wild knots with finite distortion. Relatedly, for tame knots $K$, $\dist(K\#...\#K)$ is finite even when there are infinitely many iterated connect sums. Moreover, the standard algebraic and combinatorial knot invariants are usually unbounded on the set of knots with low distortion. So they do not give lower bounds for distortion.

\subsection{Ideas in the proof and comments} We now give an overview of the proof of the main theorems, restricting to the case of just $T_{p,p+1}$. This gives a proof of the inequality $\dist(T_{p,p+1})\gtrsim p$, as well as a proof of \cref{Seifert surface is thin}. Of course, the former inequality has a much simpler proof due to \cite{Par11}, but it is still useful to run our proof on this example, since it explains some of the ideas which generalize to the case of $T_{p,p+1}\# K$.

The starting point is the following statement, that an embedded ball in $\mathbb{R}^3$ whose interior and exterior intersect the standard Seifert surface of $T_{p,p+1}$ in large genus pieces must intersect the knot many times.

\begin{theorem}\label{topological lemma} Let $\beta\subset \mathbb{R}^3$ be an embedding of $T_{p,p+1}$. Let $\Sigma_{p,p+1}\subset \mathbb{R}^3$ be a standard Seifert surface, with $\partial \Sigma_{p,p+1}=\beta$. Let $S$ be an embedded $S^2$ in $\mathbb{R}^3$ intersecting $\Sigma_{p,p+1}$ transversely. Denote by $\interior(S)$ the closed ball that $S$ bounds, and $\exterior(S)$ its complement in $\mathbb{R}^3$. Suppose $$\frac{p(p-1)}{20}\leq \genus(\interior(S)\cap \Sigma_{p,p+1}), \genus(\exterior(S)\cap \Sigma_{p,p+1})\leq \frac{9p(p-1)}{20}.$$ Then $$|S\cap \beta|\gtrsim p,$$ with constant independent of $p$ and $S$.
\end{theorem}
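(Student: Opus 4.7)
My plan is to exploit the fact that $T_{p,p+1}$ is a fibered knot: the exterior $M := S^3 \setminus \nu(\beta)$ is the mapping torus of the (periodic) monodromy $\phi: \Sigma \to \Sigma$ acting on the Seifert surface $\Sigma := \Sigma_{p,p+1}$, with fibration $\pi: M \to S^1$ and $\pi^{-1}(0) = \Sigma$. Since $\Sigma$ is a minimal-genus Seifert surface for a fibered knot, it is both incompressible and $\partial$-incompressible in $M$.

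I first normalize $S$ so that it is transverse to both $\beta$ and $\Sigma$. Using these incompressibility properties together with the irreducibility of $M$, standard innermost-disk and outermost-arc surgeries on $S$ remove all trivial circles and arcs of $S \cap \Sigma$, while preserving $k := |S \cap \beta|$ and the genera of $\Sigma \cap \interior(S)$ and $\Sigma \cap \exterior(S)$. After this reduction, $F := S \cap M$ is a planar surface with $\chi(F) = 2 - k$ and $k$ meridional boundary circles.

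The key step is to view the restriction $\pi|_F : F \to S^1$ as a circle-valued Morse function, giving a ``sweepout'' of the subsurface $U_0 := \Sigma \cap \interior(S)$ around the monodromy orbit. After isotopy cancelling balancing pairs of critical points (possible because $F$ has genus $0$), I arrange $\pi|_F$ to have exactly $k-2$ saddles and no local extrema. The regular preimages $L_t := F \cap \Sigma_t$, transported to $\Sigma$ by the canonical fiber identification, give a family of 1-submanifolds of $\Sigma$; the subsurface $U_t \subset \Sigma$ that they cobound with $\interior(S)$ evolves by a single band surgery each time $t$ crosses a saddle. After traversing $S^1$ once, the mapping-torus gluing identifies $U_1$ with $\phi^{-1}(U_0)$. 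Thus the $k-2$ saddles exhibit a sequence of band surgeries taking $U_0$ to $\phi^{-1}(U_0)$.

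By hypothesis, both $U_0$ and $\Sigma \setminus U_0$ have genus in $[p(p-1)/20,\, 9p(p-1)/20]$, and the same then holds for $\phi^{-1}(U_0)$. The main obstacle, which I expect to be the technical heart of the proof, is to show that for the standard monodromy $\phi$ of $T_{p,p+1}$ and any such balanced subsurface $U_0$, transforming $U_0$ into $\phi^{-1}(U_0)$ requires at least $\gtrsim p$ band surgeries. Intuitively, $\phi$ factors as a product of Dehn twists along the $2p-1$ standard generating curves on $\Sigma$, and it cyclically permutes these curves with displacement comparable to $p$, so a balanced subsurface cannot equal its $\phi^{-1}$-image after $o(p)$ elementary moves. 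Making this quantitative is precisely the dictionary between 3-submanifolds of the knot complement and monodromy orbits of subsurfaces alluded to in the introduction; once this lower bound is established, $k-2 \gtrsim p$ and the theorem follows.
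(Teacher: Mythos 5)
Your high-level framing is correct and matches the paper's: exploit the fibration of the $T_{p,p+1}$ complement, sweep $F = S \cap M$ around the circle, and extract a sequence of band surgeries (elementary moves) taking $U_0 = \Sigma \cap \interior(S)$ to a $\phi$-twist of itself, so that the number of saddles bounds the length of a ``twisted path of subsurfaces.'' This is essentially the dictionary the paper develops in Sections 4--6 (the paper uses $\partial$-compressions in $\Sigma \times [0,1]$ rather than a Morse sweepout, which is cosmetic). The gap is in what you call ``the technical heart'': you assert, without proof, that any balanced subsurface $U_0$ has elementary-move distance $\gtrsim p$ from $\phi^{-1}(U_0)$. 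This is not what the paper proves, and the paper's tools do not give it. The only distance lower bound in the paper (Lemma~\ref{distance between surface and rotated surface}) is of the form $d(\Omega', g(\Omega)) \gtrsim |\chi^{D,P}(\Omega)|$ for a subsurface $\Omega'$ \emph{disjoint} from $\Omega$; the disjointness is essential to the ``minimal connected pair'' argument and is simply not available for $\Omega$ versus $g(\Omega)$ themselves. The paper instead manufactures the needed disjoint pair by first establishing a \emph{cyclic splitting} property of the path, and cyclic splitting is itself forced by a topological Mayer--Vietoris/linking argument (the four cases in the proof of Theorem~\ref{general topological lemma}) that uses the sphericity of $S$ in an essential, second way: to conclude $H_1(\widetilde{\mathcal L})=0$ and that an embedded curve on the sphere-with-holes $\mathcal S^\vee$ is a sum of $\lesssim |\chi(S)|$ meridians.

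Two further concerns. First, your intuition for the combinatorial lower bound---that $\phi$ ``factors as a product of Dehn twists along $2p-1$ standard generating curves'' and permutes them with displacement $\sim p$---does not reflect the actual structure: the monodromy of $T_{p,p+1}$ is periodic of order $p(p+1)$, and the relevant combinatorics (as developed in Sections~\ref{topological description of monodromy} and~\ref{dictionary for torus knots}) lives on the bipartite cell structure $(U_i, V_j, \alpha_{ij})$, decomposing the twisted path into a $\phi$-twisted $U$-path and a $\psi$-twisted $V$-path. Second, you use sphericity of $S$ only to normalize $\pi|_F$ to $k-2$ saddles; as noted above, the paper uses it again, crucially, in the homological case analysis. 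Without that second use, it is not clear how to rule out a short twisted path. In sum, the reduction is right, but the central combinatorial estimate is unsupplied and a priori stronger than the statement the paper proves; as written the argument does not go through.
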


This theorem, along with an adaptation of the double bubble argument in \cite{Par11}, implies the distortion inequality for $T_{p,p+1}$ as well as \cref{Seifert surface is thin}. 

The core of this paper is a proof of a generalized version of \cref{topological lemma}. We now explain the idea of the proof, in the specific case of \cref{topological lemma}. An embedded ball in $\mathbb{R}^3$ gives a certain type of embedded $3$-submanifold in the knot complement $S^3-N(T_{p,p+1})$, constructed by removing a neighborhood of the knot from the ball. Since $T_{p,p+1}$ is fibered, the $3$-submanifold of the knot complement also gives a $3$-submanifold in the product $\Sigma_{p,p+1}\times [0,1]$, constructed by cutting the knot complement along a fiber Seifert surface. To prove \cref{topological lemma}, we understand the structure of $3$-submanifolds of the product $\Sigma_{p,p+1}\times [0,1]$, understand which ones come from $3$-submanifolds of the knot complement, and also understand which ones of those come from balls in $\mathbb{R}^3$.

Understanding the structure of a $3$-submanifold of $\Sigma_{p,p+1}\times [0,1]$ amounts to understanding its interior boundary, i.e. the part of its boundary lying in the interior of the product. The interior boundary is a properly embedded surface of the product. By doing a sequence of operations like compressions and $\partial$-compressions, the interior boundary may be put into a standard form. 

Let $L$ be a $3$-submanifold of $\Sigma_{p,p+1}\times [0,1]$, with interior boundary $S$. Then $L\cap (\Sigma_{p,p+1}\times \{0\})$ is a subsurface of $\Sigma_{p,p+1}$. A $\partial$-compression applied to $S$, along a disk which intersects $\partial (\Sigma_{p,p+1}\times [0,1])$ only at $\Sigma_{p,p+1}\cap \{0\}$, gives a new properly embedded surface which is the interior boundary of a new $3$-submanifold. This $3$-submanifold also intersects $\Sigma_{p,p+1}\cap \{0\}$ in a subsurface. The old subsurface (which is the intersection of $\Sigma_{p,p+1}\cap \{0\}$ with $L$) and the new subsurface are related by a certain type of combinatorial operation on subsurfaces, which we call an elementary move.

\begin{figure}[h]
\begingroup%
  \makeatletter%
  \providecommand\color[2][]{%
    \errmessage{(Inkscape) Color is used for the text in Inkscape, but the package 'color.sty' is not loaded}%
    \renewcommand\color[2][]{}%
  }%
  \providecommand\transparent[1]{%
    \errmessage{(Inkscape) Transparency is used (non-zero) for the text in Inkscape, but the package 'transparent.sty' is not loaded}%
    \renewcommand\transparent[1]{}%
  }%
  \providecommand\rotatebox[2]{#2}%
  \newcommand*\fsize{\dimexpr\f@size pt\relax}%
  \newcommand*\lineheight[1]{\fontsize{\fsize}{#1\fsize}\selectfont}%
  \ifx\svgwidth\undefined%
    \setlength{\unitlength}{109.31963292bp}%
    \ifx\svgscale\undefined%
      \relax%
    \else%
      \setlength{\unitlength}{\unitlength * \real{\svgscale}}%
    \fi%
  \else%
    \setlength{\unitlength}{\svgwidth}%
  \fi%
  \global\let\svgwidth\undefined%
  \global\let\svgscale\undefined%
  \makeatother%
  \begin{picture}(1,1.12969348)%
    \lineheight{1}%
    \setlength\tabcolsep{0pt}%
    \put(0,0){\includegraphics[width=\unitlength,page=1]{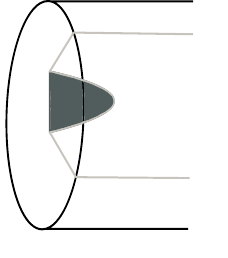}}%
    \put(-0.0021954,0.0207099){\makebox(0,0)[lt]{\lineheight{1.25}\smash{\begin{tabular}[t]{l}$\Sigma_{p,p+1}\times \{0\}$\end{tabular}}}}%
    \put(0.87138346,0.94951715){\makebox(0,0)[lt]{\lineheight{1.25}\smash{\begin{tabular}[t]{l}$S$\end{tabular}}}}%
  \end{picture}%
\endgroup%

\caption{A $\partial$-compressing disk for $S$.}
\end{figure}

\begin{figure}[h]
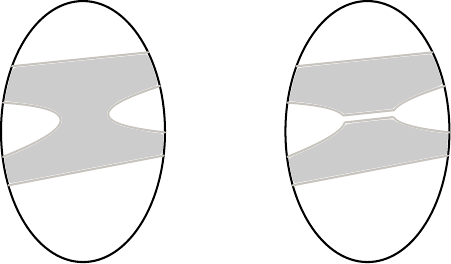
\caption{An elementary move applied to a subsurface.}
\end{figure}

Using this observation, we develop a dictionary between $3$-submanifolds of $\Sigma_{p,p+1}\times [0,1]$ and sequences of subsurfaces of $\Sigma_{p,p+1}$. (The dictionary applies generally in the case of any fibered knot.) Specifically, a $3$-submanifold of $\Sigma_{p,p+1}\times [0,1]$ gives a sequence of subsurfaces of $\Sigma_{p,p+1}$, in which each subsurface is obtained by applying an elementary move to the previous one. We call such a sequence a path of subsurfaces. A path of subsurfaces also produces a $3$-submanifold of the product. This $3$-submanifold comes from a $3$-submanifold of the knot complement if it can be glued back together. In the language of subsurface paths, this means that the first and last terms of the path must be subsurfaces related by the monodromy map.

Our next step to is to understand when a path of subsurfaces corresponds to an embedded ball in $\mathbb{R}^3$. (Crucially, \cref{topological lemma} is not true when the embedded ball is replaced with an embedded handlebody of nonzero genus.) There is a combinatorial invariant in the language of subsurface paths that determines the homology of the corresponding $3$-submanifold of $\mathbb{R}^3$, distinguishing the case of a ball from the case of a handlebody with nonzero genus. However, in order to simplify the combinatorics, we prove \cref{topological lemma} using a combination of topological and combinatorial arguments.

We expect an analogue of \cref{topological lemma} to be true for $T_{p,q}$, for all $p$ and $q$. This should imply an analogue of \cref{Seifert surface is thin} for all torus knots as well. For example, we expect that the extrinsic systole of any standard Seifert surface in this case would be bounded above by $\min\{p,q\}^{-1}$ times the length of the knot, up to a constant. However, in order to prove a similar theorem for $T_{p,q}\# K$, we need a generalization of \cref{topological lemma}. This generalization (stated for $p$ and $p+1$ in \cref{Topological lemma section}) depends on number theoretic properties of $p$ and $q$, and gives worse bounds for $p$ and $q$ for which $p^{-1}(\modulo q)$ and $q^{-1} (\modulo p)$ are large. So our method would give analogues of \cref{distortion of connect sum is large} and \cref{Seifert surface is thin connect sum} for general $p$ and $q$, but they are not necessarily sharp, even when $p$ and $q$ are both large. For example, instead of an inequality $\dist(T_{p,q}\#K)\gtrsim \min\{p,q\}$, we would get an extra multiplicative factor that depends on $p^{-1}(\modulo q)$ and $q^{-1} (\modulo p)$. For readability, we restrict our statements and proofs to the case of $p$ and $p+1$.

\subsection{Structure of the paper} In \cref{Preliminaries}, we record some basic results about torus knots. In \cref{Subsurfaces and combinatorics}, we develop a theory of multicurves and subsurfaces on a surface with marked points on its boundary. We also introduce certain combinatorial operations and invariants associated to subsurfaces. In \cref{paths of subsurfaces}, we develop a theory of paths of subsurfaces. In \cref{Surfaces in a product}, we introduce some definitions of $\partial$-incompressibility that apply to surfaces in $3$-manifolds suited to our context. We classify surfaces in a product $3$-manifold that are incompressible and $\partial$-incompressible according to our definitions. In \cref{Dictionary}, we build on \cref{Subsurfaces and combinatorics}, \cref{paths of subsurfaces} and  \cref{Surfaces in a product}, developing a dictionary between $3$-submanifolds of a fibered knot complement and paths of subsurfaces of the fiber. In \cref{Topological lemma section}, we understand how embedded balls intersect the knot $T_{p,p+1}\# K$. We use the invariants introduced in \cref{Subsurfaces and combinatorics} to formulate a generalization of \cref{topological lemma} that applies to $T_{p,p+1}\# K$, and we use the dictionary to prove the generalization. Finally, we prove the main theorems in \cref{Proofs of the main theorems}. 

\subsection*{Acknowledgments} I thank Larry Guth, Helmut Hofer and Shmuel Weinberger for many conversations related to this paper. I thank Peter Ozsvath and Akshay Venkatesh for answering several of my questions. I have been supported by NSF DMS-2202831 and the Friends of the Institute for Advanced Study.
 
\section{Torus knots} \label{Preliminaries}

In this section, we note some basic results about torus knots. Let $T\subset \mathbb{R}^3$ be an unknotted embedded torus. Note that $\mathbb{R}^3-T$ has two connected components. Let $u\in H_1(T)$ be the primitive homology class that is trivial in the unbounded connected component of $\mathbb{R}^3-T$. Let $v\in H_1(T)$ be the primitive homology class that is trivial in the bounded connected component of $\mathbb{R}^3-T$. For relatively prime integers $p$ and $q$, the torus knot $T_{p,q}$ is isotopic to a simple closed curve lying on $T$, representing the homology class $pu+qv$. We now give an analytic description of $T_{p,q}$, which will also allow us to explicitly describe the fiber bundle associated to $T_{p,q}$.

\subsection{Torus knots via Milnor fibrations} \label{torus knot via milnor fibration}

In this section, we give a description of $T_{p,q}$ as the singularity set of a polynomial. This point of view is due to Milnor, in \cite{Mil68}. 

Consider the hypersurface $$G_{p,q}=\{(x,y)\in \mathbb{C}^2|x^p-y^{q}=0\}$$ in $\mathbb{C}^2$. Let $S^3$ be the unit sphere around the origin in $\mathbb{C}^2$. The torus knot $T_{p,q}$ is the intersection $$G_{p,q}\cap S^3\subset S^3.$$

Let $a$ and $b$ be positive real numbers such that $a^2+b^2=1$ and $a^p=b^{q}$. Then $T_{p,q}$ lies on the torus $$T=\{(x,y)\in \mathbb{C}^2||x|=a,|y|=b\}.$$ There is a fiber bundle \begin{gather*}S^3-T_{p,q}\to S^1\\
(x,y)\to \frac{x^p-y^{q}}{|x^p-y^{q}|}.
\end{gather*}
A fiber $$\Sigma_{p,q}=\{(x,y)\in S^3|x^p-y^{q}\in \mathbb{R}^+\}$$ is a standard Seifert surface for $T_{p,q}$. The monodromy map associated to the fiber bundle is 
\begin{gather*}\Sigma_{p,q}\to \Sigma_{p,q}\\ (x,y)\to (e^{2\pi i/p}x,e^{2\pi i/q}y).
\end{gather*}

\subsection{Topological description of monodromy map}\label{topological description of monodromy} 

In this section, we use the analytic description of the monodromy map from \cref{torus knot via milnor fibration} to give an explicit topological description of the map in terms of a CW structure on the Seifert surface.

\begin{lemma}\label{monodromy of torus knot} The Seifert surface surface $\Sigma_{p,q}$ admits the following CW structure so that the monodromy is a cellular map. The $2$-cells are two sets $$U_1,...,U_p$$ and $$V_1,...,V_{q}$$ of $p$ and $q$ disks each. Each $U_i$ is glued to each $V_j$ along a $1$-cell $\alpha_{i,j}$. The monodromy map $$f:\Sigma_{p,q}\to \Sigma_{p,q}$$ is a cellular map that satisfies \begin{equation*}
\begin{cases} f(U_i)=U_{i+1 (\modulo p)}\\
f(V_j)=V_{j+1(\modulo q)}\\
f(\alpha_{i,j})=\alpha_{i+1(\modulo p),j+1 (\modulo q)}.
\end{cases}
\end{equation*}
\end{lemma}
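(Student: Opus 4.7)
The plan is to build the CW structure directly from the analytic description of the Milnor fibration in \cref{torus knot via milnor fibration}, and then to read off the monodromy action from how $f$ permutes the defining data. The main tool is the continuous auxiliary function $h\colon\Sigma_{p,q}\to[0,1]$ defined by
\[h(x,y)=\frac{|x|^{2p}}{|x|^{2p}+|y|^{2q}}.\]
This function equals $1$ at the $p$ points $e_k:=(e^{2\pi i k/p},0)$, equals $0$ at the $q$ points $f_l:=(0,e^{i(2l+1)\pi/q})$, and is preserved by the monodromy since $f$ preserves $|x|$ and $|y|$.

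I would define $U_i$ to be the connected component of $\{h>1/2\}$ containing $e_{i-1}$, and $V_j$ to be the component of $\{h<1/2\}$ containing $f_{j-1}$. The key topological claim is that each $U_i$ is an embedded disk. On $\Sigma_{p,q}$, the locus $\{h>1/2\}$ is $\{|y|<r_0\}$, where $r_0\in(0,1)$ is the unique positive root of $(1-r_0^2)^{p/2}=r_0^q$; for each $y=se^{i\phi}$ with $s\in[0,r_0)$, the constraints $x^p-y^q\in\mathbb{R}^+$ and $|x|^2=1-s^2$ admit exactly $p$ solutions for $x$, one in each $U_i$. Parametrizing $U_i$ by $(s,\phi)\in[0,r_0)\times S^1$ (with the circle $s=0$ collapsed to the apex $e_{i-1}$) realizes $U_i$ as an open disk whose closure is the required closed $2$-cell. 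A symmetric argument with the roles of $x$ and $y$ swapped shows that each $V_j$ is a disk.

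Next, I would define $\alpha_{i,j}$ to be the connected component of $\partial U_i\cap\partial V_j$ lying on the central torus $T=\{|x|^p=|y|^q\}\cap S^3$. Analyzing the locus $\{x^p-y^q\in\mathbb{R}^+\}\cap T$ and using $\gcd(p,q)=1$, one sees that $\partial U_i\cap T$ consists of exactly $q$ embedded arcs, each adjacent to a distinct $V_j$; hence $\alpha_{i,j}$ is a single $1$-cell and the collection $\{\alpha_{i,j}\}$ gives $pq$ gluing arcs. The complete CW structure is then obtained by including the $0$-cells and boundary edges on $T_{p,q}$ inherited from the attaching maps of the $U_i$ and $V_j$.

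Finally, the monodromy action is immediate. Since $f$ preserves $h$ and the partition of $\Sigma_{p,q}$ into components of $\{h>1/2\}$ and $\{h<1/2\}$, and since $f(e_{i-1})=e_i$ and $f(f_{j-1})=f_j$, one obtains $f(U_i)=U_{i+1\,(\modulo p)}$ and $f(V_j)=V_{j+1\,(\modulo q)}$; since $\alpha_{i,j}$ is characterized as the component of $\partial U_i\cap\partial V_j$ on $T$, this forces $f(\alpha_{i,j})=\alpha_{i+1\,(\modulo p),\,j+1\,(\modulo q)}$. I expect the main technical obstacle to be the arc-counting on the central torus: one has to verify that the $(q,-p)$-curve $\{p\arg x+q\arg y\equiv\pi\pmod{2\pi}\}$ on $T$ meets the torus knot $T_{p,q}$ transversely in exactly $2pq$ points, dividing the curve into $2pq$ arcs of which exactly $pq$ lie in $\Sigma_{p,q}$, and that these arcs match up one-to-one with pairs $(U_i,V_j)$.
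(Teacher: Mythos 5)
Your proposal is correct and follows essentially the same route as the paper: decompose $\Sigma_{p,q}$ along the torus $\{|x|^p=|y|^q\}$ (your level set $h=1/2$), identify the two sides as $p$ resp.\ $q$ disks by projecting to a coordinate disk (a trivial covering of a disk), and read the monodromy action off the explicit rotation $(x,y)\mapsto(e^{2\pi i/p}x,e^{2\pi i/q}y)$. The arc-counting you flag does go through, and the paper handles it equivalently by passing to $(x^p,y^q)$-coordinates, where $\Sigma_{p,q}\cap T$ becomes a single arc whose preimage under the $pq$-fold cover of tori consists of the $pq$ arcs $\alpha_{i,j}$ --- the same count as your $2pq$ transverse intersection points with the knot on $T$.
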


\begin{proof} We construct the above CW structure on $\Sigma_{p,q}$ using the analytic description of $\Sigma_{p,q}$. First, we construct the $1$-cells $\alpha_{i,j}$. To do this, consider the intersection $$\Sigma_{p,q}\cap T=\{(x,y)\in \mathbb{C}^2||x|^p=|y|^{q}=a^p=b^{q}, x^p-y^{q}\in \mathbb{R}^+\}.$$ Consider $x^p$ and $y^{q}$ as points on the $a^p=b^{q}$-radius circle around the origin in $\mathbb{C}$. For any $x^p$ on the circle with $\re x^p\geq 0$, there is exactly one $y^{q}$ on the circle such that $x^p-y^{q}\in \mathbb{R}^+$. (For $x^p$ on the circle with $\re x^p<0$, there is no $y^{q}$ on the circle with $x^p-y^{q}\in \mathbb{R}^+$.) So the locus $$\{(x^p,y^{q})\in \mathbb{C}^2||x|^p=|y|^{q}=a^p=b^{q}, x^p-y^{q}\in \mathbb{R}^+\}$$ is a connected arc in $\mathbb{C}^2$ (with endpoints where $x^p-y^{q}=0$). Thus, $\Sigma_{p,q}\cap T$ is the union of $pq$ arcs which we define to be $\alpha_{i,j}$ ($1\leq i\leq p$ and $1\leq j\leq q$). Each $\alpha_{i,j}$ has endpoints on $T_{p,q}$. The monodromy map sends $\alpha_{i,j}$ to $\alpha_{i+1(\modulo p),j+1 (\modulo q)}$ as desired.

To construct the $2$-cells of the CW structure, we consider the intersections of the interior and exterior of $T$ with $\Sigma_{p,q}$.

Let $$\interior(T)=\{(x,y)\in S^3||x|<a, |y|>b\}.$$ There is a continuous map 
\begin{gather*}\interior(T)\cap \Sigma_{p,q}\to \{x\in \mathbb{C}||x|<|a|\}\\ (x,y)\to x.
\end{gather*} For any $x^p\leq a^p$, there exists a unique $y^{q}$ such that $|x|^2+|y|^2=1$ and $x^p-y^{q}\in \mathbb{R}^+$. Note that $y^{q}$ is never $0$. So our continuous map is actually a $q$-sheeted cover of a disk. Therefore $\interior(T)\cap \Sigma_{p,q}$ is the disjoint union of $q$ disks that we label $V_1,...,V_{q}$. The monodromy map sends $V_j$ to $V_{j+1(\modulo q)}$ 

Similarly, $\exterior(T)\cap \Sigma_{p}$ is the disjoint union of $p$ disks that we label $U_1,...,U_{p}$, and the monodromy map sends $U_i$ to $U_{i+1(\modulo p)}$. By construction, each $U_i$ and $V_j$ share $\alpha_{i,j}$ as a boundary.
\end{proof} 

The lemma implies that the knot complement is homeomorphic to the mapping torus: $$S^3-T_{p,q}=\Sigma_{p,q}\times [0,1]/(x,0)\sim (f(x),1).$$

\begin{lemma}\label{torus in fiber bundle} Under this homeomorphism, $T-T_{p,q}$ is  identified with $$\cup_{i,j}\alpha_{i,j} \times [0,1].$$
\end{lemma}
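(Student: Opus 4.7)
The plan is to exhibit an explicit flow on $S^3 - T_{p,q}$ that both realizes the mapping torus identification from \cref{monodromy of torus knot} and preserves the torus $T$. Consider the flow
\[
\phi_t(x,y) = (e^{2\pi i t/p} x, e^{2\pi i t/q} y), \quad t \in [0,1].
\]
A direct computation gives $(e^{2\pi i t/p} x)^p - (e^{2\pi i t/q} y)^q = e^{2\pi i t}(x^p - y^q)$, so $\phi_t$ rotates the Milnor fibration by angle $2\pi t$ in the base circle. Consequently, $\phi_t$ maps $\Sigma_{p,q}$ to the fiber at angle $2\pi t$, and $\phi_1$ agrees with the monodromy map $f$. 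Thus the homeomorphism $\Sigma_{p,q} \times [0,1]/(x,0)\sim(f(x),1) \to S^3 - T_{p,q}$ is given by $(x,t) \mapsto \phi_t(x)$.

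The key observation is that $\phi_t$ preserves $T$, since $|\phi_t(x,y)| = (|x|,|y|) = (a,b)$ whenever $(x,y) \in T$. Therefore, under the identification above, a point $(x,t) \in \Sigma_{p,q} \times [0,1]$ lies in the preimage of $T - T_{p,q}$ if and only if $\phi_t(x) \in T$, which (since $\phi_t$ preserves $T$) is equivalent to $x \in T \cap \Sigma_{p,q}$. In the proof of \cref{monodromy of torus knot}, the intersection $T \cap \Sigma_{p,q}$ was identified with $\bigcup_{i,j} \alpha_{i,j}$, so the preimage of $T - T_{p,q}$ is exactly $\bigcup_{i,j} \alpha_{i,j} \times [0,1]$. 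To confirm this descends to the quotient, note that $f(\alpha_{i,j}) = \alpha_{i+1(\modulo p), j+1(\modulo q)}$ (already established), so the union $\bigcup_{i,j}\alpha_{i,j}$ is $f$-invariant and the gluing $(x,0)\sim(f(x),1)$ identifies endpoints of these arcs consistently. This gives the desired identification, with essentially no obstacle beyond unpacking the explicit Milnor fibration from \cref{torus knot via milnor fibration}.
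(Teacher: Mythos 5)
Your proposal is correct and takes essentially the same route as the paper: both use the explicit homeomorphism $(x,t)\mapsto (e^{2\pi i t/p}x, e^{2\pi i t/q}y)$ from \cref{torus knot via milnor fibration} and the fact that this flow preserves $T$. The only (harmless) difference is at the end, where you deduce equality directly from the flow being a $T$-preserving bijection, while the paper proves one inclusion and then invokes that both sides are connected closed surfaces.
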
 

\begin{proof} The map \begin{gather*}\Sigma_{p,q}\times [0,1]/(x,0)\sim (f(x),1)\to S^3-T_{p,q}\\
(x,y,t)\to (e^{2\pi t i/p}x,e^{2\pi t i/q}y)
\end{gather*} 
is a homeomorphism. Since $T=\{x=|a|,y=|b|\}$, for any $t\in [0,1]$, the map $$(x,y)\to (e^{2\pi t i/p}x,e^{2\pi t i/q}y)$$ preserves $T$. So $$T_{p,q}\cup_{i,j}\alpha_{i,j} \times [0,1]\subset T.$$ Since both are connected closed surfaces, they are identical.
\end{proof}

\section{Subsurfaces and combinatorics} \label{Subsurfaces and combinatorics}

\subsection{Multicurves} Let $\Lambda$ be a compact surface, possibly with boundary. Let $P\subset \partial \Lambda$ be a (possibly empty) set of marked points on the boundary of $\Lambda$. In this section, we define multicurves on a surface in a more general setting than a compact surface without boundary; we define multicurves on the pair $(\Lambda, P)$. When $P=\emptyset$, we denote the pair by just $\Lambda$. See \cite{FM12} for an exposition of curves on closed surfaces and surfaces with boundary.

\begin{definition}\label{multicurves} A multicurve on $(\Lambda,P)$ is a union of closed curves on $\Lambda$ and arcs with boundary on $\partial \Lambda -P$. A multicurve is simple if it has no self-intersections. 
\end{definition} 

We will assume that multicurves on $\Lambda$ intersect $\partial \Lambda$ transversely. A multicurve can be oriented, meaning that each component has an orientation. 

\begin{definition}\label{isotopy of multicurves} Two (oriented) simple multicurves on $(\Lambda, P)$ are isotopic if there is an isotopy of $\Lambda$, identity on $P$, taking one multicurve to the other.
\end{definition}

\begin{definition} \label{essential multicurves} A connected simple multicurve $\gamma$ on $(\Lambda,P)$ is inessential if it is isotopic to a curve in an arbitrarily small neighborhood of a point of $\Lambda$, or isotopic to a closed curve component of $\partial \Lambda-P$. See Figure 3 for a list of the possible types of inessential components. A connected simple multicurve is essential if it is not inessential. A simple multicurve is essential if all of its components are essential. 
\end{definition}

\begin{figure}[h]
\begingroup%
  \makeatletter%
  \providecommand\color[2][]{%
    \errmessage{(Inkscape) Color is used for the text in Inkscape, but the package 'color.sty' is not loaded}%
    \renewcommand\color[2][]{}%
  }%
  \providecommand\transparent[1]{%
    \errmessage{(Inkscape) Transparency is used (non-zero) for the text in Inkscape, but the package 'transparent.sty' is not loaded}%
    \renewcommand\transparent[1]{}%
  }%
  \providecommand\rotatebox[2]{#2}%
  \newcommand*\fsize{\dimexpr\f@size pt\relax}%
  \newcommand*\lineheight[1]{\fontsize{\fsize}{#1\fsize}\selectfont}%
  \ifx\svgwidth\undefined%
    \setlength{\unitlength}{193.6276332bp}%
    \ifx\svgscale\undefined%
      \relax%
    \else%
      \setlength{\unitlength}{\unitlength * \real{\svgscale}}%
    \fi%
  \else%
    \setlength{\unitlength}{\svgwidth}%
  \fi%
  \global\let\svgwidth\undefined%
  \global\let\svgscale\undefined%
  \makeatother%
  \begin{picture}(1,0.66332286)%
    \lineheight{1}%
    \setlength\tabcolsep{0pt}%
    \put(0,0){\includegraphics[width=\unitlength,page=1]{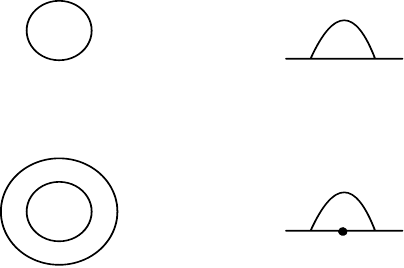}}%
    \put(0.81446904,0.45122004){\makebox(0,0)[lt]{\lineheight{1.25}\smash{\begin{tabular}[t]{l}$\partial \Lambda$\end{tabular}}}}%
    \put(0.11077884,0.14764874){\makebox(0,0)[lt]{\lineheight{1.25}\smash{\begin{tabular}[t]{l}$\partial \Lambda$\end{tabular}}}}%
    \put(0.81960442,0.01741487){\makebox(0,0)[lt]{\lineheight{1.25}\smash{\begin{tabular}[t]{l}$\partial \Lambda$\end{tabular}}}}%
  \end{picture}%
\endgroup%

\caption{Types of inessential components (clockwise from top-left): closed curve bounding a disk, arc bounding a disk with boundary component not containing any points of $P$, arc around a point of $P$, and closed curve isotopic to a boundary component of $\partial \Lambda$ not containing any points of $P$.}
\end{figure}

\begin{definition}\label{essential part of multicurve} Let $\gamma$ be an oriented multicurve on $(\Lambda,P)$. The essential part of $\gamma$, denoted $\gamma^{\ess}$, is the oriented multicurve formed by deleting the inessential components of $\gamma$.
\end{definition}

\begin{definition}\label{bigon} Let $\alpha$ and $\beta$ be essential simple multicurves on $(\Lambda,P)$. A bigon is an embedded disk on $\Lambda$ whose boundary is the union of an arc of $\alpha$ and an arc of $\beta$. A half-bigon is an embedded disk on $\Lambda$ whose boundary is the union of an arc of $\alpha$, an arc of $\beta$, and an arc of $\partial \Lambda-P$.
\end{definition}

\begin{definition}\label{minimal position} Let $\alpha$ and $\beta$ be essential simple multicurves on $(\Lambda,P)$. Then $\alpha$ and $\beta$ are in minimal position if $\alpha$ and $\beta$ do not bound any bigons or half-bigons.
\end{definition}

\begin{prop}[Bigon criterion]\label{bigon criterion} Let $\alpha$ and $\beta$ be essential simple multicurves on $(\Lambda,P)$ in minimal position. Then $\alpha$ and $\beta$ minimize their geometric intersection number over all pairs of multicurves in their respective isotopy classes. 
\end{prop}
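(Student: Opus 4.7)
I would argue by contradiction. Suppose $\alpha$ and $\beta$ are in minimal position but their geometric intersection number $i(\alpha,\beta)$ is strictly smaller than $|\alpha\cap\beta|$. By the isotopy extension theorem applied to isotopies identity on $P$, I may fix $\beta$ and choose an isotopic representative $\alpha'$ of $\alpha$ with $|\alpha'\cap\beta|=i(\alpha,\beta)<|\alpha\cap\beta|$, placing $\alpha\cup\alpha'$ in general position with itself and with $\beta$. My goal is to produce a bigon or half-bigon between $\alpha$ and $\beta$, contradicting the minimal position hypothesis.

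The first step is to find an innermost disk $D\subset\Lambda$ cobounded by an arc $\alpha_D\subset\alpha$, an arc $\alpha'_D\subset\alpha'$, and possibly an arc of $\partial\Lambda-P$ (this last option corresponding to a half-bigon between $\alpha$ and $\alpha'$), such that $(\alpha\cup\alpha')\cap\interior(D)=\emptyset$. I would obtain $D$ by passing to the universal cover of $\Lambda$ (handling the low-complexity cases where $\Lambda$ is a disk, annulus, or M\"obius band separately by direct inspection): lifts of corresponding components of $\alpha$ and $\alpha'$ share endpoint data on the boundary-at-infinity of the cover, so they cobound strips; projecting an innermost such strip back to $\Lambda$ yields $D$.

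The second step is to exploit $D$ to reach a contradiction. Since $\beta$ is essential, no component of $\beta\cap D$ is a closed curve (any such curve would bound a disk in $\Lambda$), so $\beta\cap D$ is a disjoint union of arcs with endpoints on $\partial D$. I classify these arcs by which portions of $\partial D$ (namely $\alpha_D$, $\alpha'_D$, or the boundary arc of $\partial\Lambda-P$) contain their endpoints, and I choose an innermost such arc in $D$; this cuts off a subdisk $D'\subset D$. If both endpoints of this innermost arc lie on $\alpha_D$, then $D'$ is a bigon or half-bigon between $\alpha$ and $\beta$ and we are done. Otherwise, at least one endpoint lies on $\alpha'_D$, and one can isotope $\alpha'$ across $D'$, strictly decreasing $|\alpha\cap\alpha'|$ without increasing $|\alpha'\cap\beta|$; induction on $|\alpha\cap\alpha'|$ then finishes the argument.

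The main obstacle is the careful bookkeeping required by the marked points $P$, especially in the half-bigon cases. I must verify that the boundary arc of $D'$, if it uses part of $\partial\Lambda$, still avoids $P$, so that $D'$ genuinely witnesses a half-bigon in the sense of the definition rather than an inessential arc around a marked point. This check passes because the boundary arc of $D'$ is contained in the boundary arc of $D$, which by the construction of $D$ avoids $P$. A parallel check ensures that the simplifying isotopies of $\alpha'$ are identity on $P$, which is automatic since they are supported in the disks $D'$ that are disjoint from $P$. Together these give the desired contradiction.
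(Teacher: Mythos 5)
Your overall strategy (an exchange/innermost-disk argument with a minimizing representative $\alpha'$) is a genuinely different route from the paper, which gives no in-house proof and instead points to the universal-cover/linking-at-infinity argument of Farb--Margalit. But as written your second step has a real gap: the innermost arc of $\beta\cap D$ may have one endpoint on $\alpha_D$ and the other on $\alpha'_D$. In that case $D'$ is a triangle with one corner in $\alpha\cap\alpha'$, one in $\alpha\cap\beta$, and one in $\alpha'\cap\beta$, and isotoping $\alpha'$ across $D'$ is a Reidemeister-III--type move: it changes none of $|\alpha\cap\alpha'|$, $|\alpha'\cap\beta|$, $|\alpha\cap\beta|$. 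So the claim that the move ``strictly decreases $|\alpha\cap\alpha'|$ without increasing $|\alpha'\cap\beta|$'' fails, and your induction on $|\alpha\cap\alpha'|$ need not terminate; moreover, every arc of $\beta\cap D$ can be of this triangle type, so you cannot escape by choosing a different innermost arc. (Also, when both endpoints lie on $\alpha'_D$ the move decreases $|\alpha'\cap\beta|$, which contradicts minimality of $\alpha'$ outright rather than advancing your induction --- harmless, but it shows the stated complexity is not the one actually decreasing.) The standard repair is a lexicographic induction: keep $|\alpha'\cap\beta|$ at its minimum, use corner triangles of $D$ to push the $\beta$-arcs out of the $\alpha\alpha'$-bigon one at a time (these moves preserve all pairwise counts but reduce the number of arcs of $\beta\cap D$), and only when $D$ is empty push $\alpha'$ across it to drop $|\alpha\cap\alpha'|$ by two; alternatively, follow the hyperbolic-geodesic/linking-at-infinity proof the paper cites, which avoids the exchange argument entirely.

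A second, smaller gap is the case $\alpha\cap\alpha'=\emptyset$, which your step one does not actually cover: disjoint isotopic closed components cobound an annulus, and disjoint isotopic arc components cobound a rectangle with two sides on $\partial\Lambda-P$, so projecting an ``innermost strip'' from the cover does not yield a disk of the shape you describe. This case cannot be skipped, because (after the repaired induction reduces you to it) it is where the hypothesis $|\alpha'\cap\beta|<|\alpha\cap\beta|$ is finally used: a parity/counting argument in the annulus or rectangle produces an arc of $\beta$ with both endpoints on the $\alpha$-side, or one endpoint on the $\alpha$-side and one on $\partial\Lambda-P$, and the disk it cuts off is a bigon or half-bigon in the sense of \cref{bigon} (note the paper's definition does not require the interior to be disjoint from the curves, which is what makes this step immediate). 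Your bookkeeping about $P$ is correct and does carry over to this case, since the relevant boundary arcs avoid $P$; what is missing is the case itself and the termination of the induction above.
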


A proof (for multicurves on a closed surface or surface with boundary) may be found in \cite[Proposition 1.7, Section 1.2.7]{FM12}; the proof of \cref{bigon criterion} is analogous.

\subsection{Minimal position of multicurves}

In this section, we prove that under suitable conditions, the minimal position of two essential simple multicurves on $(\Lambda,P)$ is unique up to a homeomorphism of $\Lambda$ that is the identity on $P$ and isotopic to the identity on $\Lambda-P$. This is surely a known result, but we include a proof since we could not find one in the literature.

\begin{theorem}\label{minimal position is unique surface with boundary} Let $\Lambda$ be a compact surface and $P\subset \partial \Lambda$ a set of marked points on its boundary. Let $\alpha$ and $\beta$ be essential simple multicurves on $(\Lambda,P)$ in minimal position. Assume that $\alpha$ and $\beta$ do not have any shared isotopic components. Let $\alpha'$ and $\beta'$ also be essential simple multicurves in minimal position, belonging to the isotopy classes of $\alpha$ and $\beta$, respectively. Then there exists a homeomorphism $\phi:\Lambda\to \Lambda$ such that $\phi$ is the identity on $P$, $\phi$ is isotopic to the identity on $\Lambda$ with the isotopy fixing $P$, $\phi(\alpha)=\alpha'$ and $\phi(\beta)=\beta'$.
\end{theorem}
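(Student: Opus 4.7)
The plan is to isotope $\alpha$ to $\alpha'$ first, and then, with $\alpha$ fixed, isotope $\beta$ to $\beta'$ by an ambient isotopy that preserves $\alpha$ setwise. For the first step I rely on a preparatory lemma, the $(\Lambda,P)$-version of a standard fact: any two isotopic essential simple multicurves on $(\Lambda,P)$ are related by an ambient isotopy of $\Lambda$ fixing $P$ pointwise and isotopic to the identity rel $P$. This reduces by induction on the number of components to the case of a single essential curve or arc, where it is the uniqueness of regular neighborhoods in a compact surface with boundary. Applying the lemma to $(\alpha,\alpha')$ yields an ambient isotopy $\Phi$ with $\Phi(\alpha)=\alpha'$; replacing $\beta$ by $\Phi(\beta)$ reduces us to the case $\alpha=\alpha'$, with all hypotheses preserved since $\Phi$ is a homeomorphism.

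The core of the proof is the following claim: with $\alpha$ fixed, two essential simple multicurves $\beta,\beta'$ that are isotopic, have no component isotopic to a component of $\alpha$, and are each in minimal position with $\alpha$, are related by an ambient isotopy of $\Lambda$ fixing $\alpha$ setwise and $P$ pointwise. I would establish this by passing to the cut surface $\Lambda^{\mathrm{cut}}$, obtained from $\Lambda$ by cutting along $\alpha$; its boundary consists of $\partial\Lambda$ together with new arcs and circles coming from $\alpha$. The multicurves restrict to essential simple multicurves $\tilde\beta,\tilde\beta'$ on $(\Lambda^{\mathrm{cut}},P')$ with $P'=P\cup(\alpha\cap\beta)\cup(\alpha\cap\beta')$, and the minimal-position hypothesis translates into the absence of bigons or half-bigons with the new boundary components. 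If $\tilde\beta$ and $\tilde\beta'$ are isotopic on $(\Lambda^{\mathrm{cut}},P')$, then the preparatory lemma applied to $\Lambda^{\mathrm{cut}}$ produces an ambient isotopy sending $\tilde\beta$ to $\tilde\beta'$, and reassembling across a collar of $\alpha$ yields the desired ambient isotopy of $\Lambda$.

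The main obstacle is the descent claim $\tilde\beta\sim\tilde\beta'$ in $\Lambda^{\mathrm{cut}}$: a generic isotopy from $\beta$ to $\beta'$ in $\Lambda$ may drag intersection points back and forth across $\alpha$ and need not restrict to one on $\Lambda^{\mathrm{cut}}$. My approach is an innermost-bigon argument applied to the trace of the isotopy. By the bigon criterion \cref{bigon criterion}, the geometric intersection numbers of $\beta$ and $\beta'$ with each component of $\alpha$ agree and are minimal in the common isotopy class. Take a generic isotopy $H_t$ from $\beta$ to $\beta'$, and consider the trace surface $\mathcal{T}=\bigcup_t H_t(\beta)\times\{t\}\subset \Lambda\times[0,1]$; its intersection with $\alpha\times[0,1]$ is a properly embedded $1$-manifold whose boundary is matched between $\Lambda\times\{0\}$ and $\Lambda\times\{1\}$. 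Closed components of this $1$-manifold, and arcs whose endpoints lie on the same time-slice, correspond to cancellable pairs of crossings during the isotopy; I would remove them by swapping innermost bigons in the trace, in the style of \cite[Section 1.2.7]{FM12}. Once all such components are eliminated, each time-slice $H_t(\beta)$ is transverse to $\alpha$ with constant intersection number and endpoints matched monotonically across $t$, so the family of restrictions $H_t(\beta)\cap\Lambda^{\mathrm{cut}}$ defines a continuous isotopy of multicurves on $(\Lambda^{\mathrm{cut}},P')$ between $\tilde\beta$ and $\tilde\beta'$, completing the proof.
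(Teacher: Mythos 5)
Your overall skeleton is the same as the paper's: reduce to $\alpha=\alpha'$, encode the isotopy from $\beta$ to $\beta'$ as a trace surface $B\subset\Lambda\times[0,1]$, and analyze the $1$-manifold $B\cap(\alpha\times[0,1])$ until only ``vertical'' arcs from $t=0$ to $t=1$ remain. But the heart of the argument --- deciding which components of this $1$-manifold can occur and how to get rid of them --- has a genuine gap. First, you assert that the boundary of the $1$-manifold is ``matched between $\Lambda\times\{0\}$ and $\Lambda\times\{1\}$.'' That is false in this setting: $\alpha$ has arc components $\eta$ with endpoints on $\partial\Lambda-P$, and during the isotopy a strand of $\beta$ (or an endpoint of an arc of $\beta$) can sweep across $\partial\eta$, so the $1$-manifold also has boundary on $\partial\eta\times[0,1]$. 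These components (an arc from $\eta\times\{0\}$ or $\eta\times\{1\}$ to $\partial\eta\times[0,1]$, or an arc joining $\partial\eta\times[0,1]$ to itself) are exactly where the half-bigon part of the minimal-position hypothesis and the no-shared-isotopic-components hypothesis are needed: the former forces a half-bigon between $\alpha$ and $\beta$ (or $\beta'$), the latter forces a shared isotopic component, and both are contradictions rather than removable configurations. Your proposal never confronts these cases and never uses the no-shared-components hypothesis at all in the descent step, even though the theorem fails without it.

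Second, your mechanism for the cases you do consider is not sound. Closed components can indeed be removed by an innermost-disk isotopy of $B$ rel $\partial B$ (using irreducibility of $\Lambda\times[0,1]$, as the paper does). But an arc with both endpoints on the same time-slice, say on $\eta\times\{0\}$, has its endpoints at honest intersection points of $\alpha\cap\beta$; ``swapping an innermost bigon in the trace'' across such an arc is an isotopy of $B$ that pushes $B\cap(\Lambda\times\{0\})$ across $\alpha$, i.e.\ it changes the curve $\beta$ itself, which you are not allowed to do since the trace must run from $\beta$ to $\beta'$ exactly. Such arcs cannot be removed; they must be shown not to exist, and that is where the bigon half of minimal position does its work (the arc cuts a disk off $A$ and, since $B\cong\beta\times[0,1]$ is a union of rectangles and annuli, a disk off $B$, producing a bigon between $\alpha$ and $\beta$ or between $\alpha$ and $\beta'$). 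Equality of intersection numbers from \cref{bigon criterion} does not rule them out by counting. There are also smaller issues you should tidy: taking $P'=P\cup(\alpha\cap\beta)\cup(\alpha\cap\beta')$ makes $\tilde\beta$ and $\tilde\beta'$ multicurves whose endpoints sit on distinct marked points, so by \cref{isotopy of multicurves} they could never be isotopic as stated, and the ``reassembly across a collar of $\alpha$'' must match the induced maps on the two copies of each component of $\alpha$; and even after the bad components are excluded, the surviving arcs from $t=0$ to $t=1$ still need to be straightened (monotonicity in $t$ is not automatic) before the time-slices give an isotopy on the cut surface. The paper's proof of \cref{minimal position is unique surface with boundary} is essentially your outline with these missing case analyses carried out; to repair your proposal you should replace the ``removal'' of same-slice arcs and the unstated boundary cases by contradiction arguments using bigons, half-bigons, and shared components.
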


\begin{proof} The idea is to encode isotopies from $\alpha$ to $\alpha'$ and $\beta$ to $\beta'$ as surfaces in the $3$-manifold $M=\Lambda\times [0,1]$. First, we may assume $\alpha'=\alpha$ by composing with an appropriate self-homeomorphism of $\Lambda$ fixing $P$. Let $A=\alpha\times[0,1]$, a properly embedded surface in $M$. An isotopy from $\beta$ to $\beta'$ gives a properly embedded surface $B\subset M$ such that $B\cap \Lambda\times \{0\}=\beta$ and $B\cap \Lambda\times \{1\}=\beta'$. Furthermore, $B$ actually lies in $M-P\times[0,1]$ and is isotopic to the surface $\beta\times [0,1]$ in $M-P\times[0,1]$. 

Assume that $A$ and $B$ intersect transversely. Recall that the multicurve $\alpha$ on $(\Lambda,P)$ is the union of arc and closed curve components. Let $\eta$ be an arc component of $\alpha$. Then $\eta\times [0,1]$ is a rectangular component of $A$, with boundaries $\eta\times \{0\}$, $\eta\times \{1\}$ and $\partial \eta\times [0,1]$. We now isotope $B$ so that $B\cap \eta\times [0,1]$ only contains arcs connecting $\eta\times \{0\}$ to $\eta\times \{1\}$.

Apriori, a connected component of $B\cap \eta\times [0,1]$ can also be an arc connecting $\eta\times \{0\}$ or $\eta\times \{1\}$  and $\partial \eta\times [0,1]$, an arc connecting $\eta\times \{0\}$ or $\eta\times \{1\}$ with itself, an arc connecting $\partial \eta\times [0,1]$ with itself, or a closed curve in the interior of the rectangle $\eta\times [0,1]$. We rule out the first three of these possibilities and isotope $B$ to eliminate the last possibility as well.

Suppose $B$ intersects $\eta\times [0,1]$ in an arc connecting  $\eta\times \{0\}$ or $\eta\times \{1\}$  and $\partial \eta\times [0,1]$. Without loss of generality we assume the arc $x$ connects $\eta\times\{0\}$ and $\partial \eta\times [0,1]$. (The case of $\eta\times \{1\}$ is analogous.) There exists an arc $y$ of $\eta\times \{0\}$, with one endpoint in $\partial\eta\times \{0\}$ and the other endpoint an endpoint of $x$, such that $y$ and $x$ are isotopic in $M-P\times [0,1]$. (Isotopic here means that one endpoint of $y$ is fixed, and the other lies in $\partial \eta\times [0,1]$ during the isotopy.) 

By construction $x$ also lies in $B$. Note that $B$ is the union of rectangles and annuli, since it is isotopic to $\beta\times [0,1]$. Since $x$ is an arc with one endpoint on $\Lambda\times \{0\}$ and the other on $\partial \Lambda\times [0,1]$, there exists an arc $y'\subset B$, contained in $\Lambda\times \{0\}$, with one endpoint on $\partial \Lambda \times[0,1]$ and the other an endpoint of $x$, such that $y'$ and $x$ are isotopic in $M-P\times [0,1]$. Now, $y$ is an arc of $\alpha$, and $y'$ is an arc of $\beta$. Since they are both isotopic to $x$ in $M-P\times [0,1]$, $y$ and $y'$ are isotopic in $\Lambda-P$. (Again, isotopy here means that one endpoint of $y$ is fixed, and the other always lies in $\partial \Lambda-P$ during the isotopy.) This means that  $\alpha$ and $\beta$ form a half-bigon, which is a contradiction to the minimal position assumption. 

Similarly, if $B$ intersects $\eta\times [0,1]$ in an arc connecting $\eta\times \{0\}$ or $\eta\times \{1\}$ with itself, then either $\alpha$ and $\beta$ or $\alpha'$ and $\beta'$ form a bigon, which is also a contradiction to the minimal position assumption. If $B$ intersects $\eta\times [0,1]$ in an arc connecting $\partial \eta\times [0,1]$ with itself, then $\alpha$ and $\beta$ have a shared isotopic component which is also a contradiction to the assumptions in the lemma. 

If $B$ intersects $\eta\times [0,1]$ in a closed curve, then we take such a curve innermost $\gamma$ in $\eta\times [0,1]$. The curve $\gamma$ bounds a disk $D$ in $\eta\times [0,1]$ whose interior does not intersect $B$. The curve $\gamma$ must also bound a disk $D'$ in $B$, since $\gamma$ must be nullhomotopic in $M$ and $B$ is isotopic to $\beta\times [0,1]$. The sphere $D\cup D'$ bounds a ball in $M$. Hence, we may isotope $B$ by pusing $D'$ into $D$ to eliminate the intersection $\gamma$. (Our isotopy may possibly eliminate some more closed curve intersections as well.) In this way, we may isotope $B$ to ensure that the only components of $B\cap \eta\times [0,1]$ are arcs connecting $\eta\times\{0\}$ and $\eta\times \{1\}$. 

Next, we consider closed curve components of $\alpha$. Let $\theta$ be such a component. Then $\theta\times [0,1]$ is an annular component of $A$ with boundary $\theta\times\{0\}$ and $\theta\times \{1\}$. Similar to the previous case, we now isotope $B$ so that the only components of $\theta\times [0,1]$ are arcs connecting $\theta\times \{0\}$ to $\theta\times \{1\}$. 

To do this, note that apriori, $B$ can also intersect $\theta\times [0,1]$ in an arc connecting one of $\theta\times \{0\}$ and $\theta\times \{1\}$ with itself, a meridional curve on the annulus, or a null-homotopic curve on the annulus. We rule out the first two possibilities and isotope $B$ to eliminate the last. 

If $B$ intersects $\theta\times [0,1]$ in an arc connecting one of $\theta\times \{0\}$ and $\theta\times \{1\}$ with itself, then either $\alpha$ and $\beta$ or $\alpha$ and $\beta'$ form a bigon, which is a contradiction. If $B$ intersects $\theta\times [0,1]$ in a meridional curve on the annulus, then $\alpha$ and $\beta$ have a shared isotopic closed curve components, which is a contradiction. If $B$ intersects $\theta\times [0,1]$ in a null-homotopic curve on the annulus, we use an innermost disk argument similar to the rectangular case to isotope $B$ and eliminate such intersections.

So, we isotope $B$ (avoiding $P\times [0,1]$) so that every component of $A\cap B$ on $A$ is an arc connecting $\alpha\times \{0\}$ to $\alpha\times \{1\}$. Thus every component of $A\cap B$ on $B$ is now an arc connecting $\beta\times \{0\}$ to $\beta'\times \{1\}$. We straighten these arcs so that each arc is $x\times [0,1]$ for a point $x$ in the interior of $\Lambda$. These arcs divide $B$ into rectangular and annular pieces that we straighten to isotope $\beta'$ into $\beta$ preserving $\alpha$ and fixing $P$. 
\end{proof}

\subsection{Subsurfaces}

\begin{definition}\label{subsurface} A subsurface of $(\Lambda, P)$ is a compact oriented surface $\Omega$ along with an orientation preserving embedding $$\rho:\Omega\to \Lambda.$$ The boundary $\partial \Omega$ has two components. The exterior boundary is defined to be $\rho^{-1}(\partial \Lambda)$. The interior boundary is its complement in $\partial \Omega$ and is denoted by $\bound{\Omega}$. We assume that $\bound{\Omega}$ intersects $\partial \Lambda$ transversely and does not intersect $P$, so that it is an oriented multicurve on $(\Lambda, P)$.
\end{definition} 

If $\Omega\subset (\Lambda,P)$ is a subsurface, the closure of its complement, which we denote by $\Lambda-\Omega$, is also a subsurface of $(\Lambda,P)$. It has the same interior boundary as $\Omega$, with opposite orientation.

\begin{definition} Two subsurfaces $\Omega,\Omega'\subset (\Lambda,P)$ are isotopic if $\bound{\Omega}$ and $\bound{\Omega'}$ are isotopic as oriented multicurves.
\end{definition}

\begin{definition} \label{essential subsurface} A subsurface $\Omega\subset (\Lambda, P)$ is essential if $\bound{\Omega}$ is an essential multicurve on $(\Lambda, P)$. 
\end{definition}

\begin{definition}\label{disk elimination} Let $\Omega\subset (\Lambda,P)$ be a subsurface. A disk addition applied to $\Omega$ is a type of combinatorial operation, which consists of adding or removing a disk from $\Omega$ so that a single inessential component is added to $\bound{\Omega}$. A disk elimination applied to $\Omega$ is a type of combinatorial operation, which consists of adding or removing a disk from $\Omega$ so that a single inessential component is eliminated from $\bound{\Omega}$.
\end{definition}

Note that even though a disk addition may add or remove a disk from $\Omega$, it only adds an inessential component to $\bound{\Omega}$. Analogously for disk eliminations.

\begin{definition}\label{rectangular and annular subsurfaces} A subsurface $\Omega\subset (\Lambda,P)$ is called rectangular if it is homeomorphic to a disk and $$|\bound{\Omega}\cap \partial \Lambda| + |\partial \Omega\cap P|=4.$$ A subsurface $\Omega\subset (\Lambda,P)$ is annular if it is homeomorphic to an annulus, and $$|\bound{\Omega}\cap \partial \Lambda| + |\partial \Omega\cap P|=0.$$ A rectangular component (resp. annular component) of a subsurface of $(\Lambda,P)$ is a connected component of the subsurface that is a rectangular (resp. annular) subsurface of $(\Lambda, P)$. Together, we call rectangular and annular components null components.
\end{definition}

\begin{figure}[h]
\begingroup%
  \makeatletter%
  \providecommand\color[2][]{%
    \errmessage{(Inkscape) Color is used for the text in Inkscape, but the package 'color.sty' is not loaded}%
    \renewcommand\color[2][]{}%
  }%
  \providecommand\transparent[1]{%
    \errmessage{(Inkscape) Transparency is used (non-zero) for the text in Inkscape, but the package 'transparent.sty' is not loaded}%
    \renewcommand\transparent[1]{}%
  }%
  \providecommand\rotatebox[2]{#2}%
  \newcommand*\fsize{\dimexpr\f@size pt\relax}%
  \newcommand*\lineheight[1]{\fontsize{\fsize}{#1\fsize}\selectfont}%
  \ifx\svgwidth\undefined%
    \setlength{\unitlength}{236.70998771bp}%
    \ifx\svgscale\undefined%
      \relax%
    \else%
      \setlength{\unitlength}{\unitlength * \real{\svgscale}}%
    \fi%
  \else%
    \setlength{\unitlength}{\svgwidth}%
  \fi%
  \global\let\svgwidth\undefined%
  \global\let\svgscale\undefined%
  \makeatother%
  \begin{picture}(1,0.58917259)%
    \lineheight{1}%
    \setlength\tabcolsep{0pt}%
    \put(0,0){\includegraphics[width=\unitlength,page=1]{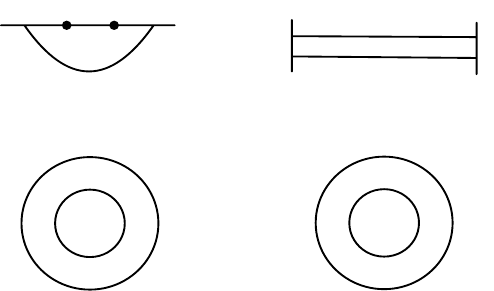}}%
    \put(0.74625035,0.15426136){\makebox(0,0)[lt]{\lineheight{1.25}\smash{\begin{tabular}[t]{l}$\partial \Lambda$\end{tabular}}}}%
    \put(0.97425203,0.47978368){\makebox(0,0)[lt]{\lineheight{1.25}\smash{\begin{tabular}[t]{l}$\partial \Lambda$\end{tabular}}}}%
    \put(0.51596825,0.48431314){\makebox(0,0)[lt]{\lineheight{1.25}\smash{\begin{tabular}[t]{l}$\partial \Lambda$\end{tabular}}}}%
    \put(0.14968785,0.55211458){\makebox(0,0)[lt]{\lineheight{1.25}\smash{\begin{tabular}[t]{l}$\partial \Lambda$\end{tabular}}}}%
  \end{picture}%
\endgroup%

\caption{Types of null components. The marked points on $\partial \Lambda$ are points in $P$.}
\end{figure}

\begin{definition}\label{null component elimination} Let $\Omega\subset (\Lambda, P)$ be a subsurface. A null component addition applied to $\Omega$ is a type of combinatorial operation, which consists of adding or removing a null component from $\Omega$ so that either
\begin{enumerate} \item pair of isotopic arcs or closed curves on $\Lambda$, or 
\item a single inessential closed curve on $\Lambda$ isotopic to a closed curve component of $\partial \Lambda -P$
\end{enumerate}
is added to $\bound{\Omega}$. A null component elimination applied to $\Omega$ is a type of combinatorial operation, which consists of a adding or removing a null component from $\Omega$ so that either (1) or (2) is eliminated from $\bound{\Omega}$.
\end{definition}

Note that a null component addition applied to $\Omega$ only adds a components to $\bound{\Omega}$ (it does not eliminate any). Analogously for null component eliminations. 

\begin{lemma} \label{subsurface of essential part} Let $\Omega\subset (\Lambda,P)$ be a subsurface. There exists a subsurface, denoted $\Omega^{\ess}$, such that $\bound{\Omega^{\ess}}=\bound{\Omega}^{\ess}$.
\end{lemma}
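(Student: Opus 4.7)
The plan is to induct on the number $k$ of inessential components of $\bound{\Omega}$. The base case $k=0$ is trivial, since then $\bound{\Omega}^{\ess}=\bound{\Omega}$ and I can take $\Omega^{\ess}=\Omega$. For the inductive step, I would locate an \emph{innermost} inessential component $\gamma$ of $\bound{\Omega}$. Each inessential component listed in \cref{essential multicurves} bounds a natural subregion of $\Lambda$: a disk for types $1$--$3$ (using the disk depicted in Figure $3$, including the arc of $\partial\Lambda-P$ when present), and an annulus bounded together with a closed curve component of $\partial\Lambda-P$ for type $4$. Since the components of $\bound{\Omega}$ are pairwise disjoint simple curves, these bounded regions are up to isotopy pairwise either disjoint or nested, so at least one is minimal under inclusion. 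Its boundary component $\gamma\subset\bound{\Omega}$ is the desired innermost component: no other component of $\bound{\Omega}$ lies in the interior of the region $R$ it bounds.

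Next I would perform the reduction. Because $\interior(R)$ meets no part of $\bound{\Omega}$, $R$ is either entirely contained in $\Omega$ or meets $\Omega$ only along $\gamma$ (together possibly with portions of $R\cap\partial\Lambda$). In the first case, $R$ is a connected component of $\Omega$ and I set $\Omega' = \Omega\setminus R$; in the second, I set $\Omega' = \Omega\cup R$. This is a disk elimination in the sense of \cref{disk elimination} when $\gamma$ is of types $1$--$3$, and a null component elimination in the sense of \cref{null component elimination} when $\gamma$ is of type $4$. In every case $\bound{\Omega'}=\bound{\Omega}\setminus\gamma$, so $\Omega'$ has one fewer inessential component in its interior boundary while the essential components are preserved. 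Applying the inductive hypothesis to $\Omega'$ yields a subsurface $\Omega^{\ess}$ satisfying $\bound{\Omega^{\ess}}=\bound{\Omega'}^{\ess}=\bound{\Omega}^{\ess}$, completing the induction.

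The main obstacle is verifying that the bounded regions are pairwise nested or disjoint in a consistent way across all four types, so that an innermost one always exists. For types $1$ and $4$ this is immediate, since the relevant curves are simple and disjoint and the induced closed regions (disk or annulus bounded by a closed curve component of $\partial\Lambda-P$) are therefore nested or disjoint. For types $2$ and $3$ the bounding disks all meet $\partial\Lambda$ along arcs, so I would argue by choosing the arcs on $\partial\Lambda-P$ minimally and then noting that two disjoint arcs of $\bound{\Omega}$ with endpoints on $\partial\Lambda-P$ together with suitably chosen boundary arcs cut off disks that are again either nested or disjoint; mixed comparisons (say a type $2$ disk versus a type $4$ annulus touching the same boundary component of $\Lambda$) are handled by the same minimal-choice device. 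Once the nesting is in place, the induction step above applies uniformly.
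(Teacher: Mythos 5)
Your proposal is correct and takes the same approach as the paper: the paper's proof simply says to apply a sequence of disk eliminations and null component eliminations until all inessential components of $\bound{\Omega}$ are removed, and your induction on the number of inessential components, peeling off an innermost one at each step, is exactly the careful version of that process. The classification you give (disk eliminations for types $1$--$3$, a null component elimination for type $4$) matches \cref{disk elimination} and \cref{null component elimination}, and the innermost-region argument is the right way to ensure each step really eliminates only a single component.
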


\begin{proof} We apply a sequence of disk eliminations and null component eliminations to $\Omega$, to remove the inessential components of $\bound{\Omega}$. After applying these eliminations, we obtain an essential surface $\Omega^{\ess}$ whose interior boundary is $\bound{\Omega}^{\ess}$.
\end{proof}

\begin{definition} Given a subsurface $\Omega\subset (\Lambda,P)$, let $\widehat{[\Omega]}$ be the unique subsurface obtained by applying iterated null component eliminations to $\Omega^{\ess}$, so that neither $\widehat{[\Omega]}$ nor $\Lambda-\widehat{[\Omega]}$ contain any null components. 
\end{definition}

\begin{definition}\label{equivalence class} Let $\Omega\subset (\Lambda, P)$ be a subsurface. We denote by $[\Omega]$ the equivalence class of  subsurfaces $\Omega'$ such that $\widehat{[\Omega]}=\widehat{[\Omega']}$.
\end{definition}

\subsection{Adjusted Euler characteristic}\label{adjusted Euler char} In this section, we define the adjusted Euler characteristic associated to a subsurface $\Omega$ of $(\Lambda,P)$. It is the standard Euler characteristic of $\Omega^{\ess}$, with an adjustment to encode how $\Omega$ intersects $\partial \Lambda$ and $P$.

\begin{definition} \label{chi 0} Let $\Omega\subset (\Lambda,P)$ be an essential subsurface. We define $$\chi^{\Lambda,P}(\Omega)=\chi(\Omega)-\frac{|\bound{ \Omega}\cap \partial \Lambda|+|\Omega\cap P|}{4}.$$
\end{definition}

\begin{definition} \label{chi 1} Let $\Omega\subset (\Lambda, P)$ be a subsurface. We define $$\chi^{\Lambda,P}(\Omega)=\chi^{\Lambda,P}(\Omega^{\ess}).$$
\end{definition}

We now list some basic properties of $\chi^{\Lambda,P}$.

\begin{lemma}[Nonpositivity] $$\chi^{\Lambda,P}(\Omega)\leq 0,$$ with equality if and only if $\Omega^{\ess}$ is a union of null components.
\end{lemma}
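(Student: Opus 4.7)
My plan is a direct case analysis on the topological type of each connected component of $\Omega^{\ess}$. First I would reduce to the connected case: since $\chi(\Omega)$, $|\bound{\Omega}\cap \partial \Lambda|$ and $|\Omega\cap P|$ are all additive over connected components, so is $\chi^{\Lambda,P}$, and hence it suffices to show $\chi^{\Lambda,P}(C)\leq 0$ for a single connected essential subsurface $C$, with equality exactly when $C$ is a null component.

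Fix such a $C$, let $g$ be its genus and $b$ its number of boundary circles, and set $a=|\bound{C}\cap \partial \Lambda|$ and $p=|C\cap P|$, so that $\chi^{\Lambda,P}(C)=(2-2g-b)-(a+p)/4$. If $2g+b\geq 3$, the first summand is already $\leq -1$ and the inequality is strict. The only remaining cases are disks, annuli, tori, and spheres. I would rule out closed torus and sphere components of $\Omega^{\ess}$ as degenerate configurations (they correspond to closed components of $\Lambda$ with no marked points, excluded by the standing conventions on $\Lambda$).

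The two substantive cases are the disk and the annulus. In the disk case the goal is $a+p\geq 4$, with equality exactly when $C$ is rectangular. The boundary $\partial C$ is a single circle. If $a=0$, it is either a closed curve of $\bound{C}$ bounding a disk, contradicting essentiality of $\bound{C}$, or an entire boundary component of $\Lambda$, making $C$ a disk component of $\Lambda$ and hence excluded. If $a=2$, then $\partial C$ consists of one arc $\sigma$ of $\bound{C}$ together with one arc $\tau$ of $\partial \Lambda$; since $\sigma$ bounds the disk $C$ with $\tau$, essentiality of $\sigma$ forces $\tau$ to contain at least two points of $P$ (otherwise $\sigma$ would be inessential as either a disk-bounding arc or an arc around a single point of $P$), so $p\geq 2$. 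If $a\geq 4$ the bound is immediate, and equality in any of these scenarios forces $C$ to be rectangular.

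For the annulus, $\chi(C)=0$ gives $\chi^{\Lambda,P}(C)=-(a+p)/4\leq 0$ directly, and equality forces $a=p=0$, so both boundary circles of $C$ are closed curves. If both lie in the interior of $\Lambda$, $C$ is an annular null component; otherwise at least one lies on $\partial \Lambda$, in which case (since $p=0$) the other boundary curve would be isotopic through the annulus to a boundary component of $\partial \Lambda - P$, contradicting essentiality of $\bound{C}$. Summing over connected components then yields $\chi^{\Lambda,P}(\Omega)\leq 0$, with equality exactly when every component of $\Omega^{\ess}$ is null. The main obstacle is the disk case with $a=2$, where the essentiality of the single arc of $\bound{C}$ must be translated into the lower bound $p\geq 2$ on the complementary arc $\tau$; the remaining cases reduce to direct combinatorial checks.
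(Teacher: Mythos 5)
Your proposal is correct and follows essentially the same route as the paper: reduce to connected essential components, observe that $\chi^{\Lambda,P}>0$ would force a disk whose essentiality makes $|\bound{\Omega}\cap\partial\Lambda|+|\Omega\cap P|\geq 4$ impossible to violate, and identify the equality cases with rectangular and annular components. You simply spell out in more detail (the $a=2$ disk case via the list of inessential arc types, and the annulus equality case) what the paper's proof asserts in one line, while relying on the same implicit exclusion of degenerate whole components of $\Lambda$ that the paper also uses.
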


\begin{proof} Suppose $\chi^{\Lambda,P}(\Omega)>0$ for some $\Omega$. We may assume that $\Omega$ is essential. We may also assume $\Omega$ is connected (otherwise, $\chi^{\Lambda,P}$ would be positive for some connected component). This means $\chi(\Omega)>0$, so $\Omega$ is homeomorphic to a disk. Now, $|\bound{\Omega}\cap \partial \Lambda|+|\Omega\cap P|\leq 3$. This means $\Omega$ is inessential, which is a contradiction.

If $\chi^{\Lambda,P}(\Omega)= 0$, then it must be $0$ on every component. To show the second part of the lemma, we may again assume $\Omega$ is connected. Now, either $\chi(\Omega)=2$ and $$|\bound{\Omega}\cap \partial \Lambda|+|\Omega\cap P|=4,$$ in which case $\Omega$ is a rectangular component. Or, $\chi(\Omega)=0$ and $$|\bound{\Omega}\cap \partial \Lambda|+|\Omega\cap P|=0,$$ in which case $\Omega$ is an annular component.
\end{proof}

\begin{lemma}[Additivity]\label{chi of complement} $$\chi^{\Lambda,P}(\Omega)+\chi^{\Lambda,P}(\Lambda-\Omega)=\chi(\Lambda)-\frac{|P|}{4}.$$
\end{lemma}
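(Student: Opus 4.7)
The plan is to first establish the identity for essential subsurfaces via inclusion--exclusion on the Euler characteristic, and then reduce the general case to the essential one by observing that the sequence of eliminations taking $\Omega$ to $\Omega^{\ess}$ is mirrored by complementary operations on $\Lambda - \Omega$.

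\emph{Step 1 (essential case).} Suppose $\Omega$ is essential. Since $\bound{(\Lambda - \Omega)} = \bound{\Omega}$ (as unoriented multicurves), $\Lambda - \Omega$ is essential as well. I apply the standard inclusion--exclusion
\[
\chi(\Lambda) = \chi(\Omega) + \chi(\Lambda - \Omega) - \chi(\bound{\Omega}),
\]
valid because $\Omega \cap (\Lambda - \Omega) = \bound{\Omega}$. The interior boundary $\bound{\Omega}$ is a disjoint union of arcs (each contributing $+1$ to the Euler characteristic) and closed curves (contributing $0$), and each arc has two endpoints on $\partial \Lambda - P$, so $\chi(\bound{\Omega}) = |\bound{\Omega} \cap \partial \Lambda|/2$. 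Combining this with $|\bound{(\Lambda - \Omega)} \cap \partial \Lambda| = |\bound{\Omega} \cap \partial \Lambda|$ and $|\Omega \cap P| + |(\Lambda - \Omega) \cap P| = |P|$ (which holds since $\bound{\Omega} \cap P = \emptyset$ and $P \subset \partial \Lambda$), a direct computation from the definition of $\chi^{\Lambda,P}$ for essential subsurfaces gives the desired identity.

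\emph{Step 2 (reduction to essential case).} For general $\Omega$, by definition $\chi^{\Lambda,P}(\Omega) = \chi^{\Lambda,P}(\Omega^{\ess})$, so it suffices to show $\chi^{\Lambda,P}(\Lambda - \Omega) = \chi^{\Lambda,P}(\Lambda - \Omega^{\ess})$. Every disk elimination or null component elimination taking $\Omega$ toward $\Omega^{\ess}$ adds or removes a disk or null component from $\Omega$; this is exactly the complementary operation on $\Lambda - \Omega$, which is itself a disk or null component elimination, since the inessential component being eliminated lies in $\bound{\Omega} = \bound{(\Lambda - \Omega)}$. Running both sequences of moves in parallel produces an essential subsurface on the complementary side with interior boundary $\bound{\Omega}^{\ess}$, namely $\Lambda - \Omega^{\ess}$, which lies in the equivalence class $[(\Lambda - \Omega)^{\ess}]$. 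Hence $\chi^{\Lambda,P}(\Lambda - \Omega) = \chi^{\Lambda,P}(\Lambda - \Omega^{\ess})$, and applying Step 1 to the essential subsurface $\Omega^{\ess}$ finishes the proof.

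The main obstacle is the bookkeeping in Step 2: one has to verify case by case that each inessential component type (disk-bounding closed curve; arc bounding a disk in $\partial \Lambda - P$; arc around a point of $P$; boundary-parallel closed curve) and each null component type (pair of isotopic arcs, pair of isotopic closed curves, boundary-parallel closed curve in an annulus) really pair up in this complementary fashion, and that running the two reductions in parallel terminates at two subsurfaces that together exhaust $\Lambda$. The subtlest case is an arc around a point of $P$: the elimination transfers the marked point from one side to the other, which is precisely what is needed so that the identity $|\Omega^{\ess} \cap P| + |(\Lambda - \Omega^{\ess}) \cap P| = |P|$ invoked in Step 1 remains valid after passing to essential parts.
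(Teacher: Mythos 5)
Your proof is correct and follows essentially the same route as the paper: inclusion--exclusion for the Euler characteristic together with $\chi(\bound{\Omega})=|\bound{\Omega}\cap\partial\Lambda|/2$ and the partition of $P$ between the two sides, after reducing to the essential case. Your Step 2 is just a more detailed justification of the identity $(\Lambda-\Omega)^{\ess}=\Lambda-\Omega^{\ess}$, which the paper invokes in one line.
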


\begin{remark} Note that $$\chi(\Lambda)-\frac{|P|}{4}=\chi^{\Lambda, P}(\Lambda).$$
\end{remark}

\begin{proof}[Proof of \cref{chi of complement}] Because $(\Lambda-\Omega)^{\ess}=\Lambda-\Omega^{\ess}$, it suffices to prove the lemma assuming $\Omega$ is essential. By additivity of the Euler characteristic, \begin{equation}\label{eq 3.4.1}\chi(\Lambda)=\chi(\Omega)+\chi(\Lambda-\Omega)-\chi(\bound{\Omega}).
\end{equation} Since every connected component of $\bound{\Omega}$ is a closed curve or arc with boundary points on $\partial \Lambda$, \begin{equation}\label{eq 3.4.2} \chi(\bound{\Omega})=\frac{|\bound{\Omega}\cap \partial \Lambda|}{2}.
\end{equation} Because each point of $P$ is in either $\Omega$ or $\Lambda-\Omega$,
$$\chi^{\Lambda,P}(\Omega)+\chi^{\Lambda,P}(\Lambda-\Omega)=\chi(\Omega)+\chi(\Lambda-\Omega)-\frac{|\bound{\Omega}\cap \partial \Lambda|+|\bound{\Lambda-\Omega}\cap \partial \Lambda|+|P|}{4}.$$ Since $\bound{\Omega}=\bound{\Lambda-\Omega}$, the right-hand side is equal to 
$$\chi(\Omega)+\chi(\Lambda-\Omega)-\frac{|\bound{\Omega}\cap \partial \Lambda|}{2}-\frac{|P|}{4}.$$ The lemma now follows from \cref{eq 3.4.1} and \cref{eq 3.4.2}.
\end{proof}

\begin{lemma}[Constant on equivalence class]\label{chi only depends on equivalence class} Let $\Omega_1,\Omega_2\subset (\Lambda, P)$ be subsurfaces. If $[\Omega_1]=[\Omega_2]$, then $$\chi^{\Lambda, P}(\Omega_1)=\chi^{\Lambda, P}(\Omega_2).$$
\end{lemma}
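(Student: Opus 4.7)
The plan is to reduce to showing that $\chi^{\Lambda,P}$ is invariant under the two elementary moves that connect any $\Omega$ to $\widehat{[\Omega]}$: passing to the essential part, and performing a single null component elimination. The first step is immediate from Definition 3.23, which gives $\chi^{\Lambda,P}(\Omega)=\chi^{\Lambda,P}(\Omega^{\ess})$. For the second, by Definition 3.22 the subsurface $\widehat{[\Omega_i]}$ is obtained from $\Omega_i^{\ess}$ by iterated null component eliminations, and the hypothesis $[\Omega_1]=[\Omega_2]$ says $\widehat{[\Omega_1]}=\widehat{[\Omega_2]}$. Once I establish that each null component elimination preserves $\chi^{\Lambda,P}$, the theorem follows from the chain
\[
\chi^{\Lambda,P}(\Omega_1)=\chi^{\Lambda,P}(\Omega_1^{\ess})=\chi^{\Lambda,P}(\widehat{[\Omega_1]})=\chi^{\Lambda,P}(\widehat{[\Omega_2]})=\chi^{\Lambda,P}(\Omega_2^{\ess})=\chi^{\Lambda,P}(\Omega_2).
\]

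The key computational input is that $\chi^{\Lambda,P}(R)=0$ for any null component $R$: for rectangular $R$ one has $\chi(R)=1$ and $|\bound R\cap\partial\Lambda|+|R\cap P|=4$, while for annular $R$ both quantities vanish. A null component elimination either removes a connected component $R$ of $\Omega$ (Case A) or adds a connected component $R$ of $\Lambda-\Omega$ to $\Omega$ (Case B), yielding a new subsurface $\Omega'$. In Case A, $R$ is disjoint from $\Omega\setminus R$ inside $\Lambda$, so $\chi(\Omega')=\chi(\Omega)-\chi(R)$, and $|\bound{\Omega'}\cap\partial\Lambda|$, $|\Omega'\cap P|$ drop by $|\bound R\cap\partial\Lambda|$, $|R\cap P|$ respectively. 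Substituting into Definition 3.23 gives
\[
\chi^{\Lambda,P}(\Omega')-\chi^{\Lambda,P}(\Omega)=-\chi(R)+\tfrac{|\bound R\cap\partial\Lambda|+|R\cap P|}{4}=-\chi^{\Lambda,P}(R)=0.
\]

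In Case B, $\Omega'=\Omega\cup R$ is obtained by gluing along the shared boundary $\bound R$, so inclusion-exclusion gives $\chi(\Omega')=\chi(\Omega)+\chi(R)-\chi(\bound R)$; further, $|\bound{\Omega'}\cap\partial\Lambda|$ decreases by $|\bound R\cap\partial\Lambda|$ while $|\Omega'\cap P|$ increases by $|R\cap P|$. To close the computation I would invoke the identity $\chi(\bound R)=\tfrac12|\bound R\cap\partial\Lambda|$ already exploited in the proof of Lemma 3.24, which makes the arithmetic collapse to
\[
\chi^{\Lambda,P}(\Omega')-\chi^{\Lambda,P}(\Omega)=\chi(R)-\chi(\bound R)+\tfrac{|\bound R\cap\partial\Lambda|-|R\cap P|}{4}=\chi^{\Lambda,P}(R)=0.
\]
The main place where care is needed is this asymmetry between the two cases: a component of $\Omega$ detaches cleanly, whereas a component of $\Lambda-\Omega$ is glued to $\Omega$ along $\bound R$ and thus requires the $\chi(\bound R)$ correction. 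Once that shared-boundary correction is reconciled against the sign flip in the $|R\cap P|$ term, both cases collapse to $\chi^{\Lambda,P}(R)=0$, which completes the proof.
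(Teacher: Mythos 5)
Your proof is correct and follows essentially the same route as the paper: both reduce the statement to the fact that the essential parts differ only by null component additions/eliminations, each of which contributes zero to $\chi^{\Lambda,P}$. You simply organize this through the canonical representative $\widehat{[\Omega]}$ and spell out the two-case computation (including the $\chi(\bound{R})=\tfrac12|\bound{R}\cap\partial\Lambda|$ correction) that the paper leaves implicit.
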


\begin{proof} If $[\Omega_1]=[\Omega_2]$, then $\Omega_1^{\ess}$ is obtained by applying null component additions and eliminations to $\Omega_2^{\ess}$. Each such component contributes $0$ to $\chi^{\Lambda, P}$.
\end{proof}

\begin{lemma}[Monotonicity]\label{monotonicity for chi} Let $\Omega_1,\Omega_2\subset (\Lambda,P)$ be subsurfaces so that $\Omega_1\subset \Omega_2$. Then $$|\chi^{\Lambda,P}(\Omega_1)|\leq |\chi^{\Lambda,P}(\Omega_2)|.$$
\end{lemma}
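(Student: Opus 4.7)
The plan is to realize $\chi^{\Lambda,P}(\Omega_2) - \chi^{\Lambda,P}(\Omega_1)$ as $\chi^{\Lambda,P}(R)$, where $R = \overline{\Omega_2 - \Omega_1}$, and then invoke nonpositivity.

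After isotoping $\bound{\Omega_1}$ and $\bound{\Omega_2}$ into general position (so they are disjoint) and using \cref{chi only depends on equivalence class} to replace $\Omega_1$ and $\Omega_2$ by essential representatives in their equivalence classes while preserving the inclusion, I may assume both subsurfaces are essential. Because $\bound{\Omega_1}\subset \interior(\Omega_2)$ in general position, the interior boundary of $R$ decomposes as $\bound{R} = \bound{\Omega_1}\sqcup \bound{\Omega_2}$: one part separates $R$ from $\Omega_1$, the other separates $R$ from $\Lambda-\Omega_2$. Since each component of $\bound{R}$ is a component of an essential multicurve, $R$ is itself essential.

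The identity $\chi^{\Lambda,P}(R) = \chi^{\Lambda,P}(\Omega_2) - \chi^{\Lambda,P}(\Omega_1)$ is then a direct computation. Inclusion--exclusion gives $\chi(\Omega_2) = \chi(\Omega_1) + \chi(R) - \chi(\bound{\Omega_1})$, with $\chi(\bound{\Omega_1}) = |\bound{\Omega_1}\cap \partial\Lambda|/2$ (arcs contribute $1$ and closed curves $0$, and the number of arcs equals half the number of endpoints on $\partial\Lambda$). Together with $|\bound{R}\cap\partial\Lambda| = |\bound{\Omega_1}\cap\partial\Lambda|+|\bound{\Omega_2}\cap\partial\Lambda|$ and $|R\cap P| = |\Omega_2\cap P|-|\Omega_1\cap P|$, substituting into \cref{chi 0} causes the boundary and marked-point terms to telescope, yielding
$$\chi^{\Lambda,P}(R) = \chi^{\Lambda,P}(\Omega_2) - \chi^{\Lambda,P}(\Omega_1).$$
Applying nonpositivity to the essential subsurface $R$ then gives $\chi^{\Lambda,P}(\Omega_2)\leq \chi^{\Lambda,P}(\Omega_1)\leq 0$, which is equivalent to $|\chi^{\Lambda,P}(\Omega_1)|\leq |\chi^{\Lambda,P}(\Omega_2)|$.

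The main obstacle I expect is the reduction to essential representatives while maintaining the inclusion $\Omega_1\subset \Omega_2$: an inessential component of $\bound{\Omega_1}$ whose bounding disk or annulus sits outside $\Omega_2$ could force an elimination that destroys the inclusion. Such a component must be matched by a corresponding inessential component of $\bound{\Omega_2}$ bounding the same region in $\Lambda$, which allows the two eliminations to be performed simultaneously while preserving the inclusion. Alternatively, one can avoid the reduction entirely by running the same inclusion--exclusion computation for arbitrary (possibly inessential) subsurfaces, obtaining the analogous identity for the pre-essentialization quantity $\chi(\Omega) - (|\bound{\Omega}\cap\partial\Lambda|+|\Omega\cap P|)/4$, and then reconciling this with $\chi^{\Lambda,P}$ via \cref{chi 1}.
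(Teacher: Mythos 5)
Your argument is correct and is essentially the paper's: after reducing to essential subsurfaces (the paper likewise just asserts $\Omega_1^{\ess}\subset\Omega_2^{\ess}$ without further justification), both proofs identify $\chi^{\Lambda,P}(\Omega_2)-\chi^{\Lambda,P}(\Omega_1)$ with the adjusted Euler characteristic of the complementary piece $\overline{\Omega_2-\Omega_1}$ and conclude by nonpositivity. The only cosmetic difference is that the paper packages your inclusion--exclusion computation by viewing $\Omega_2$ as an ambient marked surface $(\Omega_2,P')$ with $P'=(\bound{\Omega_2}\cap\partial\Lambda)\cup(\Omega_2\cap P)$ and invoking additivity (\cref{chi of complement}) there, rather than telescoping the boundary and marked-point terms directly in $(\Lambda,P)$ as you do.
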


\begin{proof} If $\Omega_1\subset \Omega_2$, then $\Omega_1^{\ess}\subset \Omega_2^{\ess}$. So we may assume that $\Omega_1$ and $\Omega_2$ are essential. In this case, $\Omega_2-\Omega_1$ is also an essential subsurface of $\Omega$. 

Now, let $$P'=(\bound{\Omega_2}\cap \partial \Lambda) \cup (\Omega_2\cap P)\subset \partial \Omega_2$$ be a set of marked points on $\partial \Omega_2$. By construction, $\chi^{\Omega_2,P'}(\Omega_2)=\chi^{\Lambda,P}(\Omega_2)$. 

Since $\Omega_1\subset \Omega_2$, $$\bound{\Omega_1}\cap \partial \Omega_2=\bound{\Omega_1}\cap \partial \Lambda.$$ Moreover, no points of $\bound{\Omega_2}\cap \partial \Lambda$ lie in $\Omega_1$. Hence 
\begin{align*}\chi^{\Lambda,P}(\Omega_1)&=\chi^{\Omega_2,P'}(\Omega_1)\\&\geq \chi^{\Omega_2,P'}(\Omega_2)\\&= \chi^{\Lambda,P}(\Omega_2)
\end{align*} by additivity and nonpositivity.
\end{proof}

\subsection{Relative adjusted Euler characteristic} In this section, given two subsurfaces of the same surface, we define the adjusted Euler characteristic of one subsurface relative to the other. Let $\Theta$ be a compact connected surface with boundary. 

\begin{definition} \label{boundary points of subsurface} For any essential subsurface $\Lambda\subset \Theta$, we denote by $P(\Lambda)$ the set of marked points $\bound{\Lambda}\cap \partial \Theta$, which lies on $\partial \Lambda$.
\end{definition}

\begin{definition}\label{minimal position of subsurfaces} Let $\Lambda\subset \Theta$ be an essential subsurface. A subsurface $\Omega\subset \Theta$ is in minimal position with $\Lambda$ if $\Omega$ is essential, and $\bound{\Omega}$ is in minimal position with $\bound{\Lambda}$ as multicurves on $\Theta$.

Similarly, let $\zeta$ be an essential multicurve on $\Theta$. A subsurface $\Omega\subset \Theta$ is in minimal position with $\zeta$ if $\Omega$ is essential, and $\bound{\Omega}$ is in minimal position with $\zeta$.
\end{definition}

In order to define the adjusted Euler characteristic of $\Omega$ relative to $\Lambda$, we put $\Omega$ in minimal position with $\Lambda$, then use \cref{chi 1}. 

\begin{definition}\label{chi 2} Let $\Omega\subset \Theta$ be a subsurface. Let $\widetilde{\Omega^{\ess}}$ be a surface isotopic to $\Omega^{\ess}$ in minimal position with $\Lambda$. Note that $\widetilde{\Omega^{\ess}}\cap \Lambda$ is a subsurface of $(\Lambda, P(\Lambda))$. We define $$\chi_{\Theta}^{\Lambda}(\Omega)=\chi^{\Lambda, P(\Lambda)}(\widetilde{\Omega^{\ess}}\cap \Lambda).$$ 
\end{definition}

\begin{remark}\label{minimal position means nonc} Note that if $\Omega^{\ess}$ is in minimal position with $\Lambda$, then $\Omega^{\ess}\cap \Lambda$ is an essential subsurface of $(\Lambda,P(\Lambda))$. If $\Omega^{\ess}\cap \Lambda$ has an inessential boundary component $\eta$, there are several cases to consider. If $\eta$ is a curve around a point in $P(\Lambda)$, then $\eta$ and $\bound{\Lambda}$ form a half-bigon. If $\eta$ bounds a disk along with an arc of $\partial \Lambda$ not containing a point of $P(\Lambda)$, then either $\eta$ forms a bigon with $\bound{\Lambda}$, or $\eta$ is an inessential arc on $\Theta$. The first case is a contradiction to the fact that $\Omega^{\ess}$ and $\Lambda$ are in minimal position. The second case is a contradiction to the fact that $\Omega^{\ess}$ is essential. Finally, $\eta$ cannot be a closed curve bounding a disk, since in this case $\Omega^{\ess}$ would not be essential. 
\end{remark}

\begin{lemma} The quantity $\chi_\Theta^\Lambda(\Omega)$ does not depend on choice of $\widetilde{\Omega^{\ess}}$.
\end{lemma}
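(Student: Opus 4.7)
The plan is to produce a self-homeomorphism of $\Theta$ that sends one choice of $\widetilde{\Omega^{\ess}}$ to the other while preserving $\Lambda$ setwise, and then to appeal to \cref{chi only depends on equivalence class}. Let $\widetilde{\Omega_1}$ and $\widetilde{\Omega_2}$ be two surfaces isotopic to $\Omega^{\ess}$ in minimal position with $\Lambda$. Both $\bound{\widetilde{\Omega_i}}$ and $\bound{\Lambda}$ are essential simple multicurves on $\Theta$, and by hypothesis $\bound{\Lambda}$ is in minimal position with each $\bound{\widetilde{\Omega_i}}$.

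Suppose first that no component of $\bound{\Omega^{\ess}}$ is isotopic on $\Theta$ to a component of $\bound{\Lambda}$. Apply \cref{minimal position is unique surface with boundary} with ambient surface $\Theta$ and empty set of marked points, taking $\alpha = \alpha' = \bound{\Lambda}$, $\beta = \bound{\widetilde{\Omega_1}}$, $\beta' = \bound{\widetilde{\Omega_2}}$. This yields a self-homeomorphism $\phi$ of $\Theta$, isotopic to the identity, with $\phi(\bound{\Lambda}) = \bound{\Lambda}$ and $\phi(\bound{\widetilde{\Omega_1}}) = \bound{\widetilde{\Omega_2}}$. Because $\phi$ is isotopic to the identity (hence orientation preserving on $\Theta$ and on the complement of every invariant multicurve), it sends $\Lambda$ to $\Lambda$ and $\widetilde{\Omega_1}$ to $\widetilde{\Omega_2}$ as oriented subsurfaces. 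The restriction $\phi|_\Lambda$ is a homeomorphism $(\Lambda, P(\Lambda)) \to (\Lambda, P(\Lambda))$ carrying $\widetilde{\Omega_1} \cap \Lambda$ to $\widetilde{\Omega_2} \cap \Lambda$, so by \cref{minimal position means nonc} both are essential subsurfaces of $(\Lambda, P(\Lambda))$ lying in the same equivalence class $[\,\cdot\,]$, and \cref{chi only depends on equivalence class} gives equality of their adjusted Euler characteristics.

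For the case in which $\bound{\Omega^{\ess}}$ and $\bound{\Lambda}$ share some isotopy classes, the idea is to perturb each $\widetilde{\Omega_i}$ slightly across the offending boundary components so that they no longer share isotopy classes with $\bound{\Lambda}$. In minimal position, any component of $\bound{\widetilde{\Omega_i}}$ isotopic to a component of $\bound{\Lambda}$ can be made disjoint from and parallel to it, and the oriented boundary of $\widetilde{\Omega_i}$ prescribes which side of $\bound{\Lambda}$ the perturbation must use, so the modification can be carried out consistently for both $i=1,2$. The effect of such a perturbation on $\widetilde{\Omega_i} \cap \Lambda$ is to add or remove annular null components in $(\Lambda, P(\Lambda))$, each of which contributes $0$ to $\chi^{\Lambda, P(\Lambda)}$ and does not change the equivalence class. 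After the perturbation, the previous paragraph applies, and we conclude the equality.

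The main obstacle I expect is verifying that the perturbation in the shared-isotopy-class case produces surfaces in the same isotopy class as $\Omega^{\ess}$ and still in minimal position with $\Lambda$, so that \cref{minimal position is unique surface with boundary} genuinely applies after perturbation. This amounts to checking that the orientations on $\bound{\Omega^{\ess}}$ force a canonical side of $\bound{\Lambda}$ to push off onto, which is a local verification near each shared-isotopy component. Everything else is an appeal to the uniqueness of minimal position and the basic invariance properties of $\chi^{\Lambda, P(\Lambda)}$ established earlier in the section.
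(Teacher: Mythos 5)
Your first case (no shared isotopy classes between $\bound{\Omega^{\ess}}$ and $\bound{\Lambda}$) is essentially the paper's argument and works. The genuine gap is in your treatment of the shared-isotopy-class case. Pushing a component of $\bound{\widetilde{\Omega_i}}$ off of the parallel component of $\bound{\Lambda}$ so that it becomes ``disjoint from and parallel to it'' does not change its isotopy class on $\Theta$ -- isotopy class is invariant under any such perturbation -- so after your modification $\bound{\widetilde{\Omega_i}}$ and $\bound{\Lambda}$ still share isotopic components and the hypothesis of \cref{minimal position is unique surface with boundary} still fails; your plan never removes the obstruction. Worse, the claim that the orientation of $\bound{\widetilde{\Omega_i}}$ forces a canonical side of $\bound{\Lambda}$ is false: the two minimal-position representatives $\widetilde{\Omega_1}$ and $\widetilde{\Omega_2}$ may legitimately place such a component on opposite sides of the parallel component of $\bound{\Lambda}$ (both placements are in minimal position and isotopic in $\Theta$), in which case $\widetilde{\Omega_1}\cap\Lambda$ and $\widetilde{\Omega_2}\cap\Lambda$ genuinely differ by null components. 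This ambiguity is precisely why the lemma asserts only equality of $\chi^{\Lambda,P(\Lambda)}$ and not isotopy of the intersections, and it is the actual content of the general case; the case you handle correctly is the easy one.

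The paper resolves this differently: it first reduces, via \cref{bigon criterion} and null component additions/eliminations applied to $\Lambda$, to the situation where no two components of $\bound{\Lambda}$ are isotopic in $\Theta$; it then isotopes each $\Omega_i$ so that the shared components lie exactly on the corresponding components $\bound{\Lambda}'$ of $\bound{\Lambda}$ and deletes every remaining parallel copy by null component eliminations. Each deleted piece lies entirely in $\Lambda$ or in $\Theta-\Lambda$, so the equivalence class $[\Omega_i\cap\Lambda]$ is unchanged, and \cref{chi only depends on equivalence class} transfers the conclusion back. Only after this surgery is \cref{minimal position is unique surface with boundary} invoked, applied to $\bound{\Lambda}$ together with $\bound{\Omega_i'}-\bound{\Lambda}'$, which by construction share no isotopy classes. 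If you want to salvage your outline, you need this ``align and delete parallel copies, work up to the equivalence class $[\,\cdot\,]$'' step rather than a perturbation. A smaller point in your first case: the homeomorphism $\phi$ produced with $P=\emptyset$ on $\Theta$ preserves $P(\Lambda)$ only setwise, so ``same equivalence class'' is not immediate; but equality of $\chi^{\Lambda,P(\Lambda)}$ still follows since that quantity is a homeomorphism invariant of the pair $(\Lambda,P(\Lambda))$ with the subsurface.
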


\begin{proof} Let $\Omega_1$ and $\Omega_2$ be two subsurfaces of $\Theta$ both isotopic to $\Omega^{\ess}$, both in minimal position with $\Lambda$. By \cref{minimal position means nonc}, it suffices to show that \begin{equation}\label{eq 3}\chi^{\Lambda, P(\Lambda)}(\Omega_1\cap \Lambda)=\chi^{\Lambda, P(\Lambda)}(\Omega_2\cap \Lambda).
\end{equation}

Next, we reduce to the case where no two components of $\bound{\Lambda}$ are isotopic in $\Theta$. If two components of $\bound{\Lambda}$ are isotopic in $\Theta$, then either $\Lambda$ or $\Theta-\Lambda$ contains a null component. By \cref{bigon criterion}, $\bound{\Omega_1}$ and $\bound{\Omega_2}$ intersect $\partial\Lambda$ the same number of times. Applying null component additions or eliminations to $\Lambda$ therefore changes both sides of \cref{eq 3} by the same amount. Hence, we may assume that neither $\Lambda$ nor $\Theta-\Lambda$ contains a null component, which means that no two components of $\bound{\Lambda}$ are isotopic in $\Theta$.

Let $\bound{\Lambda}'$ be the union of components of $\bound{\Lambda}$ which are isotopic to a component of $\bound{\Omega_1}$ (equivalently, $\bound{\Omega_2}$), as multicurves on $\Theta$. We now modify $\Omega_1$ and $\Omega_2$ to form subsurfaces $\Omega_1'$ and $\Omega_2'$. To construct $\Omega_1'$, we isotope $\Omega_1$ so that $\bound{\Lambda}'\subset \bound{\Omega_1}$, and remove all other components of $\bound{\Omega_1}$ isotopic to a component $\bound{\Lambda}'$ by applying null component eliminations to $\Omega_1$. Each such null component either lies in $\Lambda$ or $\Theta-\Lambda$, so $$[\Omega_1'\cap \Lambda]=[\Omega_1\cap \Lambda]$$ as subsurfaces of $\Lambda$. We construct $\Omega_2'$ analogously, so that $$[\Omega_2'\cap \Lambda]=[\Omega_2\cap \Lambda].$$

Now, by construction, $\bound{\Omega_1'}-\bound{\Lambda}'$ and $\bound{\Omega_2'}-\bound{\Lambda}'$ have no components isotopic to any component $\bound{\Lambda}$, and are both in minimal position with $\bound{\Lambda}$. By \cref{minimal position is unique surface with boundary}, the surfaces $\Omega_1'\cap \Lambda$ and $\Omega_2'\cap \Lambda$ are isotopic on $(\Lambda, P(\Lambda))$. So $$\chi^{\Lambda,P(\Lambda)}(\Omega_1'\cap \Lambda)=\chi^{\Lambda,P(\Lambda)}(\Omega_2'\cap \Lambda).$$ \cref{eq 3} now follows from \cref{chi only depends on equivalence class}.
\end{proof}

Like the adjusted Euler characteristic, the relative adjusted Euler characteristic is also nonpositive. Below, we list some more basic properties of the relative adjusted Euler characteristic.

\begin{lemma}[Constant on equivalence class]\label{chi only depends on equivalence class 2} Let $\Omega_1,\Omega_2\subset \Theta$ be subsurfaces. If $[\Omega_1]=[\Omega_2]$, then $$\chi_{\Theta}^{\Lambda}(\Omega_1)=\chi_{\Theta}^{\Lambda}(\Omega_2).$$
\end{lemma}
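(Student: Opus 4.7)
My plan is to reduce to a single null component operation, set up minimal-position representatives whose difference is a thin rectangle or annulus along a core curve put in minimal position with $\bound{\Lambda}$, and show that this thin piece intersects $\Lambda$ only in null components of $(\Lambda, P(\Lambda))$. The conclusion will then follow from \cref{chi only depends on equivalence class}.

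Since $[\Omega_1]=[\Omega_2]$, the subsurfaces $\Omega_1^{\ess}$ and $\Omega_2^{\ess}$ both reduce to the common subsurface $\widehat{[\Omega_1]}=\widehat{[\Omega_2]}$ by iterated null component eliminations. Concatenating these sequences, $\Omega_2^{\ess}$ is obtained from $\Omega_1^{\ess}$ by a finite sequence of null component additions and eliminations; by induction on its length, it suffices to treat the case where $\Omega_2^{\ess}$ is obtained from $\Omega_1^{\ess}$ by adding or removing a single null component $R$. In case (1) of \cref{null component elimination}, $R$ is a rectangle or annulus whose interior boundary consists of two parallel arcs or curves; in case (2), $R$ is an annulus between a component of $\partial \Theta$ and an inessential parallel closed curve.

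Let $\widetilde{\Omega_1^{\ess}}$ be a representative of $\Omega_1^{\ess}$ in minimal position with $\Lambda$. Let $\gamma$ be a core arc/curve of $R$ placed in minimal position with $\bound{\Lambda}$, and let $\widetilde{R}$ be a sufficiently thin rectangular or annular neighborhood of $\gamma$ (respectively, of the relevant component of $\partial \Theta$ in case (2)). Produce $\widetilde{\Omega_2^{\ess}}$ from $\widetilde{\Omega_1^{\ess}}$ by adjoining or removing $\widetilde{R}$, matching the direction of the null component operation. The new components of $\bound{\widetilde{\Omega_2^{\ess}}}$ are parallel copies of $\gamma$, each individually in minimal position with $\bound{\Lambda}$, and the thinness of $\widetilde{R}$ prevents the creation of any new bigons or half-bigons with $\bound{\widetilde{\Omega_1^{\ess}}}$; hence $\widetilde{\Omega_2^{\ess}}$ is also in minimal position with $\Lambda$.

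The crux is analyzing $\widetilde{R}\cap \Lambda$. Cutting $\widetilde{R}$ along the arcs of $\bound{\Lambda}\cap \widetilde{R}$ decomposes $\widetilde{R}\cap \Lambda$ into thin rectangles and annuli in $\Lambda$. I expect the main obstacle to be the case analysis verifying that each piece is a rectangular or annular component of $(\Lambda, P(\Lambda))$ in the sense of \cref{rectangular and annular subsurfaces}: one must examine every configuration of how the endpoints of $\gamma$ meet $\Lambda$, $\partial \Lambda$, and $P(\Lambda)$, as well as how $\widetilde{R}$ sits relative to $\Lambda$ in case (2). Granted this, $\widetilde{\Omega_2^{\ess}}\cap \Lambda$ differs from $\widetilde{\Omega_1^{\ess}}\cap \Lambda$ by null component additions or eliminations in $(\Lambda, P(\Lambda))$, so the two intersections lie in the same equivalence class. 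Applying \cref{chi only depends on equivalence class} gives
$$\chi^{\Lambda,P(\Lambda)}(\widetilde{\Omega_2^{\ess}}\cap \Lambda)=\chi^{\Lambda,P(\Lambda)}(\widetilde{\Omega_1^{\ess}}\cap \Lambda),$$
which by \cref{chi 2} is exactly the desired equality $\chi_\Theta^\Lambda(\Omega_1)=\chi_\Theta^\Lambda(\Omega_2)$.
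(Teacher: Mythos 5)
Your proposal is correct and follows essentially the same route as the paper: reduce to null component additions/eliminations between the essential parts, arrange (by isotopy) that each such null component meets $\Lambda$ only in null components of $(\Lambda,P(\Lambda))$, and conclude via \cref{chi only depends on equivalence class}. Your thin-neighborhood construction just makes explicit the paper's "isotoping as necessary" step (note the minimal-position check should be stated against $\bound{\Lambda}$, which is what your parallel-copies-of-$\gamma$ argument in fact gives).
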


\begin{proof} If $[\Omega_1]=[\Omega_2]$, then $\Omega_1^{\ess}$ may be obtained by applying null component additions or eliminations to $\Omega_2^{\ess}$. Isotoping as necessary, we may assume each such component also intersects $\Lambda$ in a null component. The lemma follows from \cref{chi only depends on equivalence class}.
\end{proof}

\begin{lemma}[Monotonicity] \label{monotonicity for chi 2} Let $\Omega_1,\Omega_2\subset \Theta$ be subsurfaces such that $\Omega_1\subset \Omega_2$. Then $$|\chi_{\Theta}^{\Lambda}(\Omega_1)|\leq |\chi_{\Theta}^{\Lambda}(\Omega_2)|.$$
\end{lemma}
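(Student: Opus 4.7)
The plan is to reduce Lemma \ref{monotonicity for chi 2} to its non-relative version, Lemma \ref{monotonicity for chi}, by passing to intersections with $\Lambda$. First, we may assume $\Omega_1$ and $\Omega_2$ are essential: replacing each by its essential part preserves the containment $\Omega_1\subset \Omega_2$ up to isotopy, and by definition does not change $\chi_\Theta^\Lambda$. Next, choose an ambient isotopy of $\Theta$ placing $\Omega_2$ in minimal position with $\Lambda$ and apply the same isotopy to $\Omega_1$, so that $\Omega_1\subset \Omega_2$ still holds and $\Omega_2$ is in minimal position with $\Lambda$. By Definition \ref{chi 2}, we then have $\chi_\Theta^\Lambda(\Omega_2)=\chi^{\Lambda,P(\Lambda)}(\Omega_2\cap \Lambda)$.

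The key intermediate step is to show that $\chi_\Theta^\Lambda(\Omega_1)=\chi^{\Lambda,P(\Lambda)}(\Omega_1\cap \Lambda)$ as well, despite $\Omega_1$ possibly failing to be in minimal position with $\Lambda$. Let $\widetilde{\Omega_1}$ be an isotope of $\Omega_1$ in minimal position with $\Lambda$. By Proposition \ref{bigon criterion}, the isotopy from $\Omega_1$ to $\widetilde{\Omega_1}$ can be realized as a sequence of bigon and half-bigon removals between $\bound{\Omega_1}$ and $\bound{\Lambda}$. A case analysis on whether the disk lies inside or outside each of $\Omega_1$ and $\Lambda$ shows that each such removal changes $\Omega_1\cap \Lambda$, viewed as a subsurface of $(\Lambda,P(\Lambda))$, by adding or removing a disk whose two boundary arcs form an inessential arc configuration on $(\Lambda,P(\Lambda))$. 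This is a disk addition or elimination in the sense of Definition \ref{disk elimination}; such operations preserve the equivalence class $[\cdot]$, so by Lemma \ref{chi only depends on equivalence class},
\[
\chi^{\Lambda,P(\Lambda)}(\Omega_1\cap \Lambda)=\chi^{\Lambda,P(\Lambda)}(\widetilde{\Omega_1}\cap \Lambda)=\chi_\Theta^\Lambda(\Omega_1).
\]

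Finally, since $\Omega_1\cap\Lambda \subset \Omega_2\cap\Lambda$ as subsurfaces of $(\Lambda, P(\Lambda))$, applying Lemma \ref{monotonicity for chi} to this pair gives $|\chi^{\Lambda,P(\Lambda)}(\Omega_1\cap \Lambda)|\leq |\chi^{\Lambda,P(\Lambda)}(\Omega_2\cap \Lambda)|$, which together with the two equalities above yields the desired inequality. The main obstacle is the bigon/half-bigon case analysis: one must verify that the arcs of $\bound{\Lambda}$ bounding these bigons can be chosen not to contain marked points of $P(\Lambda)=\bound{\Lambda}\cap \partial \Theta$ (by choosing the removing isotopy to avoid $P(\Lambda)$), and that in each of the four possible positions of the disk relative to $\Omega_1$ and $\Lambda$, the change to $\Omega_1\cap \Lambda$ really is a disk or null component modification of the form covered by Lemma \ref{chi only depends on equivalence class}.
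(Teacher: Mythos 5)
Your overall plan---reducing to \cref{monotonicity for chi} by intersecting with $\Lambda$---matches the paper, but your key intermediate step is false, and this is a genuine gap. You claim that moving an essential $\Omega_1$ into minimal position with $\Lambda$ only changes $\Omega_1\cap\Lambda$ by disk additions/eliminations, so that $\chi_\Theta^\Lambda(\Omega_1)=\chi^{\Lambda,P(\Lambda)}(\Omega_1\cap\Lambda)$ in any transverse position. This is correct for bigons and half-bigons lying in $\Lambda$, but not for a bigon lying in $\Theta-\Lambda$: removing such a bigon pushes an arc of $\bound{\Omega_1}$ across an arc $b\subset\bound{\Lambda}$ into $\Lambda$, and the effect on $\Omega_1\cap\Lambda$ is to add or delete a neighborhood of $b$, i.e.\ a type 1 $\partial$-surgery in the sense of \cref{boundary elementary move}, not a disk addition or elimination: no boundary component of $\Omega_1\cap\Lambda$ appears or disappears, components are instead reconnected and $|\bound{\Omega_1\cap\Lambda}\cap\partial\Lambda|$ changes by $2$, so $\chi^{\Lambda,P(\Lambda)}$ can change by $\tfrac12$ (cf.\ the proof of \cref{elementary move on chi 1}). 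Concretely: let $\Theta$ have genus $2$ with one boundary component, let $\Lambda$ be a once-holed torus cut off by a separating curve $s=\bound{\Lambda}$ (so $P(\Lambda)=\emptyset$), let $\hat c, \hat c'$ be parallel non-separating curves in the interior of $\Lambda$, and let $\Omega_1$ be the annulus they cobound, so $\chi_\Theta^\Lambda(\Omega_1)=0$. Isotoping $\hat c'$ by pushing a subarc out of $\Lambda$ across an arc $b\subset s$ gives a representative of $\Omega_1$ meeting $\Lambda$ in an annulus whose interior boundary is $\hat c$ together with an essential arc $a'$; every boundary component is essential, so $\chi^{\Lambda,P(\Lambda)}=0-\tfrac24=-\tfrac12\neq 0$. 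Passing to complements and using \cref{chi of complement}, the same configuration shows the naive quantity can also be \emph{smaller} in absolute value than $\chi_\Theta^\Lambda$ ($-\tfrac12$ versus $-1$), which is exactly the direction that breaks your chain of inequalities; so the argument cannot be salvaged merely by weakening your equality to an inequality.

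The repair is what the paper's proof does: put $\Omega_1^{\ess}$ and $\Omega_2^{\ess}$ \emph{simultaneously} in minimal position with $\Lambda$, preserving $\Omega_1^{\ess}\subset\Omega_2^{\ess}$. This is achievable by a small addition to your setup: after isotoping so that $\Omega_2^{\ess}$ is in minimal position with $\Lambda$, any innermost bigon or half-bigon between $\bound{\Omega_1^{\ess}}$ and $\bound{\Lambda}$ has interior disjoint from $\bound{\Omega_2^{\ess}}$, since $\bound{\Omega_2^{\ess}}$ is disjoint from $\bound{\Omega_1^{\ess}}$ and any arc of $\bound{\Omega_2^{\ess}}$ entering the disk would cut off a bigon or half-bigon with $\bound{\Lambda}$, contradicting minimality of $\Omega_2^{\ess}$. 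Hence each such bigon can be removed by an isotopy of $\Omega_1^{\ess}$ supported in the disk, which preserves the containment. Once both are in minimal position, $\chi_\Theta^\Lambda(\Omega_i)=\chi^{\Lambda,P(\Lambda)}(\Omega_i^{\ess}\cap\Lambda)$ for $i=1,2$ with $\Omega_1^{\ess}\cap\Lambda\subset\Omega_2^{\ess}\cap\Lambda$, and \cref{monotonicity for chi} finishes the proof, as in the paper.
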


\begin{proof} We may put $\Omega_1^{\ess}$ and $\Omega_2^{\ess}$ both in minimal position with $\Lambda$ so that $\Omega_1^{\ess}\subset \Omega_2^{\ess}$. The lemma now follows from \cref{monotonicity for chi}. 
\end{proof}

\subsection{Decomposition of surfaces and additivity} In this section, we prove that if $\Theta$ decomposes as a union of subsurfaces, then $\chi^{\Theta}$ is the sum of  the relative adjusted Euler characteristics associated to each of the subsurfaces. 

\begin{lemma}\label{chi is additive} Let $\Theta$ be a connected surface with boundary. Let $\zeta$ be an essential multicurve on $\Theta$ decomposing $\Theta$ into subsurfaces $\Lambda_1,...,\Lambda_k$. Let $\Omega\subset \Theta$ be a subsurface. Then $$\sum_{i=1}^k \chi_{\Theta}^{\Lambda_i}(\Omega)=\chi^{\Theta}(\Omega).$$
\end{lemma}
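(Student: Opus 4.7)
The plan is to isotope $\Omega$ so that $\Omega^{\ess}$ is in minimal position with the multicurve $\zeta$ on $\Theta$. Since $\bound{\Lambda_i}$ is a union of components of $\zeta$ for every $i$, this simultaneously puts $\Omega^{\ess}$ in minimal position with each $\Lambda_i$, so by \cref{minimal position means nonc} the intersection $\Omega^{\ess}\cap\Lambda_i$ is an essential subsurface of $(\Lambda_i,P(\Lambda_i))$. Thus \cref{chi 2} together with \cref{chi 0} give
\[
\chi_\Theta^{\Lambda_i}(\Omega)=\chi(\Omega^{\ess}\cap\Lambda_i)-\tfrac{1}{4}\bigl(|\bound{\Omega^{\ess}\cap\Lambda_i}\cap\partial\Lambda_i|+|(\Omega^{\ess}\cap\Lambda_i)\cap P(\Lambda_i)|\bigr),
\]
while the right-hand side of the lemma equals $\chi(\Omega^{\ess})-\tfrac{1}{4}|\bound{\Omega^{\ess}}\cap\partial\Theta|$.

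After summing the displayed formula over $i$, the task reduces to two counting identities. First, additivity of the classical Euler characteristic applied to the cover $\Omega^{\ess}=\bigcup_i(\Omega^{\ess}\cap\Lambda_i)$, whose pairwise intersections lie on the $1$-manifold $\Omega^{\ess}\cap\zeta$, yields $\sum_i\chi(\Omega^{\ess}\cap\Lambda_i)=\chi(\Omega^{\ess})+\chi(\Omega^{\ess}\cap\zeta)$. Second, I would enumerate boundary contributions: each transverse crossing of $\bound{\Omega^{\ess}}$ with $\zeta$ is an endpoint of a $\bound{\Omega^{\ess}}$-arc on $\partial\Lambda_i$ for two indices $i$, one on each side of the relevant component of $\zeta$; each point of $\bound{\Omega^{\ess}}\cap\partial\Theta$ sits on $\partial\Lambda_i$ for a single $i$; and each point of $\partial\zeta\cap\Omega^{\ess}$ belongs to $P(\Lambda_i)$ for two indices. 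Combining these gives
\[
\sum_i\bigl(|\bound{\Omega^{\ess}\cap\Lambda_i}\cap\partial\Lambda_i|+|(\Omega^{\ess}\cap\Lambda_i)\cap P(\Lambda_i)|\bigr)=2|\bound{\Omega^{\ess}}\cap\zeta|+|\bound{\Omega^{\ess}}\cap\partial\Theta|+2|\partial\zeta\cap\Omega^{\ess}|.
\]

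To close the computation, note that $\chi(\Omega^{\ess}\cap\zeta)$ is the number of arc components of this $1$-manifold, and the standard endpoint count gives $2\chi(\Omega^{\ess}\cap\zeta)=|\bound{\Omega^{\ess}}\cap\zeta|+|\partial\zeta\cap\Omega^{\ess}|$, since each arc endpoint is either a transverse crossing of $\bound{\Omega^{\ess}}$ with $\zeta$ or an endpoint of a $\zeta$-arc that lies in $\Omega^{\ess}$. Substituting both identities into $\sum_i\chi_\Theta^{\Lambda_i}(\Omega)$ causes the $\chi(\Omega^{\ess}\cap\zeta)$ term to cancel against the $\zeta$-crossing and $\partial\zeta$-endpoint contributions, leaving exactly $\chi(\Omega^{\ess})-\tfrac{1}{4}|\bound{\Omega^{\ess}}\cap\partial\Theta|=\chi^\Theta(\Omega)$.

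I expect the main obstacle to be the careful bookkeeping in the second identity: in particular verifying the multiplicity-two factors in the degenerate case where a single $\Lambda_i$ lies on both sides of a component of $\zeta$, and making sure that the minimal-position assumption (via \cref{minimal position means nonc}) keeps $\Omega^{\ess}\cap\Lambda_i$ essential so that no disk- or null-component corrections arise when passing from $\chi$ to $\chi^{\Lambda_i,P(\Lambda_i)}$. Once these subtleties are handled the lemma follows by direct substitution.
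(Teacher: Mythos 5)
Your proof is correct and follows essentially the same route as the paper's: reduce to an essential $\Omega$ in minimal position with $\zeta$, apply inclusion–exclusion for the Euler characteristic across the pieces $\Omega\cap\Lambda_i$ with correction term $\chi(\Omega\cap\zeta)$, and then count boundary points on $\zeta$ and $\partial\Theta$ with the appropriate multiplicities. The paper routes the bookkeeping through $\sum_i\chi(\Omega\cap\bound{\Lambda_i})=2\chi(\Omega\cap\zeta)$ rather than expanding $\chi(\Omega\cap\zeta)$ directly by endpoint count as you do, but the two tallies are equivalent and the cancellation is the same.
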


\begin{remark}\label{chi is additive 2} In particular, substituting $\Omega=\Theta$, we get $$\sum_{i=1}^k \chi^{\Theta}(\Lambda_i)=\chi(\Theta).$$
\end{remark}

\begin{proof}[Proof of \cref{chi is additive}] It suffices to prove the lemma statement for essential subsurfaces $\Omega$ whose interior boundary is in minimal position with $\zeta$. In other words, $\Omega$ is in minimal position with all the $\Lambda_i$. Hence, 
$$\chi^\Theta(\Omega)=\chi(\Omega)-\frac{|\bound{\Omega}\cap \partial \Theta|}{4}$$ and $$\chi_{\Theta}^{\Lambda_i}(\Omega)=\chi(\Omega\cap \Lambda_i)-\frac{|\bound{\Omega\cap \Lambda_i}\cap \partial \Lambda_i|+|\Omega\cap \Lambda_i\cap P(\Lambda_i)|}{4}.$$

Now, 
\begin{align*}\chi(\Omega)&=-\chi(\Omega\cap \zeta)+\sum_{i=1}^k\chi(\Omega\cap \Lambda_i)\\&=-\frac{1}{2}\sum_{i=1}^k \chi(\Omega\cap \bound{\Lambda_i})+\sum_{i=1}^k\chi(\Omega\cap \Lambda_i),
\end{align*} since $\cup_{i=1}^k \bound{\Lambda_i}$ traverses $\zeta$ twice.

The set $\Omega\cap \bound{\Lambda_i}$ is a union of arcs. Its Euler characteristic may be computed by counting the number of endpoints of the arcs. An endpoint of  $\Omega\cap \bound{\Lambda_i}$ is an intersection point of $\partial \Omega$ and $\bound{\Lambda_i}$ i.e. the set $\partial \Omega\cap \bound{\Lambda_i}$. So 
\begin{align*}\frac{1}{2}\sum_{i=1}^k\chi(\Omega\cap \Lambda_i)&=\sum_{i=1}^k\frac{|\partial \Omega\cap \bound{\Lambda_i}|}{4}\\&=\sum_{i=1}^k\frac{|\bound{\Omega}\cap \bound{ \Lambda_i}|}{4}+\frac{|\Omega\cap \partial \Theta\cap \bound{\Lambda_i}|}{4},
\end{align*}
where the last step follows splitting $\partial \Omega$ into the interior and exterior boundary of $\Omega$ viewed as a subsurface of $\Theta$. 

Now, $$|\Omega\cap \partial \Theta\cap \bound{\Lambda_i}|=|\Omega\cap \Lambda_i\cap P(\Lambda_i)|$$ by definition of $P(\Lambda_i)$. Also, $$|\bound{\Omega}\cap \partial \Theta|+\sum_{i=1}^k|\bound{\Omega}\cap \bound{ \Lambda_i}|=\sum_{i=1}^k|\bound{\Omega\cap \Lambda_i}\cap \partial \Lambda_i|.$$ The lemma follows.
\end{proof}

\subsection{Elementary moves on a subsurface} In this section, we introduce certain combinatorial operations on subsurfaces that we call elementary moves.

\begin{definition}\label{surgery on a subsurface} Let $\Omega\subset (\Lambda,P)$ be a subsurface. Let $\delta$ be an embedded arc whose interior is contained in $\Lambda-\bound{\Omega}$ and endpoints in $\bound{\Omega}\cup \partial \Lambda$. We consider a surgery applied to $\Omega$, along $\delta$.

If $\delta$ is an arc with both endpoints on $\bound{\Omega}$, then we call the surgery type 1. If $\delta$ is an arc with one endpoint is on $\bound{\Omega}$ and one endpoint is on $\partial \Lambda$, we call the surgery type 2. (Note that if $\delta$ is an arc with both endpoints on $\partial \Lambda$, then surgering along $\delta$ is equivalent to a null component addition.)
\end{definition}

\begin{definition}\label{boundary elementary move} Let $\Omega\subset (\Lambda,P)$ be a subsurface. A $\partial$-surgery is a surgery along an embedded arc $\delta$ whose interior is contained in $\partial \Lambda-((\bound{\Omega}\cap \partial \Lambda) \cup P)$ and endpoints in $(\bound{\Omega}\cap \partial \Lambda)\cup P$.

If $\delta$ is an arc with both endpoints on $\bound{\Omega}\cap \partial \Lambda$, we call the $\partial$-surgery type 1. If $\delta$ is an arc with one endpoint on $\bound{\Omega}\cap \partial \Lambda$ and one endpoint on $P$, we call the $\partial$-surgery type 2. (Note that if $\delta$ is an arc with both endpoints on $P$, then surgering along $\delta$ is equivalent to a null component addition.)
\end{definition}

Applying a surgery or $\partial$-surgery to a subsurface results in another subsurface. The isotopy class of the new subsurface only depends on the isotopy class of the curve along which surgery is done. A surgery may be inverted by applying another surgery. The inverse of a type 1 surgery is another type 1 surgery. The inverse of a type 2 surgery is a type 1 $\partial$-surgery. The inverse of a type 2 $\partial$-surgery is another type 2 $\partial$-surgery. 

\begin{definition} Let $\Omega\subset (\Lambda,P)$ be a subsurface. An elementary move on $\Omega$ is a surgery of type 1 or 2, $\partial$-surgery of type 1 or 2, disk addition, disk elimination, null component addition or null component elimination.
\end{definition}

\begin{notation} We write ``$\Omega$ and $\Omega'$ are related by a type 2 surgery'' if we may obtain $\Omega'$ by applying a type 2 surgery to $\Omega$. We write analogously for the other elementary moves as well. The order matters for type 2 surgeries, type 1 $\partial$-surgeries, disk additions, disk eliminations, null component additions and null component eliminations.
\end{notation}

Elementary moves give a distance on the set of subsurfaces of $(\Lambda,P)$, and on the set of equivalence classes of subsurfaces of $(\Lambda, P)$.

\begin{definition} Let $\Omega,\Omega'\subset (\Lambda,P)$ be subsurfaces. We define $d(\Omega,\Omega')$ to be the minimum nonnegative integer $k$ such that $\Omega$ and $\Omega'$ are related by a sequence of $k$ elementary moves.
\end{definition}

\begin{definition} Let $\Omega,\Omega'\subset (\Lambda,P)$ be subsurfaces. We define $d([\Omega],[\Omega'])$ to be the minimum nonnegative integer $k$ such that $\Omega$ and $\Omega'$ are related by a sequence of elementary moves, which contains $k$ number of surgeries of type 1 and 2 and $\partial$-surgeries of type 1 and 2. (The sequence may contain any number of disk additions, disk eliminations, null component additions and null component eliminations.) By construction, the distance only depends on the equivalence classes of $\Omega$ and $\Omega'$.
\end{definition}

We now describe the relationship between these two distances.

\begin{lemma} \label{distance between null eliminated surfaces} If $\Omega$ and $\Omega'$ are subsurfaces with $d(\Omega,\Omega')=1$, then $$d(\widehat{[\Omega]},\widehat{[\Omega']})\leq 3.$$
\end{lemma}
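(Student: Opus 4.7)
The proof splits into two cases depending on the type of the single elementary move relating $\Omega$ to $\Omega'$.

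\textbf{Case 1: Non-surgery moves.} Suppose the move is a disk addition, disk elimination, null component addition, or null component elimination. By \cref{disk elimination} and \cref{null component elimination}, $\bound{\Omega}$ and $\bound{\Omega'}$ differ only by inessential components or by a pair of isotopic essential components bounding a null component. In either situation, $\Omega^{\ess}$ and $(\Omega')^{\ess}$ either are isotopic or differ by a null component. The iterated null component eliminations used to construct $\widehat{[\cdot]}$ therefore produce the same canonical representative, so $\widehat{[\Omega]} = \widehat{[\Omega']}$ and $d(\widehat{[\Omega]}, \widehat{[\Omega']}) = 0$.

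\textbf{Case 2: Surgery moves.} Suppose instead that $\Omega'$ is obtained from $\Omega$ by a surgery (of type 1, 2, $\partial$-1, or $\partial$-2) along an arc $\delta$. The plan is to build a length-$3$ path from $\widehat{[\Omega]}$ to $\widehat{[\Omega']}$ of the form (surgery) + (at most two cleanup moves).

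After isotoping $\delta$ in $\Lambda$ to avoid the interiors of the disks and null components eliminated when passing from $\Omega^{\ess}$ to $\widehat{[\Omega]}$, I choose an arc $\delta'$ on $\widehat{[\Omega]}$, of the same surgery type, whose endpoints lie on $\bound{\widehat{[\Omega]}}$ (or on $\partial\Lambda$, as appropriate). Surgery of $\widehat{[\Omega]}$ along $\delta'$ is one elementary move and produces a subsurface $\Omega''$. Because the isotopy from $\delta$ to $\delta'$ is performed through disks and null components, the effect of the two surgeries agrees up to the addition or deletion of null/inessential pieces, so $[\Omega''] = [\Omega']$ and hence $\widehat{[\Omega'']} = \widehat{[\Omega']}$.

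To finish, I bound the number of cleanup moves needed to pass from $\Omega''$ to $\widehat{[\Omega'']}$. Since $\widehat{[\Omega]}$ and $\Lambda - \widehat{[\Omega]}$ both have no null components, the surgery along $\delta'$ modifies $\bound{\widehat{[\Omega]}}$ only locally near the endpoints of $\delta'$: it either splits one component of $\bound{\widehat{[\Omega]}}$ into two or merges two components into one. Each of the (at most two) resulting new boundary components can contribute at most one new null or inessential piece to $\bound{\Omega''}$: either by becoming inessential itself, or by becoming isotopic to an existing essential component and thereby bounding a null region. Hence at most two disk or null component eliminations bring $\Omega''$ to $\widehat{[\Omega']}$.

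Combining, $d(\widehat{[\Omega]}, \widehat{[\Omega']}) \leq 1 + 2 = 3$.

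\textbf{Main obstacle.} The delicate point is the local analysis in Case 2: verifying in each of the four surgery types that (i) the arc $\delta$ can be replaced by an arc $\delta'$ on $\widehat{[\Omega]}$ whose surgery lies in $[\Omega']$, and (ii) at most two extra null/inessential components arise. The subtlety is when an endpoint of $\delta$ sits on a boundary component of $\Omega$ that has been eliminated in $\widehat{[\Omega]}$; handling this requires case analysis on how $\delta$'s endpoints meet the null and inessential components of $\bound{\Omega}$.
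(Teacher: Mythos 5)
Your Case 1 and the subcase of Case 2 where $\partial\delta$ avoids the eliminated pieces agree with the paper. The genuine problem is the flagged case, and it is not just a delicate verification: the shape of your path, one surgery on $\widehat{[\Omega]}$ followed by at most two disk/null component \emph{eliminations}, cannot always reach $\widehat{[\Omega']}$. Concretely, suppose $\delta\subset\Omega$ joins two boundary curves $c_1$ and $c_1'$ of $\Omega$, where $c_1$ is isotopic to another boundary curve $c_2$ with the annulus between them a complement component, and likewise $c_1'$ is paired with $c_2'$; both annuli are absorbed when forming $\widehat{[\Omega]}$, so none of $c_1,c_2,c_1',c_2'$ appears in $\bound{\widehat{[\Omega]}}$. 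After the surgery, $\bound{\Omega'}$ contains $c_1\#_\delta c_1'$, $c_2$ and $c_2'$, and for generic choices these are three pairwise non-isotopic essential curves, no further reductions occur, and all of them appear in $\bound{\widehat{[\Omega']}}$. A single surgery changes $\bound{\widehat{[\Omega]}}$ by creating at most two new components, eliminations only delete components, and a null component addition cannot be the last move (its result always carries a null component, so it is never of the form $\widehat{[\,\cdot\,]}$). Hence no path of your proposed form (surgery plus cleanup eliminations) of length $3$ exists in this case; your step (i) — that $\delta$ can be traded for an arc $\delta'$ on $\widehat{[\Omega]}$ whose surgery lands in $[\Omega']$ — is false here, since any single surgery on $\widehat{[\Omega]}$ produces a class whose reduced representative has at most two boundary curves not already in $\bound{\widehat{[\Omega]}}$.

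The fix is to reverse the order of operations, which is exactly what the paper does: first apply at most two disk or null component \emph{additions} to $\widehat{[\Omega]}$ to restore precisely those eliminated pieces that meet $\partial\delta$ (in the example above, re-create the two annular complement components, restoring $c_1,c_2,c_1',c_2'$), and then perform the surgery along the original arc $\delta$. This gives at most $1+2=3$ moves and lands in the right class, with no need to construct a substitute arc $\delta'$ or to control cleanup after the surgery. Your proposal works as written only when at most one endpoint of $\delta$ meets an eliminated piece, and even there the replacement arc is generally not obtained from $\delta$ by an isotopy through the eliminated disks and null components (for an absorbed annulus one must let the arc run around its core), so the paper's ordering is the cleaner route in all cases.
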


\begin{proof} 
Suppose $\Omega$ and $\Omega'$ are related by a surgery or $\partial$-surgery, along an arc $\delta$. Assume $\delta\subset \Omega$. If $\partial \delta$ does not lie on any disk or null component of $\Omega$ or $\Lambda-\Omega$, then $\widehat{[\Omega]}$ and $\widehat{[\Omega']}$ are related by a surgery along $\delta$. If one endpoint of $\partial \delta$ lies on a disk or null component of $\Omega$ or $\Lambda-\Omega$, then $\widehat{[\Omega]}$ and $\widehat{[\Omega']}$ are related by the composition of a disk  or null component addition, and a surgery along $\delta$. If both endpoints of $\partial \delta$ lie on a disk or null component of $\Omega$ or $\Lambda-\Omega$, then $\widehat{[\Omega]}$ and $\widehat{[\Omega']}$ are related by the composition of one or two disk or null component additions, and a surgery along $\delta$. The case wherein $\delta\subset \Lambda-\Omega$ is similar.

If $\Omega$ and $\Omega'$ are related by a disk addition, disk elimination, null component addition, or null component elimination, then $\widehat{[\Omega]}=\widehat{[\Omega']}$, so the lemma follows.
\end{proof}

\begin{cor}\label{distance between null eliminated surfaces 2} If $\Omega$ and $\Omega'$ are subsurfaces with $d([\Omega],[\Omega'])=1$, then $$d(\widehat{[\Omega]},\widehat{[\Omega']})\leq 3.$$
\end{cor}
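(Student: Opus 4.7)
The plan is to reduce the corollary to \cref{distance between null eliminated surfaces} (the preceding lemma) by isolating the unique surgery-type move in a minimal sequence realizing $d([\Omega],[\Omega'])=1$.

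By definition of $d([\Omega],[\Omega'])$, there exists a sequence of elementary moves
\[
\Omega = \Omega_0 \to \Omega_1 \to \cdots \to \Omega_n = \Omega'
\]
containing exactly one surgery or $\partial$-surgery (of type $1$ or $2$), with every other move being a disk addition, disk elimination, null component addition, or null component elimination. Let $i$ be the unique index so that $\Omega_i \to \Omega_{i+1}$ is the one surgery or $\partial$-surgery.

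The first step is to observe that disk and null component additions and eliminations do not change the equivalence class: each such move either introduces or removes an inessential component (killed in passing to $\Omega^{\ess}$) or a null component (killed in passing from $\Omega^{\ess}$ to $\widehat{[\Omega]}$). Therefore $[\Omega_0]=[\Omega_1]=\cdots=[\Omega_i]$ and $[\Omega_{i+1}]=\cdots=[\Omega_n]$, so
\[
\widehat{[\Omega]}=\widehat{[\Omega_i]} \quad \text{and} \quad \widehat{[\Omega']}=\widehat{[\Omega_{i+1}]}.
\]

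The second step is to apply \cref{distance between null eliminated surfaces} directly to $\Omega_i$ and $\Omega_{i+1}$. Since they are related by a single elementary move, $d(\Omega_i,\Omega_{i+1})=1$, hence the lemma gives $d(\widehat{[\Omega_i]},\widehat{[\Omega_{i+1}]})\leq 3$. Substituting the equalities above yields $d(\widehat{[\Omega]},\widehat{[\Omega']})\leq 3$.

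This argument is essentially formal — the only thing to verify is the first step, that the four move types not counted by $d([\cdot\,],[\cdot\,])$ preserve $\widehat{[\,\cdot\,]}$, which is immediate from the definitions of $\Omega^{\ess}$ and $\widehat{[\Omega]}$. There is no genuine obstacle; the corollary is a packaging statement that isolates the single ``nontrivial'' move in a sequence and delegates the work to the preceding lemma.
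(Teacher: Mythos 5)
Your proof is correct and is essentially the paper's own argument: the paper likewise isolates the single surgery/$\partial$-surgery by noting that the remaining moves preserve the equivalence class, so some $\widetilde{\Omega}\in[\Omega]$ and $\widetilde{\Omega'}\in[\Omega']$ satisfy $d(\widetilde{\Omega},\widetilde{\Omega'})=1$, and then applies \cref{distance between null eliminated surfaces}. Your write-up just makes explicit the observation that disk and null component additions/eliminations do not change $\widehat{[\,\cdot\,]}$, which the paper leaves implicit.
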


\begin{proof} For some $\widetilde{\Omega}\in [\Omega]$ and $\widetilde{\Omega'}\in [\Omega']$, $d(\widetilde{\Omega},\widetilde{\Omega'})=1$. The lemma now follows from \cref{distance between null eliminated surfaces}.
\end{proof}

Next, we describe how elementary moves change the adjusted Euler characteristic.

\begin{lemma}\label{elementary move on chi 1} Let $\Omega,\Omega'\subset (\Lambda,P)$ be essential subsurfaces with $d(\Omega,\Omega')=1$. Then $$|\chi^{\Lambda, P}(\Omega)-\chi^{\Lambda,P}(\Omega')|\leq 1.$$
\end{lemma}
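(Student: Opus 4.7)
The plan is a case analysis on the eight types of elementary moves, comparing the three ingredients $\chi(\Omega^{\ess})$, $|\bound{\Omega^{\ess}}\cap \partial \Lambda|$, and $|\Omega^{\ess}\cap P|$ of $\chi^{\Lambda,P}$ before and after the move.

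The additions and eliminations of disks and null components are the easy cases. A disk addition or disk elimination changes the number of inessential components of $\bound{\Omega}$ by exactly one, so when $\Omega$ and $\Omega'$ are both assumed essential these moves cannot occur. A null component addition or elimination that preserves essentiality must be of case (1) in \cref{null component elimination} (adding or removing a pair of isotopic essential arcs or closed curves bounding a null component of $\Omega$ or of $\Lambda-\Omega$). In that situation the null component is stripped off when forming $\widehat{[\Omega]}$ and $\widehat{[\Omega']}$, so $[\Omega]=[\Omega']$, and then \cref{chi only depends on equivalence class} gives $\chi^{\Lambda,P}(\Omega)=\chi^{\Lambda,P}(\Omega')$.

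The substantive cases are the four surgery types. For each, let $R$ be a small rectangular (or half-rectangular, in the $\partial$-surgery cases) tubular neighborhood of the arc $\delta$; then either $\Omega'=\overline{\Omega\setminus R}$ (if $\delta\subset\Omega$) or $\Omega'=\Omega\cup R$ (if $\delta\subset\Lambda-\Omega$). In both sub-cases inclusion-exclusion on $\chi$ expresses $\Delta\chi(\Omega)$ in terms of $\chi(\partial R\cap \Omega)$ or $\chi(\partial R\cap \Omega')$, which I read off by deciding which sides of $R$ remain attached to the new $\Omega'$ and which become detached because they sit on $\bound{\Omega}$ or on $\partial\Lambda$. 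A parallel bookkeeping of where the sides of $\partial R$ meet $\partial\Lambda$ and $P$ gives the changes in the other two ingredients. The net contributions work out as follows: $\Delta\chi^{\Lambda,P}=\pm 1$ for a type 1 surgery (a $\pm 1$ from $\chi$, with no change in either count); $\pm\tfrac{1}{2}$ for a type 2 surgery (either $\Delta\chi=\pm 1$ together with $\Delta|\bound{\Omega}\cap\partial\Lambda|=\pm 2$, or $\Delta\chi=0$ together with $\Delta|\bound{\Omega}\cap\partial\Lambda|=\pm 2$, depending on which side $\delta$ lies on); $\pm\tfrac{1}{2}$ for a type 1 $\partial$-surgery (no change in $\chi$, $\pm 2$ change in the endpoint count); and $\pm\tfrac{1}{4}$ for a type 2 $\partial$-surgery (no change in $\chi$ or in the endpoint count, but the single point of $P$ at the far end of $\delta$ switches between $\Omega$ and $\Lambda-\Omega$).

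The main obstacle is the careful bookkeeping within each surgery type, in particular deciding which sides of $R$ are absorbed into $\bound{\Omega'}$ and which become detached because they lie on $\bound{\Omega}$ or on $\partial\Lambda$; this has to be done separately for $\delta\subset\Omega$ and $\delta\subset\Lambda-\Omega$. Once the sub-cases are pinned down, $|\chi^{\Lambda,P}(\Omega)-\chi^{\Lambda,P}(\Omega')|\leq 1$ is immediate in every sub-case, and in fact the inequality is sharp only for type 1 surgeries.
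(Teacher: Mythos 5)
Your proposal is correct and follows essentially the same route as the paper: a case analysis over the elementary move types, tracking the three ingredients $\chi(\Omega)$, $|\bound{\Omega}\cap\partial\Lambda|$ and $|\Omega\cap P|$ of $\chi^{\Lambda,P}$, with net changes $\pm 1$, $\pm\tfrac12$, $\pm\tfrac12$, $\pm\tfrac14$, $0$ for the four surgery types and the null component moves. The only (harmless) differences are that you exclude disk additions/eliminations outright via the essentiality hypothesis, whereas the paper bounds their effect on the raw quantity for not-necessarily-essential subsurfaces, and that in one of the two sub-cases of a type 1 $\partial$-surgery the change is $\Delta\chi=\mp1$ with $\Delta|\bound{\Omega}\cap\partial\Lambda|=\mp2$ rather than $\Delta\chi=0$, which still gives the net $\pm\tfrac12$ you claim.
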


\begin{proof} 
It suffices to prove that a disk elimination or addition, surgery or $\partial$-surgery applied to any subsurface $\Omega$ (not necessarily essential) changes the quantity 
$$\chi(\Omega)-\frac{|\bound{ \Omega}\cap \partial \Lambda|+|\Omega\cap P|}{4}
$$ by at most $1$. (Note that null component additions and eliminations do not change this quantity.) 

A disk addition or elimination either adds or a removes a disk from $\Omega$ or $\Lambda-\Omega$. Switching $\Omega$ and $\Omega'$ as necessary, we may assume that it  removes a disk from $\Omega$. In this case $\chi(\Omega)$ decreases by $1$, while the number of points in $(\bound{\Omega}\cap \partial \Lambda)\cup(\Omega\cap P)$ decreases by at most $3$. Hence the relevant quantity changes by at most $1$. 

A surgery of type 1 or 2, or $\partial$-surgery of type 1 or 2, is a surgery along an arc $\delta$ which is contained either in $\Omega$ or $\Lambda-\Omega$. We may assume $\delta\subset \Omega$, switching $\Omega$ and $\Omega'$ as necessary. It suffices to consider the cases of surgeries of type 1 and 2 and $\partial$-surgeries of type 2, since a type 1 $\partial$-surgery is just the inverse of a type 2 surgery.

A type 1 surgery applied to $\Omega$ along an arc contained in $\Omega$ increases $\chi(\Omega)$ by $1$, and does not change any other term. A type 2 surgery applied to $\Omega$ along an arc contained in $(\partial \Lambda-P) \cap \Omega$ increases $$\frac{|\bound{ \Omega}\cap \partial \Lambda|}{4}$$ by $1/2$, but keeps the other terms constant. A type 2 $\partial$-surgery applied to $\Omega$ along an arc contained in $\Omega$ decreases $$\frac{|\Omega\cap P|}{4}$$ by $1/4$, and keeps the other terms constant.
\end{proof}

\begin{lemma}\label{elementary move on chi 2} Let $\Theta$ be a connected surface with boundary. Let $\zeta$ be an essential multicurve on $\Theta$ decomposing $\Theta$ into subsurfaces $\Lambda_1,...,\Lambda_k$. Let $\Omega,\Omega'\subset \Theta$ be essential subsurfaces related by an elementary move. Then $$\sum_{i=1}^k |\chi_{\Theta}^{\Lambda_i}(\Omega)-\chi_{\Theta}^{\Lambda_i}(\Omega')|\leq 1.$$
\end{lemma}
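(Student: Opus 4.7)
The plan is to analyze each elementary move and its effect on each restriction $\Omega\cap \Lambda_i$, applying \cref{elementary move on chi 1} (and the explicit changes established in its proof) inside each $(\Lambda_i, P(\Lambda_i))$.

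Null component additions and eliminations preserve the equivalence class $[\Omega]$, so each $\chi_\Theta^{\Lambda_i}$ is unchanged by \cref{chi only depends on equivalence class 2}. For a disk addition or elimination, the disk involved is contractible and may be isotoped to lie entirely inside a single $\Lambda_i$; then $[\Omega \cap \Lambda_j]=[\Omega' \cap \Lambda_j]$ for $j\ne i$, and the $i$-th summand is bounded by $1$ by \cref{elementary move on chi 1} applied inside $(\Lambda_i, P(\Lambda_i))$.

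The main case is a surgery of type 1 or 2 or a $\partial$-surgery of type 1 or 2 along an arc $\delta$. Put $\delta$ transverse to $\zeta$, and write $\delta = \delta_1 \cup \cdots \cup \delta_{m+1}$, where $m = |\delta \cap \zeta|$ and each $\delta_j$ lies in some $\Lambda_{i_j}$; the surgery on $\Omega$ along $\delta$ restricts to a surgery on $\Omega \cap \Lambda_{i_j}$ along $\delta_j$ in each $\Lambda_{i_j}$. If $m = 0$, then $\delta$ lies in a single $\Lambda_i$ and the induced move is of the same type as the original, so the $i$-th summand is bounded by $1$ by \cref{elementary move on chi 1}. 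If $m \geq 1$, each middle sub-arc $\delta_j$ (for $1 < j < m+1$) has both endpoints on $\zeta\subset \partial\Lambda_{i_j}$ (or on $P(\Lambda_{i_j})$ when $\delta$ is a $\partial$-surgery arc), so by definition it induces a null component addition in $(\Lambda_{i_j}, P(\Lambda_{i_j}))$, contributing nothing. Each outer sub-arc $\delta_1$ or $\delta_{m+1}$ inherits one endpoint from $\delta$: if this endpoint lies on $\bound{\Omega}$, the sub-arc is a type 2 surgery arc (changing $\chi^{\Lambda_{i_j}, P(\Lambda_{i_j})}$ by $1/2$ in absolute value, by the proof of \cref{elementary move on chi 1}); if on $\partial\Theta$ only, the sub-arc has both endpoints on $\partial\Lambda_{i_j}$ and induces a null component addition; if on $\bound{\Omega}\cap \partial\Theta$ (only when $\delta$ is a $\partial$-surgery arc), the sub-arc is a type 2 $\partial$-surgery arc (changing $\chi^{\Lambda_{i_j}, P(\Lambda_{i_j})}$ by $1/4$). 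In every case each outer sub-arc contributes at most $1/2$, so the two outer sub-arcs together contribute at most $1$.

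The main obstacle is the bookkeeping in the surgery case: classifying each $\delta_j$ as an induced elementary move of the correct type in $(\Lambda_{i_j}, P(\Lambda_{i_j}))$, invoking the tight bounds from the proof of \cref{elementary move on chi 1} rather than the coarse bound of $1$ stated in the lemma, and handling the subcase in which both outer sub-arcs fall into the same $\Lambda_i$ (where their contributions combine into a single summand still bounded by $1$).
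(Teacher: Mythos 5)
There is a genuine gap in the surgery case. Your tally computes the effect of the induced sub-moves on the \emph{naive} restrictions $\Omega'\cap\Lambda_{i_j}$ of the surgered surface, but $\chi_{\Theta}^{\Lambda_i}(\Omega')$ is by \cref{chi 2} computed from a representative of $\Omega'$ in \emph{minimal position} with $\Lambda_i$, and the surgered surface need not be one: even if $\bound{\Omega}$ is in minimal position with $\zeta$ and $\delta$ is made not merely transverse but bigon-free with $\zeta$, the two new boundary arcs parallel to $\delta$ can create bigons or half-bigons between $\bound{\Omega\pm\delta}$ and $\zeta$ near the endpoints of $\delta$. This is exactly the phenomenon handled in the proof of \cref{minimal isotopy exists}, where restoring minimal position costs up to four additional type 1 or type 2 $\partial$-surgeries on the pieces; each such correction shifts the naive per-piece value by up to $1/2$, so your bound of $1$ is established for the wrong quantities, and as far as your argument shows the true sum $\sum_i|\chi_{\Theta}^{\Lambda_i}(\Omega)-\chi_{\Theta}^{\Lambda_i}(\Omega')|$ could exceed $1$. (You also never place $\Omega$ itself, or $\delta$, in minimal position with $\zeta$, which your decomposition implicitly assumes.) Repairing this by tracking the corrective moves and their signs would amount to redoing a large part of \cref{minimal isotopy exists} together with a cancellation argument.

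The paper sidesteps all of this bookkeeping: an elementary move always has $\Omega\subset\Omega'$ or $\Omega'\subset\Omega$, so by monotonicity (\cref{monotonicity for chi 2}) and nonpositivity every difference $\chi_{\Theta}^{\Lambda_i}(\Omega)-\chi_{\Theta}^{\Lambda_i}(\Omega')$ has the same sign; hence the sum of absolute values equals
$$\Bigl|\sum_{i=1}^k\bigl(\chi_{\Theta}^{\Lambda_i}(\Omega)-\chi_{\Theta}^{\Lambda_i}(\Omega')\bigr)\Bigr|=\bigl|\chi^{\Theta}(\Omega)-\chi^{\Theta}(\Omega')\bigr|\leq 1,$$
using additivity (\cref{chi is additive}) and \cref{elementary move on chi 1}. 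That containment-plus-monotonicity observation is the missing idea; with it, no analysis of how $\delta$ meets $\zeta$ is needed. (Minor remark: since $\Omega,\Omega'$ are both essential, the disk addition/elimination case you treat separately cannot occur.)
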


\begin{proof} Either $\Omega\subset \Omega'$ or $\Omega'\subset \Omega$. The lemma follows from \cref{chi is additive}, \cref{elementary move on chi 1} and monotonicity.
\end{proof}

\section{Paths of subsurfaces} \label{paths of subsurfaces}

\subsection{Continuous paths of subsurfaces} \begin{definition}\label{continuous path of subsurfaces} Let $\Lambda$ be a surface with boundary, with $P$ a set of marked points on $\partial \Lambda$. A continuous path of subsurfaces $\Delta=\{\Omega^t\}$ of $(\Lambda,P)$ is a $1$-parameter family of subsurfaces $\Omega^t\subset (\Lambda,P)$, for $t\in [0,1]$, which is an isotopy except at a discrete set of $t$, when there is a single elementary move.

If the elementary move is a surgery or $\partial$-surgery along a $\delta\subset \Lambda-\Omega^t$, we add a neighborhood of this arc to $\Omega^t$ at time $t$. If the elementary move is surgery or $\partial$-surgery along $\delta\subset \Omega^t$, we remove a neighborhood of $\delta$ from $\Omega^t$ for all time $t+\epsilon$, for $\epsilon$ sufficiently small.

If the elementary move is a disk addition, disk elimination, null component addition or null component elimination, a disk or null component is either added to or removed from $\Omega^t$. In the former case, we add the disk or null component to $\Omega^t$ at time $t$. In the latter case, we remove the disk or null component from $\Omega^t$ for all times $t+\epsilon$, for $\epsilon$ sufficiently small.
\end{definition}

Given a continuous path $\Delta=\{\Omega^t\}$ of subsurfaces of $(\Lambda,P)$, we denote by $\Lambda-\Delta$ the continuous path $\{\Lambda-\Omega^t\}$.

\begin{definition} Let $f:(\Lambda,P)\to (\Lambda, P)$ be a homeomorphism. A continuous path of subsurfaces $\Delta=\{\Omega^t\}$ is $f$-twisted if $f(\Omega^0)$ is isotopic to $\Omega^1$.
\end{definition}

\begin{definition}\label{length of continuous path}
Given a continuous path of subsurfaces $\Delta$, its length, denoted $\length(\Delta)$, is the total number of elementary moves in $\Delta$. Its equivalence class length, denoted $\length\left([\Delta]\right)$, is the total number of surgeries of type 1 and 2 and $\partial$-surgeries of type 1 and 2 in $\Delta$.
\end{definition}

\begin{definition} \label{3 dim realization} Associated to any continuous path of subsurfaces $\Delta=\{\Omega^t\}$ is a (topological) $3$-submanifold $\mathcal{L}(\Delta)\subset \Lambda\times [0,1]$, which we call its $3$-dimensional realization. It is defined by $$\mathcal{L}(\Delta)\cap \left(\Lambda\times \{t\}\right)=\Omega^t.$$ The conditions in \cref{continuous path of subsurfaces} ensure that $\mathcal{L}(\Delta)$ is closed.

If $\Delta$ is $f$-twisted, then $\mathcal{L}(\Delta)$ glues to form a closed $3$-submanifold of the mapping torus $M_f$ of $f$. We denote this submanifold by $\mathcal{L}(\Delta)^\vee$. 
\end{definition}

\begin{definition}\label{splitting of paths} A continuous path $\Delta=\{\Omega^t\}$ splits if there are continuous paths $\Delta_1=\{\Omega_1^t\}$ and $\Delta_2=\{\Omega_2^t\}$ such that $\Omega^t=\Omega_1^t\amalg\Omega_2^t$ is the disjoint union. In this case, we write $\Delta=\Delta_1\amalg \Delta_2$.
\end{definition}

Note that if $\Delta=\Delta_1\amalg \Delta_2$, then $$\length(\Delta)=\length(\Delta_1)+\length(\Delta_2)$$ and $$\length([\Delta])=\length([\Delta_1])+\length([\Delta_2]).$$ Now, a continuous path of subsurfaces splits along connected components of its $3$-dimensional realization:

\begin{lemma}\label{splitting and connectedness} Suppose the $3$-dimensional realization $\mathcal{L}(\Delta)$ is the disjoint union of closed $3$-submanifolds $\mathcal{L}(\Delta)_1$ and $\mathcal{L}(\Delta)_2$. Then $\Delta=\Delta_1\amalg \Delta_2$ such that $\mathcal{L}(\Delta)_1$ and $\mathcal{L}(\Delta)_2$ are the $3$-dimensional realization of $\Delta_1$ and $\Delta_2$, respectively.
\end{lemma}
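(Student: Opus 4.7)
The plan is to construct the desired splitting $\Delta = \Delta_1 \amalg \Delta_2$ by slicing the given decomposition $\mathcal{L}(\Delta) = \mathcal{L}(\Delta)_1 \amalg \mathcal{L}(\Delta)_2$ horizontally. Concretely, for each $t \in [0,1]$ and each $i \in \{1,2\}$ I would set
$$\Omega^t_i := \mathcal{L}(\Delta)_i \cap \bigl(\Lambda \times \{t\}\bigr),$$
and define $\Delta_i := \{\Omega^t_i\}$. Since $\mathcal{L}(\Delta)_1$ and $\mathcal{L}(\Delta)_2$ are disjoint closed $3$-submanifolds whose union is $\mathcal{L}(\Delta)$, it is immediate that $\Omega^t = \Omega^t_1 \amalg \Omega^t_2$ and that each $\Omega^t_i$ is a (possibly empty) subsurface of $(\Lambda,P)$, being the transverse slice of a closed $3$-submanifold. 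The content of the proof then reduces to verifying that each $\Delta_i$ satisfies \cref{continuous path of subsurfaces}.

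On any open time-interval $(a,b)$ where $\Delta$ is an isotopy, $\mathcal{L}(\Delta) \cap \bigl(\Lambda \times (a,b)\bigr)$ is homeomorphic to $\Omega^a \times (a,b)$ by a homeomorphism commuting with the time projection. Restricting this product structure to each $\mathcal{L}(\Delta)_i$ yields the product decomposition $\Omega^a_i \times (a,b)$, so $\Delta_i$ is automatically an isotopy on $(a,b)$. Consequently the only candidates for non-isotopy times of $\Delta_i$ are the (already discrete) elementary-move times of $\Delta$, which gives the discreteness required by the definition.

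The heart of the argument is to analyze a single elementary-move time $t_0$ of $\Delta$. Each such move is specified by a connected local datum in $\Lambda$: an arc $\delta$ for a surgery or $\partial$-surgery, a disk for a disk addition or elimination, or a rectangular or annular region for a null component addition or elimination. My plan is to argue that the small $3$-dimensional neighborhood of $\Lambda \times \{t_0\}$ in $\mathcal{L}(\Delta)$ in which the topology of the time-slices actually changes is a connected subset of $\mathcal{L}(\Delta)$, and hence is contained in exactly one of $\mathcal{L}(\Delta)_1$ or $\mathcal{L}(\Delta)_2$, say $\mathcal{L}(\Delta)_1$. This implies that the time-slices of $\mathcal{L}(\Delta)_2$ through a neighborhood of $t_0$ differ only by isotopy, while $\Delta_1$ realizes the single elementary move of $\Delta$. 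Thus each $\Delta_i$ has at most one elementary move at $t_0$, of the same type as that of $\Delta$, completing the verification.

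The main obstacle I expect is the connectedness claim in the case of a type 2 surgery or type 2 $\partial$-surgery along an arc $\delta$ whose interior lies in $\Lambda - \Omega^{t_0}$: here a neighborhood of $\delta$ is being added to the subsurface, and a priori one could imagine the two endpoints of $\delta$ sitting on different pieces among $\Omega^{t_0}_1$ and $\Omega^{t_0}_2$, so that the added neighborhood would merge them. This is precisely the scenario ruled out by the hypothesis that $\mathcal{L}(\Delta)_1$ and $\mathcal{L}(\Delta)_2$ are disjoint, since such a merging would produce a path in $\mathcal{L}(\Delta)$ running from $\mathcal{L}(\Delta)_1$ to $\mathcal{L}(\Delta)_2$. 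The analogous subtle case is a null component addition in which the added rectangle or annulus is connected but could have several boundary components touching different $\Omega^{t_0}_i$, and it is handled by the same connectedness-in-$\mathcal{L}(\Delta)$ observation. Once these cases are checked, the identifications $\mathcal{L}(\Delta_i) = \mathcal{L}(\Delta)_i$ follow directly from the construction of the $3$-dimensional realization.
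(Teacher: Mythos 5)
Your proposal is correct and takes essentially the same approach as the paper: define $\Omega^t_i = \mathcal{L}(\Delta)_i \cap (\Lambda \times \{t\})$, observe this gives a disjoint decomposition of each time-slice, and use the disjointness of $\mathcal{L}(\Delta)_1$ and $\mathcal{L}(\Delta)_2$ to rule out an elementary move connecting the two pieces. The paper's proof is a terse version of exactly this argument.
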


\begin{proof} Let $$\Omega_1^t=\mathcal{L}(\Delta)_1\cap (\Lambda\times \{t\})$$ and $$\Omega_2^t=\mathcal{L}(\Delta)_2\cap (\Lambda\times \{t\}).$$ Let $\Delta_1=\{\Omega_1^t\}$ and $\Delta_2=\{\Omega_2^t\}$. To show that $\Delta_1$ and $\Delta_2$ are continuous paths of surfaces, it suffices to show that no surgery in $\Delta$ does connects $\Omega_1^t$ and $\Omega_2^t$. This is true since $\mathcal{L}(\Delta)_1$ and $\mathcal{L}(\Delta)_2$ are disconnected.
\end{proof}

Next, we have a version of splitting for twisted paths.

\begin{definition}\label{cyclic splitting} Let $f:(\Lambda,P)\to (\Lambda,P)$ be a homeomorphism. Let $\Delta=\{\Omega^t\}$ be a $f$-twisted continuous path of subsurfaces of $(\Lambda,P)$. Then $\Delta$ splits with respect to $f$ if $$\Delta=\coprod_{n\in \mathbb{Z}} \Delta_n$$ for continuous paths $\Delta_n=\{\Omega^t_n\}$, satisfying $f(\Omega_n^0)=\Omega_{n-1}^1$ for all $n\in \mathbb{Z}$.
\end{definition}

Again, this corresponds to a property of the $3$-dimensional realization. 

\begin{lemma}\label{cyclic splitting on mapping torus} Let $f:(\Lambda,P)\to (\Lambda,P)$ be a homeomorphism, and $\Delta$ an $f$-twisted continuous path of subsurfaces of $(\Lambda,P)$. Then $\Delta$ splits with respect to $f$ if the composition $$\pi_1(\mathcal{L}(\Delta)^\vee)\xrightarrow{i_*} \pi_1(M_f)\xrightarrow{\pi^f_*} \mathbb{Z}$$ is trivial. Here, $i_*$ is the inclusion on $\pi_1$ and $\pi^f_*$ is the projection $\pi^f:M_f\to S^1$ on $\pi_1$. 
\end{lemma}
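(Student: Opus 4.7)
The plan is to lift the composition $\pi^f\circ i$ to the universal cover $\mathbb{R}\to S^1$ and use this lift to label connected components of $\mathcal{L}(\Delta)$ with integers encoding the splitting.

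First I would apply the covering space lifting criterion. Since $(\pi^f\circ i)_*$ is trivial on each connected component of $\mathcal{L}(\Delta)^\vee$, the map $\pi^f\circ i$ admits a continuous lift $\tilde g:\mathcal{L}(\Delta)^\vee\to\mathbb{R}$. Pulling back along the quotient $q:\mathcal{L}(\Delta)\to\mathcal{L}(\Delta)^\vee$ that glues $(x,0)\sim(f(x),1)$, I would define
\[
k:\mathcal{L}(\Delta)\to\mathbb{Z},\qquad k(x,t):=\tilde g(q(x,t))-t.
\]
Because $\pi^f([x,t])=t\bmod 1$, the value $k(x,t)$ is an integer; continuity then forces $k$ to be locally constant on $\mathcal{L}(\Delta)$. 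Setting $\mathcal{L}_n:=k^{-1}(n)$ and $\Omega_n^t:=\mathcal{L}_n\cap(\Lambda\times\{t\})$ partitions $\mathcal{L}(\Delta)$ into closed pieces indexed by $\mathbb{Z}$, so in particular $\Omega^t=\coprod_n\Omega_n^t$ with the $\Omega_n^t$ pairwise disjoint.

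Next I would verify the twisting identity $f(\Omega_n^0)=\Omega_{n-1}^1$ required by \cref{cyclic splitting}. For $x\in\Omega^0$, the quotient $q$ identifies $(x,0)$ with $(f(x),1)$, so $\tilde g(q(x,0))=\tilde g(q(f(x),1))$, which forces $k(x,0)=k(f(x),1)+1$. Hence $x\in\Omega_n^0$ if and only if $f(x)\in\Omega_{n-1}^1$, as desired.

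Finally I would check that each $\Delta_n=\{\Omega_n^t\}$ is a continuous path of subsurfaces in the sense of \cref{continuous path of subsurfaces}. Between elementary-move times of $\Delta$, the family is an isotopy, which restricts to an isotopy of each $\mathcal{L}_n$. At an elementary-move time the local model inside $\mathcal{L}(\Delta)$---a rectangular or annular bridge for surgeries and $\partial$-surgeries, or an added or removed disk or null component---is itself connected, hence lies in a single connected component of $\mathcal{L}(\Delta)$ and therefore in a single $\mathcal{L}_n$: that one $\Delta_n$ undergoes the elementary move, and every other $\Delta_m$ passes through the time by a trivial isotopy. The main obstacle is precisely this last step: one must inspect the local three-dimensional picture associated to each type of elementary move in \cref{continuous path of subsurfaces} and confirm that its local model is connected; once this is in hand, the $\Delta_n$ assemble into a splitting of $\Delta$ with respect to $f$.
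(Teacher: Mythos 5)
Your argument is correct and follows the same strategy as the paper's proof: use the covering-space lifting criterion to produce a locally constant integer label on $\mathcal{L}(\Delta)$, split along label values, and verify the twisting identity from the gluing. Your choice to lift the map $\pi^f\circ i$ through $\mathbb{R}\to S^1$ is if anything cleaner than the paper's lift of $\mathcal{L}(\Delta)^\vee$ itself into what it calls the universal cover $\mathbb{R}\times D$ of $M_f$ (which is literally correct only when $\Lambda$ is a disk, as in the paper's applications to $U$ and $V$), and your final paragraph reproduces the content of \cref{splitting and connectedness}, which the paper simply cites at that point.
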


\begin{proof}
Let $D$ be a closed disk and let $\pi:\mathbb{R}\times D\to M_f$ be the universal covering map from an infinite solid cylinder $\mathbb{R}\times D$ to the mapping torus $M_f$. The composition $$\pi^f_*\circ i_*:\pi_1(\mathcal{L}(\Delta)^\vee)\to \mathbb{Z}$$ is trivial if and only if $\mathcal{L}(\Delta)^\vee$ has a lift $\widetilde{\mathcal{L}(\Delta)^\vee}\subset \mathbb{R}\times D$ such that the projection map $$\pi:\widetilde{\mathcal{L}(\Delta)^\vee}\to \mathcal{L}(\Delta)^\vee$$ is an isomorphism. 

Let us consider the connected components of $\mathcal{L}(\Delta)$. For any such connected component $C$, $\pi^{-1}(C)\subset [n,n+1]\times D$ for some $n\in \mathbb{Z}$. To each connected component, we assign the integer $n\in \mathbb{Z}$. By \cref{splitting and connectedness}, there is a splitting $$\coprod \Delta_n$$ into paths $\Delta_n=\left\{\Omega_n^t\right\}$, such that the components of $\mathcal{L}(\Delta)$ labelled with the interger $n$ lie in $\mathcal{L}(\Delta_n)$.

Now, $$\Omega_{n-1}^{1}=\coprod_{C\text{ component of } \mathcal{L}(\Delta)\text{ labelled } n-1}C\cap (\Lambda \times \{1\}).$$ Each such $\phi(C)$ lies in the boundary of some component $C'$, and by construction $C'$ must be labelled $n$. Hence $f(\Omega_n^0)=\Omega_{n-1}^1$.
\end{proof}

\subsection{Discrete paths of subsurfaces}

\begin{definition}\label{discrete path of subsurfaces} A discrete path of subsurfaces is a sequence $$\Delta=\{\Omega_{-\ell},...,\Omega_k\}$$ of subsurfaces of $(\Lambda,P)$ such that for all $i\in \{-\ell,k-1\}$, the isotopy classes of $\Omega_i$ and $\Omega_{i+1}$ are related by an elementary move. A discrete path is essential if all subsurfaces are esssential. This means that the elementary moves in an essential discrete path must be surgeries, $\partial$-surgeries, and null component additions or eliminations.
\end{definition}

We allow negative indices in discrete paths in order to make some theorem statements cleaner in future sections. As in the case of continuous paths, we denote by $\Lambda-\Delta$ the path $\{\Lambda-\Omega_{-\ell},...,\Lambda-\Omega_k\}$.

\begin{definition} Let $f:(\Lambda,P)\to (\Lambda,P)$ be a homeomorphism. A discrete path $\Delta=\{\Omega_{-\ell},...,\Omega_k\}$ of subsurfaces of $(\Lambda,P)$ is $f$-twisted if $f(\Omega_{-\ell})$ is isotopic to $\Omega_{k}$. 
\end{definition}

\begin{definition}\label{length of discrete path} Given a discrete path $\Delta=\{\Omega_{-\ell},...,\Omega_k\}$, its length, denoted $\length(\Delta)$, is $\ell+k$. Its equivalence class length, denoted $\length([\Delta])$, is the total number of surgeries of type 1 and 2 and $\partial$-surgeries of type 1 and 2 in $\Delta$.
\end{definition}

\begin{definition}\label{discretization} Let $\Delta=\{\Omega_{-\ell},...,\Omega_k\}$ be a discrete path of subsurfaces of $(\Lambda,P)$. Let $\Delta'=\{\Omega^t\}$ be a continuous path of subsurfaces of $(\Lambda,P)$. $\Delta$ is a discretization of $\Delta'$ if 
\begin{enumerate}
\item For all $i\in \{-\ell,...,k\}$ and $t\in \left(\frac{\ell+i}{\ell+k+1},\frac{\ell+1+i}{\ell+k+1}\right)$, $\Omega^t$ is isotopic to $\Omega_i$.
\item At $t=\frac{\ell+i}{\ell+k+1}$, there is an elementary move from a subsurface isotopic to $\Omega_i$, to a subsurface isotopic to $\Omega_{i+1}$.
\end{enumerate}
\end{definition}

Note that if $\Delta$ is a discretization of $\Delta'$, then $$\length(\Delta)=\length(\Delta')$$ and $$\length([\Delta])=\length([\Delta']).$$

\begin{definition}\label{3 dim realization of discrete path} Let $\Delta$ be a discrete path of subsurfaces of $(\Lambda,P)$. A $3$-dimensional realization of $\Delta$ is a $3$-submanifold $$\mathcal{L}(\Delta')\subset \Lambda\times [0,1],$$ where $\Delta'$ is a continuous path of subsurfaces for which $\Delta$ is a discretization. 
\end{definition}

\subsection{Decomposition of surfaces and paths} Let $\Theta$ be a connected surface with boundary. Let $\zeta$ be an essential multicurve on $\Theta$ decomposing $\Theta$ into subsurfaces $\Lambda_1,...,\Lambda_n$. Suppose we have an essential discrete path of subsurfaces on $\Theta$. In this section, we aim to understand to what extent we can decompose our discrete path to get paths of subsurfaces of the $\Lambda_i$s. 

To understand this, we first consider an essential discrete path of length $1$; that is, a pair $\{\Omega,\Omega'\}$ of essential subsurfaces of $\Theta$ whose isotopy classes are related by an elementary move. We may put both $\Omega$ and $\Omega'$ in minimal position with $\zeta$. We may consider writing a sequence $\{\Omega\cap \Lambda_i,\Omega'\cap \Lambda_i\}$ for each $\Lambda_i$, however, it is not necessarily a discrete path. In fact, although $d(\Omega,\Omega')=1$, $$d(\Omega\cap \Lambda_i,\Omega'\cap \Lambda_i)$$ (as subsurfaces of $\Lambda_i$) may be arbitrarily large. So we cannot decompose our length $1$ discrete path of subsurfaces of $\Omega$ into bounded length paths of subsurfaces of the $\Lambda_i$. However, we can bound the equivalence class length of the components in the decomposition. 

\begin{notation} Let $\Omega\subset \Theta$ be an essential subsurface in minimal position with $\zeta$. Consider $\Omega$ as a surface with marked points on its boundary given by $P(\Omega)$ and $\zeta\cap \partial\Omega$. Let $\delta$ be a properly embedded arc on $\Omega$ (resp. $\Theta-\Omega$). We say that $\delta$ is in minimal position with $\zeta$ if $\delta$ and $\zeta\cap \Omega$ do not bound any bigons or half-bigons  as multicurves on $(\Omega,P(\Omega))$ (resp. $(\Theta-\Omega, P(\Theta-\Omega))$). In this case, we denote by $\Omega\pm \delta$ the subsurface of $\Theta$ obtained by surgering $\Omega$ along $\delta$.  
\end{notation}

\begin{lemma}\label{minimal isotopy exists} Let $\Delta=\{\Omega_{-\ell},...,\Omega_k\}$ be an essential discrete path of subsurfaces of $\Theta$. Assume the $\Omega_i$ are in minimal position with $\zeta$. Then for each $1\leq i\leq n$ there is a continuous path $\Delta_i=\{\Omega^t_i\}$ of subsurfaces of $\Lambda_i$, satisfying $$\Omega^{(j+\ell)/(\ell+k+1)}_i=\Omega_j\cap \Lambda_i$$ and $$\sum_{i=1}^k \length([\Delta_i])\leq 6\length(\Delta).$$ Moreover, the $3$-dimensional realizations $\mathcal{L}(\Delta_i)\subset \Lambda_i\times [0,1]$ glue to form a $3$-submanifold of $\Theta\times [0,1]$, which is isotopic to a $3$-dimensional realization of $\Delta$.
\end{lemma}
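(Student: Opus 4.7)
The plan is to construct each $\Delta_i$ one transition at a time: for each index $j\in\{-\ell,\dots,k-1\}$ I decompose the single elementary move $\Omega_j\to\Omega_{j+1}$ into a short sequence of elementary moves inside each $\Lambda_i$, and concatenate. The continuous paths in the various $\Lambda_i$ will be synchronized in time so that the moves performed at a single instant glue to the single move of $\Delta$ happening in $\Theta$ at that instant; this is what will make the $3$-dimensional realizations fit together.

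First suppose the move $\Omega_j\to\Omega_{j+1}$ is a surgery or $\partial$-surgery in $\Theta$ along an arc $\delta$. Since both $\Omega_j$ and $\Omega_{j+1}$ are essential and in minimal position with $\zeta$, the bigon criterion (Proposition 3.5) together with uniqueness of minimal position (Theorem 3.6) lets me isotope $\delta$ rel.\ its endpoints so that it meets $\zeta$ transversely in a minimal number of points. These crossings split $\delta$ into consecutive sub-arcs $\delta^{(1)},\dots,\delta^{(m+1)}$ with $\delta^{(s)}\subset \Lambda_{k_s}$. Each $\delta^{(s)}$ is an embedded arc in $\Lambda_{k_s}$ whose endpoints lie on $\bound{\Omega_j\cap\Lambda_{k_s}}$, on $P(\Lambda_{k_s})$, or on $\bound{\Lambda_{k_s}}\subset \partial\Lambda_{k_s}-P(\Lambda_{k_s})$, depending on whether the endpoint is an original endpoint of $\delta$ or a $\zeta$-crossing. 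Performing these sub-arc surgeries in sequence inside the corresponding $\Lambda_{k_s}$ (any order works, since the sub-arcs are disjoint) carries $\Omega_j\cap\Lambda_i$ to $\Omega_{j+1}\cap\Lambda_i$ for every $i$, simply because cutting along disjoint arcs commutes. The crucial classification is: the two terminal sub-arcs $\delta^{(1)},\delta^{(m+1)}$ inherit the original endpoints of $\delta$ on $\bound{\Omega_j}\cup\partial\Theta$, so each is a surgery or $\partial$-surgery of type $1$ or $2$ inside $\Lambda_{k_1}$ or $\Lambda_{k_{m+1}}$, contributing at most $1$ to the equivalence-class length; every interior sub-arc $\delta^{(s)}$ ($2\le s\le m$) has both endpoints on $\partial\Lambda_{k_s}-P(\Lambda_{k_s})$, so by the parenthetical remarks in Definitions 3.18 and 3.19 the corresponding surgery is equivalent to a null-component addition and contributes $0$. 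Null-component additions and eliminations in $\Theta$ are treated identically: the boundary arcs or curves of the null component are placed in minimal position with $\zeta$, and the component then decomposes into null components of the $\Lambda_i$'s, each contributing $0$. Summing, each elementary move of $\Delta$ contributes at most $2$ to $\sum_i\length([\Delta_i])$, well within the stated bound of $6\length(\Delta)$.

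For the gluing claim I arrange the temporal order of the moves inside each $\Lambda_i$ so that two sub-arc surgeries on adjacent pieces $\Lambda_{k_s}$ and $\Lambda_{k_{s+1}}$ sharing a $\zeta$-crossing happen simultaneously. Near that instant the local models for the two surgeries are regular neighborhoods of $\delta^{(s)}$ and $\delta^{(s+1)}$ that agree on $\bound{\Lambda_{k_s}}\cap\bound{\Lambda_{k_{s+1}}}$, so their union is a regular neighborhood of $\delta$ in $\Theta$ realizing the original surgery; an analogous matching handles the null-component case. Running this argument over every move of $\Delta$ shows that $\bigcup_i \mathcal{L}(\Delta_i)$ is a $3$-submanifold of $\Theta\times[0,1]$ isotopic to a $3$-dimensional realization of $\Delta$. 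The main obstacle I anticipate is the careful treatment of the null-component operations: the boundary of a null component of $\Theta$ can meet $\zeta$ many times, and one has to verify that the resulting pieces inside each $\Lambda_i$ are genuinely null components in the sense of Definition 3.11 (so that they really contribute $0$ to $\length([\Delta_i])$) and that the simultaneous decomposition can still be carried out while preserving the required equalities $\Omega_i^{(j+\ell)/(\ell+k+1)}=\Omega_j\cap\Lambda_i$ at the marked times.
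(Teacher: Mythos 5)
There is a genuine gap at the step where you claim that performing the sub-arc surgeries ``carries $\Omega_j\cap\Lambda_i$ to $\Omega_{j+1}\cap\Lambda_i$ for every $i$, simply because cutting along disjoint arcs commutes.'' Cutting does commute, but what you obtain is the surgered surface $\Omega_j\pm\delta$, which is isotopic to $\Omega_{j+1}$ only as a subsurface of $\Theta$; it need not be in minimal position with $\zeta$, so its pieces $(\Omega_j\pm\delta)\cap\Lambda_i$ need not be isotopic in $(\Lambda_i,P(\Lambda_i))$ to the given pieces $\Omega_{j+1}\cap\Lambda_i$. The problem is local to the endpoints of $\delta$: even when $\delta$ and $\bound{\Omega_j}$ are each in minimal position with $\zeta$, the two arcs of $\bound{\Omega_j\pm\delta}$ running parallel to $\delta$ can form bigons or half-bigons with $\zeta$ near their four endpoints (where $\bound{\Omega_j\pm\delta}$ switches from following $\bound{\Omega_j}$ or $\partial\Theta$ to following $\delta$). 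Consequently your concatenated paths fail the required equalities $\Omega_i^{(j+\ell)/(\ell+k+1)}=\Omega_j\cap\Lambda_i$ at the marked times, and your count of ``at most $2$ per move'' is an artifact of omitting the reconciliation. This is exactly where the paper's constant $6$ comes from: after the (at most $2$) genuine surgeries and the null component additions coming from the interior sub-arcs, one must eliminate the at most four maximal bigons/half-bigons between $\bound{\Omega_j\pm\delta}$ and $\zeta$, each such elimination costing at most one type~1 or type~2 $\partial$-surgery in some $\Lambda_i$ (plus disk and null component eliminations, which are free), and then invoke \cref{minimal position is unique surface with boundary} to identify the result with $\Omega_{j+1}$ piecewise, up to null component additions and eliminations for shared isotopic components.

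The rest of your outline is essentially the paper's argument (reduce to length-one paths, isotope $\delta$ into minimal position with $\zeta$, classify terminal versus interior sub-arcs, treat interior sub-arcs as null component additions, synchronize the pieces so the realizations glue), so the fix is to insert this minimal-position restoration step, verify it only uses moves allowed in the $\Lambda_i$'s, and carry the corresponding product regions along in the gluing argument; with that, the bound becomes $6\length(\Delta)$ rather than $2\length(\Delta)$, matching the statement.
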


\begin{proof} It suffices to prove the lemma for paths of length $1$, i.e. $\Delta=\{\Omega,\Omega'\}$; concatenation gives the lemma in the general case. It also suffices to prove the lemma when $\Omega $ and $\Omega'$ are related by a surgery of type 1 and 2 and null component additions, as the lemma then follows for type 1 $\partial$-surgeries and null component eliminations by symmetry (note that type 2 $\partial$-surgeries do not exist in this setting, since $\Theta$ does not have marked point on its boundary). Now, $\Omega$ and $\Omega'$ are related by a surgery along $\delta$ (note that a null component addition may be viewed as a surgery along an arc with both points endpoints on $\partial \Theta$). We may ensure $\delta$ is in minimal position with $\zeta$ by simply isotoping $\delta$ to eliminate any bigons or half-bigons with $\zeta$. This does not change the isotopy class of the surgered surface. 

We now construct the $1$-parameter families $\Omega_i^t$. First, we do a surgery to $\Omega$ along $\delta$. This corresponds to at most $2$ surgeries on the $\Omega\cap \Lambda_i$, as well as some null component additions to some of the $\Omega\cap \Lambda_i$. (We may do these operations one-by-one to ensure that the result is a path.) So far, the contribution to the total equivalence class length so far is at most $2$.

Second, we put $\Omega\pm \delta$ in minimal position with $\zeta$. To do this, we eliminate maximal bigons or half-bigons between $\bound{\Omega \pm\delta}$ and $\zeta$. Here, maximal means that the arc of $\bound{\Omega\pm \delta}$ is maximal. Since $\delta$ does not form any bigons or half-bigons with $\zeta$, any maximal bigon or half-bigon must contain an endpoint of one of the two arcs in $\bound{\Omega\pm \delta}$ parallel to $\delta$. So there are at most four such bigons or half-bigons. When we eliminate a maximal bigon, we apply disk eliminations or null component eliminations to some of the $\Omega \pm \delta \cap \Lambda_i$, and do at most one type 1 $\partial$-surgery to one of the $\Omega \pm \delta \cap \Lambda_i$. When we eliminate a maximal half-bigon, we apply disk eliminations or null component eliminations to some of the $\Omega \pm \delta\cap \Lambda_i$, and do at most one type 2 $\partial$-surgery to one of the $\Omega \pm \delta\cap \Lambda_i$. Hence the contribution of these operations to the total equivalence class length is at most $4$.

By \cref{minimal position is unique surface with boundary}, once $\bound{\Omega\pm \delta}$ is in minimal position with $\zeta$, it is isotopic to $\bound{\Omega'}$ except for shared isotopic components. So we apply some null component additions and eliminations to finish constructing the set of paths. The total contribution to the sum of the equivalence class lengths is at most $6$. By construction, the $3$-dimensional realizations of the components in the decomposition glue to form a $3$-submanifold isotopic to a $3$-dimensional realization of $\{\Omega,\Omega'\}$. 
\end{proof}

\section{Surfaces in a product $3$-manifold} \label{Surfaces in a product}

\subsection{Preliminaries} 

Let $W$ be a compact connected surface with boundary. Let $N=W\times [0,1]$, so that $$\partial N=W\times \{0\}\cup \partial W\times [0,1]\cup W\times \{1\}.$$ In this section, we consider properly embedded surfaces in $N$. We develop a theory of $\partial$-compressibility of properly embedded surfaces that recognizes the decomposition of $\partial N$ above.

\begin{definition}\label{strong isotopy} Let $F,F'\subset N$ be properly embedded surfaces. We say that $F$ and $F'$ are strongly isotopic if there is an isotopy of $N$, preserving $\partial W\times \{0\}$ and $\partial W\times \{1\}$, taking $F$ to $F'$.
\end{definition}

\begin{definition}\label{bigon removing isotopy} Let $F\subset N$ be a properly embedded surface. Let $D\subset \partial W\times [0,1]$ be a bigon formed by $\partial F\cap (\partial W\times [0,1])$ and $\partial W\times \{0,1\}$. A bigon removing isotopy of $F$ is an isotopy of $F$ along $D$, so that the bigon is eliminated. 
\end{definition}

Recall the definition of a compressing disk:

\begin{definition}\label{compression} Let $F\subset N$ be a properly embedded surface. A compressing disk $D$ for $F$ is a disk $D\subset N$ such that 
\begin{enumerate} 

\item $\partial D\subset F$,

\item the interior of $D$ is disjoint from $F$, and 

\item $\partial D$ is essential in $F$.

\end{enumerate}
$F$ is incompressible if $F$ does not admit any compressing disk, and no connected component of $F$ is a sphere.
\end{definition} 

We now introduce some relevant definitions of $\partial$-compressions.

\begin{definition}\label{boundary compression along annular boundary region} Let $F\subset N$ be a properly embedded surface. A $\partial$-compressing disk along $\partial W\times[0,1]$ for $F$ is an embedded disk $D\subset N$ such that $D\cap F=a$ and $D\cap \partial N=b$ are arcs in $\partial D$ with disjoint interiors, satisfying:

\begin{enumerate}
\item $a\cup b=\partial D$,

\item $a\cap b=\partial a=\partial b\subset \partial F$,

\item $b\subset \partial W\times [0,1]$ and

\item $a$ does not cobound a disk in $F$ with another arc in $\partial F\cap (\partial W\times [0,1])$. 
\end{enumerate}

We call $F$ $\partial$-compressible along $\partial W\times [0,1]$ if $F$ is a disk that cobounds a ball along with a disk in $\partial W\times [0,1]$, or $F$ admits a $\partial$-compressing disk along $\partial W\times[0,1]$, in which case a $\partial$-compression to $F$ is a surgery along such a disk. Otherwise, $F$ is $\partial$-incompressible along $\partial W\times [0,1]$.
\end{definition}

\begin{definition}\label{straight boundary} A properly embedded surface $F\subset N$ has straight boundary if $$\partial F\cap (\partial W\times [0,1])=P\times [0,1],$$ where $P$ is a discrete set of points in $\partial W$. 
\end{definition}

A properly embedded surface $F\subset N$ naturally comes with some marked points on its boundary, and it will be useful to keep track of these points.

\begin{definition}\label{marked points on surface} Given a properly embedded surface $F\subset N$, we let $$Q(F)=F\cap (\partial W\times \{0,1\}),$$ a set of marked points lying on $\partial F$.
\end{definition}

\begin{lemma}\label{number of bigon removing isotopies} Let $F\subset N$ be a properly embedded surface such that $F$ is $\partial$-incompressible along $\partial W\times[0,1]$, and $F$ does not intersect $\partial W\times [0,1]$ in a longitudal curve. After at most $Q(F)/2$ number of bigon removing isotopies, $F$ is strongly isotopic to a surface with straight boundary.
\end{lemma}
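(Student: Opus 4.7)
My plan is to induct on the number of ``non-vertical'' arcs of $F \cap (\partial W \times [0, 1])$, by which I mean arcs with both endpoints in the same side $\partial W \times \{0\}$ or $\partial W \times \{1\}$. The no-longitudinal-curve hypothesis ensures that $F \cap (\partial W \times [0, 1])$ contains no closed-curve components (any null-homotopic closed curve not directly ruled out by $\partial$-incompressibility can be removed by an auxiliary isotopy using an innermost disk in its annulus), so $F \cap (\partial W \times [0, 1])$ consists entirely of arcs. Since each arc contributes exactly $2$ points to $Q(F)$, there are $|Q(F)|/2$ arcs in total, and in particular the number of non-vertical arcs is at most $|Q(F)|/2$. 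The straight-boundary condition is equivalent to every arc being vertical; once this is achieved, a final strong isotopy within each annulus component straightens each arc to the form $\{p\}\times[0,1]$.

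For the inductive step, I would use that every non-vertical arc $\gamma$ lying in an annulus component $A_0$ of $\partial W \times [0, 1]$ bounds a unique disk in $A_0$ together with a subarc of the boundary circle containing its endpoints, because a simple arc in an annulus with both endpoints on one boundary circle always separates off a disk to one side. Let $D_0$ be innermost under inclusion among all disks in $\partial W \times [0, 1]$ bounded by a (sub)arc of $F \cap (\partial W \times [0, 1])$ together with a (sub)arc of $\partial W \times \{0, 1\}$. If $\interior(D_0) \cap F = \emptyset$, then a bigon removing isotopy along $D_0$ is well-defined and eliminates one non-vertical arc, reducing $|Q(F)|$ by exactly $2$.

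The technical heart of the argument is verifying $\interior(D_0) \cap F = \emptyset$. If instead some arc of $F \cap A_0$ crosses $\interior(D_0)$, then taking an outermost component of $F \cap \interior(D_0)$ produces a subarc $\sigma$ with both endpoints on the boundary arc of $D_0$ in $\partial W \times \{0, 1\}$, cutting off a smaller sub-disk $D_\sigma \subset D_0$ whose interior is disjoint from $F$. Pushing $D_\sigma$ slightly off $\partial N$ into $\interior(N)$ yields a disk satisfying conditions (1)--(3) of a $\partial$-compressing disk along $\partial W \times [0, 1]$; by the $\partial$-incompressibility hypothesis, condition (4) must fail, meaning $\sigma$ cobounds a disk $E$ in $F$ with some other arc $\delta$ of $\partial F \cap (\partial W \times [0, 1])$. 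Using $E$, I would exchange $D_0$ for a strictly smaller bigon-type disk bounded by $\delta$ together with the appropriate subarc of $\partial W \times \{0, 1\}$, contradicting the innermost choice of $D_0$.

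The main obstacle I anticipate is making the condition-(4) exchange step genuinely rigorous: showing that the replacement disk is properly contained in $D_0$, that the resulting configuration remains consistent after appropriate small isotopies, and handling carefully the boundary-corner cases where $\partial\sigma$ lies very close to the corners of $D_0$. Once the innermost claim is established, iterating the bigon removing isotopy reduces the count of non-vertical arcs to zero in at most $|Q(F)|/2$ steps, and a final strong isotopy produces a surface with straight boundary.
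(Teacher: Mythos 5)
Your counting is exactly the paper's (much terser) argument: each bigon removing isotopy removes one non-vertical arc of $\partial F\cap(\partial W\times[0,1])$ and hence drops $|Q(F)|$ by $2$, and since every such arc accounts for two points of $Q(F)$ the bound $|Q(F)|/2$ follows; so the approach is essentially the same. The one substantive comment is that the step you single out as the technical heart --- the condition-(4) exchange using $\partial$-incompressibility along $\partial W\times[0,1]$ --- is not needed at all, and trying to run it is what creates the ``main obstacle'' you describe. If some component of $F\cap(\partial W\times[0,1])$ met $\interior(D_0)$, it could not touch the $\partial F$-side of $\partial D_0$ (distinct components of the embedded $1$-manifold $\partial F$ are disjoint, and the component forming $D_0$ is a single arc meeting the corner circles only at its endpoints), so an outermost piece of $F\cap\interior(D_0)$ would have both endpoints on the corner-arc side and would itself cut off a strictly smaller disk of precisely the type over which $D_0$ was chosen innermost --- an immediate contradiction, with no appeal to \cref{boundary compression along annular boundary region}. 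One small caveat: your parenthetical claim that null-homotopic closed components of $\partial F$ in $\partial W\times[0,1]$ can be removed by an auxiliary isotopy is not correct as stated (neither bigon removing isotopies nor strong isotopies can push a boundary circle across $\partial W\times\{0,1\}$), but the paper's own proof silently ignores this case as well, and in the application such components are excluded by the admissibility/incompressibility assumptions.
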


\begin{proof} If $F$ is not already isotopic to a surface with straight boundary, then $\partial F$ intersects $\partial W\times \{0,1\}$ in a bigon. A bigon removing isotopy removes the bigon, and reduces $Q(F)$ by $2$. 
\end{proof}

Next, we have another modified definition of $\partial$-incompressibility, suited for surfaces with straight boundary in $N$.

\begin{definition}\label{boundary compression along W} Let $F\subset N$ be a surface with straight boundary. A $\partial$-compressing disk along $W$ for $F$ is an embedded disk $D\subset N$ such that $D\cap F=a$ and $D\cap \partial N=b$ are arcs in $\partial D$ with disjoint interiors, satisfying:

\begin{enumerate}
\item $a\cup b=\partial D$,

\item $a\cap b=\partial a=\partial b\subset \partial F$,

\item $b\subset W\times\{0\}\cup W\times \{1\}$ and

\item $a$ does not cobound a disk in $F$ with another arc in $\partial F\cap (W\times\{0\}\cup W\times \{1\})$. 
\end{enumerate}

We call $F$ $\partial$-compressible along $W$ if $F$ is a disk that cobounds a ball along with a disk in $W\times\{0,1\}$, or $F$ admits a $\partial$-compressing disk along $W$, in which case a $\partial$-compression to $F$ is a surgery along such a disk. Otherwise, $F$ is $\partial$-incompressible along $W$.
\end{definition}

\begin{remark} Condition (4) in \cref{boundary compression along annular boundary region} and condition (4) in \cref{boundary compression along W} are equivalent to the statement that $a$, viewed as a multicurve on $(F,Q(F))$, is essential. The definition of standard $\partial$-incompressibility is constructed to so that a $\partial$-compression simplifies the topology of the surface. Similarly, \cref{boundary compression along annular boundary region} and \cref{boundary compression along W} is constructed so that a $\partial$-compression along $W$ simplifies the topology of $(F,Q(F))$.
\end{remark}

We quantify this observation (for $\partial$-compressions along $W$) in the following lemma.

\begin{lemma} \label{number of compressions} Let $F\subset N$ be an incompressible surface with straight boundary. After at most $-\chi(F)+|Q(F)|$ number of $\partial$-compressions along $W$, $F$ becomes a surface that is incompressible and $\partial$-incompressible along $W$. 
\end{lemma}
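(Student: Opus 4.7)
The plan is to introduce the complexity $c(F) = -\chi(F) + |Q(F)|$, show that each $\partial$-compression along $W$ strictly decreases $c$ by one while preserving incompressibility, and verify that $c(F^*) \geq 0$ for any incompressible $F^*$ with straight boundary that is $\partial$-incompressible along $W$. Iterating then gives the bound of at most $-\chi(F)+|Q(F)|$ steps.

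For the Euler characteristic computation, I would take the $\partial$-compressing disk $D$ with $\partial D = a \cup b$, $a \subset F$, $b \subset W \times \{0,1\}$, and let $N(a) \cong a \times [-1,1]$ be a regular neighborhood of $a$ in $F$. Writing $F = N(a) \cup \overline{F \setminus N(a)}$, glued along the two arcs $a \times \{\pm 1\}$, yields $\chi(\overline{F \setminus N(a)}) = \chi(F)+1$; gluing in two parallel copies of $D$ along those same arcs leaves $\chi$ unchanged, so $\chi(F') = \chi(F)+1$. Because $b$ and the endpoints of $a$ all lie in the interior of $W \times \{0,1\}$, the surgery modifies $\partial F$ only away from $\partial W \times \{0,1\}$, so $|Q(F')| = |Q(F)|$ and $c(F') = c(F)-1$. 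Preservation of incompressibility then follows from a standard innermost-disk/outermost-arc argument applied to any hypothetical compressing disk $E$ for $F'$: $E$ can be isotoped off the two copies of $D$ glued in during surgery, leaving a compressing disk for $F$ and contradicting the hypothesis.

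For the lower bound on $c$ at termination, suppose $F^*$ is incompressible and $\partial$-incompressible along $W$. Spheres are excluded by incompressibility. For a disk component $C$ of $F^*$: straight boundary forbids closed-curve components of $\partial F^*$ inside $\partial W \times [0,1]$, so if $|Q(C)| = 0$ then $\partial C \subset W \times \{0,1\}$, say $\partial C \subset W \times \{0\}$. Since the inclusion $\pi_1(W\times\{0\}) \hookrightarrow \pi_1(N)$ is an isomorphism, $\partial C$ bounds a disk in $W \times \{0\}$, and irreducibility of $N = W \times [0,1]$ gives a ball cobounded by $C$ and this disk, making $C$ $\partial$-compressible along $W$---a contradiction. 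Hence $|Q(C)| \geq 2$ and $c(C) \geq 1$ for every disk component, while all other components have $\chi \leq 0$ and so contribute $c \geq 0$. Summing, $c(F^*) \geq 0$, and iterating $\partial$-compressions yields a terminating sequence of length at most $c(F) = -\chi(F) + |Q(F)|$.

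The main obstacle is the incompressibility-preservation step: one must verify that the standard surgery modifications can be performed without creating new essential intersections and that the boundary of the resulting disk remains essential in $F$. A secondary subtlety is to handle disk components that cobound a ball with $W \times \{0,1\}$ and appear mid-process: since these are $\partial$-compressible in the degenerate sense of the definition, they can be removed as trivial $\partial$-compressions, and one checks that doing so before any nontrivial $\partial$-compression does not violate the bound because the nonnegativity argument for $c$ applies to the final $\partial$-incompressible surface.
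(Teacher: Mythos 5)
Your proposal is correct and follows essentially the same route as the paper: the paper's proof is exactly your bookkeeping step, namely that a $\partial$-compression along $W$ cuts $F$ along the arc $a$, raising $\chi$ by one while leaving $F\cap(\partial W\times\{0,1\})$, hence $|Q(F)|$, unchanged, so the nonpositive integer $\chi(F)-|Q(F)|$ increases by one at each step. The additional verifications you include (preservation of incompressibility, and the $\pi_1$-injectivity/irreducibility argument giving nonnegativity of the complexity at a $\partial$-incompressible surface, where the degenerate disk clause enters) are points the paper treats as immediate or defers to the compatibility lemma stated just afterwards.
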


\begin{proof} Let $D\cap F=a$. When compressed, $F$ is cut along $a$, but $F\cap (\partial W\times \{0,1\})$ does not change. Thus $\chi(F)-|Q(F)|$ (which is a nonpositive integer) is increased by $1$. The lemma follows.
\end{proof}

Finally, we note that our various definitions of compressibility are compatible.

\begin{lemma}\label{compatibility of compression definitions} 
Let $F\subset N$ be a properly embedded surface, and let $F'\subset N$ the result of a bigon removing isotopy (or compression, or $\partial$-compression along $\partial W\times [0,1]$, or $\partial$-compression along $W$) applied to $F$. If $F$ is incompressible (or $\partial$-incompressible along $\partial W\times [0,1]$, or $\partial$-incompressible along $W$), so is $F'$.
\end{lemma}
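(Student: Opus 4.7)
The plan is to handle the four operations separately, grouped into an easy case (bigon removing isotopy) and three surgery cases (compression and the two kinds of $\partial$-compression), which all follow the same template. A bigon removing isotopy exhibits $F'$ as the image of $F$ under an ambient isotopy of $N$ supported in a neighborhood of the relevant bigon in $\partial W\times[0,1]$, and this isotopy can be taken to preserve the decomposition $\partial N=W\times\{0\}\cup \partial W\times[0,1]\cup W\times\{1\}$ setwise. Consequently, any disk certifying the failure of incompressibility or either form of $\partial$-incompressibility for $F'$ pulls back through the reverse isotopy to a disk certifying the same failure for $F$, and the essentiality clauses transfer since they depend only on the isotopy class of $F$ relative to $\partial N$.

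For each of the three surgery operations, let $D$ denote the disk along which $F$ is surgered to produce $F'$. Suppose for contradiction that $F'$ admits a disk $D'$ certifying failure of one of the three properties. I would put $D'$ in general position with a parallel copy of $D$, so that $D\cap D'$ consists of simple closed curves and properly embedded arcs of $D'$. A standard innermost/outermost reduction then removes these intersections: an innermost simple closed curve of $D\cap D'$ on $D$ bounds a subdisk of $D$ disjoint from $F'\cup\partial N$ except at its boundary, which one uses to isotope $D'$ and eliminate the curve; for an outermost arc of $D\cap D'$ on $D'$, the cut-off subdisk $E\subset D'$ has its boundary composed of an arc in $D$ and an arc in $F'\cup \partial N$, and $E$ is used either to isotope $D'$ to reduce the number of intersections or to swap $D$ for a disk of the same type with fewer intersections. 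After finitely many steps $D\cap D'=\emptyset$, and $D'$ lies in the complement of a neighborhood of $D$, which canonically embeds in $N$ with $F$ in place of $F'$. Hence $D'$ is a disk in $N$ with boundary in $F\cup \partial N$.

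The final step is to verify that $D'$ is of the correct type as a disk for $F$, namely that its boundary curve or arc remains essential on $F$ or on $(F,Q(F))$ as required by \cref{compression}, \cref{boundary compression along annular boundary region}, and \cref{boundary compression along W}. This is the main obstacle: the property of the original surgery disk (condition (3) in \cref{compression}, condition (4) in either $\partial$-compression definition) must be used to rule out the possibility that the boundary of $D'$, which was essential on $F'$, becomes inessential once re-interpreted on $F$. The argument is by contradiction: if the boundary on $F$ bounds an inessential region, one glues that region to the original surgery disk to obtain a disk on $F$ that either contradicts essentiality of the original surgery or exhibits the surgery as trivial. Each of the three types of surgery requires a slightly different case analysis according to how $\partial D'$ meets the scar left by the surgery, but once the innermost/outermost reduction is in hand the remaining combinatorial geometry is elementary.
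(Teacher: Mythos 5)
Your treatment of the surgery cases via innermost/outermost reduction is the standard argument and is fine in outline; the paper in fact dismisses those cases as routine and says the only nontrivial part of the lemma is the case of a bigon removing isotopy preserving $\partial$-incompressibility along $\partial W\times[0,1]$. That is exactly where your proposal has a genuine gap: the claim that the bigon removing isotopy ``can be taken to preserve the decomposition $\partial N=W\times\{0\}\cup \partial W\times[0,1]\cup W\times\{1\}$ setwise'' is false. By construction (\cref{bigon removing isotopy}) the bigon lives in $\partial W\times[0,1]$ with one side on $\partial W\times\{0,1\}$, and the isotopy pushes the corresponding arc of $\partial F$ across $\partial W\times\{0,1\}$ and into $W\times\{0\}$ or $W\times\{1\}$; indeed $\partial F'\cap(\partial W\times[0,1])$ has one fewer component than $\partial F\cap(\partial W\times[0,1])$, so no ambient isotopy carrying $F$ to $F'$ can preserve the decomposition. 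Consequently, a $\partial$-compressing disk along $\partial W\times[0,1]$ for $F'$ does not automatically pull back to one for $F$: its boundary arc $b$ could be dragged from $\partial W\times[0,1]$ into $W\times\{0,1\}$ (violating condition (3) of \cref{boundary compression along annular boundary region}), and the arc $a$ on $F$ could now cobound a disk with the re-created arc of $\partial F\cap(\partial W\times[0,1])$ (violating condition (4)).

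The paper's proof repairs exactly this. It first uses a \emph{strong} isotopy (which by \cref{strong isotopy} does preserve $\partial W\times\{0,1\}$) to move the $\partial$-compressing disk $D$ for $F'$ out of a small neighborhood of $\partial W\times\{0,1\}$, and then arranges the bigon to live inside that neighborhood, so the bigon removing isotopy is supported away from $D$. After that, $D$ is literally unchanged and $F$ and $F'$ agree near $D$, so $D$ is a $\partial$-compressing disk along $\partial W\times[0,1]$ for $F$ as well, giving the contradiction. Your proposal needs this ``separate the disk from the bigon'' step; without it, the transfer of the essentiality clause (4) across the bigon removing isotopy does not follow.
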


\begin{proof} The only nontrivial part is to show that if $F'$ is the result of a bigon removing isotopy applied to $F$, and $F$ is $\partial$-incompressible along $\partial W\times[0,1]$, so is $F'$. Suppose the contrary. Let $D$ be a $\partial$-compressing disk along $\partial W\times[0,1]$. Strongly isotoping $F'$ as necessary, we may ensure that $D$ avoids a small neighborhood of $\partial W\times \{0,1\}$. Reversing the bigon removing isotopy, we may strongly isotope $F$ so that the bigon is contained in a small neighborhood of $\partial W\times \{0,1\}$. This means $D$ is also a $\partial$-compressing disk along $\partial W\times [0,1]$ for $F$, which is a contradiction.
\end{proof}

\subsection{Classification of incompressible surfaces}
In the remainder of this section, we classify surfaces with straight boundary in $N$ that are incompressible and $\partial$-incompressible along $W$.

\begin{theorem}\label{incompressible structure theorem} Let $N=W\times [0,1]$ and $F\subset N$ be a connected  surface with straight boundary that is incompressible and $\partial$-incompressible along $W$. Then $F$ is strongly isotopic to $\alpha\times [0,1],$ where $\alpha$ is a curve or arc on $W$. 
\end{theorem}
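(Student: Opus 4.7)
The plan is to straighten $F$ by normalizing its intersections with a family of vertical squares coming from a cut system of arcs on $W$, and then to reduce to an analysis inside a $3$-ball.

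Choose a disjoint family $\Gamma$ of properly embedded arcs in $W$ with $\partial\Gamma \cap P = \emptyset$ such that $W$ cut along $\Gamma$ is a single disk $D$ (possible since $\partial W \neq \emptyset$; if $W$ is already a disk take $\Gamma = \emptyset$). Then $\Gamma \times [0,1]$ is a disjoint union of squares in $N$, and $N \setminus (\Gamma \times [0,1])$ is the $3$-ball $D \times [0,1]$. After a strong isotopy, assume $F$ meets $\Gamma \times [0,1]$ transversely with the number of components of $F \cap (\Gamma \times [0,1])$ minimal over the strong isotopy class. Because $\partial \Gamma \cap P = \emptyset$, $F$ is disjoint from the vertical edges of every square, so each component of $F \cap (\Gamma \times [0,1])$ is either a closed curve on a square or an arc with both endpoints on the horizontal edges $\Gamma \times \{0,1\}$.

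The main step is to rule out all non-vertical intersection components. A closed curve on a square is killed by a standard innermost disk argument: an innermost such curve bounds a disk on the square disjoint from $F$; incompressibility forces it to bound a disk on $F$ too; the resulting sphere bounds a ball in the irreducible manifold $N$, and $F$ is strongly isotoped across this ball to lower the intersection count, contradicting minimality. A ``U-shape'' arc with both endpoints on one horizontal edge cuts off a disk $\Delta$ on its square whose boundary splits as $a \cup b$ with $a \subset F$ and $b \subset W \times \{0,1\}$. Either $a$ is essential as a multicurve on $(F, Q(F))$, in which case $\Delta$ is a $\partial$-compressing disk along $W$ for $F$, contradicting $\partial$-incompressibility along $W$; or $a$ cobounds a disk in $F$ with a sub-arc of $\partial F \cap (W \times \{0,1\})$, in which case a strong isotopy across that disk eliminates the intersection, again contradicting minimality. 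The remaining arcs then run from $\Gamma \times \{0\}$ to $\Gamma \times \{1\}$, and a final strong isotopy within each square straightens each one to a genuine vertical arc $\{x\} \times [0,1]$.

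After this normalization, each connected piece $F' = F \cap (D \times [0,1])$ is a properly embedded incompressible and $\partial$-incompressible surface with straight boundary in the $3$-ball $D \times [0,1]$. Being incompressible in a ball, $F'$ is a disk; its boundary alternates vertical segments $\{p\} \times [0,1]$ on $\partial D \times [0,1]$ with horizontal arcs on $D \times \{0,1\}$, and if more than two vertical segments occurred, an arc in $F'$ between two non-adjacent horizontal boundary arcs (completed by an arc in $D \times \{0\}$ or $D \times \{1\}$, which exists since $D$ is simply connected) would furnish a $\partial$-compressing disk along $W$, contradicting $\partial$-incompressibility. Hence each $F'$ is a rectangle $\alpha_i \times [0,1]$ for an arc $\alpha_i \subset D$. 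Gluing these rectangles back across $\Gamma \times [0,1]$ and invoking connectedness of $F$ yields $F = \alpha \times [0,1]$ for a single connected $\alpha \subset W$, which is an arc (or a closed curve, if $F$ misses $\partial W \times [0,1]$). The hardest part is the U-shape step: correctly identifying essential arcs in the marked-point sense of condition (4) of \cref{boundary compression along W}, and confirming that the degenerate sub-case (where $a$ cobounds a disk in $F$) always yields a valid complexity-reducing strong isotopy rather than stalling the induction.
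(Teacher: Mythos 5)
Your proof follows essentially the same route as the paper's: the paper cuts $W$ along one essential arc at a time and inducts on the topological type of $W$, while you cut along a full arc system to a disk all at once, but the substance---normalizing $F\cap(\mathrm{arc}\times[0,1])$ by innermost-curve and outermost-bigon arguments using incompressibility, $\partial$-incompressibility along $W$, and irreducibility of $N$, then settling the $\mathrm{disk}\times[0,1]$ case and regluing---is identical. The points you flag as delicate are exactly where the paper supplies the missing details (in the degenerate U-shape case it forms the sphere $D\cup D'\cup D''$ and uses irreducibility together with incompressibility of $W\times\{0\}$ to push across a ball; the cut pieces are verified to stay $\partial$-incompressible; and the straightening isotopies are chosen to be the identity on the cut squares, with multiple components in one ball handled by an induction on components as in the paper's corollary), and these go through as you expect.
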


\begin{cor}\label{incompressible structure theorem 2} Let $N=W\times [0,1]$ and $F\subset N$ be a surface with straight boundary that is incompressible and $\partial$-incompressible along $W$. Then $F$ is strongly isotopic to $\alpha\times [0,1],$ where $\alpha$ is a multicurve on $W$.
\end{cor}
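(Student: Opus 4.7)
The strategy is induction on the number $n$ of connected components of $F$, with the base case $n=1$ being \cref{incompressible structure theorem}. Write $F = F_1 \sqcup \cdots \sqcup F_n$. First I would verify that each component $F_i$ is itself incompressible and $\partial$-incompressible along $W$; then I would straighten $F_1$ using the base case and apply the inductive hypothesis to the remaining components inside the complement of $F_1$.

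To inherit the hypotheses to a component $F_i$, note that a compressing disk for $F_i$ is automatically one for $F$. For $\partial$-incompressibility, suppose $D$ is a $\partial$-compressing disk along $W$ for $F_i$, with $\partial D = a \cup b$, $a \subset F_i$, and $b \subset W \times \{0,1\}$. Using incompressibility of $F$ and irreducibility of $N$, a standard innermost-circle / outermost-arc reduction makes the interior of $D$ disjoint from $F$. The key observation for an outermost arc $a' \subset D \cap F_j$ (with $j \ne i$) is that its endpoints lie on $\partial F_j$, which is disjoint from $a \subset F_i$, and hence lies in the interior of $b \subset W \times \{0,1\}$; the subdisk cut off is therefore bounded by $a'$ together with a subarc of $W \times \{0,1\}$, so it is either itself a $\partial$-compressing disk along $W$ for $F$ (contradicting the hypothesis) or $a'$ is inessential in $F_j$ and can be removed by an isotopy through a disk in $F_j$. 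Once the interior of $D$ is disjoint from $F$, essentiality of $a$ in $F_i$ coincides with essentiality in $F$, and $D$ becomes a $\partial$-compressing disk for $F$, a contradiction.

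Having established the hypotheses on each $F_i$, apply \cref{incompressible structure theorem} to $F_1$ to obtain a strong isotopy $\Psi$ of $N$ with $\Psi(F_1) = \alpha_1 \times [0,1]$ for some arc or curve $\alpha_1 \subset W$. Cutting $N$ along $\Psi(F_1)$ produces a product $W' \times [0,1]$, where $W'$ is $W$ cut along $\alpha_1$. The surfaces $\Psi(F_i)$ for $i \geq 2$ live in this submanifold, are pairwise disjoint, and inherit incompressibility and $\partial$-incompressibility along $W'$, because a $\partial$-compressing disk along $W'$ in $W' \times [0,1]$ glues to a $\partial$-compressing disk along $W$ in $N$. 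After applying \cref{number of bigon removing isotopies} they have straight boundary, and the inductive hypothesis in $W' \times [0,1]$ produces a strong isotopy taking $\bigcup_{i \geq 2} \Psi(F_i)$ to $\alpha' \times [0,1]$ for a multicurve $\alpha' \subset W'$. Regluing along $\alpha_1$ and precomposing with $\Psi$ yields a strong isotopy of $N$ taking $F$ to $(\alpha_1 \cup \alpha') \times [0,1]$.

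The main technical obstacle is ensuring that the isotopies constructed inside $W' \times [0,1]$ reglue to an honest strong isotopy of $N$. A strong isotopy of $W' \times [0,1]$ only preserves $\partial W' \times \{0,1\}$ setwise, so in principle it could move the two copies of $\alpha_1$ inside $\partial W'$ inconsistently on the two sides of the cut. I would handle this by choosing all of the bigon-removing and inductive isotopies to fix a collar of each copy of $\alpha_1 \times [0,1]$ pointwise, so that they extend by the identity across the cut; concretely, any bigon in $\partial W \times [0,1]$ whose interior meets $\partial \alpha_1 \times [0,1]$ is first subdivided along these vertical arcs so that each resulting push-off is supported in a ball disjoint from $F_1$.
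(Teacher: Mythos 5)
Your argument is correct and follows essentially the same route as the paper: show that each component inherits incompressibility and $\partial$-incompressibility along $W$ via an innermost-circle/outermost-arc reduction of an offending disk, then induct on the number of components by straightening one component to $\alpha_1\times[0,1]$, cutting along it, and applying the inductive hypothesis in the cut product. One small slip: a compressing disk for $F_i$ is \emph{not} automatically a compressing disk for $F$ (its interior may meet other components), but the same innermost-circle reduction, using incompressibility of $F$ and irreducibility of $N$, that you carry out in the $\partial$-compression case repairs this, exactly as in the paper's proof.
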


\begin{proof} First, we show that that each connected component of $F$ is strongly isotopic to a product as in the statement of \cref{incompressible structure theorem}. Assume the contrary. Then it admits a compressing disk or $\partial$-compressing disk $D$ along $W$. 

Suppose it admits a compressing disk $D$. Apriori, $D$ may not be a compressing disk for $F$ because it may intersect other components of $F$. However, isotoping $D$ so that it intersects $F$ minimally and choosing an innermost closed curve of $F\cap D$ gives a compressing disk for $F$. This is a contradiction.

Analogously, suppose the connected component of $F$ admits a $\partial$-compressing disk $D$ along $W$. Again, $D$ may intersect other components of $F$, hence may not be a $\partial$-compressing disk along $W$ for $F$. Isotoping $D$ so that it intersects $F$ minimally and choosing an outermost bigon on $D$ gives a $\partial$-compressing disk along $W$ for $F$, which is a contradiction.

Therefore, each connected component of $F$ is strongly isotopic to a product. To show that $F$ itself is strongly isotopic to a product, we do induction on the number of components. The base case is when $F$ is connected, which is already proved. For the induction step, we first strongly isotope $F$ so that one component, $F_0$, is a product. Then, we cut along $F_0$ and use the induction hypothesis to conclude that the other components of $F$ may also be simultaneously strongly isotoped to a product.
\end{proof}

The rest of this section will be to prove \cref{incompressible structure theorem}. To do this, we do induction on the topological type of $W$.

\subsection{Base case of induction} In this section, we prove the following:

\begin{lemma}\label{induction base case for incompressible structure theorem} Let $W$ be a disk, and $N=W\times [0,1]$. Let $F\subset N$ be a connected surface with straight boundary, and suppose $F$ is incompressible and $\partial$-incompressible along $W$. Then $F$ is strongly isotopic to $\alpha\times [0,1]$,  where $\alpha$ is a simple arc on $W$ with boundary points on $\partial W$.
\end{lemma}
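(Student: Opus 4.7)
The plan is to exploit that $N$ is a $3$-ball together with the $\partial$-incompressibility hypothesis to sharply constrain the combinatorics of $\partial F$, and then to conclude by the standard uniqueness of spanning disks in a ball.

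First, since $N$ is a ball and $F$ is connected and incompressible, the loop theorem together with an innermost-circle argument on $\partial F\subset \partial N$ forces $F$ to be a disk. Hence $\partial F$ is a single simple closed curve on $\partial N=S^2$. The straight-boundary assumption says $\partial F\cap(\partial W\times[0,1])$ is a disjoint union of $|P|$ vertical arcs; off these, $\partial F$ consists of horizontal arcs in $W\times\{0\}$ and $W\times\{1\}$, giving non-crossing matchings $H^{(0)}, H^{(1)}$ of the corner sets $P\times\{0\}, P\times\{1\}$. Because $\partial F$ is a simple closed curve on $S^2$ meeting the circle $\partial W\times\{0\}$ transversely, $|P|$ is even. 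The plan is to rule out $|P|=0$ and $|P|\ge 4$ using $\partial$-incompressibility along $W$, and then handle $|P|=2$ directly.

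For $|P|=0$, connectedness places $\partial F$ in one of $W\times\{0\}, W\times\{1\}$, say $W\times\{0\}$, where it bounds a disk $D$; then $F\cup D$ is a $2$-sphere in the irreducible ball $N$, so $F$ cobounds a ball with $D\subset W\times\{0\}$ and is $\partial$-compressible along $W$ by the first clause of \cref{boundary compression along W}, a contradiction. The main step is $|P|\ge 4$, where I would produce a $\partial$-compressing disk along $W$ as follows. The matching $H^{(0)}$ cuts $W\times\{0\}$ into $|P|/2+1$ regions whose dual graph is a tree, and for $|P|\ge 4$ this tree has an internal vertex, so some region $R\subset W\times\{0\}$ has at least four corners on its boundary. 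The curve $\partial F$ separates $\partial N$ into two disks $E_+,E_-$, and by irreducibility of $N$, $F$ cobounds closed balls $B_+, B_-\subset N$ with $E_+, E_-$ respectively; the region $R$ lies in one of them, say $R\subset \overline{E_+}\subset \partial B_+$. Pick two corners $p,q$ of $\partial R$ that are not adjacent on $\partial R$. Then $p$ and $q$ cannot be matched by $H^{(0)}$, since such a matching arc would be a single edge of $\partial R$ joining them, and they are not endpoints of any $H^{(1)}$ arc (whose endpoints lie in $W\times\{1\}$). Choose an embedded arc $a\subset F$ from $p$ to $q$ with interior in $F\setminus \partial F$, and an embedded arc $b\subset R$ from $p$ to $q$ with interior in $R\setminus \partial F$; then $a\cup b$ is a simple closed curve on the sphere $\partial B_+$, so since $B_+$ is a ball it bounds an embedded disk $D\subset B_+$ whose interior lies in $\interior(B_+)$. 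A direct verification gives $D\cap F=a$, $D\cap \partial N=b\subset W\times\{0\}$, and condition $(4)$ of \cref{boundary compression along W} holds vacuously because no arc of $\partial F\cap (W\times\{0,1\})$ has endpoint set $\{p,q\}$. Hence $D$ is a $\partial$-compressing disk along $W$, contradicting the hypothesis. The main obstacle is this construction: in particular, verifying that a region with at least four corners always exists for $|P|\ge 4$, and that non-adjacency of $p,q$ on $\partial R$ is exactly what rules out both $H^{(0)}$- and $H^{(1)}$-cobounding.

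This leaves $|P|=2$: $\partial F$ is a rectangle consisting of two vertical arcs $q_1\times[0,1], q_2\times[0,1]$ together with one arc in each of $W\times\{0\}, W\times\{1\}$ connecting the corresponding corners. Fix any simple arc $\alpha\subset W$ from $q_1$ to $q_2$. Since any two simple arcs in the disk $W$ sharing the same endpoints on $\partial W$ are isotopic rel endpoints, strong isotopies of $N$ supported in collars of $W\times\{0\}$ and $W\times\{1\}$ carry the two horizontal arcs of $\partial F$ to $\alpha\times\{0\}$ and $\alpha\times\{1\}$. After this, $\partial F=\partial(\alpha\times[0,1])$, and both $F$ and $\alpha\times[0,1]$ are properly embedded disks in the ball $N$ with identical boundary. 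Standard uniqueness of spanning disks in a ball (perturb to transverse position, apply irreducibility of $N$ to an innermost sphere in $F\cup (\alpha\times[0,1])$, and invoke isotopy extension) then produces an ambient isotopy of $N$ that is the identity on $\partial N$ and carries $F$ to $\alpha\times[0,1]$; such an isotopy is in particular strong, completing the proof.
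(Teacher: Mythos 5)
Your proof is correct and follows essentially the same route as the paper: show $F$ is a disk, use the straight-boundary combinatorics of $\partial F$ on the sphere $\partial N$ together with $\partial$-incompressibility along $W$ to force exactly two vertical arcs, and then straighten to a product. Your dual-tree/four-corner-region argument producing an explicit $\partial$-compressing disk (and your explicit disk-uniqueness finish for the $|P|=2$ case) is just a more detailed rendering of the paper's innermost-bigon step and its final ``we may strongly isotope'' assertion, and all the verifications you flag (non-adjacency ruling out cobounding arcs, $D\cap F=a$, $D\cap\partial N=b$) check out against \cref{boundary compression along W}.
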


If $F$ is incompressible, it must also be $\pi_1$-injective. Since $N$ is homeomorphic to a ball, this means $F$ is homeomorphic to a disk. Note also that $\partial N$ is homeomorphic to $S^2$. The boundary $\partial F$ is a curve on $\partial N$.

Note that $\partial F$ intersects $W\times \{0\}$ an even number of times. If $\partial F$ does not intersect $\partial W\times\{0\}$ at all, then since $\partial F$ is straight, it does not intersect $\partial W\times \{1\}$ either. Thus $\partial F\subset W\times \{0\}\cup W\times \{1\}$. In particular, $\partial F$ bounds a disk in either $W\times \{0\}$ or $W\times \{1\}$, and there is an innermost component of $\partial F$ in this disk. This innermost component bounds a disk which is a $\partial$-compressing disk for $F$ along $W$. Hence, $\partial F$ intersects $\partial W\times \{0\}$ at least twice.  

Now, $\partial F$ is a closed curve on $\partial N$, hence bounds disk $D\subset \partial N$. There is a bigon formed by $\partial F$ and $\partial W\times \{0\}$ that is contained in $D$. Let $\theta$ be the arc of $\partial F$ associated to this bigon. The endpoints of $\theta$ are two points $x\times \{0\}$ and $y\times\{0\}$ lying on $\partial W\times \{0\}$. Because $F$ is admissible, $x\times [0,1]$ and $y\times [0,1]$ are also arcs of $\partial F$. Now, consider the edge $e$ of $W\times\{1\}$ between $x\times \{1\}$ and $y\times \{1\}$ that is in $D$. Since $F$ is $\partial$-incompressible along $W$, $e$ forms a bigon with an arc of $\partial F$ in $W\times \{1\}$. Thus, $\partial F$ intersects both $\partial W\times \{0\}$ and $\partial W\times \{1\}$ exactly twice. Therefore we may strongly isotope $F$ so that $F=\alpha\times[0,1]$, for an arc $\alpha\subset W$ with endpoints on $\partial W$. 

\subsection{Induction step}

Let $W$ be a compact surface with boundary and $N=W\times [0,1]$. Let $\eta$ be an essential arc on $W$ with boundary on $\partial W$, so that cutting $W$ along $\eta$ produces a connected surface $W'$. We prove the following induction step for \cref{incompressible structure theorem}.

\begin{lemma}\label{induction step of incompressible structure theorem} Suppose the statement of \cref{incompressible structure theorem} holds for $W'$. Then it also holds for $W$.
\end{lemma}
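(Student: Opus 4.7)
The plan is to cut $N$ along the disk $D_\eta = \eta \times [0,1]$ and apply the induction hypothesis to the resulting surface in $N' = W' \times [0,1]$. First I would strongly isotope $F$ so that it meets $D_\eta$ transversely, also arranging that $\partial \eta$ avoids the discrete set $P \subset \partial W$ with $\partial F \cap (\partial W \times [0,1]) = P \times [0,1]$. Then the boundary of the $1$-manifold $F \cap D_\eta$ lies entirely on $\eta \times \{0,1\}$, so its components are (a) closed curves in the interior of $D_\eta$, (b) arcs with both endpoints on $\eta \times \{0\}$ or both on $\eta \times \{1\}$, and (c) arcs connecting $\eta \times \{0\}$ to $\eta \times \{1\}$.

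Next I would simplify the intersection to remove types (a) and (b). For an innermost closed curve in case (a), standard 3-manifold arguments give either a compressing disk for $F$ (contradicting incompressibility) or, choosing an innermost disk on $F$ instead, a sphere in $N$ that bounds a ball (since the handlebody $N$ is irreducible), along which one can isotope $F$ to reduce intersection. For an outermost arc in case (b), an analogous argument produces either a $\partial$-compressing disk along $W$ for $F$ (contradicting $\partial$-incompressibility, where the essentiality requirement of $a$ in \cref{boundary compression along W} must be checked) or a disk in $F$ along which $F$ can be isotoped past $D_\eta$ to reduce intersection. After finitely many such steps, $F \cap D_\eta$ consists only of arcs of type (c), which are essential in the disk $D_\eta$ and hence may be strongly isotoped to be of the form $\{p\} \times [0,1]$.

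Now cut $N$ along $D_\eta$ to produce $N'$, and let $F' \subset N'$ denote the image of $F$. Because the intersection arcs are vertical, $F'$ still has straight boundary in $N'$, where the new portion of $\partial F'$ lies in the two copies of $\eta \times [0,1] \subset \partial W' \times [0,1]$ as vertical arcs. Any compressing disk or $\partial$-compressing disk along $W'$ for $F'$ would descend (after an innermost/outermost argument on its intersection with other components of $F'$) to the corresponding object for $F$ in $N$, so $F'$ is incompressible and $\partial$-incompressible along $W'$. Applying the induction hypothesis to each connected component of $F'$, each such component is strongly isotopic in $N'$ to $\alpha'_i \times [0,1]$ for an arc or curve $\alpha'_i$ on $W'$. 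Regluing across $D_\eta$ matches the vertical arcs to yield a strong isotopy of $F$ to $\alpha \times [0,1]$ for some multicurve $\alpha$ on $W$; connectedness of $F$ forces $\alpha$ to be a single arc or curve, completing the induction.

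The main obstacle I foresee is maintaining straight boundary throughout the simplifications, since isotopies used to remove closed curves or bigons in $F \cap D_\eta$ may a priori disturb $\partial F \cap (\partial W \times [0,1])$. I would handle this by choosing the simplifying isotopies to be supported in the interior of $N$ whenever possible, and appealing to \cref{number of bigon removing isotopies} at the end (together with \cref{compatibility of compression definitions}) to restore straight boundary without reintroducing compressibility or $\partial$-compressibility along $W$.
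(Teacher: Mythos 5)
Your proposal follows essentially the same route as the paper's proof: isotope $F$ so that $\partial F$ avoids $\partial\eta\times[0,1]$, remove inessential intersections of $F$ with $\eta\times[0,1]$ via innermost-disk and outermost-bigon arguments (using incompressibility, $\partial$-incompressibility along $W$, and irreducibility of $N$), straighten the remaining vertical arcs, cut to obtain $F'\subset N'$, verify that $F'$ inherits the compressibility hypotheses, apply the induction hypothesis, and reglue. One small remark: your worry about restoring straight boundary at the end is unnecessary — once $\partial\eta$ avoids $P$, the simplifying isotopies in the innermost/outermost arguments move $F$ only along $W\times\{0,1\}$ and in the interior, never along $\partial W\times[0,1]$, so the straight-boundary condition is preserved automatically without any appeal to bigon-removing isotopies.
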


Let $F\subset N$ be a surface with straight boundary, incompressible and $\partial$-incompressible along $W$. Isotoping $F$ as necessary, we may assume that $\partial F$ does not intersect $\partial \eta\times [0,1]$. 

Now, further isotope $F$ so that the number of connected components of $F\cap (\eta \times [0,1])$ is minimal. First, we use an innermost loop/outermost bigon argument to claim that $F$ intersects $\eta\times [0,1]$ only along horizontal arcs (i.e. an arc connecting $\eta\times \{0\}$ to $\eta\times \{1\}$).

To see this, note that by assumption, $F$ only intersects $\partial (\eta\times [0,1])$ along $\eta\times \{0\}$ and $\eta\times \{1\}$ (since $F$ does not intersect $\partial \eta\times [0,1]$). If our claim were false, $F\cap (\eta\times [0,1])$ would contain either an innermost closed curve on $\eta\times [0,1]$, or an outermost bigon along $\eta\times \{0\}$ and $\eta\times \{1\}$.

Suppose $F\cap (\eta\times [0,1])$ contains an innermost closed curve $\theta$ on $\eta\times [0,1]$. Then $\theta$ bounds a disk $D$ on $\eta\times [0,1]$. Since $D$ cannot be a compressing disk for $F$, $\theta$ also bounds a disk $D'$ on $F$. Since $\theta$ was chosen to be innermost, $D'$ and $D$ do not intersect. Hence $D\cup D'$ is an embedded sphere in $N$. Since $N$ is irreducible, the sphere $D\cup D'$ is the boundary of a ball $B$. Since the interior of $D$ is disjoint from $F$ and $F$ is incompressible, the interior of $B$ is disjoint from $F$. Pushing $D'$ into $D$ along the ball isotopes $F$ decreasing the number of connected components of $F\cap (\eta\times [0,1])$. This is a contradiction to minimality. 

Next, suppose $F\cap (\eta\times [0,1])$ contains an outermost bigon along $\eta\times \{0\}$. Let $\xi$ be the arc of $F\cap \eta\times [0,1]$ associated to this bigon. The interior of the bigon is a disk $D$ in $\eta\times [0,1]$ satisfying $D\cap F=\xi$. By assumption, $D$ cannot be a $\partial$-compressing disk along $W$ for $F$. Hence the arc $\xi$ of $F\cap (\eta\times [0,1])$ in this bigon cobounds a disk $D'$ in $F$ along with another arc in $\partial F\cap W\times \{0\}$. Since $D\cap F=\xi$, $D'$ is disjoint from $D-\theta$. Thus $D\cup D'$ is an embedded disk with boundary on $W\times \{0\}$. The curve $\partial (D\cup D')$ is an embedded curve on $W\times\{0\}$ and must be inessential, $W\times \{0\}$ is incompressible in $N$. Therefore, $\partial (D\cup D')$ also bounds an embedded disk $D''$ in $W\times \{0\}$. The sphere $D\cup D'\cup D''$ is an embedded sphere in $N$ and by irreducibility bounds a ball $B$. Because $\partial B\cap F=D'$ and $F$ is incompressible, $B\cap F=D'$. Isotoping $F$ by pushing $D'$ along $B$ into $D$ decreases the number of connected components of $F\cap (\eta\times [0,1])$. This is also a contradiction to minimality. A similar argument shows that there are no outermost bigons along $\eta\times \{1\}$, either. 

Therefore, $F$ only intersects $\eta\times [0,1]$ in horizontal arcs. Straightening, we may assume $F\cap (\eta \times [0,1])= P\times [0,1]$ for a discrete set of points $P\subset \eta$. Let $N'=W'\times [0,1]$. Then $N'$ can be obtained by cutting $N$ along $\eta\times [0,1]$. Cutting $F$ along  $\eta\times [0,1]$ gives a properly embedded surface $F'\subset N'$ with straight boundary.

\begin{lemma} The surface $F'\subset N$ is incompressible and $\partial$-incompressible along $W'$. 
\end{lemma}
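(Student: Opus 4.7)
My plan is to derive either of $F'$ being compressible or $\partial$-compressible along $W'$ from the corresponding failure of $F$ in $N$. The main tool is the gluing map $N' \to N$ which identifies the two copies of $\eta \times [0,1]$ in $\partial N'$ to form the properly embedded disk $\eta \times [0,1] \subset N$. Any compressing or $\partial$-compressing disk for $F'$ will, after isotopy, lie in a neighborhood disjoint from $\eta \times [0,1]$, so that it embeds as a disk in $N$, and then I invoke the hypotheses on $F$.

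For incompressibility, suppose $D \subset N'$ is a compressing disk for $F'$. View $D$ inside $N$. Since $\partial D$ is a simple closed curve in the interior of $F'$, its image in $F$ lies in $F \setminus (P \times [0,1])$, so $\partial D$ itself is disjoint from $\eta \times [0,1]$. Hence $D \cap (\eta \times [0,1])$ consists of closed curves in the interior of $D$ and arcs with endpoints on $\partial(\eta \times [0,1]) \subset \partial N$. I would eliminate these by the standard innermost-circle argument (an innermost circle bounds disks in both $D$ and the disk $\eta \times [0,1]$, and the resulting sphere bounds a ball by irreducibility of $N$) and an outermost-arc argument (an outermost arc cuts off a sub-disk of $D$ which is either a compressing disk for $F$, contradicting incompressibility, or a disk that permits an isotopy of $D$ reducing the intersection). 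Once $D \cap (\eta \times [0,1]) = \emptyset$, $D$ embeds in $N$ as a disk with $\partial D \subset F$, and is a compressing disk for $F$ provided $\partial D$ is essential in $F$. To verify essentiality, I would analyze a hypothetical disk $E \subset F$ with $\partial E = \partial D$: because the cut arcs $P \times [0,1]$ have their endpoints on $\partial F$ while $\partial E$ is disjoint from them, any component of $E \cap (P \times [0,1])$ is an entire cut arc contained in $E$. Cutting $E$ along these arcs produces a disk in $F'$ bounded by $\partial D$, contradicting the essentiality of $\partial D$ in $F'$.

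The argument for $\partial$-incompressibility along $W'$ is parallel. Given a $\partial$-compressing disk $D \subset N'$ along $W'$ with $\partial D = a \cup b$, $a \subset F'$, $b \subset W' \times \{0,1\}$, I would again minimize $D \cap (\eta \times [0,1])$ by the same innermost-circle and outermost-arc reductions, now invoking both the incompressibility and the $\partial$-incompressibility of $F$ along $W$ to dispose of the sub-disks produced by outermost arcs. After minimization, $D$ embeds in $N$ with the correct boundary structure, and I would check that $a$ remains essential on $(F, Q(F))$ by the same sort of intersection analysis. The main obstacle will be executing the outermost-arc reductions carefully: each sub-disk may turn out to be a compressing disk, a $\partial$-compressing disk along $W$, or (for a sub-disk with part of its boundary on $\partial W \times [0,1]$) an object requiring yet another form of incompressibility; each case must be routed through the relevant hypothesis on $F$. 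In particular, condition~(4) in the definition of $\partial$-compressing disk along $W$ (that $a$ does not cobound a disk in $F$ with another arc of $\partial F \cap (W \times \{0,1\})$) must be transferred from $F'$ to $F$ using the intersection-with-cut-arcs argument from the incompressibility case.
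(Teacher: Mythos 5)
Your overall route is the paper's: push the compressing or $\partial$-compressing disk for $F'$ back into $N$ and contradict the corresponding property of $F$. However, the step you make the centerpiece of the plan --- minimizing $D\cap(\eta\times[0,1])$ by innermost-circle and outermost-arc arguments --- rests on a misreading of where $D$ lives. The disk $D$ is a subset of $N'$, and $N'$ is $N$ cut along $\eta\times[0,1]$, so $D$ can only touch $\eta\times[0,1]$ along $\partial N'$, where the two copies of $\eta\times[0,1]$ sit inside $\partial W'\times[0,1]$. For a compressing disk one may assume, after isotoping the closed curve $\partial D$ into the interior of $F'$ and pushing the interior of $D$ into the interior of $N'$, that $D$ is disjoint from $\partial N'$; for a $\partial$-compressing disk along $W'$, condition (3) of \cref{boundary compression along W} (applied to $W'$) already forces $D\cap\partial N'=b\subset W'\times\{0,1\}$, which is disjoint from the cut locus. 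So $D$ misses $\eta\times[0,1]$ from the outset --- this is exactly the paper's one-line observation --- and the ``closed curves in the interior of $D$ and arcs with endpoints on $\partial(\eta\times[0,1])$'' that you propose to remove cannot occur. In particular the ``main obstacle'' you flag (sub-disks from outermost arcs needing further incompressibility hypotheses) is a phantom; no reduction is needed.

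The real content is the transfer of essentiality, and your mechanism for it is not correct as stated. If $\partial D$ bounded a disk $E\subset F$, then since $\partial E=\partial D$ is a closed curve disjoint from $\partial F$, the disk $E$ is disjoint from $\partial F$; each cut arc $x\times[0,1]$, $x\in P$, has both endpoints on $\partial F$, so no cut arc can be contained in $E$ (contrary to your claim), and since the frontier of $E$ in $F$ is $\partial D$, which misses the cut arcs, $E$ meets no cut arc at all. Hence $E$ is already a disk in $F'$ bounded by $\partial D$ --- no cutting is needed, and indeed ``cutting $E$ along arcs whose endpoints lie in the interior of $E$'' is not a well-defined operation returning a disk with boundary $\partial D$. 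The same frontier argument disposes of the $\partial$-case: if $a$ cobounded a disk $E\subset F$ with an arc $c$ of $\partial F\cap(W\times\{0\}\cup W\times\{1\})$, note $c$ lies in a single level $W\times\{0\}$ or $W\times\{1\}$, while each cut arc meets that level only in one endpoint; so $E$ can meet the cut arcs in at most isolated boundary points, survives in $F'$, and violates condition (4) of \cref{boundary compression along W} for $D$ as a $\partial$-compressing disk for $F'$ along $W'$. With these two repairs your argument collapses to the paper's short proof.
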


\begin{proof} Since $F$ is incompressible in $N$, $F'$ is incompressible in $N'$. It remains to show that $F'$ is $\partial$-incompressible along $W'$. Suppose the contrary. Then there exists a $\partial$-compressing disk $D$ for $F'$ along $W'$. By construction, $D$ does not intersect $\eta\times [0,1]$, therefore, $D$ is also a disk in $N$. Since $D\cap F'$ is essential in $F'$ so $D\cap F$ is also essential in $F$. Thus $D$ is a $\partial$-compressing disk for $F$ along $W$, which is a contradiction.
\end{proof}

By our assumption that \cref{incompressible structure theorem} holds for $W'$, $F'$ is strongly isotopic to $C'\times [0,1]$ where $C'$ is a multicurve on $W'$. This means $C'$ glues along the two components of $\eta$ on $W'$ to give a multicurve $C\subset W$. Since the strong isotopy can be chosen to be the identity on $F'\cap (\eta\times [0,1])$, it also glues to give a strong isotopy of $F$ to $C\times [0,1]$.

\section{A dictionary between $3$-submanifolds and paths of subsurfaces} \label{Dictionary}

\subsection{$3$-submanifolds of knot complements and products} Let $K$ be any knot and $M=S^3-N(K)$. 

\begin{notation}\label{3-submanifold of M} Let $L$ be a $3$-submanifold of $M$. The boundary of $L$ splits into two components: $\partial L\cap \partial M$, which we call the exterior boundary; and the closure of its complement in $\partial L$, which we call the interior boundary. Where the interior boundary is relevant, we shall denote a $3$-submanifold of $M$ by a pair $(L,S)$ where $L$ is the $3$-submanifold and $S$ is its interior boundary. 

If $L\subset M$ is a $3$-submanifold, we denote the closure of its complement by $M-L$.
\end{notation}

\begin{definition} A $3$-submanifold $(L,S)\subset M$ has spherical boundary if $\genus(S)=0$ (so $S$ is the union of some spheres with boundary components).
\end{definition}

Next, let $K\subset S^3$ be a fibered knot. Let $W$ be the Seifert surface that is the fiber of the fiber bundle associated to $K$. Let $f:W\to W$ be a monodromy map for the fiber bundle. (We do not assume that $f$ is the identity on $\partial W$.) Let $M=S^3-N(K)$. Fix a fiber $W\subset M$, which is a properly embedded, incompressible and $\partial$-incompressible surface. Let $N=W\times [0,1]$, obtained from $M$ by cutting along $W$.

\begin{notation}\label{3-submanifold of N} Let $L$ be a $3$-submanifold of $N$. The boundary of $L$ splits into two components: $\partial L\cap \partial N$, which we call the exterior boundary; and the closure of its complement in $\partial L$, which we call the interior boundary. We shall denote a $3$-submanifold of $N$ by a pair $(L,S)$ where $L$ is the $3$-submanifold and $S$ is its interior boundary.

Again, for $3$-submanifolds $L\subset N$, we denote the closure of the complement by $N-L$.
\end{notation}

\begin{definition}\label{submanifold of N} Given a $3$-submanifold $(L,S)\subset M$, we let $(L^\wedge,S^\wedge)$ be the $3$-submanifold of $N$ obtained by cutting $L$ and $S$ along $W$. Given a $3$-submanifold $(L,S)\subset N$ such that $f(L\cap (W\times \{0\}))$ is isotopic to $L\cap (W\times \{1\})$, we let $(L^\vee,S^\vee)\subset M$ be the $3$-submanifold formed by gluing $L$ and $S$ along $W\times \{0,1\}$. 
\end{definition}

\begin{definition} \label{tunnel component} A connected component of a properly embedded surface $S\subset N$ is a tunnel if it is strongly isotopic to the interior boundary of a neighborhood of an arc or curve on $W\times \{0,1\}$.
\end{definition}

\begin{definition} A $3$-submanifold $(L,S)\subset N$ is admissible if the following conditions are satisfied:

\begin{enumerate}

\item $L\cap (W\times \{0\})$ and $L\cap (W\times \{1\})$ are essential subsurfaces of $W$.

\item $S$ is an incompressible surface.

\item $S$ is $\partial$-incompressible along $\partial W\times [0,1]$.

\item $S$ does not intersect $\partial W\times [0,1]$ in a longitudal curve.

\end{enumerate}
\end{definition}

\begin{lemma}\label{surgery on 3-manifold gives elementary move on surface} Let $(L,S)\subset N$ be an admissible $3$-submanifold. \begin{enumerate} 

\item Assume $S$ has straight boundary. Let $S'$ be a surface formed by applying a $\partial$-compression along $W$ to $S$. There is an admissible $3$-submanifold $L'\subset N$ whose interior boundary is $S'$, which satisfies the following property. If the $\partial$-compressing disk is along $W\times \{0\}$, $L'\cap (W\times \{0\})$ and $L\cap (W\times \{0\})$ are related by a type 1 surgery. If the $\partial$-compressing disk is along $W\times \{1\}$, $L'\cap (W\times \{1\}$ and $L'\cap (W\times \{1\})$ are related by a type 1 surgery. 

\item Assume $S$ has no tunnel components. Let $S'\subset N$ be a surface formed by applying a bigon removing isotopy to $S$. There is an admissible $3$-submanifold $L'$ whose interior boundary is $S'$, which satisfies the following property. If the bigon is along $W\times \{0\}$, $L'\cap (W\times \{0\})$ and $L\cap (W\times \{0\})$ are related by a type 2 surgery. If the bigon is along $W\times \{1\}$, $L'\cap (W\times \{1\}$ and $L\cap (W\times \{1\})$ are related by a type 2 surgery. 

\item Let $S'\subset N$ be a subsurface formed by removing a tunnel component from $S$. There is an admissible $3$-submanifold $L'$ whose interior boundary is $S'$, which satisfies the following property. If the tunnel component is along $W\times \{0\}$, $L'\cap (W\times \{0\})$ and $L\cap (W\times \{0\})$ are related by a null component addition. If the tunnel component is along $W\times \{1\}$, $L'\cap (W\times \{1\})$ and $L\cap (W\times \{1\})$ are related by a null component addition. 
\end{enumerate}
\end{lemma}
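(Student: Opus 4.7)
The strategy in all three parts is the same: construct $L'$ as a local modification of $L$ in a regular neighborhood of the $\partial$-compressing disk $D$, the bigon $D$, or the tunnel component; verify that $L'$ is still admissible using \cref{compatibility of compression definitions}; and then trace the footprint of the modification on $W\times \{0\}$ or $W\times \{1\}$ to identify the elementary move on $L\cap W\times\{0,1\}$. Throughout, admissibility of $L'$ is mostly automatic: $S'$ is incompressible and $\partial$-incompressible along $\partial W\times[0,1]$ by \cref{compatibility of compression definitions}; no longitudinal intersections are created since all three operations are supported away from the longitudinal direction on $\partial W\times[0,1]$ in cases (1) and (3), and in case (2) the isotopy only decreases $|Q(S)|$. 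Essentiality of the subsurfaces $L'\cap W\times\{0\}$ and $L'\cap W\times\{1\}$ will follow in each case from the nondegeneracy conditions built into the operations (condition (4) in the $\partial$-compression definitions, and the defining properties of a tunnel).

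For part (1), I would take a product neighborhood $N(D)$ of the $\partial$-compressing disk along $W$, chosen so that $S$ meets $N(D)$ only in a bicollar of $a$. The surface $S$ divides this neighborhood into two solid sub-pieces; $L'$ is defined by taking the symmetric difference of $L$ with whichever of these two pieces lies on the appropriate side (chosen so the resulting $S'$ is the $\partial$-compression of $S$ along $D$). Restricting to $W\times \{0\}$, the modification adds or removes a thin bicollar of $b=D\cap W\times\{0\}$. Since $b$ has both endpoints on $S\cap W\times\{0\}=\bound{L\cap W\times\{0\}}$, this is exactly a type 1 surgery. For part (3), the tunnel component of $S$ cobounds a solid tube in $N$ with a regular neighborhood of an arc or curve on $W\times\{0,1\}$; set $L'=L\triangle (\text{this tube})$. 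The intersection with $W\times\{0\}$ differs from $L\cap W\times\{0\}$ by adding or removing the null component (rectangular if the tunnel is over an arc, annular if over a closed curve) that is the footprint of the tube, which is by definition a null component addition in the sense of \cref{null component elimination}.

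Part (2) is the most delicate and where I expect the main obstacle. The bigon $D\subset \partial W\times[0,1]$ has sides $\alpha\subset \partial S$ and $\beta\subset \partial W\times\{0\}$, with endpoints $p,q\in \partial W\times\{0\}$. The isotopy removing the bigon slides $\alpha$ across $\beta$, so after the isotopy $\alpha$ lies just inside $W\times\{0\}$ parallel to $\beta$; $L$ is carried along, either gaining or losing the thin collar of $\beta$ in $W\times\{0\}$ depending on which side of $S$ contained the bigon region. In the language of subsurfaces of $W$, this collar is attached along a short arc close to one of the endpoints of $\alpha$; precisely, it corresponds to surgery along an embedded arc $\delta\subset W$ whose interior lies in the appropriate side of $\bound{L\cap W\times\{0\}}$, with one endpoint on $\bound{L\cap W\times\{0\}}$ (coming from the interior-boundary arc previously terminating at $p$ or $q$) and the other endpoint on $\partial W$ (on $\beta$). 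This is a type 2 surgery in the sense of \cref{surgery on a subsurface}. The hard part is showing that the net effect is a single type 2 surgery rather than a more complicated combination: this requires using the hypothesis that $S$ has no tunnel components, which prevents the bigon region from coinciding with a cylinder of $S$ that would produce an extra null component elimination, and it also uses that $p$ and $q$ may need to be identified as marked points in $P$ so that the endpoint of $\delta$ on $\partial W$ is recognized correctly. Once the elementary move is pinned down, essentiality of $L'\cap W\times\{0\}$ (and the symmetric statement for $W\times \{1\}$ in the other cases) is checked by noting that the bigon-removing, $\partial$-compressing, and tunnel-removing operations each reduce $|Q(S)|$, $-\chi(S)$, or the number of components of $S$, respectively, so iterating them cannot produce an inessential subsurface from an essential one.
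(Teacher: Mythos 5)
Your overall strategy — add or remove a regular neighborhood of the disk/bigon/tube from $L$, then trace the footprint on $W\times\{0\}$ — matches the paper's proof, and your identification of the elementary move in parts (1) and (3) is right. But the discussion of part (2) is muddled, and the closing essentiality argument is fallacious, so the proof as written does not go through.

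On part (2): the footprint of the bigon-removing isotopy on $W\times\{0\}$ is not a strip along a short arc $\delta$ near an endpoint of $\alpha$ with one end in the interior of $W$ — it is a bicollar of the \emph{entire} arc $\beta = D\cap(\partial W\times\{0\})$, which lies on $\partial W$ with both endpoints on $\bound{L\cap W\times\{0\}}$. That is exactly a type~1 $\partial$-surgery, and the statement records it as a type~2 surgery only because the paper's convention ``$L'\cap W\times\{0\}$ and $L\cap W\times\{0\}$ are related by a type~2 surgery'' means the type~2 surgery goes from $L'$'s slice to $L$'s (type~2 surgeries and type~1 $\partial$-surgeries are inverses). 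Your worry about a ``more complicated combination'' and the invocation of marked points $P$ are red herrings: the subsurfaces here live in $(W,\emptyset)$, so $P$ plays no role, and the footprint is always a single collar. The no-tunnel hypothesis is needed for something else entirely: to guarantee that the resulting $L'\cap W\times\{0\}$ is still an \emph{essential} subsurface of $W$ (together with $\partial$-incompressibility along $\partial W\times[0,1]$), which is the nontrivial admissibility condition to verify.

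On essentiality in general: your opening sentence (``follows from condition~(4) in the $\partial$-compression definitions, and the defining properties of a tunnel'') points in the right direction, but the closing argument — that bigon-removal, $\partial$-compression, and tunnel-removal each reduce $|Q(S)|$, $-\chi(S)$, or the number of components, ``so iterating them cannot produce an inessential subsurface from an essential one'' — is not a valid inference. Decreasing a complexity measure says nothing about essentiality of the resulting slice; one must check, per operation, that no new inessential boundary component of $\bound{L'\cap W\times\{0\}}$ is created. For $\partial$-compressions along $W$ this is precisely what condition~(4) of \cref{boundary compression along W} rules out; for bigon-removal it is ruled out by the combination of no tunnel components and $\partial$-incompressibility along $\partial W\times[0,1]$; for tunnel removal, essentiality is preserved because removing a null component from an essential subsurface leaves it essential. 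Finally, note that \cref{compatibility of compression definitions} does not cover tunnel removal, so in part~(3) you need to verify incompressibility and $\partial$-incompressibility of $S'$ directly (which is easy, since $S'\subset S$).
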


\begin{proof} First, we prove statement (1). Assume that the $\partial$-compressing disk is along $W\times \{0\}$ (the other case is analogous). The disk is either contained in $L$ or $N-L$. In the former case, we remove a neighborhood of the disk from $L$ to obtain $L'$. In the latter case, we add a neighborhood of the disk to $L$ to obtain $L'$. In either case, $L\cap (W\times \{0\})$ and $L'\cap (W\times \{0\})$ are related by a type 1 surgery. This means $L'\cap (W\times\{0\})$ and  $L\cap (W\times \{0\})$ are also related by a type 1 surgery. We now show that $L'$ is admissible. The only nontrivial part is to show that $L'\cap (W\times \{0\})$ is an essential subsurface of $W$. This is a consequence of condition (4) in \cref{boundary compression along W}. 

Next, we prove statement (2). Assume that the bigon is along $W\times \{0\}$ (the other case is analogous). The bigon is contained either in $L$ or $N-L$. If it is contained in $L$, we remove a neighborhood of the bigon from $L$ to obtain $L'$. If the bigon is contained in $N-L$, we add a neighborhood of the bigon to $L$ to obtain $L'$. In either case, $L\cap (W\times \{0\})$ and $L'\cap (W\times \{0\})$ are related by a type 1 $\partial$-surgery. This means $L'\cap (W\times\{0\})$ and  $L\cap (W\times \{0\})$ are related by a type 2 surgery. To show that $L'$ is admissible, again, the only nontrivial part is to show that $L'\cap (W\times \{0\})$ is an essential subsurface of $W$. This is true since $S$ does not contain any tunnel components and is $\partial$-incompressible along $\partial W\times[0,1]$. 

Finally, we prove statement (3). Assume that the tunnel is along $W\times \{0\}$ (the other case is analogous). The tunnel is the boundary of the neighborhood of an arc or closed curve on $W\times \{0\}$. This neighborhood is contained in $L$ or $N-L$. If it is contained in $L$, we remove it from $L$ to obtain $L'$. If it is contained in $N-L$, we add it to $L$ to obtain $L'$. In either case, $L\cap (W\times \{0\})$ and $L'\cap (W\times \{0\})$ are related by a null component elimination. Equivalently, $L'\cap (W\times\{0\})$ and  $L\cap (W\times \{0\})$ are related by a null component addition. Since $L'\cap (W\times \{0\})$ is formed by applying a null component elimination to $L\cap (W\times \{0\})$ (an essential subsurface of $W$), $L'\cap (W\times \{0\})$ is an essential subsurface as well. Therefore $L'$ is admissible.
\end{proof}

\subsection{Structure theorem for $3$-submanifolds of a fibered knot complement}

In this section, we relate certain types of $3$-submanifolds of $M$ to discrete paths of subsurfaces of $W$. First, we describe the conditions on the $3$-submanifold we need for our dictionary to hold.

\begin{definition}\label{admissible submanifold of M} A $3$-submanifold $(L,S)\subset M$ is admissible if $(L^\wedge,S^\wedge)\subset N$ is admissible, and $S^\wedge$ does not contain any tunnel components.
\end{definition}

\begin{definition} \label{isotopy of 3-submanifolds} Let $(L,S), (L',S')\subset N$ be $3$-submanifolds. We say that $(L,S)$ and $(L',S')$ are isotopic if $S$ and $S'$ are strongly isotopic, and the ambient isotopy takes $L$ to $L'$. 
\end{definition}

A $3$-dimensional realization of an $f$-twisted discrete path gives a $3$-submanifold of $M$. The main theorem of this section explains how to go in the other direction; that is, how to obtain an $f$-twisted discrete path of subsurfaces from an admissible $3$-submanifold. 

\begin{theorem} \label{discrete path associated to 3-submanifold} Let $(L,S)\subset M$ be an admissible $3$-submanifold. There is an $f$-twisted essential discrete path $$\Delta=\{\Omega_{-\ell},...,\Omega_0,...,\Omega_k\}$$ of subsurfaces of $W$, satisfying the following conditions.
\begin{enumerate} \item $\length(\Delta)\leq 4|\chi(S)|$. 
\item $L^\wedge$ is isotopic to a $3$-dimensional realization of $\Delta$, as $3$-submanifolds of $N$.
\item For all $i>0$, $\Omega_{i-1}$ and $\Omega_i$ are related by a surgery of type 1 or 2, or null component addition; and $\Omega_{-i+1}$ and $\Omega_{-i}$ are  related by a surgery of type 1 or 2, or null component addition. 
\end{enumerate}
\end{theorem}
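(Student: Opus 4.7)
The plan is to work in the cut setting $(L^\wedge, S^\wedge) \subset N = W \times [0, 1]$ and simplify $S^\wedge$ by a controlled sequence of moves until \cref{incompressible structure theorem 2} applies; at that point $S^\wedge$ becomes a vertical multicurve times $[0,1]$, so $L^\wedge$ is isotopic to a product $\Omega_0 \times [0,1]$. Each move on $S^\wedge$ translates, via \cref{surgery on 3-manifold gives elementary move on surface}, into a single elementary move on $L^\wedge \cap (W \times \{0\})$ or $L^\wedge \cap (W \times \{1\})$, and these moves, read outward from the middle $\Omega_0$, will constitute the two halves of $\Delta$.

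Concretely, I would first perform bigon-removing isotopies on $S^\wedge$ (valid since admissibility forbids tunnel components in $S^\wedge$) to give $S^\wedge$ straight boundary; by \cref{number of bigon removing isotopies} the count is at most $|Q(S^\wedge)|/2$, and each isotopy corresponds, by part (2) of \cref{surgery on 3-manifold gives elementary move on surface}, to a type 2 surgery relating the two side subsurfaces before and after. Next I would apply $\partial$-compressions along $W$ until $S^\wedge$ is $\partial$-incompressible along $W$, with count at most $-\chi(S^\wedge) + |Q(S^\wedge)|$ by \cref{number of compressions}; each yields a type 1 surgery by part (1). \cref{compatibility of compression definitions} guarantees that incompressibility and $\partial$-incompressibility along $\partial W \times [0,1]$ are preserved throughout. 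Finally, any tunnel components that may have emerged are removed, each contributing a null component addition by part (3).

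To assemble the path, I would partition the moves into those affecting $W \times \{0\}$ and those affecting $W \times \{1\}$, and perform them in that order, producing intermediate submanifolds $L^\wedge = L^{(-\ell)}, L^{(-\ell+1)}, \ldots, L^{(0)}, \ldots, L^{(k)}$. After all moves, \cref{incompressible structure theorem 2} ensures $L^{(k)}$ is isotopic to a product $\Omega_0 \times [0,1]$. I then set $\Omega_i = L^{(i)} \cap (W \times \{0\})$ for $i \leq 0$ and $\Omega_i = L^{(i)} \cap (W \times \{1\})$ for $i \geq 0$ (agreeing at $i = 0$ by the product structure). A $3$-dimensional realization of $\Delta$ is recovered by reversing the sequence of moves starting from the product $L^{(k)}$, verifying condition (2). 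The $f$-twisted condition holds because $\Omega_{-\ell} = L \cap (W \times \{0\})$ and $\Omega_k = L \cap (W \times \{1\})$ are identified under $f$ when gluing $N$ back to $M$.

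Condition (3) follows by inspecting the inverse of each simplification: the inverse of a type 1 surgery is a type 1 surgery, the inverse of a type 1 $\partial$-surgery (produced by removing a bigon) is a type 2 surgery, and the inverse of a null component elimination (produced by removing a tunnel) is a null component addition; reading $\Delta$ outward from $\Omega_0$ thus uses only these three kinds of moves, exactly as recorded in the proof of \cref{surgery on 3-manifold gives elementary move on surface}. The main obstacle I anticipate is the accounting for the length bound: the three sums above give roughly $-\chi(S^\wedge) + \frac{3}{2}|Q(S^\wedge)|$ plus a tunnel contribution, and one must leverage the structure of $S^\wedge$ (with each pair of points of $Q(S^\wedge)$ absorbable into the Euler characteristic count) and the limited number of tunnels produced by the compressions to consolidate this into the clean bound $4|\chi(S)|$. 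A subtler technical point is ensuring admissibility of each intermediate $L^{(i)}$ (in particular essentiality of $L^{(i)} \cap (W \times \{0, 1\})$) is preserved, which rests on condition (4) of \cref{boundary compression along W} and the no-tunnel hypothesis on the initial $S^\wedge$.
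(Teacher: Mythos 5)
Your proposal follows the same structural route as the paper: cut to $N = W\times[0,1]$, simplify $S^\wedge$ by bigon-removing isotopies, tunnel removals and $\partial$-compressions along $W$ until \cref{incompressible structure theorem} makes it a vertical multicurve, then read off the two halves of the path from the two halves of $\partial N$ via \cref{surgery on 3-manifold gives elementary move on surface}. The twisting argument and the description of condition (3) are both correct. However there are two genuine gaps in execution.

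First, the scheduling of moves. You run all bigon-removing isotopies, then all $\partial$-compressions, then collect tunnel removals at the end. But part (2) of \cref{surgery on 3-manifold gives elementary move on surface} — the step that turns each bigon-removing isotopy into a type-$2$ surgery — is stated under the hypothesis that $S$ has no tunnel components, and a bigon-removing isotopy can create a tunnel. Your appeal to initial admissibility justifies only the first such move. The paper interleaves: at each step, if a tunnel is present, remove it; otherwise perform the next bigon-removing isotopy. (Tunnels that arise during the $\partial$-compression phase are harmless, since part (1) carries no such hypothesis.)

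Second, and more substantially, the length bound. You concede this as an open gap, and it is. The raw totals from \cref{number of bigon removing isotopies} and \cref{number of compressions} give something like $-\chi(S^\wedge) + \tfrac{3}{2}|Q(S^\wedge)|$ plus a tunnel contribution, and $|Q(S^\wedge)|$ is not controlled by $|\chi(S)|$ without more information. The paper closes this by a component-wise analysis that your proposal omits: it first shows $S\cap W$ is essential as a multicurve on $S$ (an innermost-bigon/outermost-disk argument using that the fiber $W\subset M$ is incompressible and $\partial$-incompressible — this is where the fibered hypothesis enters), so every component $R$ of $S^\wedge$ has $\chi^S(R)\leq 0$; it then classifies the $\chi^S(R)=0$ components as vertical products (\cref{classification of 0 components}) or tunnels, which require no moves or a single removal, respectively; and it uses the additivity of \cref{chi is additive} to get $\sum_{R:\chi^S(R)<0}(\chi(R)-|Q(R)|)\geq 4\chi(S)$, which, plugged component-by-component into the counts of \cref{number of bigon removing isotopies} and \cref{number of compressions}, yields $\length(\Delta)\leq 4|\chi(S)|$. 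Without the essentiality of $S\cap W$ on $S$ and the zero-$\chi^S$ classification, the ``consolidation'' you anticipate does not close.
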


\begin{proof} Consider $S\cap W$ as a multicurve on $S$. First, we show that it is essential, using an innermost bigon/outermost disk argument. If it contains a closed nullhomotopic loop, then choose an innermost such loop $\xi$ on $S$. The loop $\xi$ bounds a disk on $S$ that does not intersect $W$. Because $K$ is fibered, $W\subset M$ is an incompressible surface, hence $\xi$ is inessential on $W$ also. This contradicts our assumption that $(L,S)$ is in minimal position with $W$. If a component of $S\cap W$ along with an arc of $\partial S$ forms a bigon on $S$, then we take an outermost such bigon $D$. Then $D$ is disk in $M$ with boundary arc $\theta$ on $W$ and $\partial D-\theta\subset \partial M$. If $\theta$ were essential in $W$, then $D$ would be a $\partial$-compressing disk for $W\subset M$, which is a contradiction. Therefore, $\theta$ is inessential in $W$, which is a contradiction our minimal position assumption. 

We now study the connected components of $S^\wedge$; let $\mathcal{C}$ be the set of connected components of $S^\wedge$. On $S$, these connected components are simply the connected components of the complement of $W\cap S$. Since $W\cap S$ is essential, for all $R\in \mathcal{C}$ we have $\chi^S(R)\leq 0$. (By admissibility, the complement of $W\cap S$ cannot have any spherical components. Otherwise, $S^\wedge$ would not be an incompressible surface in $N$.) Subdivide $\mathcal{C}$ into $$\mathcal{C}_{\chi=0}=\{R \text{ is a connected component of } S^\wedge|\chi^S(R)=0\}$$ and $$\mathcal{C}_{\chi< 0}=\{R \text{ is a connected component of } S^\wedge|\chi^S(R)< 0\}.$$

Recall from \cref{marked points on surface} that $Q(R)=R\cap (\partial W\times \{0,1\})$. Then $Q(R)$ consists of the points where $R$ intersect $\partial S$. So $$\chi^S(R)=\chi(R)-\frac{|Q(R)|}{4}.$$ 

\begin{lemma}\label{classification of 0 components} If $\chi^S(R)=0$, then $R$ is isotopic to $\alpha\times [0,1]$, for a properly embedded arc or closed curve $\alpha\subset W$.
\end{lemma}

\begin{proof} If $\chi^S(R)=0$, then $R$ is a disk that intersects $\partial W\times \{0,1\}$ four times. There are three cases to consider.

\textbf{Case 1:} $\partial R$ intersects $\partial W\times \{0\}$ twice and $\partial W\times \{1\}$ twice. In this case, $\partial R\cap W\times \{0\}$ and $\partial R\cap W\times \{1\}$ must be isotopic arcs, so the lemma follows.

\textbf{Case 2:} $\partial R$ intersects $\partial W\times \{0\}$ four times. In this case, $\partial R\cap W\times \{0\}$ contains two connected components, which must be isotopic. Thus $S^\wedge$ contains a tunnel component, which is a contradiction.

\textbf{Case 3:} $\partial R$ intersects $\partial W\times \{1\}$ four times. Similar to case 2 above, we have a contradiction again.
\end{proof}

By \cref{chi is additive 2}, 
\begin{align*}\sum_{{R\in \mathcal{C}_{\chi<0}}} \chi(R)-\frac{|Q(R)|}{4}&=\sum_{R\in \mathcal{C}_{\chi<0}} \chi^S(R)\\&=\chi(S).
\end{align*} Since $\chi(R)\leq 0$, this means $$\sum_{i=1}^k \chi(R)-|Q(R)|\geq 4\chi(S).$$ 

We now do a series of bigon removing isotopies, tunnel removals, and $\partial$-compressions along $W$ to $S^\wedge$. First, we do a series of bigon removing isotopies and tunnel removals to obtain surfaces $S^\wedge_0=S^\wedge,S^\wedge_1,...,S^\wedge_m$. Each surface is obtained by applying a bigon removing isotopy to the previous one, if the previous one does not have any tunnel components; or the removal of a tunnel, if the previous one does have a tunnel component. Finally, $S^\wedge_m$ is strongly isotopic to a surface with straight boundary. Second, we do a series of $\partial$-compressions along $W$ starting from $S^\wedge_m$ to obtain surfaces $S^\wedge_{m+1},...,S^\wedge_n$. Each surface is obtained by applying a $\partial$-compression along $W$ to the previous one, and $S^\wedge_n$ is incompressible and $\partial$-incompressible along $W$. By \cref{number of bigon removing isotopies} and \cref{number of compressions}, $n\leq 4|\chi(S)|$. 

By \cref{surgery on 3-manifold gives elementary move on surface}, we have a sequence $L^\wedge_i\subset N$ of $3$-submanifolds, each having $S^\wedge_i$ as its interior boundary. Using this sequence, we construct a two-sided sequence of subsurfaces of $W$, as follows. Because $S^\wedge_n$ is incompressible and $\partial$-incompressible along $W$, by \cref{incompressible structure theorem}, it is a product. We let $$\Omega_0=L^\wedge_n\cap (W\times \{0\})=L^\wedge_n\cap (W\times \{1\}).$$ At each stage, to obtain $L^\wedge_i$ from $L^\wedge_{i+1}$, we remove a disk neighborhood along $W\times \{0\}$ or $W\times \{1\}$. In the first case, we add the subsurface $L^\wedge_i\cap (W\times \{0\})$ to the negative side of the sequence. In the second case, we add the subsurface $L^\wedge_i\cap (W\times \{1\})$ to the positive side of the sequence. In this way, we have a sequence $$\Delta=\{\Omega_{-\ell},...,\Omega_0,...,\Omega_k\}$$ of subsurfaces of $W$, where $\ell+k=n\leq 4|\chi(S)|$. By \cref{surgery on 3-manifold gives elementary move on surface} again, $\Delta$ is an essential discrete path of subsurfaces satisfying condition (3) in the theorem statement. By construction, $L^\wedge$ is isotopic to a $3$-dimensional realization of $\Delta$. Since $L^\wedge$ is constructed by taking a subsurface $L\subset M$ and cutting along $W$, $$f(L^\wedge\cap (W\times \{0\}))=L^\wedge\cap (W\times \{1\}.$$ This means $f(\Omega_{-\ell})$ is isotopic to $\Omega_k$, hence $\Delta$ is an $f$-twisted path.
\end{proof}

\subsection{$3$-submanifolds of a torus knot complement} \label{dictionary for torus knots} Let $M=S^3-N(T_{p,p+1})$. In this section, we further extend the dictionary between $3$-submanifolds of $M$ and sequences of subsurfaces of $\Sigma_{p,p+1}$.

Recall from \cref{Preliminaries} that the multicurve $\cup_{i,j}\alpha_{i,j}$ decomposes $\Sigma_{p,p+1}$ into the $U_i$s and $V_j$s, which are all disks. In this language, the monodromy map $f:\Sigma_{p,p+1}\to \Sigma_{p,p+1}$ sends $U_i$ to $U_{i+1(\modulo p)}$ and $V_j$ to $V_{j+1(\modulo p+1)}$, by \cref{monodromy of torus knot}.

Let $U$ be the closed disk and $P_U$ be a set of $2p$ marked points on $\partial U$. Similarly, let $V$ also be a copy of the closed disk with $P_V$ a set of $2(p+1)$ marked points on $\partial V$. Considering $U_i$ and $V_j$ as subsurfaces of $\Sigma_{p,p+1}$, note that each $(U_i,P(U_i))$ (resp. $(V_j,P(V_j))$) is homeomorphic to $(U,P_U)$ (resp. $(V,P_V)$) (see \cref{boundary points of subsurface}). We identify each $(U_i,P(U_i))$ with $(U,P_U)$ so that the monodromy map is an isomorphism $U_i\to U_{i+1}$ for $1\leq i\leq p-1$, and the monodromy map $U_p\to U_1$ is a homeomorphism shifting the marked points by two in the direction of the orientation. We denote this map on $U$ by $$\phi:(U,P_U)\to (U,P_U).$$ Similarly, we identify each $(V_j,P(V_j))$ with $(V,P_V)$ so that the monodromy map is an isomorphism $V_j\to V_{j+1}$ for $1\leq j\leq p$, and the monodromy map $V_{p+1}\to V_1$ is a homeomorphism shifting the marked points by two in the direction opposite the orientation. We denote this map on $V$ by $$\psi:(V,P_V)\to (V,P_V).$$ 

\begin{construction}\label{U and V sequences} Let $\Delta=\{\Omega_{-\ell},...,\Omega_k\}$ be an $f$-twisted essential discrete path of subsurfaces of $\Sigma_{p,p+1}$. Put each subsurface in minimal position with the $U_i$s and $V_j$s. By \cref{minimal isotopy exists}, associated to $\Delta$ there are continuous paths of surfaces $\Delta_{U_i}=\{\Omega_{U_i}^t\}$ of $(U,P_U)$  and $\Delta_{V_j}=\{\Omega_{V_j}^t\}$ of $(V,P_V)$, satisfying the following conditions:
\begin{enumerate}
\item $\Omega_{U_i}^0=\Omega_{-\ell}\cap U_i$ and $\Omega_{U_i}^1=\Omega_k\cap U_i$.
\item $\Omega_{V_j}^0=\Omega_{-\ell}\cap V_j$ and $\Omega_{V_j}^1=\Omega_k\cap V_j$.
\item $$\sum_{i=1}^p\length([\Delta_{U_i}])+\sum_{j=1}^{p+1}\length([\Delta_{V_j}])\leq 6\length(\Delta).$$
\item The $3$-dimensional realizations $\mathcal{L}(\Delta_{U_i})$ and $\mathcal{L}(\Delta_{V_j})$ glue to form a $3$-submanifold $\mathcal{L}\subset N$ isotopic to a $3$-dimensional realization of $\Delta$.
\end{enumerate} 

Since $\Delta$ is $f$-twisted, $\Omega_k\cap U_i$ is isotopic to $\Omega_{-\ell}\cap U_{i+1}$ for all $1\leq i\leq p$. Therefore, the $\Delta_{U_i}$s may be concatenated (and rescaled in the time direction) to form a $\phi$-twisted continuous path of subsurfaces $$\Delta_U=\{\Omega_U^t\}$$ of $(U,P_U)$ such that 
\begin{enumerate}
\setcounter{enumi}{4}
\item $\Omega_U^0=\Omega_{-\ell}\cap U_1$, 
\item $\Omega_U^1=\Omega_k\cap U_p$, and
\item $\length([\Delta_U])\leq 6\length(\Delta)$. 
\end{enumerate} 

Analogously, there is a $\psi$-twisted continuous path of subsurfaces $$\Delta_V=\{\Omega_V^t\}$$ of $(V,P_V)$ such that 
\begin{enumerate}
\setcounter{enumi}{7}
\item $\Omega_V^0=\Omega_{-\ell}\cap V_1$, 
\item $\Omega_V^1=\Omega_k\cap V_{p+1}$, and
\item $\length([\Delta_V])\leq 6\length(\Delta)$. 
\end{enumerate} 
\end{construction}

Let $T$ be the union of multicurve $\cup_{i,j}\alpha_{i,j}$ over all of the fibers $\Sigma_{p,p+1}$ in $M$. By \cref{torus in fiber bundle}, $T$ bounds two solid tori in $M$. One, denoted $T_U$, is the union of the $U_i$s over all the fibers. The other, denoted $T_V$, is the union of the $V_j$s over all the fibers.

\begin{theorem} \label{cyclic splitting and homology} Let $\Delta$ be an $f$-twisted essential discrete path of subsurfaces of $\Sigma_{p,p+1}$. Let $\Delta_U$, $\Delta_V$ be continuous paths of subsurfaces, as in \cref{U and V sequences}. There is a $3$-submanifold $\mathcal{L}$, isotopic to $\mathcal{L}(\Delta)$, such that:
\begin{enumerate} 
\item If the inclusion $$H_1(\mathcal{L}^\vee\cap T_U)\to H_1(T_U)$$ is trivial, then $\Delta_U$ splits with respect to $\phi$. 
\item If the inclusion $$H_1(\mathcal{L}^\vee\cap T_V)\to H_1(T_V)$$ is trivial, then $\Delta_V$ splits with respect to $\psi$.
\item If the inclusion $$H_1((M-\mathcal{L}^\vee)\cap T_U)\to H_1(T_U)$$ is trivial, then $U-\Delta_U$ splits with respect to $\phi$.
\item If the inclusion $$H_1((M-\mathcal{L}^\vee)\cap T_V)\to H_1(T_V)$$ is trivial, then $V-\Delta_V$ splits with respect to $\psi$.
\end{enumerate}
\end{theorem}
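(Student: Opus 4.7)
The plan is to reduce each statement to \cref{cyclic splitting on mapping torus} by recognizing the relevant solid torus as a mapping torus.

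First, I would identify $T_U$ with the mapping torus $M_\phi$ of $\phi\colon U\to U$, and likewise $T_V$ with $M_\psi$. The key observation is that $T_U\subset M$ is a solid torus that fibers over the base $S^1$ of the Milnor fibration with disconnected fiber $\coprod_{i=1}^p U_i$, cyclically permuted by the monodromy; passing to the $p$-fold cover of the base turns this into a bundle with connected fiber $U$ whose monodromy, by the very way the identifications $U_i\cong U$ were chosen in \cref{dictionary for torus knots}, is $\phi$. The analogous statement, with a $(p+1)$-fold cover, identifies $T_V$ with $M_\psi$.

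Second, I would take $\mathcal{L}$ to be the specific submanifold produced by condition (4) of \cref{U and V sequences}, so that $\mathcal{L}\cap (U_i\times [0,1])=\mathcal{L}(\Delta_{U_i})$ and $\mathcal{L}\cap (V_j\times [0,1])=\mathcal{L}(\Delta_{V_j})$. Under the identification of the previous paragraph, $\mathcal{L}^\vee\cap T_U$ should coincide with $\mathcal{L}(\Delta_U)^\vee$: gluing the $\mathcal{L}(\Delta_{U_i})$'s along the identifications $U_i\times\{1\}\sim U_{i+1}\times\{0\}$ (identity for $i<p$ and $\phi$ for $i=p$) matches precisely the construction of $\mathcal{L}(\Delta_U)$ as a concatenation of the $\Delta_{U_i}$'s followed by closing up via $\phi$. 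The analogous identification gives $\mathcal{L}^\vee\cap T_V=\mathcal{L}(\Delta_V)^\vee$. For parts (3) and (4), I would use that the complement of a $\phi$-twisted continuous path is again $\phi$-twisted, with $3$-dimensional realization equal to the complement of the original realization, so $(M-\mathcal{L}^\vee)\cap T_U$ realizes $U-\Delta_U$ in the corresponding sense.

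Third, I would translate the $\pi_1$-hypothesis of \cref{cyclic splitting on mapping torus} into the stated $H_1$-hypothesis. The map $\pi_1(\mathcal{L}(\Delta_U)^\vee)\to \pi_1(T_U)\to \mathbb{Z}$ factors through $H_1$ since $\mathbb{Z}$ is abelian. Because the fiber $U$ of the bundle $T_U=M_\phi\to S^1$ is a disk, hence simply connected, the long exact sequence of this fibration forces the projection to induce an isomorphism on first homology. Thus triviality of the composition is equivalent to triviality of $H_1(\mathcal{L}^\vee\cap T_U)\to H_1(T_U)$, which is exactly the hypothesis of (1); \cref{cyclic splitting on mapping torus} then gives the splitting of $\Delta_U$ with respect to $\phi$. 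Statements (2), (3), (4) follow identically, replacing $U$ by $V$ and/or $\mathcal{L}^\vee$ by its complement. The only nontrivial point I anticipate having to write carefully is the identification in the second paragraph: verifying that the abstract concatenation-and-closure defining $\mathcal{L}(\Delta_U)^\vee$ inside $M_\phi$ really does agree, after the $p$-fold cover identification, with the subset of $M$ defined as $\mathcal{L}^\vee\cap T_U$.
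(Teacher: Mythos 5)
Your proposal is correct and follows essentially the same route as the paper: take $\mathcal{L}$ from condition (4) of \cref{U and V sequences}, identify $\mathcal{L}^\vee\cap T_U$ with $\mathcal{L}(\Delta_U)^\vee$ inside $T_U$ viewed as the mapping torus of $\phi$ (and similarly for $V$ and for the complements), upgrade the $H_1$-triviality to $\pi_1$-triviality since the target is $\mathbb{Z}$, and invoke \cref{cyclic splitting on mapping torus}. You are somewhat more explicit than the paper about the mapping-torus identification of $T_U$ and about the complement cases, but the argument is the same.
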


\begin{proof}

Recall that $\phi(\mathcal{L}(\Delta_U)\cap (U\times \{0\}))$ and $\mathcal{L}(\Delta_U)\cap (U\times \{1\})$ are isotopic. So gluing, we obtain a submanifold $\mathcal{L}(\Delta_U)^\vee\subset T_U$. By statement (4) in \cref{U and V sequences}, $$\mathcal{L}^\vee\cap T_U=\mathcal{L}(\Delta_U)^\vee.$$ If the inclusion $$H_1(\mathcal{L}^\vee\cap T_U)\to H_1(T_U)\simeq \mathbb{Z}$$ is trivial, then the inclusion $$\pi_1(\mathcal{L}^\vee\cap T_U)\to \pi_1(T_U)\simeq \mathbb{Z}$$ is also trivial, since it factors through the former map by Hurewicz. By \cref{cyclic splitting on mapping torus}, $\Delta_U$ splits with respect to $\phi$. This proves statement (1). The proofs of statements (2), (3) and (4) are analogous.
\end{proof}

\section{Intersection of the Seifert surface of $T_{p,p+1}\# K$ with embedded balls} \label{Topological lemma section}

Let $\Sigma$ be a minimal genus Seifert surface for $T_{p,p+1}\#K$. The goal of this section is to show that if $\Sigma$ is cut by an embedded ball into pieces bounded below and above in a suitable sense, then the boundary of the ball must intersect the knot many times. The precise formulation of the statement involves the relative adjusted Euler characteristic, and is in \cref{statement of topological lemma}.

\subsection{Preliminaries} 
The Seifert surface $\Sigma$ decomposes as $\Sigma=\Sigma_{p,p+1}\# \Sigma_K$, where $\Sigma_K$ is a minimal genus Seifert surface for $K$. Fix representatives of $\alpha_{i,j}$ on $\Sigma_{p,p+1}$. Fix representatives of $\alpha_{i,j}$ and $\bound{\Sigma_{p,p+1}}$ on $\Sigma$. Below, we define some notational shorthands for adjusted Euler characteristics associated to $\Sigma$ and $\Sigma_{p,p+1}$. 

\begin{notation}\label{char on Sigma p} Let $\Omega\subset \Sigma_{p,p+1}$ be a subsurface. We let $$\chi^{p,p+1}(\Omega)=\chi^{\Sigma_{p,p+1}}(\Omega).$$
\end{notation}

\begin{notation}\label{char on Sigma} Let $\Omega\subset \Sigma$ be a subsurface. We let $$\chi^{p,p+1}_{\Sigma}(\Omega)=\chi^{\Sigma_{p,p+1}}_{\Sigma}(\Omega).$$
\end{notation}

\begin{remark}\label{char on Sigma p vs Sigma} In \cref{char on Sigma p}, we do not have marked points associated to $\Sigma_{p,p+1}$. In \cref{char on Sigma}, we do have marked points associated to $\Sigma_{p,p+1}$. So if $\Omega\subset \Sigma$ is an essential subsurface in minimal position with $\Sigma_{p,p+1}$, 
$$\left|\chi^{p,p+1}(\Omega\cap \Sigma_{p,p+1})-\chi^{p,p+1}_{\Sigma}(\Omega)\right|\leq \frac{1}{2},$$
accounting for the discrepancy in marked points.
\end{remark}

\subsection{Statements of the main theorems in this section}\label{statement of topological lemma}

\begin{theorem}\label{topological lemma for connect sum} Let $\beta\subset \mathbb{R}^3$ be an embedding of $T_{p,p+1}\#K$. Let $\Sigma\subset \mathbb{R}^3$ be a minimal genus Seifert surface with $\partial \Sigma=\beta$. Let $S$ be an embedded $S^2$ in $\mathbb{R}^3$ intersecting $\Sigma$ transversely. Suppose $$\frac{p(p-1)}{10}\leq |\chi^{p,p+1}_{\Sigma}(\interior(S)\cap \Sigma)| \leq \frac{9p(p-1)}{10}.$$ Then $$|S\cap \beta|\geq \frac{p}{10^4}.$$
\end{theorem}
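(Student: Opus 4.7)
The plan is to argue by contradiction: assume $|S\cap\beta|<p/10^4$ and derive a contradiction with the hypothesized bounds on $|\chi^{p,p+1}_\Sigma(\interior(S)\cap\Sigma)|$. The central idea is to package the ball $\interior(S)$ as an admissible 3-submanifold of the fibered complement $M_{p,p+1}=S^3-N(T_{p,p+1})$, apply the dictionary of \cref{Dictionary} to produce a short $f$-twisted path of subsurfaces of $\Sigma_{p,p+1}$, and then use the ball-ness of $\interior(S)$ to force the path (after passage to the disk pieces $U$ and $V$) to split, which severely constrains the adjusted Euler characteristic profile.

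The first step is a reduction. Let $S_0\subset \mathbb{R}^3$ be a decomposition sphere for the connect sum, meeting $\beta$ in two points and $\Sigma$ in a single essential arc whose complement has components $\Sigma_{p,p+1}$ and $\Sigma_K$. After isotoping $S$ into general position and applying innermost-disk/outermost-bigon arguments to $S\cap\Sigma$ and $S\cap S_0$, the submanifold $L_1$ obtained by intersecting $L:=\interior(S)-N(\beta)$ with the $T_{p,p+1}$-side of $S_0$ becomes an admissible 3-submanifold of $M_{p,p+1}$, with interior boundary $S_1$ satisfying $|\chi(S_1)|\leq |S\cap\beta|+C$ for a universal constant $C$. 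By \cref{discrete path associated to 3-submanifold}, $L_1$ determines an $f$-twisted essential discrete path $\Delta=\{\Omega_{-\ell},\dots,\Omega_0,\dots,\Omega_k\}$ of subsurfaces of $\Sigma_{p,p+1}$ with $\length(\Delta)\leq 4|\chi(S_1)|\lesssim |S\cap\beta|$, and the hypothesis together with \cref{char on Sigma p vs Sigma} forces some $\Omega_{i_0}$ to satisfy $|\chi^{p,p+1}(\Omega_{i_0})|\in[p(p-1)/10-O(1),\,9p(p-1)/10+O(1)]$.

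The second step exploits sphericity. \cref{U and V sequences} produces a $\phi$-twisted continuous path $\Delta_U$ on $(U,P_U)$ and a $\psi$-twisted continuous path $\Delta_V$ on $(V,P_V)$ of combined equivalence-class length $\lesssim|S\cap\beta|$. Because $\interior(S)$ is a ball meeting $\beta$ in $|S\cap\beta|/2$ arcs, $L_1^\vee\subset M_{p,p+1}$ is a handlebody of genus $\leq|S\cap\beta|/2$, so the images of $H_1(L_1^\vee\cap T_U)$ and $H_1((M_{p,p+1}-L_1^\vee)\cap T_U)$ inside $H_1(T_U)\cong\mathbb{Z}$ have ranks summing to at most $|S\cap\beta|$. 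For $|S\cap\beta|<p/10^4$ (well below $p$) at least one of them is trivial, and by \cref{cyclic splitting and homology} one of $\Delta_U$ or $U-\Delta_U$ splits with respect to $\phi$, and analogously for $V$ and $\psi$. The third step is the combinatorial punchline. A $\phi$-splitting decomposes the configurations $\{\Omega_i\cap U_j\}$ into independent components under the cyclic $\phi$-action on the $p$ copies of $U$, and each $\phi$-orbit of nontrivial intermediate configurations consumes at least one elementary move, so the number of $j\in\{1,\dots,p\}$ for which $\Omega_{i_0}\cap U_j$ is neither empty nor full $U_j$ (up to null components) is bounded by $\length([\Delta_U])\lesssim|S\cap\beta|$; the same holds for $V$. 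Applying \cref{chi is additive} with the multicurve $\zeta=\bigcup_{i,j}\alpha_{i,j}$,
\[
\chi^{p,p+1}(\Omega_{i_0})=\sum_{i=1}^p \chi_{\Sigma_{p,p+1}}^{U_i}(\Omega_{i_0})+\sum_{j=1}^{p+1}\chi_{\Sigma_{p,p+1}}^{V_j}(\Omega_{i_0}),
\]
and each summand lies in $[-(p-1)/2,0]$ or $[-p/2,0]$ by nonpositivity. With all but $O(|S\cap\beta|)$ summands pinned to their extremal value, $|\chi^{p,p+1}(\Omega_{i_0})|$ lies within $O(p\cdot|S\cap\beta|)$ of either $0$ or $|\chi(\Sigma_{p,p+1})|=p(p-1)-1$; for $|S\cap\beta|<p/10^4$ neither range meets $[p(p-1)/10,\,9p(p-1)/10]$, contradicting the second step.

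The main obstacle is making the third step quantitative: converting the abstract $\phi$-splittings into the claim that most $\Omega_{i_0}\cap U_j$ and $\Omega_{i_0}\cap V_j$ take extremal values. This requires a careful analysis of the $\phi$-orbit structure on essential subsurfaces of a disk with marked boundary, together with bookkeeping of the $2$-shift of $\phi$ and $\psi$ on marked points, and this is where the constant $10^4$ in the conclusion enters.
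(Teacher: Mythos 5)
Your overall architecture (cut along the connect-sum sphere, pass to an admissible $3$-submanifold of $S^3-N(T_{p,p+1})$, extract a short $f$-twisted path via \cref{discrete path associated to 3-submanifold} and \cref{U and V sequences}, then play splitting against the adjusted Euler characteristic) matches the paper, and your first step is essentially the paper's reduction together with \cref{Euler characteristic of S'}. But the second step contains a genuine gap. The images of $H_1(L_1^\vee\cap T_U)$ and $H_1((M-L_1^\vee)\cap T_U)$ in $H_1(T_U)\cong\mathbb{Z}$ are subgroups of $\mathbb{Z}$, so each has rank $0$ or $1$; ``ranks summing to at most $|S\cap\beta|$'' gives no information, and $|S\cap\beta|<p/10^4$ does not, by itself, make one of them trivial. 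Ruling out ``both nontrivial'' is exactly where the paper has to work: it uses Mayer--Vietoris to produce an embedded curve on the spherical surface $\mathcal{S}^\vee\cap T_U$ whose class maps to a nonzero multiple of $p$ in $H_1(M)\cong\mathbb{Z}$, forcing at least $p$ boundary components and contradicting the bound $|\chi(\mathcal{S}^\vee)|\le p/10^4$ (Case 1 of the proof of \cref{general topological lemma}). Sphericity of $\interior(S)$ alone, without this quantitative argument, proves nothing here.

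Even granting the splittings you want, the third step cannot close the argument, because of the mixed case: it is perfectly consistent with your step 2 that $\Delta_U$ splits with respect to $\phi$ while $V-\Delta_V$ splits with respect to $\psi$ (and $\Delta_V$, $U-\Delta_U$ do not). In that situation the subsurface can be essentially $\bigcup_j V_j$ (all $U$-pieces empty, all $V$-pieces full), whose relative adjusted Euler characteristic is roughly $p^2/2$ --- squarely inside the window $[p(p-1)/10,\,9p(p-1)/10]$ --- so no combinatorial count of ``non-extremal pieces'' produces a contradiction; indeed your own conclusion that $|\chi^{p,p+1}(\Omega_{i_0})|$ must be near $0$ or near $p(p-1)-1$ fails for this configuration. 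The paper excludes precisely these mixed homological cases (its Cases 3 and 4) by a linking-number argument in $S^3$: after surgering the boundary to a union of spheres $\widetilde{\mathcal{S}}$ bounding $\widetilde{\mathcal{L}}$, a curve in $\widetilde{\mathcal{L}}\cap T_U$ nontrivial in $H_1(T_U)$ and a curve in $(S^3-\widetilde{\mathcal{L}})\cap T_V$ nontrivial in $H_1(T_V)$ have nonzero linking number, contradicting $H_1(\widetilde{\mathcal{L}})=0$. This topological ingredient, together with the combinatorial statement in the correct direction (\cref{combinatorial lemma for cyclic splitting} via \cref{last cyclic splitting lemma}, which shows a splitting path of short length keeps $\chi^{U,P_U}$ \emph{small}, not extremal), is what your proposal is missing; without it the argument does not go through.
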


\cref{topological lemma} follows as a corollary.

\begin{proof}[Proof of \cref{topological lemma}] We apply \cref{topological lemma for connect sum} to the case where $K$ is the unknot. Since $$\frac{p(p-1)}{20}\leq \genus(\interior(S)\cap \Sigma), \genus(\exterior(S)\cap \Sigma)\leq \frac{9p(p-1)}{20},$$ we have that $$\frac{p(p-1)}{10}\leq |\chi^{p,p+1}_{\Sigma}(\interior(S)\cap \Sigma)|\leq \frac{9p(p-1)}{10},$$ so the conditions in the statement of \cref{topological lemma for connect sum} are satisfied.
\end{proof}

\cref{topological lemma for connect sum} follows from the following theorem.

\begin{theorem} \label{general topological lemma} Let $M$ be $S^3-N(T_{p,p+1})$. Let $\Sigma_{p,p+1}\subset M$ be a properly embedded oriented surface of genus $p(p-1)/2$ such that $\partial \Sigma_{p,p+1}\subset\partial M$ is a longitude. Let $(L,S)\subset M$ be a $3$-submanifold with spherical boundary. Suppose $$\frac{p(p-1)}{10}\leq |\chi^{p,p+1}(L\cap \Sigma_{p,p+1})|\leq \frac{9p(p-1)}{10}.$$ Then $$|\chi(S)|\geq \frac{p}{10^4}.$$
\end{theorem}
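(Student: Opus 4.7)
The plan is to convert the topological data $(L,S)$ into the combinatorial data of subsurface paths using the dictionary of \cref{Dictionary}, use the sphericity of $S$ to trigger cyclic splittings via \cref{cyclic splitting and homology}, and then extract the inequality from a combinatorial estimate on the paths. Since $S$ is a union of spheres, $(L,S)$ is admissible (spheres are incompressible, and $S^\wedge$ has no tunnel components), so \cref{discrete path associated to 3-submanifold} applies and yields an $f$-twisted essential discrete path $\Delta=\{\Omega_{-\ell},\dots,\Omega_k\}$ of subsurfaces of $\Sigma_{p,p+1}$ with $\length(\Delta)\le 4|\chi(S)|$ and $\Omega_{-\ell}$ isotopic to $L\cap\Sigma_{p,p+1}$. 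Then \cref{U and V sequences} produces a $\phi$-twisted continuous path $\Delta_U$ in $(U,P_U)$ and a $\psi$-twisted continuous path $\Delta_V$ in $(V,P_V)$ of total equivalence-class length at most $24|\chi(S)|$.

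Second, I would deduce cyclic splittings of $\Delta_U$ and $\Delta_V$ from the sphericity of $S$. Each sphere component of $S$ bounds a ball in $S^3$, so a component-by-component analysis using Alexander duality --- combined with the freedom to replace $L$ by $M-L$, which preserves the hypothesis window on $|\chi^{p,p+1}(L\cap\Sigma_{p,p+1})|$ by \cref{chi of complement} --- allows me to arrange both inclusions $H_1(L\cap T_U)\to H_1(T_U)$ and $H_1(L\cap T_V)\to H_1(T_V)$ to vanish. \cref{cyclic splitting and homology} then gives cyclic splittings $\Delta_U=\coprod_n\Delta_n^U$ and $\Delta_V=\coprod_n\Delta_n^V$ with respect to $\phi$ and $\psi$ respectively.

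Third, I would turn these splittings into a bound on $|\chi^{p,p+1}(\Omega_{-\ell})|$. By additivity (\cref{chi is additive}),
\[
\chi^{p,p+1}(\Omega_{-\ell})=\sum_{i=1}^p\chi^{U,P_U}(\Omega_{-\ell}\cap U_i)+\sum_{j=1}^{p+1}\chi^{V,P_V}(\Omega_{-\ell}\cap V_j),
\]
and each summand equals the value of $\chi^{U,P_U}$ (resp.\ $\chi^{V,P_V}$) at a time-slice of $\Delta_U$ (resp.\ $\Delta_V$) corresponding to the relevant Seifert fiber in $T_U$ (resp.\ $T_V$). Under cyclic splitting the lifted $3$-dimensional realization in $U\times\mathbb{R}$ has finitely many components of bounded vertical extent; combined with the $\phi$-invariance $\chi^{U,P_U}(\phi X)=\chi^{U,P_U}(X)$, the periodicity $\phi^p=\id$, and \cref{elementary move on chi 1}, a telescoping argument bounds $\sum_{i=1}^p|\chi^{U,P_U}(\Omega_{-\ell}\cap U_i)|$ by a constant multiple of $p\cdot\length([\Delta_U])\le 24p|\chi(S)|$, and symmetrically for the $V$-sum. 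Thus $|\chi^{p,p+1}(\Omega_{-\ell})|\le Cp|\chi(S)|$ for an absolute constant $C$, and the hypothesis then forces $|\chi(S)|\ge p/10^4$ once $C$ is controlled.

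The main obstacle is the third step, where the cyclic splittings must be used to upgrade a naive $O(p^2|\chi(S)|)$ bound into the required $O(p|\chi(S)|)$. The improvement requires genuinely exploiting that the $p$ fibers $U_1,\dots,U_p$ lie on a single $\phi$-orbit in $T_U$, so that a single connected component of the lifted realization in $U\times\mathbb{R}$ meets only a small number of these fibers with nontrivial $\chi^{U,P_U}$-contribution; this is exactly where the order of $\phi$ being $p$ enters, and it is the point at which the introduction's remark about $p^{-1}\bmod q$ and $q^{-1}\bmod p$ for general $T_{p,q}$ is relevant --- for $q=p+1$ these inverses are trivial, so the bound is clean. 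A secondary obstacle is Step 2, which requires a careful topological analysis of which component of $S^3\setminus S_0$ contains the core of $T_U$ or $T_V$ for each sphere component $S_0$ of $S$, and how this determines the correct choice of $L$ versus $M-L$ in order to satisfy the homological hypothesis of \cref{cyclic splitting and homology}.
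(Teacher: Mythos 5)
Your proposal has the right first step --- converting $(L,S)$ into an $f$-twisted discrete path via the dictionary of \cref{Dictionary} and then specializing to the $\phi$-twisted and $\psi$-twisted paths $\Delta_U$, $\Delta_V$ via \cref{U and V sequences} --- and the combinatorial intuition in Step~3 (a single connected component of the lifted realization in $U\times\mathbb{R}$ can only touch a bounded window of fibers, using that $\phi$ has order $p$) is on target and morally matches the role of \cref{last cyclic splitting lemma}. However, the logical spine of the proof is inverted at Step~2, and this is a genuine gap, not a repairable detail.

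The claim that sphericity of $S$ together with Alexander duality and the freedom to swap $L\leftrightarrow M-L$ ``allows me to arrange both inclusions $H_1(L\cap T_U)\to H_1(T_U)$ and $H_1(L\cap T_V)\to H_1(T_V)$ to vanish'' is false. Being a ball does not force $I_U(L)$ to be trivial: a ball $L\subset S^3-N(T_{p,p+1})$ can easily contain a loop that is homologically essential in the solid torus $T_U$ (it just bounds a disk in $L$ that crosses the core of $T_U$). The single binary choice of $L$ versus $M-L$ also cannot kill two independent topological obstructions simultaneously. More to the point, the paper's argument runs in the opposite direction. \cref{combinatorial lemma for cyclic splitting} --- which is exactly the contrapositive of the estimate you sketch in Step~3, via \cref{last cyclic splitting lemma} --- shows that under the $\chi^{p,p+1}$ window and the small Euler-characteristic assumption, $\Delta_U$ and $\Delta_V$ \emph{cannot} both split (and similarly for the complements). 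Feeding this through \cref{cyclic splitting and homology} forces at least one of $I_U(\mathcal{L}^\vee)$, $I_V(\mathcal{L}^\vee)$ and at least one of $I_U(M-\mathcal{L}^\vee)$, $I_V(M-\mathcal{L}^\vee)$ to be \emph{nontrivial}; sphericity of $S$ then enters only at the end, through the four-case analysis using Mayer--Vietoris exactness (same torus on both sides) or a linking-number argument (opposite tori), to contradict the Euler-characteristic bound. Your plan omits this entire topological case analysis, which is where the spherical-boundary hypothesis actually does its work. As a secondary issue, the claim that $(L,S)$ is automatically admissible because $S$ is a union of spheres is too quick: $S^\wedge$ can perfectly well be compressible in $N$ or have tunnel components even when $S$ is spherical, and the paper performs a sequence of compressions, $\partial$-compressions, and tunnel removals (each monotone for $|\chi(S)|$ and preserving $\chi^{p,p+1}(L\cap\Sigma_{p,p+1})$) to reduce to the admissible case before the dictionary applies.
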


In the rest of the section, we prove \cref{topological lemma for connect sum} and \cref{general topological lemma}.

\subsection{\cref{general topological lemma} implies \cref{topological lemma for connect sum}} Let $\beta$ be an embedding of $T_{p,p+1}\# K$ in $\mathbb{R}^3$. Let $\Sigma$ be a minimal genus Seifert surface with $\partial \Sigma=\beta$. Let $S$ be a sphere intersecting $\Sigma$ transversely and $L$ be the embedded ball it bounds. Assuming $$\frac{p(p-1)}{10}\leq |\chi^{p,p+1}_\Sigma(\interior(S)\cap \Sigma)| \leq \frac{9p(p-1)}{10}$$ and $$|S\cap \beta|\leq \frac{p}{10^4},$$ we will show a contradiction. 

Compactifying, we may assume $\beta$, $\Sigma$ and $S$ lie in $S^3$. Let $H$ be a sphere in $S^3$ intersecting $\Sigma$ in one connected arc, decomposing it into $\Sigma_{p,p+1}=\interior(H)\cap \Sigma$ and $\Sigma_K=\exterior(H)\cap \Sigma$.

First, we isotope $S$ so that $S\cap \Sigma$ and $H\cap \Sigma$ are in minimal position as multicurves on $\Sigma$. If $S\cap \Sigma$ and $H\cap \Sigma$ form a bigon, we may choose an innermost such bigon and isotope $S$ along the bigon to eliminate it. This operation does not change $\chi_{\Sigma}^{p,p+1}(L\cap \Sigma)$. 

Assume $S\cap \Sigma$ and $H\cap \Sigma$ are in minimal position. Now, $H\cap S$ is a disjoint union of embedded closed curves. Surgering $S$ (and adding or removing disks from $L$) to eliminate components of $H\cap S$ that do not intersect $\Sigma$, we may assume that every component of $H\cap S$ intersects $\Sigma$. These surgeries happen away from $\Sigma$, so again $\chi_{\Sigma}^{p,p+1}(L\cap \Sigma)$ is unchanged.

After doing these operations, we have a surface $S$ which is a disjoint union of spheres, bounding $L$, a $3$-manifold in $S^3$. The pair $(L',S')=(L\cap \interior(H),S\cap \interior(H))$ is a $3$-submanifold of $M=\interior(H)-N(\beta)$, which is homeomorphic to $S^3-N(T_{p,p+1})$.

\begin{lemma}\label{Euler characteristic of S'} $$|\chi(S')|\leq \frac{p}{10^4}.$$
\end{lemma}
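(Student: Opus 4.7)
The plan is to further normalize $S$ beyond the simplifications already performed, and then extract the bound from a direct Euler characteristic count on $S'$. The key input will be the assumption $|S\cap\beta|\le p/10^4$ together with the incompressibility of the minimal genus Seifert surface $\Sigma$.

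First, I would eliminate all closed curve components of $S\cap\Sigma$ by an innermost disk argument. If $\gamma$ is an innermost such closed curve on $S$, then since $S$ is a union of spheres, $\gamma$ bounds a disk $D\subset S$ with $D\cap\Sigma=\gamma$ and $D\cap\beta=\emptyset$. Because $\Sigma$ is a minimal genus Seifert surface it is incompressible in $S^3\setminus\beta$, so $\gamma$ also bounds a disk $D'\subset\Sigma$; the sphere $D\cup D'$ bounds a ball in $S^3$ by Alexander's theorem, and pushing $D$ across this ball eliminates $\gamma$ without increasing $|S\cap\beta|$. After iterating this step and reapplying the two simplifications already performed, $S\cap\Sigma$ consists only of arcs, of which there are at most $|S\cap\beta|/2\le p/(2\cdot 10^4)$. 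Next I would eliminate any sphere component of $S$ that bounds a ball in $S^3\setminus\beta$ by adding or removing the corresponding ball component of $L$; after this, each sphere of $S$ meets $\beta$ in at least two points, so the number $n_S$ of sphere components satisfies $n_S\le |S\cap\beta|/2$. Writing $a$ for the number of connected components of $S\cap B$, $k=|S\cap H|$, and $b=|S\cap\beta\cap B|$, each component of $S'$ is a planar surface (because $\genus(S)=0$) whose boundary consists of $k$ circles on $H$ and $b$ meridional circles on $\partial N(\beta)\cap B$, giving
$$\chi(S')\;=\;2a-k-b.$$

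Finally, using that each sphere of $S$ with $k_s$ circles of $S\cap H$ is partitioned into $k_s+1$ planar pieces distributed between $B$ and $B^c$, one has $a+b^*=n_S+k$ (where $b^*$ counts components of $S\cap B^c$), and hence
$$\chi(S')\;=\;(a-b^*)+n_S-b.$$
Here $n_S\le p/(2\cdot 10^4)$ and $b\le p/10^4$, so the bound reduces to controlling $|a-b^*|$. The pieces of each sphere alternate between $B$ and $B^c$ across circles of $S\cap H$, and $|a-b^*|$ can be estimated in terms of $k$, while $k$ is bounded by $|S\cap\alpha|$ since every circle of $S\cap H$ meets $\alpha=H\cap\Sigma$. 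After the normalizations, $S\cap\alpha$ consists only of intersection points between arcs of $S\cap\Sigma$ and the arc $\alpha$, in minimal position on $\Sigma$, and the number of such intersections is in turn controlled by the number of arcs of $S\cap\Sigma$, which is at most $|S\cap\beta|/2$. Combining these bounds gives $|\chi(S')|\le p/10^4$.

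The main obstacle will be making the bound on $|a-b^*|$ precise: I need to control how the tree structure of planar pieces on each sphere of $S$ can be unbalanced between $B$ and $B^c$, and simultaneously bound $|S\cap\alpha|$ using that arcs of $S\cap\Sigma$ are in minimal position with $\alpha$. The latter may require an auxiliary simplification on the sphere $H$ (bigon removal between circles of $S\cap H$ and the arc $\alpha$) to ensure that the geometric intersection number is bounded linearly in the number of arcs of $S\cap\Sigma$, rather than growing with the homotopy classes of individual arcs.
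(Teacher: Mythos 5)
Your proposal has two genuine gaps, and the second is exactly the point the paper's proof is built around. First, the normalization step does not go through. An ``innermost closed curve'' $\gamma$ of $S\cap\Sigma$ on $S$ need not bound a disk $D\subset S$ with $D\cap\Sigma=\gamma$: both disks that $\gamma$ bounds on its sphere component may contain arcs of $S\cap\Sigma$, in which case $D$ meets $\beta$ and the incompressibility argument does not apply; and closed components of $S\cap\Sigma$ that are \emph{essential} in $\Sigma$ cannot be removed by any isotopy of $S$ at all (incompressibility only converts a compressing disk in the complement into a disk in $\Sigma$, i.e.\ it only handles curves that already bound disks in $S$ missing $\Sigma$). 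So the assertion that after iterating, $S\cap\Sigma$ consists only of arcs is false in general. Moreover, \cref{Euler characteristic of S'} is a statement about the specific $S'=S\cap\interior(H)$ produced by the two normalizations already performed: your further moves (isotopies across balls, discarding sphere components of $S$ that bound balls missing $\beta$) change $S\cap H$ and hence $S'$ and $\chi(S')$, and discarding such a sphere component can change $\chi^{p,p+1}_{\Sigma}(L\cap\Sigma)$ by a large amount (a ball disjoint from $\beta$ may still contain a large essential piece of $\Sigma$), which would destroy the hypotheses used immediately afterwards when \cref{general topological lemma} is invoked. So these reductions would have to be inserted earlier and re-justified, and the first one cannot be.

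Second, the quantitative heart of your count is missing. In your identity $\chi(S')=(a-b^*)+n_S-b$ everything hinges on bounding the imbalance $|a-b^*|$, and your plan --- bound it by $k=|S\cap H|$ and bound $k$ by $|S\cap\alpha|$, which you hope is linear in the number of arcs of $S\cap\Sigma$ --- fails: minimal position minimizes intersections within isotopy classes but does not bound how many times a \emph{single} arc of $S\cap\Sigma$ crosses $\alpha=H\cap\Sigma$; that number can be arbitrarily large while $|S\cap\beta|$ stays small, so $k$ is simply not controlled by the number of arcs, and no bigon removal can help since the crossings can be essential. (You flag this obstacle yourself, but it is not a technicality; it is the gap.) The paper closes it with a different observation, absent from your proposal: every \emph{disk} component of $S-(S\cap H)$ must intersect $\beta$, because such a disk is innermost, lies entirely in $\interior(H)$ or $\exterior(H)$, and if it missed $\beta$ then minimality of $\Sigma_{p,p+1}$ or $\Sigma_K$ together with the two normalizations already performed (minimal position with $H\cap\Sigma$, and every circle of $S\cap H$ meets $\Sigma$) yields a contradiction. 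In your tree picture this is precisely what is needed: the bipartite imbalance of each tree is at most its number of leaves, a leaf without meridional boundary is a disk component of $S-(S\cap H)$, so the ``disks meet $\beta$'' claim bounds the imbalance by $|S\cap\beta|\le p/10^4$. With that claim the paper's proof is a short count (a large $|\chi(S')|$ forces at least $p/10^4$ disks, hence $|S\cap\beta|\ge p/10^4$), and none of your extra normalizations are needed; without it, your count does not close.
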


\begin{proof} Consider $H\cap S$ as a union of embedded curves on $S$. Take an innermost such curve $\alpha$, which bounds a disk $D$ on $S$. We claim that $$D\cap \beta\neq \emptyset.$$

To see this claim, suppose the contrary. Since $\alpha$ is an innermost curve of $H\cap S$ on $S$, the interior of $D$ does not intersect $H$. So, $D$ lies in either $\interior(H)$ or $\exterior(H)$. In either case, by minimality of the Seifert surface $\Sigma_{p,p+1}$ or $\Sigma_K$, either $D\cap \Sigma$ forms a bigon on $\Sigma$ with $H\cap \Sigma$, or $D$ does not intersect $\Sigma$. Both cases are ruled out by our operations modifying $S$. Hence we have a contradiction.

Now, suppose $$|\chi(S')|\geq \frac{p}{10^4}.$$ Every component of $S-(H\cap S)$ is a sphere with some holes. Of these components, only disks have positive Euler characteristic. By supposition, $S-(H\cap S)$ contains at least $p/10^4$ disks. By our claim, each disk intersects $\beta$. So $$|S\cap \beta|\geq \frac{p}{10^4},$$ which is a contradiction. 
\end{proof}

By construction, $$\frac{p(p-1)}{10}\leq |\chi^{p,p+1}_{\Sigma}(L'\cap \Sigma)| \leq \frac{9p(p-1)}{10}.$$ Since $L'\cap \Sigma$ is in minimal position with $\Sigma_{p,p+1}$, by \cref{char on Sigma p vs Sigma}, $$\frac{p(p-1)}{10}\leq |\chi^{p,p+1}(L'\cap \Sigma_{p,p+1})| \leq \frac{9p(p-1)}{10}.$$ This contradicts \cref{general topological lemma}. 

\subsection{Proof of \cref{general topological lemma}: topology}

Assume that $$\frac{p(p-1)}{10}\leq \chi^{p,p+1}(L\cap \Sigma_{p,p+1})\leq \frac{9p(p-1)}{10}.$$ Assume also that $$|\chi(S)|\leq \frac{p}{10^4}.$$ We will show a contradiction. We start by modifying $(L,S)$ so that is admissible.

First, we modify $(L,S)$ so that $S^\wedge\subset N$ is an incompressible surface. To do this, note that any compressing disk for $S^\wedge$ is a compressing disk $D$ for $S$ such that $D\subset M-\Sigma_{p,p+1}$. Compressing $S$ along $D$ (and removing a neighborhood of $D$ from $L$) decreases $|\chi(S)|$ but does not change $\chi^{p,p+1}(L\cap \Sigma_{p,p+1})$. By doing these operations we may assume that $S^\wedge\subset N$ is incompressible.

Second, we modify $(L,S)$ so that $L\cap \Sigma_{p,p+1}$ is an essential subsurface of $\Sigma_{p,p+1}$. Note that $S\cap \Sigma_{p,p+1}$ is the interior boundary of $L\cap \Sigma_{p,p+1}$ as a subsurface of $\Sigma_{p,p+1}$. If $S\cap \Sigma_{p,p+1}$ contains an inessential closed curve component on $\Sigma_{p,p+1}$, we take an innermost such component and compress $S$ along the disk in $\Sigma_{p,p+1}$ bounded by it. Such a compression decreases $|\chi(S)|$ but does not change $\chi^{p,p+1}(L\cap \Sigma_{p,p+1})$. If $S\cap \Sigma_{p,p+1}$ contains an inessential arc component on $\Sigma_{p,p+1}$, we take an outermost bigon on $\Sigma_{p,p+1}$, and $\partial$-compress $S$ along the bigon. Again, such a compression decreases $|\chi(S)|$ but does not change $\chi^{p,p+1}(L\cap \Sigma_{p,p+1})$.

Third, we modify $(L,S)$ so that $S^\wedge$ is $\partial$-incompressible along $\partial \Sigma_{p,p+1}\times[0,1]$. If $D$ is a $\partial$-compressing disk along $\partial \Sigma_{p,p+1}\times[0,1]$ for $S^\wedge$, we compress $S^\wedge$ along $D$ and glue the ends back together to modify $S$. We modify $L$ by adding or removing a neighborhood of $D$. This operation does not increase $|\chi(S)|$ or change $\chi^{p,p+1}(L\cap \Sigma_{p,p+1})$.

Fourth, we eliminate tunnel components of $S^\wedge$. If $S^\wedge$ contains a tunnel component, we take an outermost such, and isotope $S$ by pushing the tunnel component through a rectangular or annular subsurface of $\Sigma_{p,p+1}$. This operation does not change $\chi(S)$. It also does not change $\chi^{p,p+1}(L\cap \Sigma_{p,p+1})$, since null component eliminations do not change the adjusted Euler characteristic.

Finally, the Euler characteristic bound on $S$ implies that $S$ does not intersect $\partial M$ in a longitudal curve. 

Thus, we may assume that $(L,S)\subset M$ is an admissible $3$-submanifold. By \cref{discrete path associated to 3-submanifold}, there is an $f$-twisted discrete path $\Delta$, with $\length(\Delta)\leq 4|\chi(S)|$, such that $L^\wedge$ is isotopic to a $3$-dimensional realization of $\Delta$. By \cref{U and V sequences} and \cref{cyclic splitting and homology}, there are continuous paths $\Delta_U$ and $\Delta_V$, whose $3$-dimensional realizations glue to form a $3$-submanifold $\mathcal{L}$ isotopic to a $3$-dimensional realization of $\Delta$, satisfying conditions (1) and (2) in the theorem statement. Denote by $\mathcal{S}$ the interior boundary of $\mathcal{L}$.

Then $\mathcal{L}^\vee\subset M$ is also a $3$-submanifold with spherical boundary. It satisfies
\begin{equation}\label{eq 4}\frac{p(p-1)}{10}\leq \chi^{p,p+1}(\mathcal{L}^\vee\cap \Sigma_{p,p+1})\leq \frac{9p(p-1)}{10}
\end{equation} and 
\begin{equation}\label{eq 5}
|\chi(\mathcal{S}^\vee)|\leq \frac{p}{10^4}.
\end{equation}

We will prove the following lemma in \cref{proof of topological lemma: combinatorics}.

\begin{lemma}\label{combinatorial lemma for cyclic splitting} Either $\Delta_U$ does not split with respect to $\phi$, or $\Delta_V$ does not split with respect to $\psi$. Similarly, either $U-\Delta_U$ does not split with respect to $\phi$, or $V-\Delta_V$ does not split with respect to $\psi$.
\end{lemma}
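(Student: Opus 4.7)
The plan is to prove \cref{combinatorial lemma for cyclic splitting} by contradiction. I will argue the first assertion explicitly; the second follows symmetrically by applying the same reasoning to $M - \mathcal{L}^\vee$ in place of $\mathcal{L}^\vee$, since $|\chi^{p,p+1}((M - \mathcal{L}^\vee) \cap \Sigma_{p,p+1})|$ lies in essentially the same range by \cref{chi of complement}. So suppose $\Delta_U = \coprod_n \Delta_{U,n}$ splits with respect to $\phi$ and simultaneously $\Delta_V = \coprod_m \Delta_{V,m}$ splits with respect to $\psi$. From \cref{discrete path associated to 3-submanifold} and \cref{U and V sequences}, the equivalence class lengths obey $\length([\Delta_U]), \length([\Delta_V]) \leq 24|\chi(\mathcal{S}^\vee)| \leq 24p/10^4$, so very few surgery or $\partial$-surgery moves occur in either path.

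Next I would classify each split component as \emph{static} (no surgery or $\partial$-surgery moves, so $\Omega^t_{U,n} \equiv C_n$ for a fixed subsurface $C_n \subset U$, up to isotopy and null moves) or \emph{moving}; at most $\length([\Delta_U])$ components of $\Delta_U$ are moving, and similarly for $\Delta_V$. For adjacent static components $\Delta_{U,n}$ and $\Delta_{U,n-1}$, the splitting condition forces $\phi(C_n) = C_{n-1}$. Since $\phi$ is a finite-order rotation of $U$ acting on $P_U$ as a shift by $2$, each chain of consecutive static components yields a $\phi$-orbit of pairwise disjoint subsurfaces inside $U$, whose combined adjusted Euler characteristic is bounded by $|\chi^{U,P_U}(U)|$ via \cref{monotonicity for chi}. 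The analogous statement holds for $\Delta_V$ under $\psi$.

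By additivity (\cref{chi is additive}),
\[
|\chi^{p,p+1}(\mathcal{L}^\vee \cap \Sigma_{p,p+1})| = \sum_{i=1}^p |\chi^{U,P_U}(\mathcal{L}^\vee \cap U_i)| + \sum_{j=1}^{p+1} |\chi^{V,P_V}(\mathcal{L}^\vee \cap V_j)|,
\]
and via \cref{U and V sequences} each summand equals the adjusted Euler characteristic of $\Omega^{t_i}_U$ (resp.\ $\Omega^{t_j}_V$) at the time in $\Delta_U$ (resp.\ $\Delta_V$) corresponding to $U_i$ (resp.\ $V_j$). Contributions from moving components are controlled by \cref{elementary move on chi 1}: each elementary move changes the relevant summand by at most $1$, and its effect can propagate to at most $O(p)$ time-slots through the moving component, yielding a total moving contribution of size $O(|\chi(\mathcal{S}^\vee)| \cdot p) = O(p^2/10^4)$. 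The static contributions, whose control is the bulk of the work, come from the $\phi$- and $\psi$-orbits described above.

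The main obstacle is showing that static contributions are negligible. The approach is to exploit the gluing structure along the shared arcs $\alpha_{i,j} = U_i \cap V_j$ (\cref{torus in fiber bundle}, \cref{monodromy of torus knot}): a static $\phi$-orbit on $U$ must glue consistently with a static $\psi$-orbit on $V$ along these arcs. The action of the monodromy on $\{\alpha_{i,j}\}$ sends $(i,j) \mapsto (i+1 \bmod p,\, j+1 \bmod p+1)$, and since $\gcd(p,p+1)=1$, this is a single cycle of length $p(p+1)$. The only static configuration compatible with this cycle structure and the disjointness constraints on $U$ and $V$ is one in which each static piece is a null component of its respective $(U,P_U)$ or $(V,P_V)$, contributing zero to the adjusted Euler characteristic. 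Combining the bounds then gives $|\chi^{p,p+1}(\mathcal{L}^\vee \cap \Sigma_{p,p+1})| = O(p^2/10^4)$, contradicting the lower bound $p(p-1)/10$ for $p$ sufficiently large. Making the arc-gluing compatibility argument fully rigorous, using the coprimality of $p$ and $p+1$, is the most delicate step.
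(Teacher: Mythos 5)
Your approach diverges substantially from the paper's, and as written it contains two significant gaps.

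\textbf{Different strategy.} The paper does not try to control the \emph{sum} $\sum_{i} \chi^{U,P_U}(\Omega_k\cap U_i)$ directly, nor does it invoke the gluing structure between $U$ and $V$ along the arcs $\alpha_{i,j}$. Instead, it applies additivity plus a pigeonhole argument to find a \emph{single} time $t$ with $\chi^{U,P_U}(\Omega_U^t)\geq p/20$, and then reduces everything to a purely disk-level combinatorial statement (\cref{last cyclic splitting lemma}): a $g$-twisted continuous path of subsurfaces of $(D,P)$ with small equivalence-class length that splits with respect to $g$ can never pass through a subsurface with large $\chi^{D,P}$. The proof of that lemma hinges on a combinatorial invariant (\cref{minimal connected pair}, the set $M(\Omega)$ of minimal connected pairs of edges), which changes by a bounded amount under a single elementary move (\cref{sublemma 1}), lower-bounds $|\chi^{D,P}|$ (\cref{sublemma 2}), and obstructs a subsurface disjoint from $\Omega$ from lying close to $g(\Omega)$ (\cref{distance between surface and rotated surface}). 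The argument works entirely inside one disk and never needs the coprimality of $p$ and $p+1$ or compatibility across the $\alpha_{i,j}$.

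\textbf{Gap 1: the moving-contribution bound.} You claim that a moving component contributes $O(|\chi(\mathcal{S}^\vee)|\cdot p)$ to the sum because the effect of a surgery ``can propagate to at most $O(p)$ time-slots.'' There is no such propagation bound. A single surgery can merge or split components in a way that leaves a large $\chi^{U,P_U}$ persisting through all remaining time-slots; the number of surgeries bounds the \emph{change per step}, not the accumulated contribution over $p$ time-slots. Without an a priori bound on $\chi^{U,P_U}(\Omega_{U,n}^t)$ itself (which is precisely what the paper establishes via minimal connected pairs), the moving contribution cannot be controlled this way.

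\textbf{Gap 2: the static contribution.} You observe correctly that a static chain yields a $\phi$-orbit of pairwise disjoint subsurfaces of $U$ with total $\chi^{U,P_U}$ bounded by $|\chi^{U,P_U}(U)|\approx p/2$. But this bound is \emph{per time-slot}; summed over the $p$ time-slots $t_i$, a single static chain can still contribute on the order of $p^2/2$, which swamps the bound you need. You then try to strengthen this to ``static pieces are null components'' via an arc-gluing compatibility argument using the length-$p(p+1)$ monodromy cycle on $\{\alpha_{i,j}\}$, but you acknowledge you have not carried this out. In fact, this argument would need to rule out, for example, a static chain whose components are essential subsurfaces of $U$ with small $\chi^{U,P_U}$ — which is exactly the kind of configuration the paper's minimal-connected-pairs argument is designed to handle. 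The paper avoids the arc-gluing question entirely; attempting it is likely to be harder than the route actually taken. I recommend looking at \cref{distance between surface and rotated surface} and the even/odd decomposition $\Delta = \Delta_{\even}\amalg\Delta_{\odd}$ used to prove \cref{last cyclic splitting lemma}; that is the missing ingredient.
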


Assuming this lemma for now, let 
\begin{equation*}
\begin{cases} I_U(\mathcal{L}^\vee)&:H_1(\mathcal{L}^\vee\cap T_U)\to H_1(T_U)\\ I_V(\mathcal{L}^\vee)&:H_1(\mathcal{L}^\vee\cap T_V)\to H_1(T_V)\\ I_U(M-\mathcal{L}^\vee)&:H_1((M-\mathcal{L}^\vee)\cap T_U)\to H_1(T_U)\\ I_V(M-\mathcal{L}^\vee)&:H_1((M-\mathcal{L}^\vee)\cap T_V)\to H_1(T_V)
\end{cases}
\end{equation*} be the inclusion maps on homology. By \cref{cyclic splitting and homology} and \cref{combinatorial lemma for cyclic splitting}, either $I_U(\mathcal{L}^\vee)$ or $I_V(\mathcal{L}^\vee)$ is nontrivial. Also, either $I_U(M-\mathcal{L}^\vee)$ or $I_V(M-\mathcal{L}^\vee)$ is nontrivial. We split into four cases.

\textbf{Case 1.} $I_U(\mathcal{L}^\vee)$ and $I_U(M-\mathcal{L}^\vee)$ are nontrivial. By Mayer-Vietoris, $$H_1(\mathcal{S}^\vee\cap T_U)\to H_1(\mathcal{L}^\vee\cap T_U)\oplus  H_1((M-\mathcal{L}^\vee)\cap T_U)\to H_1(T_U)\simeq \mathbb{Z}$$ is exact. By assumption, there is a nonzero $m\in \mathbb{Z}$ which is in the image of both $I_U(\mathcal{L}^\vee)$ and $I_U(M-\mathcal{L}^\vee)$. By exactness, $m$ is also in the image of $$H_1(\mathcal{S}^\vee\cap T_U)\to H_1(T_U).$$ Let $\gamma\in \mathcal{S}^\vee\cap T_U$ be a closed curve which represents $m$ in $H_1(T_U)$. Apriori, $\gamma$ may not be embedded. We replace $\gamma$ by an embedded curve representing a nonzero class in $H_1(T_U)$ as follows. Suppose $\gamma$ has a self intersection. Then $[\gamma]=[\gamma_1]+[\gamma_2]$, for two curves $\gamma_1,\gamma_2\subset \gamma$ as subsets. Either $[\gamma_1]$ or $[\gamma_2]$ is nonzero. Replacing $\gamma$ by one of these curves decreases the number of self-intersections. Continuing this process, we obtain an embedded $\gamma'\in \mathcal{S}^\vee \cap T_U$ such that $[\gamma']\neq 0\in H_1(T_U)$. This means that $[\gamma']$ represents $ap$ in $H_1(M)\simeq \mathbb{Z}$, for some integer $a\neq 0$. Since $\gamma'$ is embedded and $\mathcal{S}^\vee$ is a sphere with boundary components, $\gamma'$ bounds a disk with boundary components in $\mathcal{S}^\vee$. Because $[\gamma']=ap$, there are at least $|a|p$ boundary components. Since $|a|\geq 1$, this contradicts the Euler characteristic assumption on $\mathcal{S}^\vee$.

\textbf{Case 2.} $I_V(\mathcal{L}^\vee)$ and $I_V(M-\mathcal{L}^\vee)$ are nontrivial. This is analogous to Case 1.

\textbf{Case 3.} $I_U(\mathcal{L}^\vee)$ and $I_V(M-\mathcal{L}^\vee)$ are nontrivial. 

First, we replace $\mathcal{L}^\vee$ and $\mathcal{S}^\vee$ by $\widetilde{\mathcal{L}}\subset S^3$, a $3$-submanifold of $S^3$ with boundary $\widetilde{\mathcal{S}}$, as follows. Because $\mathcal{S}^\vee$ is admissible, it only intersects $\partial M$ in inessential curves, or meridional curves. First, we remove the inessential intersection components by compressing $\mathcal{S}^\vee$ along the disk it bounds (and removing a neighborhood of the disk it bounds from $\mathcal{L}^\vee$.) Second, note that $\partial M \cap \mathcal{L}^\vee$ is a disjoint set of annuli. We replace each such annulus by tubes in $N(T_{p,p+1})$ to remove the meridional intersection components of $\mathcal{S}^\vee\cap \partial M$. After these surgeries, we have a union of spheres $\widetilde{\mathcal{S}}\subset S^3$ bounding a $3$-submanifold $\widetilde{\mathcal{L}}\subset S^3$.

Since the surgeries only take place in a neighborhood of $N(T_{p,p+1})$, we still have that the inclusions $$I_U(\widetilde{\mathcal{L}}):H_1(\widetilde{\mathcal{L}}\cap T_U)\to H_1(T_U)$$ and $$I_V(S^3-\widetilde{\mathcal{L}}):H_1((S^3-\widetilde{\mathcal{L}})\cap T_V)\to H_1(T_V)$$ are nontrivial. Since $\widetilde{\mathcal{S}}\subset S^3$ is a union of spheres, $H_1(\widetilde{\mathcal{L}})=0$. Let $\gamma_1$ be a closed curve in $\widetilde{\mathcal{L}}\cap T_U$ such that $[\gamma_1]\neq 0\in H_1(T_V)$ (which exists since $I_U(\widetilde{\mathcal{L}})$ is nontrivial). Let $\gamma_2$ be a closed curve in $(S^3-\widetilde{\mathcal{L}})\cap T_V$ such that $[\gamma_1]\neq 0\in H_1(T_V)$ (which exists since $I_V(S^3-\widetilde{\mathcal{L}})$ is nontrivial). Note that $\gamma_1$ and $\gamma_2$ have a nonzero linking number. So $[\gamma_1]\neq 0\in H_1(S^3-\gamma_2)$. Since $\widetilde{\mathcal{L}}\subset S^3-\gamma_2$ and $[\gamma_1]=0$ in $H_1(\widetilde{\mathcal{L}})$, we have a contradiction. 

\textbf{Case 4.} $I_V(\mathcal{L}^\vee)$ and $I_U(M-\mathcal{L}^\vee)$ are nontrivial. This is analogous to Case 3.

\subsection{Proof of \cref{general topological lemma}: combinatorics} \label{proof of topological lemma: combinatorics}

In this section, we prove \cref{combinatorial lemma for cyclic splitting}, completing the proof of \cref{general topological lemma}.

Recall that we have $\Delta=\{\Omega_{-\ell},...,\Omega_k\}$, an $f$-twisted essential discrete path of subsurfaces of $\Sigma_{p,p+1}$ that we obtained from a $3$-submanifold of $M$. We also have $\Delta_U=\{\Omega_U^t\}$ and $\Delta_V=\{\Omega_V^t\}$, which are $\phi$-twisted and $\psi$-twisted continuous paths of surfaces on $U$ and $V$, respectively. The $3$-dimensional realizations of $\Delta_U$ and $\Delta_V$ glue to give a $3$-submanifold isotopic to a $3$-dimensional realization of $\Delta$. 

By \cref{eq 4} and additivity of the adjusted Euler characteristic, $$\frac{p(p-1)}{10}\leq|\chi^{p,p+1}(\Omega_k), \chi^{p,p+1}(\Sigma_{p,p+1}-\Omega_k)|\leq \frac{9p(p-1)}{10}.$$ By \cref{chi is additive}, 
$$\sum_{i=1}^p\chi^{U,P_U}(\Omega_k\cap U_i)+\sum_{j=1}^{p+1}\chi^{V,P_V}(\Omega_k\cap V_j)\geq \frac{p(p-1)}{10}.$$ So either 
\begin{equation}\label{eq 6}\sum_{i=1}^p\chi^{U,P_U}(\Omega_k\cap U_i)\geq \frac{p(p-1)}{20},
\end{equation} or 
\begin{equation}\label{eq 7}\sum_{i=1}^{p+1}\chi^{V,P_V}(\Omega_k\cap V_i)\geq \frac{p(p-1)}{20}.
\end{equation} Similarly, either 
\begin{equation}\label{eq 8}\sum_{i=1}^p\chi^{U,P_U}(U-(\Omega_k\cap U_i))\geq \frac{p(p-1)}{20},
\end{equation} or 
\begin{equation}\label{eq 9} \sum_{i=1}^{p+1}\chi^{V,P_V}(V-(\Omega_k\cap V_i))\geq \frac{p(p-1)}{20}.
\end{equation} We will now show that if \cref{eq 6} holds, then $\Delta_U$ does not split with respect to $\phi$. Analogous statements will hold for \cref{eq 7}, \cref{eq 8} and \cref{eq 9} with $\Delta_V$, $U-\Delta_U$ and $V-\Delta_V$, respectively. Together these statements will imply \cref{combinatorial lemma for cyclic splitting}.

Assume \cref{eq 6} holds. By \cref{eq 5}, $$\length(\Delta)\leq \frac{p}{10^4}.$$ By condition (7) in \cref{U and V sequences}, 
\begin{equation}\label{eq 10}\length([\Delta_U])\leq \frac{p}{1000}.
\end{equation}

Recall that $\Delta_U=\{\Omega_U^t\}$. By condition (1) in \cref{U and V sequences}, for each $1\leq i\leq p$, there exists $t\in [0,1]$ such that $\Omega_U^t$ is isotopic to $\Omega_k\cap U_i$. By \cref{eq 6}, there exists $t\in [0,1]$ such that 
\begin{equation}\label{eq 11}\chi^{U,P_U}(\Omega_U^t)\geq \frac{p}{20}.
\end{equation}

\begin{lemma}\label{last cyclic splitting lemma} Let $D$ be a closed disk with $P$ a set of $2n$ points in $\partial D$. Let $g:(D,P)\to (D,P)$ be the homeomorphism that shifts points in $P$ by two in the direction of the orientation. Let $\Delta=\{\Omega^t\}$ be a $g$-twisted continuous path of subsurfaces of $(D,P)$, with $$\length([\Delta])\leq \frac{n}{1000}.$$ If $\Delta$ splits with respect to $g$, then $$\chi^{D,P}(\Omega^t)\leq \frac{n}{20}$$ for all $t\in [0,1]$. 
\end{lemma}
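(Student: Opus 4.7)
The plan is to reduce the bound to $t=0$, apply the splitting decomposition, and combine a chain estimate with the finite order of $g$ on $(D,P)$ to control the result.

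Since each elementary move changes $\chi^{D,P}$ by at most $1$, while disk and null component additions or eliminations leave it unchanged (\cref{elementary move on chi 1}), one has
$$|\chi^{D,P}(\Omega^t) - \chi^{D,P}(\Omega^0)| \leq \length([\Delta]) \leq n/1000.$$
Reading the conclusion as an absolute value bound (the quantity is nonpositive by the Nonpositivity lemma), it suffices to show $|\chi^{D,P}(\Omega^0)| \leq n/20 - n/1000$. The splitting gives $\Omega^0 = \coprod_k \Omega_k^0$ with only finitely many nonempty pieces (since $\mathcal{L}(\Delta)$ has finitely many connected components), so by additivity $|\chi^{D,P}(\Omega^0)| = \sum_k |\chi^{D,P}(\Omega_k^0)|$.

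To bound each summand I use the chain relation. Since $g$ is a homeomorphism of $(D,P)$, $\chi^{D,P}(\Omega_k^0) = \chi^{D,P}(\Omega_{k-1}^1)$, and the path $\Delta_{k-1}$ of equivalence-length $L_{k-1} := \length([\Delta_{k-1}])$ shifts $\chi^{D,P}$ by at most $L_{k-1}$. The sequence $c_k := \chi^{D,P}(\Omega_k^0)$ therefore has total variation $\leq \sum_k L_k \leq n/1000$ and is eventually zero, yielding $|c_k| \leq n/1000$. The same chain reasoning controls $|\Omega_k^0 \cap P|$ and $|\bound{\Omega_k^0} \cap \partial D|$, each of which changes by a bounded amount per elementary move. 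The crucial second ingredient is that $g$ has order $n$ on $(D,P)$: the shift by $2$ on the $2n$ cyclic marked points has order $n$, so $g^n|_P = \mathrm{id}$ and $g^n$ is isotopic to the identity on $D$ rel $P$. Iterating the chain $n$ steps, the piece $\Omega_{k-n}^0$ is related to $\Omega_k^0$ by composing $g^n$ (isotopic to identity) with path changes of total size at most $n/1000$; in particular, the two subsurfaces are approximately isotopic. Since distinct pieces of a splitting are disjoint, this forces pairwise compatibility: e.g., when both are nonempty, $|\Omega_k^0 \cap P| + |\Omega_{k-n}^0 \cap P| \leq n/1000$, because two approximately isotopic essential subsurfaces of $(D,P)$ which are disjoint can share only a small fraction of the data.

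The main obstacle I expect is combining these individual-piece estimates into a bound on the total $\sum_k |\chi^{D,P}(\Omega_k^0)|$. My strategy is to organize the nonempty pieces by residue class modulo $n$: within each residue class, the disjointness-plus-approximate-isotopy estimate forces the summed marked-point counts and boundary-arc counts to be at most $n/1000$. Aggregating across the at most $n$ residue classes, combined with the trivial universal bound $|\Omega^0 \cap P| \leq 2n$ and essentiality bounds on boundary arcs, and noting that each nonzero $\chi^{D,P}(\Omega_k^0)$ is at least $1/4$ (so the number of effective pieces is itself controlled via total variation $\leq n/1000$), should yield $|\chi^{D,P}(\Omega^0)| \leq n/20 - n/1000$. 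The generous margin between $n/1000$ and $n/20$ is chosen to absorb the losses from this case analysis and from the separate treatment of the three contributions $-\chi(\Omega^0)$, $|\bound{\Omega^0} \cap \partial D|/4$, and $|\Omega^0 \cap P|/4$ to $|\chi^{D,P}(\Omega^0)|$.
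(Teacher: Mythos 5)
Your early reductions are sound: by \cref{elementary move on chi 1} the total variation of the sequence $c_k := \chi^{D,P}(\Omega_k^0)$ is at most $\length([\Delta])$, so each $|c_k|\leq n/1000$ and only finitely many $c_k$ are nonzero. But the argument stalls exactly there, because bounding each piece does not bound the sum $\sum_k|c_k|$: a sequence with total variation $n/1000$ that starts and ends at zero can still have quadratically large $\ell^1$-norm (a slow staircase down and back up), so the length bound alone cannot force $\sum_k|c_k|\leq n/20$. The aggregation-by-residue-class step is supposed to rescue this, but its crucial assertion --- that ``two approximately isotopic essential subsurfaces of $(D,P)$ which are disjoint can share only a small fraction of the data,'' so $|\Omega_k^0\cap P|+|\Omega_{k-n}^0\cap P|\leq n/1000$ --- is false. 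Two disjoint subsurfaces joined by a short chain of elementary moves can each carry on the order of $n$ marked points (think of two opposite quarter-disks of $(D,P)$); disjointness plus near-isotopy constrains \emph{which} edges of $\partial D$ a connected component touches, not \emph{how many} marked points the pieces carry.

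The ingredient your proposal misses, and the one the paper actually uses, is that the operative power of $g$ is $g^1$, not $g^n$. Passing to $g^n=\mathrm{id}$ cancels the rotation and discards exactly the structure that makes the lemma true. The paper instead groups the pieces into $\Omega^0_{\even}$ and $\Omega^0_{\odd}$; one of these two aggregates inherits $|\chi^{D,P}|\geq n/40$, they are disjoint, and $\Omega^1_{\odd}=g(\Omega^0_{\even})$ sits within combinatorial distance $O(n/1000)$ of $\Omega^0_{\odd}$. Against this the paper proves \cref{distance between surface and rotated surface}: an essential subsurface $\Omega$ with large $|\chi^{D,P}(\Omega)|$ has many ``minimal connected pairs'' of edges at cyclic distance $\geq 3$ (\cref{sublemma 2}), that set is stable under few elementary moves (\cref{sublemma 1}), and a connected component of $\Omega'$ spanning edges $e_i,e_j$ separates $D$ while $\Omega$ must then reach the shifted edges, which lie on opposite sides of that separating component --- contradicting disjointness. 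That separation-under-a-shift-by-two argument is the heart of the proof, and your route around it does not survive.
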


Let us see how \cref{last cyclic splitting lemma} implies \cref{combinatorial lemma for cyclic splitting}. Applying the lemma to $\Delta_U$ proves that if \cref{eq 6} holds, then so does \cref{eq 11}, so $\Delta_U$ does not split with respect to $\phi$. Similarly, we may show that if \cref{eq 8} holds, then $U-\Delta_U$ does not split with respect to $\phi$. Flipping $\Delta_V$ to reverse the orientation, we may show that if \cref{eq 7} holds (resp. \cref{eq 9} holds), then $\Delta_V$ does not split with respect to $\psi$ (resp. $V-\Delta_V$ does not split with respect to $\psi$).

So to complete the proof of \cref{combinatorial lemma for cyclic splitting}, it remains to prove \cref{last cyclic splitting lemma}. In order to do this, we introduce some preliminary combinatorial definitions and lemmas. 

Label the points in $$P=\{x_1,...,x_{2n}\}$$ in order. Label the edges of $\partial D$ (i.e. components of $\partial D-P$) by $$e_1,...,e_{2n},$$ so that $e_i$ is the edge between $x_i$ and $x_{i+1}$, and $e_{2n}$ is the edge between $x_{2n}$ and $x_1$. 

\begin{definition}\label{minimal connected pair} Given a connected subsurface $\Omega\subset (D,P)$ and a pair $(i,j)$ with $i<j\in \{1,...,2n\}$, we say that $(i,j)$ is a connected pair if:

\begin{enumerate}
\item $j-i\geq 3$ and $i+(2n-j)\geq 3$. (In other words, the cyclic distance between $i$ and $j$ is at least $3$.)

\item $\Omega$ intersects both $e_i$ and $e_j$.
\end{enumerate}

We say that $(i,j)$ is a minimal connected pair if either for all $i<k<j$, neither $(i,k)$ nor $(k,j)$ is a connected pair; or for all $k<i$ and $k>j$, neither $(k,i)$ nor $(j,k)$ is a connected pair.

Now, given a (not necessarily connected) subsurface $\Omega\subset (D,P)$ and a pair $(i,j)$ with $i<j\in \{1,...,2n\}$, we say that $(i,j)$ is a minimal connected pair associated to $\Omega$ if $(i,j)$ is a minimal connected pair associated to some connected component of $\Omega$. Let $$M(\Omega)\subset \{(i,j)\in \{1,...,2n\}^2|i<j\}$$ be the set of minimal connected pairs associated to $\Omega$.
\end{definition}

\begin{lemma} \label{sublemma 1} If $\Omega$ and $\Omega'$ are subsurfaces of $(D,P)$ with $d(\Omega,\Omega')\leq 1$, then $$|M(\Omega)-(M(\Omega)\cap M(\Omega'))|\leq 6.$$
\end{lemma}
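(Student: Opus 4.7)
The plan is to prove the lemma by a case analysis on the type of elementary move relating $\Omega$ and $\Omega'$, showing that in each case at most $6$ minimal connected pairs of $\Omega$ cease to be minimal connected pairs of $\Omega'$. The key structural observation I would use is that a minimal connected pair $(i,j) \in M(\Omega)$ is determined by two data: a connected component $C$ of $\Omega$ intersecting both $e_i$ and $e_j$, and the cyclic position of the set $S_C = \{k : e_k \cap C \neq \emptyset\}$, specifically that one of the two cyclic arcs of $\partial D$ from $i$ to $j$ contains no $e_k \in S_C$ at cyclic distance $\geq 3$ from both $i$ and $j$. Thus a pair $(i,j) \in M(\Omega) \setminus M(\Omega')$ must either involve an edge whose intersection pattern is changed by the move, or lie in a component of $\Omega$ that splits or merges.

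Next I would establish a planarity lemma: for two disjoint connected subsurfaces $C_1, C_2$ of the disk $D$, the edge sets $S_{C_1}$ and $S_{C_2}$ lie in two complementary cyclic arcs of $\partial D$. This is because the complement of a connected planar subsurface in $D$ is a union of open regions, each meeting $\partial D$ in a single cyclic arc, and any disjoint connected subsurface must lie in one such region. Applied to a component $C$ split by a type 1 surgery into $C_1$ and $C_2$, this ensures that the ``crossing'' minimal pairs $(i,j)$ of $C$ with $i \in S_{C_1}$ and $j \in S_{C_2}$ number at most $2$ (one per cyclic transition between the two arcs), and a symmetric count applies to merges.

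I would then step through each type of elementary move and tally. Type 1 surgery affects no edges and contributes at most $2$ crossing pairs; type 2 surgery modifies one edge but that edge remains in the edge set of the resulting component(s), so only the $\leq 2$ crossing pairs matter; type 1 $\partial$-surgery affects one edge sitting in at most $2$ minimal pairs per component, giving $\leq 2+2=4$; type 2 $\partial$-surgery and the ``arc around a marked point'' case of disk addition/elimination affect up to $2$ edges adjacent to a marked point, giving $\leq 4+2=6$; other disk additions/eliminations and all null-component operations lose at most $1$ minimal pair (since a rectangular or annular null component has $|S| \leq 2$ and hence at most one minimal pair of its own, while other components are untouched). The main technical obstacle will be the planarity lemma together with the shared-edge bookkeeping in the type 2 surgery case, where both components produced by the split inherit the edge containing the $\partial D$ endpoint of the surgery arc; once these are handled, the uniform bound of $6$ follows.
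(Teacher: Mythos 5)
Your overall skeleton is the same as the paper's (reduce to $\delta\subset\Omega$ and $\Omega$ connected, use the planarity fact that the two pieces of a split component meet complementary consecutive cyclic blocks of edges, observe that pairs with both edges in one block survive, and tally the rest case by case), but the crucial count is wrong. The claim that the crossing minimal pairs ``number at most $2$ (one per cyclic transition between the two arcs)'' is false, because minimal connected pairs are not only the cyclically consecutive pairs of the edge set: any connected pair at cyclic distance exactly $3$ is automatically minimal (the intermediate edges are within distance $2$ of both endpoints), and pairs at distance $4$ can be minimal too. Concretely, take $2n=20$ and a connected $\Omega$ meeting exactly $e_1,\dots,e_6$ (a neighborhood of a tree), and a single type~1 surgery splitting it into components meeting $\{e_1,e_2,e_3\}$ and $\{e_4,e_5,e_6\}$. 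Then $M(\Omega)=\{(1,4),(2,5),(3,6),(1,6)\}$ while $M(\Omega')=\emptyset$, so four minimal pairs are lost, all of them ``crossing'' --- already more than your claimed $2$, and up to $3$ crossing minimal pairs can sit at a single transition. So your tallies for type~1 and type~2 surgeries (the main cases) are unjustified as stated; the bound $6$ is still true, but proving it needs the finer structural fact that the side witnessing minimality of a pair either contains no edges of the component or has length at most $4$, which caps the crossing minimal pairs at $3$ per transition, i.e.\ $6$ in total --- exactly the constant in the lemma, so there is no slack for your bookkeeping.

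A second, related gap: your counts for moves that merge components or enlarge a component's edge set (a type~1 $\partial$-surgery joining two components that meet the same edge, a band addition attaching $\Omega$ to a new edge, or the $\delta\subset D-\Omega$ direction of a surgery) only charge pairs containing the modified edge. But merging can destroy minimal pairs that involve no affected edge at all: edges of the other component land on the formerly clean side of a pair $(i,j)$ and create new connected pairs $(i,k)$ or $(k,j)$ there, while the opposite side was never clean. For example, if one component meets $\{e_4,e_{10},e_{16}\}$ and a disjoint one meets $\{e_4,e_6,e_7\}$, joining them along $e_4$ kills the pair $(4,10)$ even though no edge membership changes. So the tally ``$\leq 2+2=4$'' for type~1 $\partial$-surgery does not follow from your stated reasoning. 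To repair the proof you should run the same clean-side analysis symmetrically for merges (the contaminating edges are confined, by planarity, to a single gap adjacent to the shared edge, which again limits the damage to the pairs anchored at that gap), rather than counting only pairs through the modified edge.
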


\begin{proof} First, we consider the case wherein $\Omega$ and $\Omega'$ are related by a surgery along a curve or arc $\delta$. It suffices to consider the case where $\delta\subset \Omega$. (When $\delta\subset D-\Omega$, $M(\Omega')\supset M(\Omega)$, so the lemma follows trivially.) In this situation, we may also assume that $\Omega$ is connected. (If $\Omega$ is disconnected, the lemma follows by considering each connected component separately.)

Let $E(\Omega)$ be the set of edges $\{e_1,...,e_{2n}\}$ that $\Omega$ intersects nontrivially. When we apply a surgery to $\Omega$, either $\Omega'$ remains connected, or it splits into two components. In the first case, it has the same set of edges it intersects with as $\Omega$. Therefore $M(\Omega)=M(\Omega')$. In the second case, let $\Omega_1'$ and $\Omega_2'$ be the two connected components of $\Omega'$. Let $E(\Omega_1')$ and $E(\Omega_2')$ be the set of edges that $\Omega_1'$ and $\Omega_2'$ intersect, respectively. Then $E(\Omega_1')\cup E(\Omega_2')=E(\Omega)$. Moreover, since $\Omega_1'$ and $\Omega_2'$ are disjoint, the edges in $E(\Omega_1')$ are a sequence of edges consecutive in $E(\Omega)$ with respect to the cyclic order on them. Similarly, the edges in $E(\Omega_2')$ are a sequence of edges consecutive in $E(\Omega)$ with respect to the cyclic order on them. Now, $M(\Omega)$ consists of certain pairs of edges in $E(\Omega)$ that are at least cyclic distance $3$ apart. If both edges in a pair are in $E(\Omega_1')$ (resp. $E(\Omega_2')$), the pair will still be a minimal connected pair associated to $\Omega'$. So there are at most six pairs in $M(\Omega)$ that are not in $M(\Omega')$.

Finally, we consider the case wherein $\Omega$ and $\Omega'$ are related by $\partial$-surgery of type 1 or 2. Again, we may assume that $\Omega$ is connected, otherwise treating each connected component separately. In this case, $E(\Omega)$ and $E(\Omega')$ differ by at most one edge. As before, $M(\Omega)$ consists of certain pairs of edges in $E(\Omega)$ that are at least cyclic distance $3$ apart, and at most two pairs can fail to be in $M(\Omega')$. Hence the lemma follows in this case also.
\end{proof}

\begin{lemma} If $\Omega\subset (D,P)$ is a subsurface, then \label{sublemma 2} $|M(\Omega)|\geq |\chi^{D,P}(\Omega)|$.
\end{lemma}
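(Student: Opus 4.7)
The plan is to reduce to the case of a single connected essential subsurface $C\subset(D,P)$: $\chi^{D,P}$ decomposes additively across essential components of $\Omega^{\ess}$, and with care in selecting representatives the same will hold for $M$, so that $|M(\Omega)|\geq \sum_{C}|M(C)|$ and the statement reduces to $|M(C)|\geq|\chi^{D,P}(C)|$ for each connected essential component $C$. Since $D$ is a disk and $P\subset\partial D$, any simple closed curve in $D$ bounds a disk disjoint from $P$ and is therefore inessential on $(D,P)$; hence $\bound{C}$ consists entirely of essential arcs with endpoints on $\partial D-P$. Writing $c := b/2$ for the number of such arcs, $h$ for the number of boundary circles of $C$, and $p := |C\cap P|$, the surface $C$ is planar with $\chi(C)=2-h$, so
\begin{equation*}
\chi^{D,P}(C) \;=\; 2-h-\frac{b+p}{4} \;=\; 2-h-\frac{c}{2}-\frac{p}{4}.
\end{equation*}
The $\partial D$-arcs of $\partial C$ form exactly $c$ maximal cyclic runs in $E(C)\subset\mathbb{Z}/(2n)$, and summing their sizes yields the useful identity $|E(C)|=p+c$.

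The heart of the argument is a combinatorial lower bound on $|M(C)|$ via the following \emph{vacuous-minimality} criterion: any pair $(i,j)\in E(C)^2$ at cyclic distance at least $3$ such that no element of $E(C)$ lies strictly between $i$ and $j$ on one of the two sides is automatically a minimal connected pair, because the minimality condition on the empty side is vacuously satisfied. This produces two distinguished families of minimal pairs: (a) cyclically consecutive pairs of $E(C)$ separated by a gap of cyclic distance at least $3$ (one such pair per large gap); and (b) pairs at cyclic distance exactly $3$, $4$, or $5$ lying inside a single cluster of $E(C)$. A tally of these contributions, using the identity $|E(C)|=p+c$ and the inequality $h\leq c$, then produces at least $h-2+(b+p)/4$ distinct minimal connected pairs and hence yields $|M(C)|\geq |\chi^{D,P}(C)|$.

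The main obstacle I expect is the degenerate regime in which almost every gap has cyclic distance exactly $2$ and almost every cluster consists of a single edge, so that neither family (a) nor (b) is directly large enough. In this regime each short gap forces a rectangular null component of $D-C$; passing to $\widehat{[C]}$ by iterated null component eliminations preserves $\chi^{D,P}(C)$ by the equivalence-class invariance of $\chi^{D,P}$, while absorbing the short gaps and enlarging $|E(C)|$ enough that the within-cluster family (b) alone recovers the count. Transferring the resulting bound back from $\widehat{[C]}$ to $C$ is then straightforward because null component eliminations only add minimal pairs, but this bookkeeping, and in particular the verification that the pairs produced for different components of $\Omega^{\ess}$ can be chosen distinct, will require the bulk of the detailed work.
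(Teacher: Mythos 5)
Your reduction to a connected essential $C$ and the vacuous-minimality observation are both correct and agree with the paper. However, the tally is where the argument must actually produce the bound, and it is not carried out; moreover, the two families you name are insufficient, and your acknowledged fix does not repair this. Family (b) is mischaracterized: within a cluster, a pair $(a,a+3)$ is inside-minimal, but $(a,a+4)$ and $(a,a+5)$ have the connected pair $(a,a+3)$ strictly between their endpoints, so they are not inside-minimal and need not be minimal at all; only the distance-$3$ pairs contribute to (b) in the way you intend. More seriously, in the regime you flag---say $E(C)=\{e_1,e_3,\dots,e_{2m-1}\}$ with $p=0$ and $c=m$, realized by a connected essential $C$ bounded by $m$ disjoint essential arcs---family (b) is empty and family (a) contributes only one pair (from the single large gap going the long way around), while $|\chi^{D,P}(C)|=m/2-1$ grows with $m$. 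The pairs that actually close the count are ones like $(1,5),(3,7),\dots$, which straddle an isolated edge and are inside-minimal because the lone intermediate $E(C)$-edge sits at cyclic distance $2$ from both endpoints; these belong to neither (a) nor (b).

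The proposed repair via $\widehat{[C]}$ does not apply either: $C$ is connected and so has no null components of its own, and the component of $D-C$ behind a short gap is a disk whose interior boundary is a \emph{single} arc flanked by two marked points, which is not the parallel-pair configuration that a null component elimination removes, so $\widehat{[C]}$ does not absorb these gaps. The paper avoids all of this with a cruder argument: after showing $|E(\Omega)|\geq(|P\cap\Omega|+|\bound{\Omega}\cap\partial D|)/2$, it proves $|M(\Omega)|\geq|E(\Omega)|/2-1$ directly by a case analysis on $|E(\Omega)|\in\{3,4\}$ versus $|E(\Omega)|\geq 5$, arguing in the last case that each edge of $E(\Omega)$ belongs to some minimal connected pair. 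Your sharper identity $|E(C)|=p+c$ is not needed; the cruder inequality already suffices, and the real work is in the case analysis on $|E(\Omega)|$, which your proposal does not replace.
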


\begin{proof} It suffices to prove the lemma for essential subsurfaces, as applying disk additions or eliminations does not change the set $M(\Omega)$. By definition and additivity, it suffices to prove the lemma in the case where $\Omega$ is connected. Note that $$|\chi^{D,P}(\Omega)|=\frac{|P\cap \Omega|+ |\bound{\Omega}\cap \partial D|}{4}-1.$$ Let $E(\Omega)$ again be the set of edges that $\Omega$ intersects nontrivially. Because $\Omega$ is connected and essential, each edge contains at most two points in $(P\cap \Omega) \cup (\delta(\Omega)\cap \partial D)$. So $$|E(\Omega)|\geq \frac{|P\cap \Omega|+ |\bound{\Omega}\cap \partial D|}{2}.$$ To prove the lemma, it suffices to show that $$|M(\Omega)|\geq \frac{|E(\Omega)|}{2}-1.$$ 

Since $\Omega$ is essential and $|\chi^{D,P}(\Omega)|>0$, $|E(\Omega)|\geq 3$. We split into cases based on the value of $|E(\Omega)|$. For $|E(\Omega)|\geq 5$, for each edge in $E(\Omega)$, there is another edge in $E(\Omega)$ at least cyclic distance $3$ away from the original edge. Hence, in this case, each edge in $E(\Omega)$ is part of a minimal connected pair. Therefore, $|M(\Omega)|\geq |E(\Omega)|/2$. For $|E(\Omega)|=4$, at least one pair of edges must have cyclic distance $3$. So $|M(\Omega)|\geq 1$, as desired. Finally, for $|E(\Omega)|=3$: if there is no minimal connected pair associated to $\Omega$, then the three edges are consecutive. This contradicts $\Omega$ being essential.
\end{proof}

\begin{proof}[Proof of \cref{last cyclic splitting lemma}] Let $\Delta=\{\Omega^t\}$ be a $g$-twisted continuous path of subsurfaces of $(D,P)$ with a length bound. Assume the lemma is false.. Then there exists some $t\in [0,1]$ for which $$\chi^{D,P}(\Omega^t)\geq \frac{n}{20}.$$ Shifting $t$ as necessary (since $D$ is $g$-twisted), we may assume that $t=0$. By the length bound, $$d([\Omega^0],[g(\Omega^0)]])\leq \frac{n}{10^4}.$$ Now, $\Delta=\coprod \Delta_n$ for $\Delta_n=\{\Omega_n^t\}$ satisfying $g(\Omega_{n-1}^0)=\Omega_n^1$. Let $$\Delta_{\even}=\coprod_{n\in 2\mathbb{Z}} \Delta_n$$ and $$\Delta_{\odd}=\coprod_{n\in 2\mathbb{Z}+1}\Delta_n$$ be disjoint unions. Note that $\Delta=\Delta_{\even}\amalg \Delta_{\odd}$. Also, $g(\Omega_{\even}^0)=\Omega_{\odd}^1$ and $g(\Omega_{\odd}^0)=\Omega_{\even}^1$.

\begin{lemma}\label{distance between surface and rotated surface} Let $\Omega\subset (D,P)$ be an essential subsurface. Then $$d(\Omega',g(\Omega))\geq \frac{\chi^{D,P}(\Omega)}{6}$$ for any subsurface $\Omega'\subset (D,P)$ disjoint from $\Omega$. 
\end{lemma}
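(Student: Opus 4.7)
The plan is to adapt the template of \cref{sublemma 1} and \cref{sublemma 2}. Iterating \cref{sublemma 1} along any minimum-length sequence of elementary moves $\Omega' = \Omega_0, \Omega_1, \ldots, \Omega_k = g(\Omega)$ gives
\[
|M(g(\Omega)) \setminus M(\Omega')| \leq 6\, d(\Omega', g(\Omega)),
\]
since each pair in $M(g(\Omega)) \setminus M(\Omega')$ must first appear in some $M(\Omega_i) \setminus M(\Omega_{i-1})$, contributing at most six pairs per step. Separately, applying \cref{sublemma 2} to $g(\Omega)$ and using that $g$ is a homeomorphism of $(D,P)$ gives $|M(g(\Omega))| \geq |\chi^{D,P}(g(\Omega))| = |\chi^{D,P}(\Omega)|$. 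So the lemma will follow from the key claim that $M(g(\Omega)) \cap M(\Omega') = \emptyset$.

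To prove the claim, I would suppose for contradiction that $(i,j) \in M(g(\Omega)) \cap M(\Omega')$. Pulling back by $g$ (which shifts edge indices by $2$), some connected component $C \subset \Omega$ meets $e_{i-2}$ and $e_{j-2}$, while some connected component $C' \subset \Omega'$ meets $e_i$ and $e_j$. By connectedness, I can choose a properly embedded arc $\alpha \subset C$ from a point $p_1 \in e_{i-2}$ to a point $p_2 \in e_{j-2}$, and a properly embedded arc $\alpha' \subset C'$ from $q_1 \in e_i$ to $q_2 \in e_j$, with each arc's interior in the interior of its subsurface. Since $\Omega \cap \Omega' = \emptyset$, the arcs $\alpha$ and $\alpha'$ are disjoint.

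Now the condition $3 \leq j - i \leq 2n - 3$ (required for $(i,j)$ to be a connected pair at all) forces the four indices $i-2, i, j-2, j$ to be distinct mod $2n$ and to appear in this cyclic order on $\{1,\ldots,2n\}$, so the endpoints $p_1, q_1, p_2, q_2$ lie in alternating cyclic order on $\partial D$. Two properly embedded arcs in a disk with disjoint boundary points alternating on $\partial D$ must cross: the arc $\alpha$ separates $D$ into two components, and alternation forces $q_1$ and $q_2$ into different components, so $\alpha'$ must meet $\alpha$. This contradicts disjointness, establishing the claim. Combining the three bounds yields $d(\Omega', g(\Omega)) \geq |\chi^{D,P}(\Omega)|/6$. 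The main obstacle is the cyclic-order verification in the extreme cases $j-i \in \{3, 2n-3\}$; once the alternation is in hand, the arc-crossing step and the bookkeeping with $M(\cdot)$ are routine.
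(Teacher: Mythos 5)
Your proof is correct and follows essentially the same route as the paper's: both derive $M(g(\Omega)) \cap M(\Omega') \neq \emptyset$ from Lemmas~\ref{sublemma 1} and~\ref{sublemma 2} under the assumed distance bound and then produce an intersection of $\Omega$ and $\Omega'$ from a shared minimal connected pair (your explicit two-arc alternating-endpoint argument is the justification the paper leaves implicit in the sentence ``Since the cyclic distance between $i$ and $j$ is at least $3$, this means $\Omega$ and $\Omega'$ are not disjoint''). The only cosmetic discrepancy is the shift direction: you write $e_{i-2}, e_{j-2}$ where the paper writes $e_{i+2}, e_{j+2}$; with $g$ shifting marked points forward by two, $\Omega = g^{-1}(g(\Omega))$ meets the edges shifted back by two, so your indices are the correct ones, and in any case the alternation argument is insensitive to the sign.
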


\begin{proof} Suppose $$d(\Omega',g(\Omega))< \frac{\chi^{D,P}(\Omega)}{6}.$$ By \cref{sublemma 1} and \cref{sublemma 2}, $$M(\Omega')\cap M(g(\Omega))\neq \emptyset$$ (see \cref{minimal connected pair}). Let $(i,j)$ be a minimal connected pair associated to both $\Omega'$ and $g(\Omega)$. Then $\Omega'$ intersects $e_i$ and $e_j$, while $\Omega$ intersects $e_{i+2(\modulo 2n)}$ and $e_{j+2(\modulo 2n)}$. Since the cyclic distance between $i$ and $j$ is at least $3$, this means $\Omega$ and $\Omega'$ are not disjoint, which is a contradiction. The case of $V$ is analogous.
\end{proof}

We return to the proof of \cref{last cyclic splitting lemma}. Since $$\chi^{D,P}(\Omega^0)\geq \frac{n}{20},$$ either $$\chi^{D,P}(\Omega^0_{\even})\geq \frac{n}{40}$$ or $$\chi^{D,P}(\Omega^0_{\odd})\geq \frac{n}{40}.$$ Without loss of generality, we assume the former. Note $\Omega^0_{\even}$ and $\Omega^0_{\odd}$ are disjoint, so $\widehat{[\Omega^0_{\even}]}$ and $\widehat{[\Omega^0_{\odd}]}$ are also disjoint. Since $\Omega^1_{\odd}=g(\Omega^0_{\even})$, $$\widehat{[\Omega^1_{\odd}]}=g(\widehat{[\Omega^0_{\even}]}).$$ Since $$\length([\Delta_{\odd}])\leq \length(\Delta)\leq \frac{n}{1000}, $$ $$d([\Omega^0_{\odd}],[\Omega^1_{\odd}])\leq \frac{n}{1000}.$$ By \cref{distance between null eliminated surfaces 2}, $$d(\widehat{[\Omega^0_{\odd}]},\widehat{[\Omega^1_{\odd}]})\leq \frac{3n}{1000}.$$ This contradicts \cref{distance between surface and rotated surface}.
\end{proof}

\section{Proofs of the main theorems} \label{Proofs of the main theorems}

In this section, we prove \cref{Seifert surface is thin connect sum} and \cref{distortion of connect sum is large}. They are both consequences of \cref{topological lemma for connect sum}, along with an adaptation of the double bubble argument from \cite{Par11}.

\subsection{Double bubble argument} 

\begin{definition} A double bubble $Z$ is an embedded $3$-complex in $\mathbb{R}^3$ constructed as follows. The $1$-skeleton, $Z^1$, is an embedded $S^1$ in $\mathbb{R}^3$. The $2$-skeleton, $Z^2$, consists of three embedded disks with disjoint interiors, whose boundaries are $Z^1$. We denote the three $2$-cells by $e_{Z,1}^2$, $e_{Z,2}^2$ and $e_{Z,3}^2$. There are two $3$-cells, denoted $e_{Z,1}^3$ and $e_{Z,2}^3$, both embedded balls in $\mathbb{R}^3$, with $$\partial e_{Z,1}^3=e_{Z,1}^2\cup e_{Z,2}^2$$ and $$\partial e_{Z,2}^3= e_{Z,2}^2\cup e_{Z,3}^2.$$  
\end{definition}

\begin{lemma} \label{double bubble lemma} Let $\beta$ be an embedding of $T_{p,p+1}\# K$ in $\mathbb{R}^3$. Let $\Sigma$ be a minimal genus Seifert surface with $\partial \Sigma=\beta$. Let $Z$ be a double bubble intersecting $\beta$ and $\Sigma$ transversely, such that $$|Z^2\cap \beta|<\frac{p}{2\cdot 10^4}.$$ If $$|\chi_{\Sigma}^{p,p+1}(e_{Z,1}^3\cap \Sigma)|+|\chi_{\Sigma}^{p,p+1}(e_{Z,2}^3\cap \Sigma)|\leq \frac{p(p-1)}{10},$$ then $$|\chi_{\Sigma}^{p,p+1}((e_{Z,1}^3 \cup e_{Z,2}^3)\cap \Sigma)|\leq \frac{p(p-1)}{10}.$$ Analogously, if $$|\chi_{\Sigma}^{p,p+1}((e_{Z,1}^3 \cup e_{Z,2}^3)\cap \Sigma)|\geq \frac{9p(p-1)}{10},$$ then $$|\chi_{\Sigma}^{p,p+1}(e_{Z,1}^3\cap \Sigma)|+|\chi_{\Sigma}^{p,p+1}(e_{Z,2}^3\cap \Sigma)|\geq \frac{9p(p-1)}{10}.$$ 
\end{lemma}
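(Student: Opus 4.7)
The plan is to combine the contrapositive of \cref{topological lemma for connect sum} with an approximate additivity statement for $\chi_\Sigma^{p,p+1}$ under disjoint-interior unions of subsurfaces.

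First, I would note that each of $B_1 := e_{Z,1}^3$, $B_2 := e_{Z,2}^3$, and $B_{12} := B_1 \cup B_2$ is an embedded ball in $\mathbb{R}^3$ whose boundary is a piecewise-smooth $2$-sphere formed by gluing two of the three disks $e_{Z,j}^2$ along $Z^1$. Each such sphere $\partial B$ satisfies $|\partial B \cap \beta| \leq |Z^2 \cap \beta| < p/(2\cdot 10^4) < p/10^4$. After a small perturbation making each $\partial B$ meet $\Sigma$ transversely (which changes neither intersection counts with $\beta$ nor $\chi_\Sigma^{p,p+1}$), the contrapositive of \cref{topological lemma for connect sum} applied to each of $B_1$, $B_2$, $B_{12}$ yields a dichotomy: writing $a(B) := |\chi_\Sigma^{p,p+1}(B \cap \Sigma)|$, for each $B \in \{B_1, B_2, B_{12}\}$ either $a(B) < p(p-1)/10$ (\emph{small}) or $a(B) > 9p(p-1)/10$ (\emph{large}).

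Second, I would establish an approximate additivity estimate
$$|a(B_{12}) - a(B_1) - a(B_2)| \leq C\,|Z^2 \cap \beta|$$
for some absolute constant $C$. The subsurfaces $\Omega_1 := B_1 \cap \Sigma$ and $\Omega_2 := B_2 \cap \Sigma$ have disjoint interiors and meet exactly along the $1$-manifold $\gamma := e_{Z,2}^2 \cap \Sigma$, whose number of arc components equals $|e_{Z,2}^2 \cap \beta|/2 \leq |Z^2 \cap \beta|/2$. Inclusion-exclusion gives $\chi(\Omega_1 \cup \Omega_2) = \chi(\Omega_1) + \chi(\Omega_2) - \chi(\gamma)$, while $|\bound{\Omega_1 \cup \Omega_2} \cap \partial \Sigma| = |\bound{\Omega_1} \cap \partial \Sigma| + |\bound{\Omega_2} \cap \partial \Sigma| - 2\,|\gamma \cap \partial \Sigma|$; these balance to give additivity of $\chi^\Sigma$ up to an $O(|\gamma|)$ contribution from inessential and null components created by the union. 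Using \cref{chi is additive} to write $\chi^\Sigma = \chi_\Sigma^{p,p+1} + \chi_\Sigma^{\Sigma_K}$ and applying the analogous inclusion-exclusion to $\gamma \cap \Sigma_K$ transfers the estimate to $\chi_\Sigma^{p,p+1}$.

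Third, I would combine the two ingredients. For the first claim, from $a(B_1) + a(B_2) \leq p(p-1)/10$ approximate additivity yields $a(B_{12}) \leq p(p-1)/10 + Cp/(2\cdot 10^4)$, which is strictly less than $9p(p-1)/10$ for all sufficiently large $p$ (the finitely many small values of $p$ being trivial since $|Z^2 \cap \beta| = 0$ there); the dichotomy then forces $a(B_{12}) < p(p-1)/10$. For the second claim, from $a(B_{12}) \geq 9p(p-1)/10$ approximate additivity yields $a(B_1) + a(B_2) > 2p(p-1)/10$ for large $p$, so the dichotomy forces at least one of $a(B_1), a(B_2)$ to be large, hence $\geq 9p(p-1)/10$, whence $a(B_1) + a(B_2) \geq 9p(p-1)/10$. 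The main obstacle will be the additivity estimate in the second step: one must verify carefully that passing to the essential part and to minimal position with $\Sigma_{p,p+1}$ (as required by \cref{chi 2}) does not introduce an error in $\chi_\Sigma^{p,p+1}(\Omega_1 \cup \Omega_2)$ worse than $O(|\gamma|)$ relative to the naive sum, and the restriction that $\Omega_1$ and $\Omega_2$ meet only along the single planar disk $e_{Z,2}^2$ is precisely what keeps this error linear in $|Z^2 \cap \beta|$.
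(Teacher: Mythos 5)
Your overall architecture (apply the contrapositive of \cref{topological lemma for connect sum} to the three spheres $\partial B_1$, $\partial B_2$, $\partial(B_1\cup B_2)$ to get a small/large dichotomy, then transfer smallness or largeness across the union via an additivity estimate) is genuinely different from the paper's argument, and the dichotomy half of it is fine. But the second step — the estimate $\bigl|\,|\chi_{\Sigma}^{p,p+1}(B_{12}\cap\Sigma)|-|\chi_{\Sigma}^{p,p+1}(B_1\cap\Sigma)|-|\chi_{\Sigma}^{p,p+1}(B_2\cap\Sigma)|\,\bigr|\leq C|Z^2\cap\beta|$ — is a genuine gap, not a routine inclusion--exclusion. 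Your sketch only controls the \emph{raw} Euler characteristic and the boundary-arc count: there, circles of $\gamma=e_{Z,2}^2\cap\Sigma$ indeed contribute nothing and arcs are counted by $|Z^2\cap\beta|$. The invariant $\chi_{\Sigma}^{p,p+1}$, however, is defined only after (i) deleting inessential components of the interior boundary (\cref{essential part of multicurve}, \cref{chi 1}) and (ii) isotoping to minimal position with $\Sigma_{p,p+1}$ (\cref{chi 2}). The number of inessential circle components of $\bound{\Omega_1}$, $\bound{\Omega_2}$, $\bound{\Omega_{12}}$ is not controlled by $|Z^2\cap\beta|$ at all (the sphere $e_{Z,2}^2$ can meet $\Sigma$ in arbitrarily many circles), and each essentialization step and each change of minimal-position representative can shift the quantity; to get an error of size $O(|Z^2\cap\beta|)$ you must show all these per-circle contributions cancel, which is precisely the nontrivial content you defer to "the main obstacle." In particular your claim that the error vanishes when $Z^2\cap\beta=\emptyset$ (needed for small $p$) is exactly this unproved assertion in its sharpest form. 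Note also that for your argument you only need subadditivity, $|\chi_{\Sigma}^{p,p+1}(B_{12}\cap\Sigma)|\leq |\chi_{\Sigma}^{p,p+1}(B_1\cap\Sigma)|+|\chi_{\Sigma}^{p,p+1}(B_2\cap\Sigma)|+C|Z^2\cap\beta|$ (monotonicity, \cref{monotonicity for chi 2}, gives the other direction up to the maximum), but even that one-sided statement requires an argument that complexity cannot be created by gluing along circles of $\gamma$, and no such argument is given.

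For comparison, the paper avoids any static additivity across the shared wall: it runs a $1$-parameter family of spheres $S_t$ starting at $e_{Z,1}^2\cup e_{Z,3}^2$ and surgering along $e_{Z,2}^2$, arranges $|S_t\cap\beta|<p/10^4$ throughout, and notes that $\interior(S_t)\cap\Sigma$ changes by isolated elementary moves, each of which moves $|\chi_{\Sigma}^{p,p+1}|$ by at most a bounded amount (\cref{chi only depends on equivalence class 2}, \cref{elementary move on chi 2}). Since \cref{topological lemma for connect sum} forbids the value from lying in $[p(p-1)/10,\,9p(p-1)/10]$ at any time, a discrete intermediate-value argument gives both implications; the only additivity used is the easy one for the disjoint union $B_1\sqcup B_2$ at the end of the family. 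If you want to salvage your static approach, you would essentially have to prove your additivity lemma by a comparable move-by-move argument, at which point the paper's sweeping-sphere formulation is the cleaner way to package it.
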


This lemma is similar to Lemma 2.7 in \cite{Par11}.

\begin{proof} Consider the $1$-parameter family of spheres $S_t$, starting from $S_0=e_{Z,1}^2\cup e_{Z,3}^2$ and doing surgery along $e_{Z,2}^2$. By a suitable perturbation we may assume that $S_t$ is transverse to $\Sigma$ except for a finite number of values of $t$ at which point $S_t\cap \Sigma$ undergoes an elementary move. We can also arrange that $$|S_t\cap \beta|<\frac{p}{10^4}$$ for all $t$. 

Now, suppose $$|\chi_{\Sigma}^{p,p+1}(e_{Z,1}^3\cap \Sigma)|+|\chi_{\Sigma}^{p,p+1}(e_{Z,2}^3\cap \Sigma)|\leq \frac{p(p-1)}{10}$$ and $$|\chi_{\Sigma}^{p,p+1}((e_{Z,1}^3 \cup e_{Z,2}^3)\cap \Sigma)|> \frac{p(p-1)}{10}.$$ By \cref{chi only depends on equivalence class 2} and \cref{elementary move on chi 2}, for some $t\in [0,1)$, $$\frac{p(p-1)}{10}\leq |\chi_{\Sigma}^{p,p+1}(\interior(S_t)\cap \Sigma)|\leq \frac{p(p-1)}{10}+2.$$ This contradicts \cref{topological lemma for connect sum}. Similarly, suppose $$|\chi_{\Sigma}^{p,p+1}((e_{Z,1}^3 \cup e_{Z,2}^3)\cap \Sigma)|\geq \frac{9p(p-1)}{10}$$ and $$|\chi_{\Sigma}^{p,p+1}(e_{Z,1}^3\cap \Sigma)|+|\chi_{\Sigma}^{p,p+1}(e_{Z,2}^3\cap \Sigma)|< \frac{9p(p-1)}{10}.$$ Then for some $t\in [0,1)$, $$\frac{9p(p-1)}{10}-2\leq |\chi_{\Sigma}^{p,p+1}(\interior(S_t)\cap \Sigma)|\leq \frac{9p(p-1)}{10},$$ which is a contradiction.
\end{proof}

\subsection{Proof of \cref{Seifert surface is thin connect sum}} Perturbing $\beta$ and $\Sigma$ slightly, we assume that they are both smooth. Scaling appropriately, we assume $\length(\beta)=1$. Suppose that $$\textstyle\sys_{\mathbb{R}^3}(\Sigma)> \displaystyle \frac{3^{1/2}\cdot 2\cdot 10^4}{p}.$$ We will show a contradiction. 

Let $X$ be the $2$-skeleton of the cubical lattice in $\mathbb{R}^3$ with side length $2\cdot 10^4/p$. First, we translate $\beta$ so that the intersection with $X$ is bounded above, and $X$ intersects $\beta$ and $\Sigma$ transversely. To do this, note that $X$ is the union of planes perpendicular to the $x$ direction, planes perpendicular to the $y$ direction, and planes perpendicular to the $z$ direction, which we denote by $X_x$, $X_y$ and $X_z$, respectively. 

As $\beta$ is translated in the $y$ or $z$ direction, $X_x\cap (\beta+s)$ does not change. Hence, 
\begin{align*}\int_{s\in [0,2\cdot 10^4/p]^3}|X_x\cap (\beta+s)|ds&\leq \length(\beta)\\& =\left(\frac{2\cdot 10^4}{p}\right)^2.
\end{align*} Similar inequalities hold for $X_y$ and $X_z$. Adding, we obtain $$\int_{s\in [0,2\cdot 10^4/p]^3}|X\cap (\beta+s)|ds\leq 3\left(\frac{2\cdot 10^4}{p}\right)^2.$$ So suitably translating $\beta$, we may assume that $\beta$ and $\Sigma$ intersect $X$ transversely, and $$|X\cap \beta|\leq \frac{3p}{2\cdot 10^4}.$$

By our assumption that $$\textstyle\sys_{\mathbb{R}^3}(\Sigma)\geq \displaystyle \frac{3^{1/2}\cdot 2\cdot 10^4}{p},$$ no cube in the complement of $X$ contains any non-contractible curve on $\Sigma$.

\begin{lemma}\label{complement of lattice does not intersect surface much} We have, $$|\chi_{\Sigma}^{p,p+1}(X^c\cap \Sigma)|\leq \frac{3p}{2\cdot 10^4}.$$ 
\end{lemma}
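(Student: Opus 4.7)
The plan is first to reduce the bound on $|\chi^{p,p+1}_\Sigma(\Omega)|$, with $\Omega=X^c\cap\Sigma$, to a bound on the ambient $|\chi^\Sigma(\Omega)|$, and then to extract the latter by showing that every essential component of $\Omega^{\ess}$ is a disk contained in a single closed cube of the lattice. For the reduction, apply \cref{chi is additive} to the decomposition of $\Sigma$ into $\Sigma_{p,p+1}$ and $\Sigma_K$ along the essential arc $\bound{\Sigma_{p,p+1}}$:
\[
\chi^{p,p+1}_\Sigma(\Omega)+\chi^{\Sigma_K}_\Sigma(\Omega)=\chi^\Sigma(\Omega).
\]
Both summands on the left are nonpositive, so $|\chi^{p,p+1}_\Sigma(\Omega)|\le |\chi^\Sigma(\Omega)|$, and it remains to prove $|\chi^\Sigma(\Omega)|\le \tfrac{3p}{2\cdot 10^4}$.

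The connected components of $\Omega$ sit inside single open cubes of $X^c$. To obtain $\Omega^{\ess}$, iteratively perform innermost disk eliminations on inessential components of $\bound{\Omega}=X\cap\Sigma$: in both the closed-curve and arc cases, the bounding disk (respectively, half-disk) for an innermost inessential component has $X$-free interior, hence lies in a single open cube, so each elimination is local to a cube and preserves the property that each component of the running subsurface lies in a closed cube. Crucially, a closed cube has diameter $\sqrt{3}\cdot\tfrac{2\cdot 10^4}{p}<\sys_{\mathbb{R}^3}(\Sigma)$ by our standing hypothesis, so every closed curve on $\Sigma$ contained in a closed cube is contractible on $\Sigma$. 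This rules out the third type of inessential boundary component (closed curves isotopic to $\partial\Sigma$, which are non-contractible), so the local process above already produces $\Omega^{\ess}$, and $\bound{\Omega^{\ess}}$ consists entirely of essential arcs with endpoints on $\partial\Sigma$.

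Now each component $C$ of $\Omega^{\ess}$ is an essential subsurface of $\Sigma$, so the inclusion $\pi_1(C)\hookrightarrow \pi_1(\Sigma)$ is injective; but $C$ sits in a single closed cube, so every loop on $C$ is contractible on $\Sigma$. Hence $\pi_1(C)=1$ and $C$ is a disk. Its boundary circle alternates between $m_C$ arcs of $\bound{C}\subset X$ and $m_C$ arcs of $\partial\Sigma$, and $m_C\ge 2$ because each arc of $\bound{C}$ is essential (else it would have been removed as type (ii) inessential). Then $\chi(C)=1$, $|\bound{C}\cap\partial\Sigma|=2m_C$, and $|\chi^\Sigma(C)|=\tfrac{m_C}{2}-1\le \tfrac{m_C}{2}$. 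Summing,
\[
|\chi^\Sigma(\Omega)|\le \tfrac{1}{2}\sum_C m_C=\tfrac{|\bound{\Omega^{\ess}}\cap\partial\Sigma|}{4}\le \tfrac{|X\cap\beta|}{4}\le \tfrac{3p}{8\cdot 10^4},
\]
which suffices.

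The main obstacle is the middle step: confirming that innermost disk eliminations stay inside a single cube, and that the systole hypothesis forbids closed-curve components from appearing in $\bound{\Omega^{\ess}}$. Without both, a component of $\Omega^{\ess}$ could span multiple cubes and harbor loops essential in $\Sigma$, invalidating the disk conclusion.
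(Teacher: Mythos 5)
Your overall strategy matches the paper's: bound $|\bound{\Omega}\cap\partial\Sigma|$ by $|X\cap\beta|$, use the systole hypothesis to control the topology of $\Omega^{\ess}$, and then pass from $\chi^{\Sigma}$ to $\chi^{p,p+1}_{\Sigma}$ by additivity (\cref{chi is additive}) and nonpositivity. The first and last steps are fine. But the middle step — which you yourself flag as the crux — has a genuine gap: the inductive claim that each innermost disk elimination ``is local to a cube and preserves the property that each component of the running subsurface lies in a closed cube'' is not true. When the innermost inessential component $\gamma$ bounds its disk (or half-disk) $D$ on the side \emph{opposite} the running subsurface, the elimination \emph{adds} $D$, gluing it onto the component $C_0$ adjacent to $\gamma$. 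Even granting that $D$ lies in a single closed cube $Q$, the curve $\gamma$ lies on a face of the lattice, and $C_0$ may lie in the adjacent cube $Q'$ sharing that face, so the merged component $C_0\cup D$ spans $Q\cup Q'$; iterating, components can come to span arbitrarily many cubes. Worse, in the rigorous formulation (as in the paper, one must replace $X^c$ by $N(X)^c$, since the closure of $X^c\cap\Sigma$ is all of $\Sigma$), the added disks are the pieces lying in the neighborhood $N(X)$ of the $2$-skeleton; such a disk is only constrained to avoid $\partial N(X)\cap\Sigma$, not $X$ itself, so a single innermost disk can snake along many faces and is not contained in any cube. Consequently your deduction ``$C$ sits in a single closed cube, hence every loop on $C$ is contractible in $\Sigma$, hence $\pi_1(C)=1$ and $C$ is a disk'' is not established, and the final count collapses.

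The step can be repaired without cube-tracking, which is essentially what the paper does: since every closed component of $\bound{\Omega}$ lies in a single closed cube, it is contractible in $\Sigma$, so $\bound{\Omega^{\ess}}$ consists of essential arcs only; and the essentialization modifies $\Omega$ only by attaching or deleting disks along a single circle or a single arc of the frontier at a time, operations which never increase the genus of any component. Since every component of $\Omega$ lies in a cube and hence is planar, every component of $\Omega^{\ess}$ has genus $0$ (in particular $\Omega^{\ess}\neq\Sigma$), and each of its boundary circles must contain an essential arc, hence an arc of $\beta$; counting these against $|X\cap\beta|$ bounds the number of boundary circles and yields $\chi(\Omega^{\ess})\geq 0$ up to a controlled error, which is all the lemma needs (the paper records this as: $\Omega$ contains no non-contractible curve of $\Sigma$, so $\chi(\Omega^{\ess})\geq 0$, and then $|\chi_{\Sigma}(\Omega)|$ is bounded by the count of boundary points). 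As written, though, your proof does not supply a valid argument for the disk (or even genus-$0$) structure of the components of $\Omega^{\ess}$.
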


\begin{proof} It suffices to prove the lemma replacing $X^c$ by $N(X)^c$, the complement of an arbitrarily small neighborhood of $X$. Note that $\partial N(X)^c$ is a union of spheres. By choosing the neighborhood to be sufficiently small, we can arrange that $$|\partial N(X)^c\cap \beta|\leq \frac{3p}{2\cdot 10^4}.$$ 

Let $\Omega$ be the subsurface $N(X)^c\cap \Sigma$ of $\Sigma$. Then $$|\bound{\Omega}\cap \partial \Sigma|\leq \frac{3p}{2\cdot 10^4}.$$ Since $\Omega$ does not contain any non-contractible curve of $\Sigma$, $\chi(\Omega^{\ess})\geq 0$. Therefore, $$|\chi_\Sigma(\Omega)|\leq \frac{3p}{2\cdot 10^4}.$$  Note that $\Sigma=\Sigma_{p,p+1}\# \Sigma_K$, where the connect sum is along an arc $\delta$ on $\Sigma_K$. Put $\Omega$ in minimal position with $\delta$. By \cref{chi is additive}, $$\chi_{\Sigma}^{p,p+1}(\Omega)+\chi_{\Sigma}^{K}(\Omega)=\chi_{\Sigma}(\Omega)\geq \frac{3p}{2\cdot 10^4}.$$ Since $\chi$ is non-positive, $$\chi_{\Sigma}^{p,p+1}(\Omega)\geq \frac{3p}{2\cdot 10^4}.$$ The lemma follows.
\end{proof}

Let $Q$ be a cube (with $\partial Q\subset X$) of side length in $[2- 2\cdot 10^4/p,2+2\cdot 10^4/p]$, containing $\beta$ in its interior. Such a cube exists since $\length(\beta)=1$. Since $\Sigma$ is a Seifert surface of minimal genus, $\partial Q\cap \Sigma$ is simply the union of some inessential closed curves on $\Sigma$. Thus 
\begin{equation}\label{eq 12}|\chi_{\Sigma}^{p,p+1}(Q\cap \Sigma)|=p(p-1)-\frac{1}{2}.
\end{equation} Consider $X\cap Q$, the $2$-skeleton of the cubical lattice restricted to $Q$. The complex $X\cap Q$ is an iterated double bubble; \cref{complement of lattice does not intersect surface much} along with repeated applications of \cref{double bubble lemma} gives a contradiction with \cref{eq 12}.

\subsection{Conformal length and proof of \cref{distortion of connect sum is large}} 

\begin{definition} \label{conformal length} Let $\beta\subset \mathbb{R}^3$ be an embedding of a knot. The conformal length of $\beta$ is $$\conlength(\beta)=\sup_{r>0,x\in \mathbb{R}^3}\frac{\length(B(x,r)\cap \beta)}{r}.$$
\end{definition}

\cref{conformal length} is a slight variation of the conformal volume defined in \cite{LY82}. (The conformal volume in the latter is defined for a Riemannian manifold, not a Riemannian manifold embedded in another one.) The distortion of a curve is at least its conformal length, up to a constant. In particular, in Lemma 4.1 in \cite{GG12}, it is shown that $$4\dist(\beta)\geq \conlength(\beta).$$ The arguments in \cite{Par11} and \cite{GG12} go through the conformal length; our proof of \cref{distortion of connect sum is large} does, too.

\begin{theorem}\label{conformal length of connect sum is large} Let $\beta\subset \mathbb{R}^3$ be an embedding of $T_{p,p+1}\# K$. Then $$\conlength(\beta)\gtrsim p,$$ with constant independent of $p$ and $K$.
\end{theorem}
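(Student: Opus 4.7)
The plan is to adapt the proof of \cref{Seifert surface is thin connect sum} by swapping the extrinsic-systole hypothesis for a conformal-length hypothesis. I would normalize $\length(\beta) = 1$ and assume for contradiction that $\conlength(\beta) < \epsilon p$ for a small absolute constant $\epsilon > 0$. Fix a minimal-genus Seifert surface $\Sigma$ with $\partial \Sigma = \beta$, and set up the cubical lattice $X$ of side length $s = C/p$ (with $C$ depending on $\epsilon$ and on $10^4$), translating it via the Crofton averaging argument from the proof of \cref{Seifert surface is thin connect sum} so that $|X \cap \beta| \le 3p/C$ and so that $X$ meets $\beta$ and $\Sigma$ transversely.

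The central step is to establish the bound $|\chi^{p,p+1}_\Sigma(Q \cap \Sigma)| \le p(p-1)/10$ for each cube $Q$ of the lattice. For each center $x$, the function $r \mapsto G_x(r) := |\chi^{p,p+1}_\Sigma(B(x,r) \cap \Sigma)|$ is non-decreasing in $r$ by \cref{monotonicity for chi 2} and changes in jumps of size at most one by \cref{elementary move on chi 2}. If the bound were violated for some $Q$, then since $Q \subset B(x_Q, s\sqrt{3}/2)$ and by monotonicity $G_{x_Q}(s\sqrt{3}/2) > p(p-1)/10$, the function $G_{x_Q}$ must cross the threshold $p(p-1)/10$ at some $r_0 \le s\sqrt{3}/2$, necessarily landing in the middle range $[p(p-1)/10,\, 9p(p-1)/10]$. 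By \cref{topological lemma for connect sum}, $|\partial B(x_Q, r_0) \cap \beta| \ge p/10^4$. The conformal-length hypothesis combined with the coarea inequality on concentric balls at $x_Q$ bounds the $r$-measure near $r_0$ over which this sphere-intersection count stays large; integrating over radii, together with averaging over nearby centers, yields a contradiction for $\epsilon$ small enough. The complementary case $|\chi^{p,p+1}_\Sigma(Q \cap \Sigma)| \ge 9p(p-1)/10$ is ruled out by summing over cubes and using the total adjusted Euler characteristic.

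Given the per-cube bound, the conclusion follows from iterated application of \cref{double bubble lemma} exactly as in the proof of \cref{Seifert surface is thin connect sum}: the large cube $Q_{\mathrm{big}}$ containing $\beta$ has $|\chi^{p,p+1}_\Sigma(Q_{\mathrm{big}} \cap \Sigma)| = p(p-1) - 1/2$ by the analogue of \cref{eq 12}, yet the iterated double bubble forces $|\chi^{p,p+1}_\Sigma(Q_{\mathrm{big}} \cap \Sigma)| \le p(p-1)/10$, a contradiction. Composed with the inequality $4\dist(\beta) \ge \conlength(\beta)$, this is essentially the proof of \cref{distortion of connect sum is large} as well.

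The main obstacle is the central step. The systole argument in \cref{Seifert surface is thin connect sum} rests on a topological fact (no cube contains a non-contractible loop on $\Sigma$), which controls the Euler characteristic of $Q \cap \Sigma$ directly via \cref{complement of lattice does not intersect surface much}. The conformal-length hypothesis, by contrast, gives only a metric length bound at every scale. Bridging from the length bound to the adjusted Euler characteristic requires a careful application of the topological lemma, and the key subtlety is that the radii at which $G_{x_Q}$ lies in the middle range may occupy a very small $r$-interval compared to the cube scale $s$. Handling this likely requires simultaneously averaging over cube centers and over radii, or a refinement of the topological lemma providing a range of radii over which the sphere-intersection count is large.
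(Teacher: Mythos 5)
The central step of your plan is a genuine gap, and it is exactly the one you flag at the end. \cref{topological lemma for connect sum} gives $|\partial B(x,r)\cap\beta|\ge p/10^4$ only at those radii $r$ for which $|\chi^{p,p+1}_{\Sigma}(B(x,r)\cap\Sigma)|$ lies in the middle range, while the conformal-length hypothesis only controls the integral $\int_0^R|\partial B(x,r)\cap\beta|\,dr\le\length(B(x,R)\cap\beta)\le R\cdot\conlength(\beta)$. Since the elementary moves occurring as $r$ grows can be packed into an arbitrarily short interval of radii, the set of radii at which the middle range is occupied may have measure $o(s)$ relative to your cube scale $s=C/p$, and then the integral inequality yields no contradiction no matter how small $\epsilon$ is. This is not a removable technicality of your set-up: $\conlength$ is scale invariant, whereas your lattice scale is pinned to $\length(\beta)=1$, so nothing prevents essentially all of the topology of $\Sigma$ from being concentrated at a scale far below $1/p$ inside a single cube, which is precisely the situation in which the crossing radii have negligible measure; averaging over nearby centers sees the same concentration and does not help. (Also, the ``complementary case'' you propose to dispose of by summing over cubes is not a separate case: since each elementary move changes the relative adjusted Euler characteristic by at most one, any violating cube already produces a crossing of the middle range.) So the per-cube bound $|\chi^{p,p+1}_{\Sigma}(Q\cap\Sigma)|\le p(p-1)/10$, on which everything else rests, is unsupported.

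The paper supplies the missing idea by letting the configuration choose its own scale rather than fixing a lattice at scale $1/p$. It considers all boxes of dimensions $r,2^{1/3}r,2^{2/3}r$, lets $\mathcal{S}$ be those with $|\chi^{p,p+1}_{\Sigma}(B\cap\Sigma)|\ge 9p(p-1)/10$ (nonempty, since a sufficiently large box contains all of $\Sigma$), and picks an approximately smallest box $Q\in\mathcal{S}$. After rescaling $Q$ to unit scale, the conformal-length hypothesis is used twice in averaged form: once over the concentric boxes $Q_r$, $r\in[1,6/5]$, to find $Q'$ with $|\partial Q'\cap\beta|\le 3p/10^5$, and once over planes bisecting the long side of $Q'$ to find $P_s$ with $|P_s\cap\beta|\le 3p/10^5$. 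A single application of \cref{double bubble lemma} to the double bubble formed by $\partial Q'$ and $P_s$ then forces one of the two halves, each contained in a box of scale $2/3$, to carry large adjusted Euler characteristic, contradicting the minimality of $Q$. Minimality of the scale is what replaces your per-cube bound; without such a minimal-scale (or equivalent multi-scale) mechanism, the lattice adaptation of \cref{Seifert surface is thin connect sum} does not go through, so you would need to either adopt the paper's smallest-box argument or prove the refinement of the topological lemma you allude to (a guaranteed range of radii with many intersections), which is not available in the paper.
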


\begin{proof} Let $\beta\subset \mathbb{R}^3$ be an embedding of $T_{p,p+1}\# K$, and assume that $$\conlength(\beta)\leq \frac{p}{5\cdot 10^5}.$$ Let $\Sigma$ be any minimal genus Seifert suface with $\partial \Sigma=\beta$. 

We now consider (closed) boxes $B$ of dimensions $r, 2^{1/3}r, 2^{2/3}r$, for $r>0$. We call a box with such dimensions a box of scale $r$. Let $\mathcal{S}$ be the set of all boxes $B$ of scale $r$ such that
$$|\chi_{\Sigma}^{p,p+1}(B\cap \Sigma)|\geq \frac{9p(p-1)}{10}.
$$ Let $Q$ be a box of scale $r$ in the set $\mathcal(S)$ that is approximately the smallest, meaning that $\mathcal{S}$ contains no boxes of scale $(2/3)r$. We will obtain a contradiction to the assumption that $Q$ is an approximately smallest element of $\mathcal{S}$.

Scale $\beta$ and $Q$ so that $Q$ has dimension $1$. Translate $Q$ so that it is centered at the origin $(0,0,0)$. By our assumption on the conformal length of $\beta$, 
\begin{equation}\label{eq 13} \length(\beta\cap Q)\leq \frac{p}{5\cdot 10^5}.
\end{equation} Apriori, $\beta$ and $\Sigma$ may not be smooth, but we may replace them with arbitrarily close smooth copies so that \cref{eq 13} holds, and $Q$ is still an approximately smallest element of $\mathcal{S}$.  

For $r\in [1,6/5]$, let $Q_r$ be the box with center at the origin and dimension $r, 2^{1/3}r, 2^{2/3}r$. Then \begin{align*}\int_1^{6/5}|\partial Q_r\cap \beta|dr &\leq \length(Q_{6/5}\cap \beta)\\&\leq \length(B((0,0,0),3)\cap \beta)\\&\leq 3\conlength(\beta)\\&\leq \frac{3p}{5\cdot 10^5}.
\end{align*} So for some $Q'$ among the $Q_r$s that intersects $\beta$ and $\Sigma$ transversely, $$|\partial Q'\cap \beta|\leq\frac{3p}{10^5}.$$

Next, we find a plane intersecting the long dimension of $Q'$ whose intersection with $\beta$ is bounded. Without loss of generality, assume $z$ is the long dimension of $Q'$. let $P_s$ be the intersection of the plane $z=s$ with the $Q'$. Then
\begin{align*} \int_{-1/10}^{1/10}|P_s\cap \beta|ds&\leq \length(Q_{6/5}\cap \beta)\\&\leq \frac{3p}{5\cdot 10^5}
\end{align*} 
as before. Therefore, for some $s\in [-1/10,1/10]$, $P_s$ intersects $\beta$ and $\Sigma$ transversely and $$|P_s\cap \beta|\leq \frac{3p}{10^5}.$$

The plane $P_s$ divides $Q'$ into two boxes, which we label $Q'_1$ and $Q'_2$. Each box is contained in a box with dimension $2/3, (2/3)2^{1/3}, (2/3)2^{2/3}$. By  \cref{double bubble lemma}, $$|\chi_{\Sigma}^{p,p+1}(Q'_i\cap \Sigma)|\geq \frac{p(p-1)}{10}$$ for some $i\in \{1,2\}$. This contradicts the assumption that $Q$ is the smallest such box.
\end{proof}

\bibliographystyle{alpha} 

\bibliography{bibliography}

\end{document}